\newtheorem{thm}{Theorem}[section]
\newtheorem{theorem}[thm]{Theorem}
\newtheorem{facts}[thm]{Facts}
\newtheorem{lemma}[thm]{Lemma}
\newtheorem{corollary}[thm]{Corollary}
\newtheorem{proposition}[thm]{Proposition}
\numberwithin{equation}{thm}
\newcommand\Supp{\operatorname{Supp}}
\newcommand{\Proj}{\operatorname{Proj}}
\newcommand{\Spec}{\operatorname{Spec}}
\newcommand{\Pic}{\mbox{\rm Pic}\kern 1pt}
\newcommand{\Br}{\mbox{\rm Br}\kern 1pt}
\newcommand{\cO}{{\mathcal O}}
\newcommand{\F}{{\mathcal F}}
\newcommand{\G}{{\mathcal G}}
\newcommand{\J}{{\mathcal J}}    
\newcommand{\m}{{\mathfrak m}}
\newcommand{\p}{{\mathfrak p}}
\newcommand{\q}{{\mathfrak q}}
\newcommand{\Ass}{\operatorname{Ass}}
\newcommand{\red}{\mathrm{red}}
\newcommand{\CH}{\mathcal A}
\newcommand{\dv}{\operatorname{div}}
\newcommand{\Coker}{\operatorname{Coker}\kern 1pt}
\newcommand{\ord}{\operatorname{ord}}
\newcommand{\card}{\operatorname{Card}}
\newcommand{\Frac}{\operatorname{Frac}}
\newcommand{\codim}{{\operatorname{codim}}}
\newcommand{\A}{\mathbb A}
\newcommand{\pictorsion}{\mbox{pictorsion}}
\newcommand*\oline[1]{%
  \vbox{%
    \hrule height 0.5pt
    \kern0.25ex
    \hbox{%
      \kern-0.1em
      \ifmmode#1\else\ensuremath{#1}\fi
      \kern-0.1em
    }
  }
}
\begin{document}
\date{\today}
\thanks{D.L. was supported by NSF Grant 0902161.}

\begin{abstract}
Let $X/S $ be a quasi-projective morphism over an affine base. We develop in this article 
a technique for proving the existence of closed subschemes $H/S$ of $X/S$ with various favorable properties.
We offer several applications of this technique, including the existence  of finite quasi-sections in certain projective morphisms,
and the existence of
hypersurfaces in $X/S$ containing a given closed subscheme $C$, and intersecting properly a closed set $F$. 

Assume now that the base $S$ is the spectrum of 
a ring $R$ such that for any finite 
morphism $Z \to S$, $\Pic(Z)$ is a torsion group.
This condition is satisfied if $R $ is the ring of integers of a number field, or 
the ring of functions of a smooth affine curve over a finite field. 
We prove in this context a moving lemma pertaining to horizontal $1$-cycles on a regular scheme
$X$ quasi-projective and flat over  $S $. We also show the existence
of a finite surjective $S$-morphism to $\mathbb P_S^d$
for any scheme $X$ projective over $S$ when $X/S$ has all its fibers of a fixed dimension $d$.

\vspace*{.5cm}
\noindent 
\begin{tiny}KEYWORDS. \end{tiny} Avoidance lemma, Bertini-type theorem, Hypersurface,   Moving lemma, Multisection,  $1$-cycle,  Pictorsion, Quasi-section, Rational equivalence, Zero locus of a section, Noether normalization.

\vspace*{.3cm}
\noindent 
\begin{tiny}MATHEMATICS SUBJECT CLASSIFICATION:   14A15, 14C25, 14D06, 14D10,  14G40.   \end{tiny}
\end{abstract}

\title
{Hypersurfaces in projective schemes and a moving lemma}
\author{Ofer Gabber}
\author{Qing Liu}
\author{Dino Lorenzini}
\address{IH\'ES, 35 route de Chartres, 
91440 Bures-sur-Yvette, France}
\email{gabber@ihes.fr}
\address{Universit\'e de Bordeaux 1, Institut de Math\'ematiques de 
Bordeaux, CNRS UMR 5251, 33405 Talence, France} 
\email{Qing.Liu@math.u-bordeaux1.fr}
\address{Department of mathematics, University of Georgia, 
Athens, GA 30602, USA} 
\email{lorenzin@uga.edu}
\maketitle

Let $S=\Spec R$ be an affine 
scheme, and let $X/S$ be a quasi-projective scheme. 
The core of this article is a method, 
summarized in \ref{sum} below, 
for proving the existence of closed subschemes  of $X$ with 
various favorable properties. As the technical details  can be 
somewhat complicated, we start this introduction by discussing 
the applications of the method that the reader will find in this article.

Recall (\ref{zerolocusinvertible}) that a global section $f$ of an invertible sheaf ${\mathcal L}$ on any scheme  $X$
 defines a closed subset $H_f$ of $X$,  
 consisting of all points $x \in X$ where the stalk $f_x$ does not generate ${\mathcal L}_x$.
 Since $\cO_X f\subseteq {\mathcal L}$, 
 the ideal sheaf ${\mathcal I}:=
\cO_X f\otimes {\mathcal L}^{-1}$ endows $H_f$ with the structure of closed subscheme of $X$.
Let $X \to S$ be any morphism.
We  call the closed subscheme $H_f$ of $X$ a \emph{hypersurface} (relative to $X \to S$) when no
irreducible component of positive dimension of $X_s$ is contained in $H_f$, for all $s\in S$. 
If, moreover, the ideal sheaf ${\mathcal I}$ is invertible, 
we say that the hypersurface $H_f$ is \emph{locally principal}.
We remark that when a fiber $X_s$ contains isolated points, 
it is possible for $H_f$ (resp.\ $(H_f)_s)$ to have codimension $0$ in $X$ (resp.\ in $X_s$), instead of the expected codimension $1$.

\bigskip \noindent
{\bf A.  An Avoidance Lemma for Families.} It is classical that if $X/k$ is a quasi-projective 
scheme over a field, $C \subsetneq X$ is a closed subset of positive codimension, 
and $\xi_1,\dots, \xi_r$ are points of $X$ not contained in $C$, then there exists a 
hypersurface $H$ in $X$ such that $C \subseteq H$ and $\xi_1, \dots, \xi_r \notin H$.
Such a statement is commonly referred to as an Avoidance Lemma (see, e.g., \ref{avoid}).
Our next theorem establishes an Avoidance Lemma for Families. 
As usual,  when $X$ is noetherian, $\Ass(X)$ denotes the finite
{\em set of associated points} of $X$. 
\medskip

\noindent {\bf Theorem \ref{bertini-type-0}.} {\em
Let $S$ be an affine scheme, and let $X\to S$ be a 
quasi-projective and finitely presented morphism.  Let $\cO_X(1)$ be a very ample sheaf relative to $X \to S$. Let 
\begin{enumerate}[{\rm (i)}]
\item $C$ be a closed subscheme of $X$, 
finitely presented over $S$;
\item $F_1, \dots, F_m$ be 
subschemes\footnote{Each $F_i$ is a closed subscheme of an open subscheme of $X$ (\cite{EGA}, I.4.1.3).} of $X$ of finite presentation over $S$;
\item $A$ be a finite subset of $X$ such that $A\cap C=\emptyset$. 
\end{enumerate} 
Assume that for all $s \in S$, $C$ does not contain any
irreducible component of positive dimension of $(F_i)_s$ and of $X_s$. 
Then there exists $n_0>0$ such that for all $n\ge n_0$, there exists 
a global section $f$ of $\cO_X(n)$ such that: 
\begin{enumerate}[\rm (1)]
\item the closed subscheme $H_f$ of $X$ is a hypersurface 
that contains $C$ as a closed subscheme;
\item for all $s \in S$ and for all $i\le m$, $H_f$ does not contain 
any irreducible component of positive dimension of $(F_i)_s$; and
\item  $H_f\cap A=\emptyset$. 
\end{enumerate}
Assume in addition that $S$ is noetherian, and 
that $C\cap\Ass(X)=\emptyset$. Then there exists such a hypersurface 
$H_f$ which is locally principal. 
}

\medskip 
When $H_f$ is locally principal, 
$H_f$ is the support of an effective ample Cartier divisor on $X$. 
This divisor is `horizontal' in the sense that it does not contain in its support any irreducible component of fibers of $X \to S$ of positive dimension. In some instances, such as in \ref{bertini-cor1} and \ref{generic-S1},  we can show that $H_f$ is  a relative effective Cartier divisor, 
i.e.,  that $H_f \to S$ is flat. Corollary \ref{bertini-cor1} also includes a Bertini-type statement for $X \to S$ with Cohen-Macaulay fibers.
We use Theorem \ref{bertini-type-0} to establish in \ref{quasisections} the existence of finite quasi-sections in 
certain projective morphisms $X/S$,
as we now discuss.

\bigskip 
\noindent {\bf B. Existence of finite quasi-sections.}
 Let $X\to S$ be a surjective morphism. Following EGA \cite{EGA}, IV,
\S 14, p.\ 200, we define: 

\begin{definition} \label{def.finite-qs}  We call a closed subscheme 
$C$ of $X$ a \emph{finite quasi-section} when $C \to S$ is finite 
and surjective.  Some authors call  \emph{multisection} a finite quasi-section $C \to S$ which is also flat, with $C$ irreducible
(see e.g., \cite{HT}, p.\ {\rm 12} and {\rm 4.7}).
\end{definition}

When $S$ is integral noetherian of dimension $1$ 
and $X\to S$ is proper and surjective, 
the existence of a finite quasi-section $C$ is well-known and easy 
to establish. It suffices to take $C$ to be the Zariski closure 
of a closed point of the generic fiber of $X\to S$. When $\dim S>1$, 
the process of taking the closure of any closed point of the generic 
fiber does not always produce a closed subset {\it finite} over $S$  
(see \ref{easy}).

\medskip
\noindent {\bf Theorem \ref{quasisections}.} 
{\em Let $S$ be an affine scheme  and let $X\to S$ be a projective, finitely 
presented 
morphism. Suppose that all fibers of $X\to S$ 
are of the same dimension $d\ge 0$. Let $C$ be a finitely presented closed subscheme of $X$, with $C \to S$ finite but not necessarily surjective.
Then there exists a finite quasi-section $T$ of finite presentation which contains $C$. 
Moreover:
\begin{enumerate}[\rm (1)]
\item Assume that $S$ is noetherian. If $C$ and $X$ are both irreducible, then there exists 
such a quasi-section with $T$ irreducible.
\item   
If $X\to S$ is flat with Cohen-Macaulay fibers (e.g., if
$S$ is regular and $X$ is Cohen-Macaulay),  
then there exists 
such a quasi-section with $T\to S$ flat. 
\item  
If $X \to S$ is flat and a local complete intersection morphism\footnote{Since the morphism $X\to S$ is flat, 
it is a local complete intersection morphism if and  only if every fiber is a local complete intersection morphism (see, e.g., \cite{Liubook}, 6.3.23).},  
then there exists such a quasi-section with $T\to S$ flat and a local complete intersection morphism.
\item 
Assume that $S$ is noetherian. Suppose that $\pi:X \to S$ has fibers pure 
of the same dimension, and that $C \to S$ is unramified. 
Let $Z$ be a finite subset of $S$ (such as the set of generic points of $\pi(C)$), and suppose that there
exists  an open subset $U$ of $S$ containing $Z$
 such that $X \times_S U \to U$ is smooth. Then there exists 
such a quasi-section $T$ of $X \to S$ and an open set $V \subseteq U$ containing $Z$
such that $T \times_S V \to V$ is \'etale.
\end{enumerate}
}
\medskip

As an application of Theorem~\ref{quasisections}, we obtain 
a strengthening in the affine case of the classical splitting 
lemma for vector bundles. 

\medskip
\noindent {\bf Proposition \ref{splitting}.} 
{\em  Let $A$ be a 
commutative ring. Let $M$ be a 
projective $A$-module of finite presentation with constant rank $r> 1$.
Then there exists an $A$-algebra $B$, finite and faithfully flat over $A$, with $B$ a local complete intersection over $A$,
such that $M\otimes_A B $ 
is isomorphic to a direct sum of projective $B$-modules of rank $1$.}
\smallskip

Another application of Theorem
 \ref{quasisections}, to the problem of extending a given 
 family of stable curves ${D} \to Z$ after a finite surjective base change, is found in \ref{extension.stable.curve}.
 It is natural to wonder whether Theorems \ref{bertini-type-0} and \ref{quasisections} hold for more general bases $S$ which are not affine,
such as a noetherian base $S$ 
having an ample invertible sheaf. 
 It is also natural to wonder if the existence of finite quasi-sections in Theorem \ref{quasisections} holds for proper morphisms. 

\bigskip \noindent
{\bf C. Existence of Integral Points.}
Let $R$ be a Dedekind domain\footnote{A Dedekind domain in this article has dimension $1$, and a Dedekind scheme is the spectrum of a Dedekind domain.} and let $S= \Spec R$.
When $X\to S$ is quasi-projective, an 
integral 
finite quasi-section is also called an \emph{integral point}
in \cite{MB1}, 1.4. The existence of a finite quasi-section
in the quasi-projective case over $S=\Spec {\mathbb Z}$ when the generic fiber is geometrically irreducible
is Rumely's famous Local-Global Principle \cite{Rum}. 
This existence result was extended in \cite{MB1}, 1.6, as follows. As in \cite{MB1}, {\rm  1.5}, we make the following definition.

\begin{definition} \label{ConditionT} {
We say that a Dedekind scheme $S$ satisfies \emph{Condition} {\rm (T)} if: 
\begin{enumerate}[\rm (a)]
\item For any finite extension 
$L$ of the field of fractions $K$ of $R$, the normalization $S'$ of $S$ in $\Spec L$ has 
torsion Picard group $\Pic(S')$, and 
\item  The residue fields 
at all closed points of $S$ are  algebraic extensions of finite fields.
\end{enumerate} 
}
\end{definition}

For example, $S$ satisfies Condition 
(T) if $S$ is an affine integral smooth curve over a finite field, 
or if $S$ is the spectrum of the ring of $P$-integers 
in a number field $K$, where $P$ is a finite set of finite places of $K$. 
\medskip

Our next theorem is only a mild sharpening of the Local-Global Principle in \cite{MB1}, 1.7:
We show in \ref{RumelyLG} that the hypothesis in \cite{MB1}, 1.7, that the base scheme $S$ is excellent, can be removed.

\medskip
\noindent {\bf Theorem \ref{RumelyLG}.} 
{\em  Let $S$ be a Dedekind scheme satisfying Condition {\rm (T)}. 
Let $X \to S$ be a separated surjective morphism of finite type. Assume that $X$ is irreducible and that the generic fiber of $X \to S$
is geometrically irreducible. Then $X \to S$ has a finite quasi-section.}

\medskip
Condition (T)(a) is necessary in the  Local-Global Principle, but
it is not sufficient, as shown by an example of Raynaud over $S=\Spec \overline{{\mathbb Q}}[x]_{(x)}$ (\cite{MB0}, 3.2, and \cite{vDM}, 5.5).
The following weaker condition is needed for our next two theorems.

\begin{definition} \label{ConditionT*}
{Let $R$ be any commutative ring and let $S= \Spec R$. We say that $R$ or $S$ is \emph{pictorsion} if $\Pic(Z)$ is a torsion group
for any finite 
morphism $Z \to S$.}
\end{definition}

Any semi-local ring $R$ is $\pictorsion$.
A Dedekind domain satisfying Condition (T) is $\pictorsion$ 
(\cite{MB1}, 2.3, see also \ref{lem.torsiondegreed} (2)). 
Rings which satisfy the primitive criterion (see \ref{localglobal}) are 
$\pictorsion$ and only have infinite residue fields. 
\bigskip 

\noindent{\bf D.  A Moving Lemma.} Let $S$ be a Dedekind scheme and let $X$ be a noetherian scheme over $S$. 
An integral closed subscheme $C$ of $X$ finite and surjective over $S$
is called an \emph{irreducible horizontal $1$-cycle} on $X$. A \emph{horizontal $1$-cycle} on $X$
is an element of the free abelian group generated by the irreducible horizontal $1$-cycles.
Our next application of the method developed in this article is a Moving Lemma 
for horizontal  $1$-cycles.

\medskip
\noindent {\bf Theorem \ref{mv-1-cycle-local}.} {\em  Let $R$ be a Dedekind domain, and let $S:=\Spec R$. 
Let $X \to S$ be a flat and quasi-projective morphism, with $X$
integral. 
Let $C$ be a horizontal $1$-cycle on $X$. Let $F$ be a 
closed subset of $X$. 
Assume that for all $s \in S$, $F\cap X_s$ and $\Supp(C) \cap X_s$ have 
positive codimension\footnote{The definition of codimension in \cite{EGA}, Chap.\ 0, 14.2.1,
implies that the codimension of the empty set in $X_s$ is $+\infty$, 
which we consider to be positive.} 
in $X_s$. Assume in addition that
either
\begin{enumerate}[\rm (a)]
\item $R$ is pictorsion and the support of $C$ is contained in the regular locus of $X$, or
\item $R$ satisfies Condition \emph{(T)}. 
\end{enumerate}

Then some positive multiple $mC$ of 
$C$ is rationally equivalent
to a horizontal $1$-cycle $C'$ on $X$ whose support does not meet $F$.
Under the assumption {\rm (a)}, if furthermore $R$ is  
semi-local, then we can take $m=1$.

Moreover, if $Y \to S$ is any separated morphism of finite type and $h: X \to Y$
is any  $S$-morphism, then $h_*(mC)$ is rationally equivalent to $h_*(C')$ on $Y$.}
\medskip
 
Example \ref{ex.extrahyp} shows that the Condition (T)(a) is necessary 
for Theorem \ref{mv-1-cycle-local} to hold.
A different proof of  Theorem \ref{mv-1-cycle-local} when $S$ is semi-local, $X$ is regular, and $X\to S$ is quasi-projective, 
is given in \cite{GLL1}, 2.3, where the result is then used to prove a formula for the index of an algebraic variety
over a Henselian field (\cite{GLL1}, 8.2).

It follows from \cite{GLL1}, 6.5, that for each $s \in S$, a multiple $m_s C_s$ of the $0$-cycle $C_s$ is rationally equivalent 
on $X_s$ to a $0$-cycle whose support is disjoint from $F_s$. Theorem  6.5 in \cite{GLL1} expresses such an integer $m_s$ 
in terms of Hilbert-Samuel multiplicities. The $1$-cycle $C$ in $X$ can be thought of   as a family of $0$-cycles, and 
Theorem \ref{mv-1-cycle-local} may be considered as a Moving Lemma for $0$-cycles in families.

Even for schemes of finite type over a finite field, Theorem \ref{mv-1-cycle-local}
is not a consequence of the classical Chow's Moving Lemma. Indeed, let $X$ be a 
smooth
quasi-projective variety over 
a field $k$.
The classical Chow's Moving Lemma \cite{Rob}
immediately implies the following statement:

\begin{emp} \label{Chow} 
{\it Let $Z$ be a $1$-cycle on $X$. 
Let $F$ be a closed subset of $X$ of codimension at least $2$ in $X$. 
Then there exists a $1$-cycle $Z'$ on $X$, rationally equivalent to $Z$, 
and such that $\Supp(Z') \cap F 
= \emptyset$.
}
\end{emp}

 Consider a morphism $X \to S$ as in Theorem \ref{mv-1-cycle-local},
and assume in addition that $S$ is a smooth affine curve over a finite field $k$.
Let $F$ be a closed subset as in \ref{mv-1-cycle-local}. Such a subset may be of 
codimension $1$ in $X$. Thus, Theorem \ref{mv-1-cycle-local} is not a consequence
of Chow's Moving Lemma for $1$-cycles just recalled, since \ref{Chow} can only be applied to 
$X\to S$ when  $F$ is a closed subset of codimension at least $2$ in $X$.

\medskip 
\noindent 
{\bf E. Existence of finite morphisms to ${\mathbb P}^d_S$.} 
Let $k$ be a field. A strong form of the Normalization Theorem of E. Noether
that applies to graded rings  (see, e.g., \cite{Eis}, 13.3) implies that every projective variety $X/k$ of dimension $d$
admits a finite $k$-morphism  $X \to {\mathbb P}_k^d$. Our next theorem   
guarantees the existence of a finite $S$-morphism $X \to {\mathbb P}_S^d$ 
when $X\to S$ is projective with $R$ $\pictorsion$, and $d:= \max\{\dim X_s, s\in S\}$. 

\medskip
\noindent {\bf Theorem \ref{theorem.finiteP^n}.} {\em
 Let $R$ be a $\pictorsion$ ring, and let $S:=\Spec R$.
Let $X \to S$ be a projective morphism, 
and set    
$d:= \max\{\dim X_s, s\in S\}$. 
Then  there exists a finite $S$-morphism 
$ X \to {\mathbb P}^d_S$.
If we assume in addition that  $\dim X_s= d$  for all $s\in S$,  then there exists a finite surjective $S$-morphism $ X \to {\mathbb P}^d_S$.
}
\medskip

The  above theorem generalizes  to schemes $X/S$ of any dimension
the results of \cite{Gre2}, Theorem 2,  and  \cite{CPT}, Theorem 1.2,  which apply to morphisms
of relative dimension $1$. After this article was written, we became aware of the preprint
\cite{CMPT}, where the general case is also discussed. 
We also prove the  converse of this theorem: 

\smallskip
\noindent {\bf Proposition (see \ref{conversepictorsion}).} {\em 
Let $R$ be any commutative 
ring and let $S:=\Spec R$. 
Suppose that for any $d\ge 0$, and for any projective morphism $X
\to S$ such that $\dim X_s=d$  for all $s\in S$, 
there exists a finite surjective $S$-morphism $ X \to {\mathbb P}^d_S$.
Then $R$ is pictorsion.
}

\bigskip \noindent 
{\bf F. Method of proof.} 
Now that the main applications of our method for proving the existence of hypersurfaces $H_f$ in projective schemes $X/S$ with certain desired properties 
have been discussed, let us summarize the method. 

\begin{emp} \label{sum} 
Let $X\to S$ be a projective morphism with $S=\Spec R$ affine and 
noetherian. Let $\cO_X(1)$ be a very ample sheaf relative to $X\to S$. 
Let $C \subseteq X$ be a closed subscheme
defined by an ideal ${\mathcal I}$, and set ${\mathcal I}(n) := {\mathcal I} \otimes {\mathcal O}_X(n)$.
Our goal is to show the existence, for all $n$ large enough, 
of a global section $f $ of ${\mathcal I}(n)$ such that the 
associated subscheme $H_f$ has the desired properties. 

To do so, we fix a system of generators $f_1,\dots, f_N$ 
of $H^0(X,{\mathcal I}(n)) $, and we consider for each $s\in S$
a subset $\Sigma(s) \subset {\mathbb A}^N(k(s))$ consisting of all the vectors $(\alpha_1,\dots,\alpha_N)$ such that $\sum_i \alpha_i f_{i|{X_s}}$
does not have the desired properties. We show then that all these
subsets 
$\Sigma(s)$, $s\in S$, are the rational points of 
a single pro-constructible subset $T$ of ${\mathbb A}^N_S$ 
(which depends on $n$). To find a 
desired global section $f:=\sum_i a_i f_i$ with $a_i \in R$ which 
avoids the subset $T$ of `bad' sections, 
we show that for some $n$ large enough the pro-constructible subset $T$ satisfies the hypotheses of the  following theorem.
The section $\sigma$ whose existence follows from \ref{globalize} provides the desired vector $(a_1,\dots, a_N) \in R^N$.

\medskip
\noindent 
{\bf Theorem (see \ref{globalize}).} {\it
Let $S$ 
be a noetherian affine scheme. Let $T:=T_1\cup \ldots \cup T_m$ 
be a finite union of pro-constructible subsets of $\mathbb A^N_S$.  
Suppose that:
\begin{enumerate}[\rm (1)]
\item For all $i\le m$, $\dim T_i<N$,  and  
 $(T_i)_s$ is constructible in $\mathbb A^N_{k(s)}$ for all $s\in S$.
\item For all $s\in S$, there exists a $k(s)$-rational point in 
$\A^N_{k(s)}$ which does not belong to $T_s$.
\end{enumerate}
\smallskip
Then there exists a section $\sigma$ of $\pi: \A^N_S\to S$
such that $\sigma(S) \cap T = \emptyset$.
}

\smallskip
To explain the phrasing of (1) in the above theorem,
note that  the union $T_1\cup \ldots \cup T_m=:T$ 
is pro-constructible since each $T_i$ is. 
However, it may happen that $\dim T > \max_i(\dim T_i)$. This can be seen already on  the spectrum $T$ of a discrete valuation ring, 
which is the union of two (constructible) points, each of dimension $0$.
The proof of 
Theorem~\ref{globalize} is given in 
section \ref{Constructible subsets}. 
The construction alluded to in \ref{sum}  of the
pro-constructible subset $T$ whose rational points are in bijection with 
$\Sigma(s)$ is 
done in Proposition~\ref{constructible-conditions}.
\medskip 

We present our next theorem  as a final illustration of the strength of the method.
This theorem, stated in a slightly stronger form in section \ref{Main application},   
is the key to the proof of Theorem \ref{mv-1-cycle-local} (a), 
as it allows for a reduction
to the case of relative dimension $1$. 
Note that in this theorem, $S$ is not assumed to be $\pictorsion$.
\end{emp}

\noindent {\bf Theorem (see \ref{pro.reductiondimension2}).} {\em 
Let $S$ be an affine noetherian scheme
of finite dimension, and let $X\to S$ be a quasi-projective
morphism. Let $C$ be a closed irreducible subscheme of $X$, 
of codimension $d>\dim S$ in $X$.  Assume that $C\to S$ is finite and surjective, and  that
$C \to X$ is a regular immersion. Let $F$ be a closed subset
of $X$. Fix a very ample sheaf $\cO_X(1)$ relative to $X\to S$. 
Then there exists $n_0>0$ such that for all $n \geq n_0$, 
there exists a global section $f$ of $\cO_X(n)$ such that: 
\begin{enumerate}[\rm (1)]
\item $C$ is a closed subscheme of codimension $d-1$ in $H_f$, 
and $C \to H_f$ is a regular immersion; 
\item For all $s\in S$, $H_f$ does not contain any irreducible
component  of positive dimension of $F_s$. 
\end{enumerate}
}

\smallskip
The proof of 
Theorem \ref{pro.reductiondimension2} is quite subtle and 
spans sections \ref{Main application} and \ref{compute-coh}. 
In  Theorem \ref{mv-1-cycle-local} (a), we start with the hypothesis that $C$ is contained in the regular locus of $X$.
It is not possible in general to expect that a hypersurface $H_f$ containing $C$ can be chosen so that 
$C$ is again contained in the regular locus of $H_f$. 
Thus, when  no regularity conditions can be expected on the total space, we impose regularity conditions by assuming that $C$ is 
regularly immersed in $X$. 
 Great care is then needed in the proof of \ref{pro.reductiondimension2} to insure that a hypersurface $H_f$ can be found with the property 
that $C$ is  regularly immersed in $H_f$. 

Section~\ref{Main application} contains most of the proof of 
Theorem \ref{pro.reductiondimension2}. Several lemmas needed in the
proof of Theorem \ref{pro.reductiondimension2} are discussed 
separately in section~\ref{compute-coh}. 
Sections \ref{hyperf}, \ref{finite-qs}, \ref{mv-1c} and \ref{finite-pd} 
contain the proofs of the applications of our method.
\medskip 

It is our pleasure to thank Max Lieblich and Damiano Testa for the
second example in \ref{ex.avoidance}, Angelo Vistoli for helpful 
clarifications regarding \ref{extension.stable.curve} and 
\ref{smooth_cover_mg}, and Robert Varley for Example \ref{Varley}. 
We also warmly thank the referees, for a meticulous reading of the 
article, for  several corrections and strengthenings,  and many other 
useful suggestions which led to improvements in the exposition.

\tableofcontents

\begin{section}{Zero locus of sections of a quasi-coherent sheaf} 

We start this section by reviewing basic facts on constructible 
subsets, a concept introduced by Chevalley in \cite{Che}. We follow the 
exposition in \cite{EGA}. We introduce the zero-locus ${\mathcal Z}(\mathcal F,f)$
of a global section $f$ of a finitely presented $\cO_X$-module $\mathcal F$ on a scheme $X$,
and show in \ref{zero-locus-f} that this subset is locally constructible in $X$. Given a finitely presented
morphism $\pi: X \to Y$, we further define a subset $T_{\mathcal F, f, \pi}$ in $Y$, and show in \ref{constructible-univ} that
it is locally constructible in $Y$. The main result in this section  is Proposition~\ref{constructible-conditions},
which is a key ingredient in the proofs of  Theorems \ref{exist-hyp} and \ref{pro.reductiondimension2}. 
 
Let $X$ be a topological space. A subset $T$ of $X$ is 
\emph{constructible}\footnote{See \cite{EGA}, Chapter 0, 9.1.2. Beware that in the second edition \cite{EGA1}, Chapter 0, 2.3,  a {\it globally constructible} subset now refers
to what is called a {\it constructible} subset in \cite{EGA}.} if it is a finite union of subsets of the form
$U\cap (X\setminus V)$, where $U $ and $ V$ are 
open and retro-compact\footnote{A topological space $X$ is {\it quasi-compact} if every open covering of $X$ has a finite refinement.
A continuous map $f : X \to Y $ is {\it quasi-compact} if the inverse
image $f^{-1}(V )$ of every quasi-compact open $V$ of $Y$ is quasi-compact.
A subset $Z$ of $X$ is {\it retro-compact} if the inclusion map $Z \to X$ is
quasi-compact.} in $X$. A subset $T$ of $X$ is 
\emph{locally constructible} if for any point $t\in T$, there exists
an open neighborhood $V$ of $t$ in $X$ such that $T\cap V$ is
constructible in $V$ (\cite{EGA}, Chap.\ 0, 9.1.3 and 9.1.11).
When $X$ is a quasi-compact and quasi-separated scheme\footnote{$X$ is {\it quasi-separated} if and only if the intersection of any two quasi-compact open
subsets of $X$ is quasi-compact (\cite{EGA}, IV.1.2.7).} (e.g., if 
$X$ is noetherian, or affine), then any quasi-compact open subset 
is retro-compact and 
any locally constructible subset is constructible (\cite{EGA}, IV.1.8.1). 

When $T$ is a subset of a topological space $X$, we endow $T$ with the induced topology, 
and define the \emph{dimension of $T$} to be the Krull dimension 
of the topological space $T$. As usual, $\dim T<0$ if and only 
if $T = \emptyset$. 
Let $\pi: X\to S$ be a morphism and let $T\subseteq X$ be any
subset. For any $s \in S$, we will denote by $T_s$ the subset 
$\pi^{-1}(s) \cap T$. 

\begin{emp} \label{pro-constr} 
Recall that a \emph{pro-constructible} subset 
in a \emph{noetherian} scheme $X$ is a (possibly infinite) intersection of 
constructible subsets of $X$ (\cite{EGA}, IV.1.9.4). 
Clearly, the constructible subsets of $X$ are pro-constructible in $X$, 
and so are the finite subsets of $X$ (\cite{EGA}, IV.1.9.6), and  
the constructible subsets of any fiber of a morphism of schemes $X \to Y$ 
(\cite{EGA}, IV.1.9.5 (vi)). 
The complement in $X$ of a pro-constructible subset is called 
an \emph{ind-constructible} of $X$.  Equivalently, an
ind-constructible subset  of $X$ is any union of constructible subsets of $X$. 
\end{emp}

We are very much indebted to a referee for pointing out that our original hypothesis in Theorem \ref{globalize} 
that $T$ be the union of a constructible subset and finitely many closed strict 
subsets of fibers $\mathbb A^N_{k(s)}$ could be generalized to 
the hypothesis that $T$ be pro-constructible.
We also thank this referee for suggestions which greatly improved
the exposition of the proof of  our original 
Proposition \ref{constructible-conditions}. 

\begin{lemma} \label{lem.construct}
\begin{enumerate}[\rm (a)]
\item Let $X/k$ be a scheme of finite type over a field $k$, and let $T\subseteq X$ be a constructible subset,
with closure $\overline{T} $ in $X$. Then $\dim T = \dim \overline{T}$.
\item Let $X/k$ be as in (a). Let $k'/k$ be a finite extension, and denote by $T_{k'}$ the preimage of $T$ under the map 
$X \times_k k' \to X$. Then $\dim T_{k'} = \dim T$.
\item Let $Y$ be any noetherian scheme and $\pi: X \to Y$ be a morphism of finite type. Let $T \subseteq X$ be constructible.
Assume that for each $y\in Y$, $\dim T_y\leq d$.
Then $\dim T \leq \dim Y + d$. 
\item Suppose $X$ is a noetherian scheme. Let $T$ be a
pro-constructible subset of $X$. 
Then $T$ has finitely many irreducible components 
and each of them has a generic point. 
\end{enumerate}
\end{lemma}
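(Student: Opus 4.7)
My plan is to address the four parts in turn, using (a) as a base that makes (b) and (c) essentially immediate, and treating (d) as the genuinely distinct and hardest part.

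For (a), I would first observe that it suffices to prove $\dim L=\dim\overline{L}$ for $L$ locally closed, since a constructible $T$ is a finite union of locally closed $L_i$ with $\overline{T}=\bigcup_i\overline{L_i}$, from which $\dim T=\max_i\dim L_i$ and $\dim\overline{T}=\max_i\dim\overline{L_i}$ can be read off. For such an $L$, it is open and dense in $\overline{L}$; decomposing $\overline{L}$ into its (finitely many) irreducible components $Z_k$, each $L\cap Z_k$ is a nonempty open of the integral finite-type $k$-scheme $Z_k$. The key input is the standard fact that on such a $Z_k$ every nonempty open has the same dimension as $Z_k$ (since dimension equals the transcendence degree over $k$ of the function field), so $\dim(L\cap Z_k)=\dim Z_k$ and the assertion follows.

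Part (b) then follows readily: the projection $X_{k'}\to X$ is a finite morphism, so $T_{k'}$ is constructible in $X_{k'}$ and the induced map $\overline{T_{k'}}\to\overline{T}$ is a finite surjection. Finite morphisms preserve Krull dimension, so $\dim\overline{T_{k'}}=\dim\overline{T}$; combined with (a) applied both over $k$ and over $k'$, this gives $\dim T_{k'}=\dim T$.

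For (c), I would decompose $T$ into finitely many irreducible locally closed pieces $L_i$, with respective generic points $\xi_i\in L_i$ and images $y_i:=\pi(\xi_i)\in Y$. The closure $\overline{L_i}$ is an integral closed subscheme of $X$, finite type over the integral noetherian scheme $\overline{\{y_i\}}\subseteq Y$, so the standard fiber-dimension inequality for morphisms of finite type over a noetherian base (see \cite{EGA}, IV.5.6) gives $\dim\overline{L_i}\leq\dim\overline{\{y_i\}}+\dim_{\xi_i}X_{y_i}\leq\dim Y+\dim_{\xi_i}X_{y_i}$. Since $\xi_i$ is the generic point of the irreducible closed subset $\overline{\{\xi_i\}}^{X_{y_i}}$ of the fiber, $\dim_{\xi_i}X_{y_i}$ equals the dimension of this closed set. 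The intersection $L_{i,y_i}\cap\overline{\{\xi_i\}}^{X_{y_i}}$ is a nonempty locally closed subset of the irreducible space $\overline{\{\xi_i\}}^{X_{y_i}}$ containing its generic point, hence a dense open in this $k(y_i)$-variety, of the same dimension. As it is contained in $T_{y_i}$, this dimension is at most $d$, giving $\dim L_i\leq\dim Y+d$ and hence $\dim T\leq\dim Y+d$.

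I expect (d) to be the main obstacle, since pro-constructibility is a significantly weaker hypothesis than constructibility. My first step would be to show that $T$ with the induced topology is itself noetherian: any closed subset of $T$ has the form $T\cap\overline{F}^X$ where $\overline{F}^X$ is its $X$-closure, so a descending chain $F_1\supseteq F_2\supseteq\cdots$ of closed subsets of $T$ gives a descending chain $\overline{F_i}^X$ in the noetherian $X$, which stabilizes and hence forces the $F_i$ to stabilize. This yields finitely many irreducible components of $T$. To exhibit a generic point for such a component $W$, let $\eta$ be the generic point of the irreducible closed subset $\overline{W}^X$; writing $T=\bigcap_j C_j$ with each $C_j$ constructible in $X$, the intersection $C_j\cap\overline{W}^X$ is a constructible subset of the irreducible space $\overline{W}^X$ which contains the dense subset $W$, hence is dense in $\overline{W}^X$, hence contains a nonempty open of $\overline{W}^X$ and in particular $\eta$. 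Since this holds for every $j$, $\eta\in T$, giving the required generic point of $W$.
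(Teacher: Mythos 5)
Parts (a), (b), and (d) of your proof are correct and closely track the paper's arguments: in (a) both arguments rest on the fact that a dense constructible subset of an integral finite-type $k$-scheme contains a dense open of full dimension; in (d) both first establish noetherianity of $T$ and then show that the generic point of the $X$-closure of a component lies in every constructible subset containing that component. Your route to (b) via the finite surjection $\overline{T_{k'}}\to\overline{T}$ and invariance of dimension under finite surjective morphisms is a reasonable variant of the paper's chain of equalities $\dim\Gamma=\dim\Gamma_{k'}=\dim U_{k'}\le\dim T_{k'}\le\dim\overline{T}_{k'}=\dim\overline{T}$.

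There is, however, a genuine error in your treatment of (c). You assert that
$\dim_{\xi_i}X_{y_i}$ equals $\dim\overline{\{\xi_i\}}^{X_{y_i}}$
on the grounds that $\xi_i$ is the generic point of $\overline{\{\xi_i\}}^{X_{y_i}}$. This is false in general: $\dim_{\xi_i}X_{y_i}$ is the maximum of the dimensions of the irreducible components of $X_{y_i}$ passing through $\xi_i$, and $\overline{\{\xi_i\}}^{X_{y_i}}$ need not be such a component. For instance, with $Y=\Spec\mathbb{Z}$, $X=\mathbb{A}^2_{\mathbb{Z}}$, $T$ the line $\mathbb{A}^1_{\mathbb{Z}}\times\{0\}$, and $\xi$ its generic point, one has $\dim_\xi X_\eta=2$ while $\dim\overline{\{\xi\}}^{X_\eta}=1$. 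Consequently $\dim_{\xi_i}X_{y_i}$ may exceed $d$, and your chain of inequalities does not conclude. What EGA IV.5.6.5 actually gives, applied to the dominant finite-type morphism $\overline{L_i}\to\overline{\{y_i\}}$ of integral noetherian schemes, is $\dim\overline{L_i}\le\dim\overline{\{y_i\}}+e$ with $e:=\dim(\overline{L_i})_{y_i}$, the dimension of the generic fiber \emph{of this morphism}, not of $X\to Y$. Since $\overline{L_i}$ is integral with generic point $\xi_i$, the fiber $(\overline{L_i})_{y_i}$ is irreducible with generic point $\xi_i$, i.e., it coincides with $\overline{\{\xi_i\}}^{X_{y_i}}$, and your observation that $L_{i,y_i}\cap\overline{\{\xi_i\}}^{X_{y_i}}$ is a dense open of $\overline{\{\xi_i\}}^{X_{y_i}}$ contained in $T_{y_i}$ then gives $e\le d$ as required. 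After this correction the argument is sound and parallels the paper's proof of (c), which simply reduces to the case of $T$ irreducible and dense in $X$ integral, and applies IV.5.6.5 to $X\to\overline{\pi(X)}$ directly.
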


\proof (a)-(b) Let $\Gamma$ be an irreducible component of $\overline{T}$ of
dimension $\dim \overline{T}$. As $T$ is dense in 
$\overline{T}$, $T\cap \Gamma$ is dense in $\Gamma$. As $T\cap \Gamma$
is
constructible and dense in $\Gamma$, it contains a dense open subset
$U$ of $\Gamma$. Therefore, because $\Gamma$ is integral of finite
type over $k$,  
$\dim \Gamma=\dim U \le \dim T \le \dim \overline{T}=\dim \Gamma$ 
and $\dim T=\dim \overline{T}$. This proves (a). 

We also have 
$$\dim\Gamma=\dim\Gamma_{k'}=\dim U_{k'}\le \dim T_{k'}\le \dim
\overline{T}_{k'}=\dim \overline{T}=\dim\Gamma.$$ 
This proves (b). 

(c) Let $\{\Gamma_i\}_i$ be the irreducible components of $T$. They are
closed in $T$, thus constructible in $X$. As $\dim T=\max_i \{ \dim \Gamma_i\}$ and
the fibers of $\Gamma_i\to Y$ all have dimension bounded by $d$, it is enough to
prove the statement when $T$ itself is irreducible. Replacing $X$ with the
Zariski closure of $T$ in $X$ with reduced scheme structure, we can suppose $X$ is integral
and $T$ is dense in $X$. Let $\xi$ be the generic point of $X$ and let 
$\eta=\pi(\xi)$. As $T$ is constructible and dense in $X$, it contains
a dense open subset $U$ of $X$. Then $U_\eta$ is dense in $X_\eta$. Hence
$\dim X_\eta=\dim U_\eta\le \dim T_\eta\le d$. Therefore
$$\dim T\le \dim X\le \dim \overline{\pi(X)}+d\le \dim Y + d,$$ 
where the middle inequality is given by \cite{EGA}, IV.5.6.5.

(d) The subspace $T$ of $X$ is noetherian and, hence, it has finitely many irreducible 
components (\cite{Bou}, II, \S 
4.2, Prop.\ 8 (i), and Prop.\ 10).
Let $\Gamma$ be an irreducible component of $T$. Let $\overline{\Gamma}$ be its 
closure in $X$. Since  $\overline{\Gamma}$ is also irreducible, it has a generic point $\xi\in X$. 
We claim that $\xi \in \Gamma$, so that $ \xi$ is also the generic point of $\Gamma$.
Indeed, suppose that $\Gamma$ is contained in a constructible 
$W:=\cup_{i=1}^m U_i \cap F_i$, with $U_i$ open and $F_i$ closed
in $X$, and such that $(U_i \cap F_i) \cap \Gamma \neq \emptyset$ for all $i=1,\dots, m$. Then there exists $j$ such that $\overline{\Gamma} \subset F_j$. Since $U_j$ contains an element of $\Gamma$ by hypothesis, we find that it must also contain $\xi$, 
so that $\xi \in W$. The subset  $\Gamma$  is pro-constructible in $X$ since it is closed in the pro-constructible $T$.
Hence, by definition, $\Gamma $ is the intersection of constructible subsets, which all contain $\xi$. Hence, $\xi \in \Gamma$.
\qed

\medskip 

\begin{emp} \label{zero.locus} Let $X$ be a scheme. 
Let $\F$ be a  quasi-coherent $\cO_X$-module, such as a finitely presented $\cO_X$-module 
(\cite{EGA}, Chap.\ 0, (5.2.5)). 
Fix a section $f\in H^0(X, \F)$. For $x \in X$, 
denote by $f(x) $ the canonical image of $f$ in the fiber 
$\F(x) := \F_x \otimes_{\cO_{X,x}} k(x)$. 
We say that {\it $f$ vanishes at $x$ if $f(x)=0$} (in $\F(x)$). 
Define 
$$Z(\F, f):=\{ x\in X \mid f(x)=0 \}$$ 
to be the \emph{zero-locus of $f$}. 

Let $q: X'\to X$ be any morphism of schemes. Let 
$\F':=q^*\F$, and let $f'\in H^0(X', \F')$ be the canonical image of $f$. Then
$$Z(\F',f')= q^{-1}(Z(\F,f)).$$ Indeed, 
for any   $x'\in q^{-1}(x)$, the natural morphism 
$\F(x) \to \F'(x')=\F(x)\otimes_{k(x)} k(x')$ is injective.

When $\F$ is invertible or, more generally, locally free, 
then $Z(\F,f)$ is closed in $X$. 
As our next lemma shows,  $Z(\F,f)$ in general is locally constructible. When $X$ is noetherian, this is proved for instance in \cite{Pearlstein-Schnell}, Proposition 5.3.
 We give here a different proof. 
\end{emp}

\begin{lemma}\label{zero-locus-f} Let $X$ be a scheme and 
let $\F$ be a  finitely presented $\cO_X$-module. 
Then the set $Z(\F, f)$ is locally constructible in $X$. 
\end{lemma}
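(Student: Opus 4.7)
The question is local on $X$, so I reduce to the case $X=\Spec A$ affine with $\F=\widetilde M$ for a finitely presented $A$-module $M$ and $f \in M$; this uses the standard fact that a finitely presented quasi-coherent sheaf restricts to a finitely presented module on every affine open. The plan is to exhibit $Z(\F,f)\cap\Spec A$ as a finite union of locally closed subsets cut out by the vanishing and non-vanishing of finitely many elements of $A$. Since every basic open $D(g)$ of $\Spec A$ is retro-compact, any such description automatically yields a constructible subset.

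The key observation is that $f(x)=0$ at $x=\p$ can be detected by comparing the fiber dimensions of $M$ and of the quotient $M':=M/Af$. Since $M$ is finitely presented and $Af$ is finitely generated, $M'$ is itself finitely presented. Tensoring the right-exact sequence $A \xrightarrow{\,\cdot f\,} M \to M' \to 0$ with $k(\p)$ shows that $f(x)=0$ if and only if the surjection $M\otimes k(\p)\to M'\otimes k(\p)$ is an isomorphism, which in turn holds if and only if $\dim_{k(\p)} M\otimes k(\p)=\dim_{k(\p)} M'\otimes k(\p)$.

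To convert this dimension equality into constructible data, I invoke the standard theory of Fitting ideals. For a finitely presented module $N$ with presentation $A^p\xrightarrow{B} A^q\to N\to 0$, the $k$-th Fitting ideal $\operatorname{Fitt}_k(N)$ is generated by the $(q-k)$-minors of $B$; it is finitely generated, independent of the presentation, and satisfies $\dim_{k(\p)} N\otimes k(\p)\le k$ if and only if $\operatorname{Fitt}_k(N)\not\subseteq \p$. Consequently, $\{\p:\dim_{k(\p)} N\otimes k(\p)=k\}=D(\operatorname{Fitt}_k N)\cap V(\operatorname{Fitt}_{k-1} N)$ is constructible in $\Spec A$, since each $V$ is cut out by finitely many equations and each $D$ is a finite union of retro-compact basic opens. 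Applying this to both $M$ and $M'$ yields
\[
Z(\F,f)\cap \Spec A \;=\; \bigcup_{k=0}^{q}\bigl(D(\operatorname{Fitt}_k M)\cap V(\operatorname{Fitt}_{k-1}M)\bigr)\cap\bigl(D(\operatorname{Fitt}_k M')\cap V(\operatorname{Fitt}_{k-1}M')\bigr),
\]
a finite union (since $\operatorname{Fitt}_k M=A$ once $k$ exceeds the number of generators of $M$) of constructible subsets, hence constructible.

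I do not anticipate a serious obstacle: the argument depends only on elementary commutative algebra (finitely presented modules pass to affine opens; the quotient of a finitely presented module by a finitely generated submodule is finitely presented; Fitting ideals compute fiber dimensions). An essentially equivalent route would be to lift $f$ to $\tilde f\in A^q$ and rephrase $f(x)=0$ as the rank condition $\operatorname{rank}_{k(\p)}[B\mid\tilde f]=\operatorname{rank}_{k(\p)} B$, which is again cut out by vanishing and non-vanishing of minors; this bypasses $M'$ and Fitting ideals but gives the same argument.
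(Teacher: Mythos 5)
Your proof is correct, and it takes a genuinely different route from the paper's. The paper reduces to the affine case and then invokes the flattening stratification of \cite{EGA}, IV.8.9.5: on each stratum $X_i$ the restriction $\F_i$ is flat and finitely presented, hence locally free, so the zero locus $Z(\F_i,f_i)$ is closed in the retro-compact locally closed $X_i$; the full zero locus is the finite union of these, hence constructible. You instead work directly with a presentation of $M$ and characterize $f(x)=0$ by the fiber-dimension equality $\dim_{k(\p)} M\otimes k(\p)=\dim_{k(\p)} (M/Af)\otimes k(\p)$, which you detect via Fitting ideals. Both are sound; the trade-off is that the paper's argument is shorter once one grants the (nontrivial) flattening stratification as a black box, whereas yours is essentially self-contained, relying only on elementary facts about finitely presented modules and minors of a presentation matrix, and in particular it makes the constructible decomposition completely explicit. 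A small bookkeeping point in your argument is that the finite union requires the convention $\operatorname{Fitt}_{-1}=(0)$ for the $k=0$ term and the observation that $\operatorname{Fitt}_k=A$ once $k$ reaches the number of generators of $M$ (and of $M'$), both of which you implicitly use; and of course $\operatorname{Fitt}_{k-1}$ must be taken to be finitely generated so that $D(\operatorname{Fitt}_{k-1})$ is retro-compact, which holds since the ideal is generated by finitely many minors. Your closing remark about replacing $M'$ by the augmented matrix $[B\mid\tilde f]$ and comparing ranks is also correct and arguably the cleanest version of the same computation.
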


\begin{proof} 
Since the statement is local on $X$, it suffices to prove the lemma
when $X=\Spec A$ is affine.
We can use the stratification $X=\cup_{1\le i\le n} X_i$ of $X$
described in \cite{EGA}, IV.8.9.5: 
each $X_i$ is a quasi-compact subscheme of $X$, and 
$\F_i:=\F\otimes_{\cO_X} \cO_{X_i}$ is flat on 
$X_i$. Let $f_i$ be the 
canonical image of $f$ in $H^0(X_i, \F_i)$. Then 
$$Z(\F, f)=\cup_{1\le i\le n} Z(\F_i, f_i).$$ 
Since $\F_i$ is finitely presented and flat, it is projective
(\cite{Laz}, 1.4) and, hence, locally free. 
So $Z(\F_i, f_i)$ is closed in $X_i$. 
\end{proof}

\begin{emp} \label{prelimSigma}
Let $\pi: X\to Y$ be a finitely presented\footnote{A morphism $\pi:X \to Y$ is {\it finitely presented} (or  {\it of finite presentation}) if it is locally of finite presentation, quasi-compact, and quasi-separated (\cite{EGA}, IV.1.6.1).} 
morphism of schemes.
Let $T$ be a locally constructible subset of $X$. Set 
$$T_\pi:=\{ y \in Y \mid T_y \mbox{ contains a generic 
 point of } 
X_y \},$$
where a \emph{generic point} of a scheme $X$ is the generic point of an irreducible component of
$X$. Such a point is called a maximal point in \cite{EGA}, just before IV.1.1.5. 
Let $\F$ be a finitely presented $\cO_X$-module and fix a global section $f \in \F(X)$. 
Set
$$T_{\F, f, \pi}:=\{ y\in Y \mid f \ \text{vanishes at a generic point of}
\ X_y\}.$$
For future use, let us note the following equivalent expression for $T_{\F, f, \pi}$.
For any $y\in Y$, let 
$$\F_y:=\F\otimes_{\cO_X} \cO_{X_y}=\F\otimes_{\cO_Y} k(y),$$ 
and let $f_y$ be the canonical image of $f$ in 
$H^0(X_y, \F_y)$.  Let $x \in X_y$. Since the canonical map $\F(x)\to
\F_y(x)$ of fibers at $x$ is an isomorphism, 
$f_y$ vanishes at $x$ if and only if $f$ vanishes at $x$.
Thus
$$T_{\F, f, \pi}=\{ y\in Y \mid f_y \ \text{vanishes at a generic point of}
\ X_y\}.$$
When the morphism $\pi$ is understood, we may denote $T_{\F, f, \pi}$ simply by $T_{\F, f}$. 
Note that when $\pi= {\rm id}: X \to X$, the set $T_{\F, f, {\rm id}}$ is equal to the zero locus
$Z(\F,f)$ of $f$ introduced in \ref{zero.locus}.
\end{emp}

\begin{proposition} \label{constructible-univ} 
Let $\pi:X\to Y$ be a finitely presented morphism of 
schemes. Let $T$ be a locally constructible subset of $X$. 
Let $\F$ be a  finitely presented 
$\cO_X$-module, and fix a section $f\in H^0(X, \F)$. Then 
the subsets $T_{\pi}$ and  $T_{\F,f,\pi}$ are both locally constructible in
  $Y$. 
\end{proposition}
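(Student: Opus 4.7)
The plan is to reduce both claims to one statement about $T_\pi$ and then prove that claim by noetherian approximation followed by noetherian induction.

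First, I would reduce the claim for $T_{\F,f,\pi}$ to the claim for $T_\pi$. By Lemma~\ref{zero-locus-f}, the zero-locus $Z:=Z(\F,f)$ is locally constructible in $X$, and by the equivalent description of $T_{\F,f,\pi}$ noted in \ref{prelimSigma}, we have $T_{\F,f,\pi}=Z_\pi$ (the construction $T\mapsto T_\pi$ applied to $Z$). So it suffices to prove that $T_\pi$ is locally constructible for every locally constructible $T\subseteq X$. Since the conclusion is local on $Y$, I would assume $Y=\Spec A$ is affine.

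Next, using the limit techniques of \cite{EGA}, IV.8, write $A$ as a filtered colimit of its finitely generated $\mathbb Z$-subalgebras $A_\lambda$ and descend the finitely presented data $(\pi,T)$ to some $(\pi_\lambda:X_\lambda\to Y_\lambda,T_\lambda)$ with $T_\lambda$ constructible. The key compatibility $T_\pi = q^{-1}((T_\lambda)_{\pi_\lambda})$, where $q:Y\to Y_\lambda$ is the canonical map, is verified as follows: for $y\in Y$ with image $y_\lambda$, the base change $X_y\to(X_\lambda)_{y_\lambda}$ is faithfully flat (extension of residue fields), so by going-down every generic point of $X_y$ maps to a generic point of $(X_\lambda)_{y_\lambda}$, and every generic point of $(X_\lambda)_{y_\lambda}$ has a generic point of $X_y$ lying over it; combined with the fact that $T_y$ is the preimage of $(T_\lambda)_{y_\lambda}$, this gives the required equivalence $y\in T_\pi\iff y_\lambda\in(T_\lambda)_{\pi_\lambda}$. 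Since preimages of constructible subsets are constructible, we reduce to $Y$ noetherian and affine.

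For noetherian $Y$, I would argue by noetherian induction: it suffices to find a dense open $U\subseteq Y$ on which $T_\pi$ is constantly either $U$ or empty, since the residual proper closed subset is handled by the induction hypothesis. Replacing $Y$ with an irreducible component (via base change), we may assume $Y$ is integral with generic point $\eta$, and the generic fiber $X_\eta$ has finitely many generic points $\xi_1,\ldots,\xi_r$. Let $Z_i:=\overline{\{\xi_i\}}^{X}$ with reduced structure; these are exactly the irreducible components of $X$ dominating $Y$. Applying generic flatness to each $Z_i\to Y$ and excising the components of $X$ not dominating $Y$ (which lie over a proper closed subset of $Y$), we obtain a dense open $U\subseteq Y$ over which every $Z_i\to Y$ is flat. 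By flatness, for $y\in U$ the generic points of the fibers $(Z_i)_y$ are specializations of $\xi_i$, and every generic point of $X_y$ arises this way. Since the constructible subset $T\cap Z_i$ of the integral scheme $Z_i$ either contains $\xi_i$ and therefore a dense open, or is contained in a proper closed subset, shrinking $U$ further ensures that on each $(Z_i)_y$ the condition of lying in $T$ takes, on generic points, the constant value corresponding to whether $\xi_i\in T$. Hence on $U$, the condition $y\in T_\pi$ is constant, determined by whether some $\xi_i$ lies in $T$, completing the induction.

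The main obstacle is the spreading-out step in the last paragraph: one must verify, using \cite{EGA}, IV's theory of fibers of flat finite-type morphisms, that on a suitably small dense open $U$ the generic points of every fiber $X_y$ are indeed exhausted by the specializations of $\xi_1,\ldots,\xi_r$ along the flat $Z_i\to Y$, with no extraneous generic points arising from degenerations or failures of equidimensionality.
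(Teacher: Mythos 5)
Your reduction of $T_{\F,f,\pi}$ to $T_\pi$ via the zero locus $Z(\F,f)$ and Lemma~\ref{zero-locus-f} is exactly the paper's first step. For the constructibility of $T_\pi$, however, the paper takes a much shorter route that avoids noetherian approximation and noetherian induction entirely. After reducing to $Y$ affine (so that $T$ is constructible) and then to the case $T = U \cap Z$ with $U$ open retro-compact and $Z$ closed with retro-compact complement, the paper establishes the characterization
\[
y\in T_\pi \Longleftrightarrow \text{there exists } x\in T_y \text{ with } \codim_x(Z_y, X_y)=0,
\]
observes that the locus $X_0 := \{x \in X : \codim_x(Z_{\pi(x)}, X_{\pi(x)}) = 0\}$ is locally constructible by \cite{EGA}, IV.9.9.1(ii), so that $T_\pi = \pi(T \cap X_0)$, and concludes by Chevalley's theorem. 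The constructibility you re-derive by induction is essentially a special case of that EGA reference.

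Your alternative route is sound in outline, but the spreading-out step you flag as an obstacle genuinely requires more than you have written. Two things are needed on the dense open $U$. First, each flat $Z_i\to U$ must have equidimensional fibers of dimension $\dim (Z_i)_\eta$; this follows from flatness, irreducibility of $Z_i$, and Chevalley's semicontinuity after further shrinking, and the analogous statement is also needed for the components of $Z_i\setminus V$ (equivalently $\overline{T\cap Z_i}$), so generic flatness must be applied to these as well, not just to the $Z_i$. Second, and more subtly, you must ensure that for $y\in U$ no irreducible component of $(Z_i)_y$ is contained in a component of another $(Z_j)_y$. Otherwise, a generic point of $(Z_i)_y$ that carries the correct $T$-membership may nevertheless fail to be a generic point of $X_y$, so the set of indices $i$ actually contributing generic points to $X_y$ could jump with $y$, and $T_\pi$ need not be constant on $U$ even though each $T\cap Z_i$ behaves constantly. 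To rule this out you need to apply generic flatness to the intersections $Z_i\cap Z_j$ and then use the equidimensionality of the $(Z_i)_y$ to exclude such containments by a dimension count. None of this is fatal, but it makes the approach substantially heavier than the paper's direct appeal to IV.9.9.1(ii).
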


\begin{proof} Let us start by showing that $T_\pi$ is locally constructible in $Y$.
By definition of locally constructible, the statement is local on $Y$, 
and it suffices to prove the statement when $Y$ is affine. Assume then from now on that
$Y$ is 
affine. Since $\pi $ is quasi-compact and quasi-separated and $Y$ is affine, $X$ is also quasi-compact and quasi-separated
(\cite{EGA}, IV.1.2.6). 
Hence, $T$ is constructible and we can write it as a 
finite union of locally closed 
subsets  $T_i:=U_i\cap (X\setminus V_i)$ with $U_i$ and $ V_i$ open and 
retro-compact. Then $T_y$ contains a generic point of $X_y$ if and only if
$(T_i)_y$ contains a generic point of $X_y$ for some $i$.
Therefore,  it suffices to 
prove the statement when $T=U\cap (X\setminus V)$ with $U $ and $ V$ open and
retro-compact. 

We therefore assume now that $T= U \cap Z$, with 
$U $ and $ X\setminus Z$ open and
retro-compact. 
Fix $y\in Y$. 
We claim that {\it $T_y$ contains a generic point of $X_y$ if and only if there 
exists $x \in T_y$ such that 
$\codim_x(Z_y, X_y)=0$}.

To justify this claim, let us recall the following.
Let $\Gamma_1, \dots, \Gamma_n$ be the irreducible components of
$Z_y$ passing through $x$ ($X_y$ is noetherian). Then 
$$\codim_x(Z_y, X_y)=\min_{1\le i\le n} \{ \codim(\Gamma_i, X_y)\}$$
(\cite{EGA}, 0.14.2.6(i)). So $\codim_x(Z_y, X_y)=0$ if and only
if $Z_y$ contains an irreducible component of $X_y$ passing
through $x$.  Now, if $T_y$ contains a generic point $\xi$ of 
$X_y$,  then $Z_y$ contains the 
irreducible component $\overline{\{ \xi\}}\ni \xi$ of $X_y$ 
and $\codim_{\xi}(Z_y, X_y)=0$. Conversely, if 
$\codim_x(Z_y, X_y)=0$ for some $x\in T_y$, then $Z_y$ contains
an irreducible component $\Gamma$ of $X_y$ passing through $x$. 
As $T_y$ is open in $Z_y$, 
$T_y\cap \Gamma$ is open in $\Gamma$ and non-empty, so 
$T_y$ contains the generic point of $\Gamma$. 

Since $Z$ is closed,  we can apply \cite{EGA}, IV.9.9.1(ii),
and find that the set 
$$X_0:= \{ x \in X \ | \ \codim_x(Z_{\pi(x)}, X_{\pi(x)})=0\}$$
is locally constructible in $X$. 
It is easy to check that 
$$T_\pi = \pi( T \cap X_0).$$ 
Since $T \cap X_0$ is locally constructible,
it follows then from Chevalley's theorem (\cite{EGA}, IV.1.8.4)
that  $T_\pi$ is locally constructible in $Y$.

Let us now show that $T_{\F, f,\pi}$ is locally constructible in $Y$.
Set $T$ to be the zero locus $Z(\F,f)$ of $f$ in $X$, which is locally constructible in $X$
by \ref{zero-locus-f}. Then  $T_{\F, f,\pi}$ is nothing but the associated 
subset $T_\pi$
which was shown to be locally constructible in $Y$ in the first part of the proposition.
\end{proof}

The formation of $T_{\F, f, \pi}$ is compatible with
base changes $Y'\to Y$, as our next lemma shows.

\begin{lemma} \label{(i)} Let $\pi:X\to Y$ be a finitely presented morphism of 
schemes. Let $q: Y' \to Y$ be any morphism of schemes. Let $X':=X\times_Y Y'$ and
$\pi':X'\to Y'$. Let $\F$ be a  finitely presented 
$\cO_X$-module, and fix a section $f\in H^0(X, \F)$.
Let $\F':=\F\otimes_{\cO_Y} \cO_{Y'}$  and let $f'$ be the image of $f$ in 
$H^0(X', \F')$. Then
$T_{\F', f',\pi'}=q^{-1}(T_{\F, f,\pi})$.
\end{lemma}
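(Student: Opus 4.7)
The plan is to verify the equality $T_{\F',f',\pi'}=q^{-1}(T_{\F,f,\pi})$ pointwise. Fix $y'\in Y'$ with image $y:=q(y')\in Y$; the task reduces to showing that $y'\in T_{\F',f',\pi'}$ if and only if $y\in T_{\F,f,\pi}$. Under the canonical identification $X'_{y'}\simeq X_y\times_{\Spec k(y)}\Spec k(y')$, the restriction $\F'_{y'}$ is the pullback of $\F_y$ along the faithfully flat projection $p\colon X'_{y'}\to X_y$, and $f'_{y'}$ is the pullback of $f_y$ under $p$. The statement therefore reduces to the following purely local assertion: given a scheme $V$ of finite type over a field $k$, a finitely presented $\cO_V$-module $\G$ with a global section $g$, and a field extension $k\hookrightarrow k'$ with pulled-back data $(V',\G',g')$, the section $g$ vanishes at some generic point of $V$ if and only if $g'$ vanishes at some generic point of $V'$.

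To settle this local statement I would use two standard ingredients. First, the map $p\colon V'\to V$ is faithfully flat, so by going-down the image of any generic point of $V'$ is a generic point of $V$; conversely, every generic point $\xi$ of $V$ admits a preimage in $V'$ (surjectivity), and any such preimage must be a generic point of $V'$ (any irreducible component of $V'$ meeting $p^{-1}(\xi)$ is flat over, hence dominates, the corresponding component of $V$). Second, for any $x\in V$ with preimage $x'\in V'$, the canonical map of fibers $\G(x)\to\G'(x')$ is the base-change $\G(x)\otimes_{k(x)}k(x')$, which is injective; this is the observation already recorded in \ref{zero.locus}, and it says precisely that $g$ vanishes at $x$ if and only if $g'$ vanishes at $x'$.

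Putting these two ingredients together is straightforward: if $f_y$ vanishes at a generic point $\xi$ of $X_y$, pick any preimage $\xi'\in X'_{y'}$ of $\xi$; by the first ingredient $\xi'$ is generic in $X'_{y'}$, and by the second $f'_{y'}(\xi')=0$, whence $y'\in T_{\F',f',\pi'}$. Conversely, if $f'_{y'}$ vanishes at a generic point $\xi'$ of $X'_{y'}$, set $\xi:=p(\xi')$; by the first ingredient $\xi$ is generic in $X_y$, and by the second $f_y(\xi)=0$, so $y\in T_{\F,f,\pi}$. There is no real obstacle here: the only point of substance is the compatibility of generic points with faithfully flat base change by a field extension, combined with the injectivity of $\G(x)\to\G(x)\otimes_{k(x)}k(x')$.
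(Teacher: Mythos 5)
Your argument follows the same route as the paper: reduce to the fiber over $y'$, identify $X'_{y'}$ with $(X_y)_{k(y')}$, relate generic points under this field-extension base change, and invoke the injectivity of $\F_y(\xi)\to\F'_{y'}(\xi')$ already recorded in \ref{zero.locus}. However, there is one incorrect intermediate claim: it is not true that \emph{every} preimage of a generic point of $V:=X_y$ is a generic point of $V':=X'_{y'}$. For example, take $k\hookrightarrow k'=k(t)$ and $V=\mathbb A^1_k=\Spec k[y]$, so $V'=\mathbb A^1_{k(t)}$; the closed point of $V'$ cut out by $(y-t)$ contracts to the zero ideal of $k[y]$ (since no nonzero $f\in k[y]$ vanishes at the transcendental $t$), i.e., it maps to the generic point of $V$, yet it is not the generic point of $V'$. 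What your parenthetical argument actually establishes, and all that is needed, is the weaker and correct statement that every generic point $\xi$ of $V$ is the image of \emph{some} generic point of $V'$: take the generic point of any irreducible component of $V'$ meeting $p^{-1}(\xi)$; by flatness that component dominates the component of $V$ through $\xi$, so its generic point maps to $\xi$. This is exactly what the paper asserts. Replacing ``pick any preimage $\xi'$'' by ``pick a generic preimage $\xi'$ (which exists)'' repairs the direction $y\in T_{\F,f,\pi}\Rightarrow y'\in T_{\F',f',\pi'}$; the other direction is fine as written.
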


\proof For any $y'\in Y'$, we have a natural $k(y')$-isomorphism 
$X'_{y'}\to (X_y)_{k(y')}$. 
Any generic point $\xi'$ of $X'_{y'}$ maps to a generic point 
$\xi$ of $X_y$, and any generic point of $X_y$ is the image of a generic
point of $X'_{y'}$.  Moreover, $f'(\xi')$ is identified with the image of $f(\xi)$ 
under the natural injection   
$\F_y(\xi) \to \F'_{y'}(\xi')=\F_y(\xi)\otimes k(\xi')$. 
\qed

\begin{emp} \label{sig}
Let $X\to Y$ be a finitely presented morphism of 
schemes, and  let $\F$ be a finitely presented 
$\cO_X$-module. Let $N\ge 1$ and let
$f_1, \dots, f_N\in H^0(X, \F)$. For each $y\in Y$, define
$$
\Sigma(y):=\left\{ (\alpha_1, \ldots, \alpha_N)\in k(y)^N \ \Big| \ 
\sum_i \alpha_i f_{i,y} \ \text{\rm vanishes at some generic point of} 
\ X_y\right\}. 
$$
When $X_y = \emptyset$, we set $\Sigma(y):= \emptyset$. 
The subset $\Sigma(y) $ depends on the data $X\to Y$, $\F$, and $\{f_1, \dots, f_N\}$. 
\end{emp}

\begin{example} \label{specialcase}
Consider  the special case in \ref{sig} where $Y=\Spec k= \{ y\}$, with $k$ a field.
For each generic point $\xi $ of $X=X_y$, consider the $k$-linear map
$$k^N \longrightarrow \F \otimes k(\xi), \quad (\alpha_1,\dots, \alpha_N) \longmapsto \sum \alpha_i f_i(\xi).$$
The kernel $K(\xi)$ of this map
is a linear subspace of $k^N$ and, hence, can be defined by a system of homogeneous polynomials of degree $1$.
The same equations define a closed subscheme $T({\xi})$ of $\mathbb A^N_y$.
Then the set $\Sigma(y)$ is the union of the sets $K(\xi)$, where the union is taken over all the  generic points of $X$,
and $\Sigma(y)$ is the subset of
$k(y)$-rational points of the closed scheme $T:= \cup_\xi T({\xi})$ of  $\mathbb A^N_Y$. 
This latter statement is generalized  to any base $Y$ in our next proposition.
\end{example}

\begin{proposition} \label{constructible-conditions}
Let $X\to Y$, $\F$, and 
$\{f_1, \dots, f_N\} \subset H^0(X, \F)$, be as in {\rm \ref{sig}}. Then there exists a 
locally constructible subset 
$T$
of $\mathbb A^N_Y$ such
that for all $y\in Y$,
the set of $k(y)$-rational points of $\mathbb A^N_{k(y)}$ 
contained in $T_y$ is equal to  $\Sigma(y)$. Moreover:
\begin{enumerate}[\rm (a)]
\item 
The set $T$ satisfies the 
following natural compatibility with respect to base change. 
Let $Y' \to Y$ be any morphism of schemes, and denote by 
$q: {\mathbb A}^N_{Y'} \to {\mathbb A}^N_Y$ the associated morphism. Let 
$X':=X\times_Y Y'\to Y'$ and let 
$\F':=\F\otimes_{\cO_Y} \cO_{Y'}$.  Let  $f_1', \dots, f_N'$ be the images of $f_1, \dots, f_N$ in $H^0(X', \F')$. 
Then the constructible set $T'$ associated with the data $X'\to Y'$, $\F'$, and $f'_1, \dots, f'_N$, is
equal to $q^{-1}(T)$. In particular, 
for all $y'\in Y'$,   
the set of $k(y')$-rational points of $\mathbb A^N_{k(y')}$ 
contained in 
$(q^{-1}(T))_{y'}$ 
is equal to the set $\Sigma(y')$ associated with $f_1',\dots, f'_N\in H^0(X', \F')$. 
\item We have $\dim T \leq \dim Y + \sup_{y \in Y} \dim T_y$ when 
$Y$ is noetherian. In general
for each $y \in Y$, $\dim T_y$ is the maximum
 of the dimension over $k(y)$ of
 the kernels of the $k(y)$-linear maps
$$k(y)^N \longrightarrow \F \otimes k(\xi), \quad  
(\alpha_1, \dots, \alpha_N)\longmapsto \sum_i \alpha_i f_i(\xi),$$
for each generic point $\xi$ of $X_y$. 
\end{enumerate}
\end{proposition}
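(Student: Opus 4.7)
The plan is to construct $T$ as the pro-constructible subset of $\mathbb{A}^N_Y$ associated, via Proposition \ref{constructible-univ}, to a single universal linear combination of the $f_i$. Concretely, I would set up the product $X' := X \times_Y \mathbb{A}^N_Y$ with its projections $p: X' \to X$ and $\pi: X' \to \mathbb{A}^N_Y$ (the latter is finitely presented by base change), pull $\F$ back to the finitely presented $\cO_{X'}$-module $\F' := p^*\F$, and form the universal section
\[
F \ := \ \sum_{i=1}^N T_i \cdot p^*f_i \ \in \ H^0(X', \F'),
\]
where $T_1,\dots,T_N$ are the coordinate functions on $\mathbb{A}^N_Y$ pulled back to $X'$. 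I then define $T := T_{\F', F, \pi}$; this is locally constructible in $\mathbb{A}^N_Y$ by Proposition \ref{constructible-univ}.

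Next I would identify the rational points. Given $y \in Y$ and $(\alpha_1,\dots,\alpha_N) \in k(y)^N$, the corresponding $k(y)$-rational point $z \in \mathbb{A}^N_Y$ satisfies $\pi^{-1}(z) \cong X_y$ canonically, and under this identification $F$ restricts to $\sum_i \alpha_i f_{i,y}$. Unwinding the definition of $T_{\F',F,\pi}$ from \ref{prelimSigma}, the point $z$ lies in $T$ iff $\sum_i \alpha_i f_{i,y}$ vanishes at some generic point of $X_y$, i.e.\ iff $(\alpha_1,\dots,\alpha_N) \in \Sigma(y)$, as required.

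For (a), I would apply Lemma \ref{(i)}: given $q: Y' \to Y$, the base change of $\pi$ is exactly the analogous projection $\pi': X \times_Y \mathbb{A}^N_{Y'} \to \mathbb{A}^N_{Y'}$, and the corresponding universal section $F'$ coincides with the pullback of $F$ under the natural map $X'_{Y'} \to X'$. Lemma \ref{(i)} then directly gives $T' = q^{-1}(T)$, and the rational-points statement for $Y'$ follows by combining this with Step 2 applied to $Y'$.

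For (b), the first inequality is a direct application of Lemma \ref{lem.construct}(c) to $T \subseteq \mathbb{A}^N_Y$, since when $Y$ is noetherian $T$ is constructible in the finite-type $Y$-scheme $\mathbb{A}^N_Y$. The fiber formula is the delicate part: using (a), I reduce to $Y = \Spec k(y)$, and then I would describe $T_y$ as a finite union, indexed by the finitely many generic points $\xi$ of $X_y$ (which are finite in number by Noetherianness of constructibles, cf.\ Lemma \ref{lem.construct}(d)), of closed subschemes of $\mathbb{A}^N_{k(y)}$ cut out by the $k(y)$-linear equations descending the vanishing condition $\sum_i \alpha_i f_i(\xi) = 0$ as in Example \ref{specialcase}; the dimension of each piece is exactly the dimension of the kernel of $\phi_\xi$. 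The main obstacle here is genuinely carrying out this reduction at the level of the constructible set $T_y$ (as opposed to just its rational points): one must show that the scheme-theoretic structure of the pieces of $T_y$ is indeed controlled by the kernels of the $\phi_\xi$, which requires carefully tracking the difference between vanishing of $F$ at a generic point of the fiber $\pi^{-1}(z)$ and vanishing at a generic point of $X_y$, and descending linear conditions from $k(\xi)$ to $k(y)$.
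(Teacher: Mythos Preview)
Your approach is essentially identical to the paper's proof: the same construction via Proposition~\ref{constructible-univ} applied to the universal section on $\mathbb{A}^N_X = X \times_Y \mathbb{A}^N_Y$, the same identification of rational points, part~(a) via Lemma~\ref{(i)}, and the first inequality in~(b) via Lemma~\ref{lem.construct}(c).

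The only divergence is that you flag the fiber formula in~(b) as ``the main obstacle,'' whereas the paper dispatches it in one line by reducing via~(a) to $Y=\Spec k$ and pointing to Example~\ref{specialcase}. Your concern is understandable but dissolves upon inspection: after this reduction, the constructible set $T$ built in the proof is \emph{literally equal as a set} (not merely on rational points) to the union $\bigcup_\xi T(\xi)$ of Example~\ref{specialcase}. Indeed, for $z\in\mathbb{A}^N_k$ and a generic point $\xi'$ of $X_{k(z)}$ lying over a generic point $\xi$ of $X$, the vanishing of $\sum_i u_i(z)f_i(\xi)$ in $\F(\xi)\otimes_{k(\xi)}k(\xi')$ is equivalent to the same vanishing already in $\F(\xi)\otimes_k k(z)$: writing the element in a $k(\xi)$-basis of $\F(\xi)$, its coordinates lie in $k(z)\subseteq k(\xi)\otimes_k k(z)$, and any nonzero element of $k(z)$ is a unit there, hence cannot lie in the minimal prime corresponding to $\xi'$. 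So the $k$-linear condition cutting out $T(\xi)$ already captures the full contribution of $\xi$ to $T$, and no further descent argument is needed.
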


\begin{proof} 
Let $\pi: \mathbb A^N_X\to \mathbb A^N_Y$ be the finitely presented morphism
induced by the given morphism $X \to Y$. Let $p: \mathbb A^N_X\to X$ be the natural projection,
and consider the 
finitely 
presented sheaf $p^*\F$ on $\mathbb A^N_X$ induced by $\F$.

Write  $\mathbb A^N_{\mathbb Z}= \Spec {\mathbb Z}[u_1, \dots, u_N]$, 
and identify  $H^0(\mathbb A^N_X, p^*\F)$ with
$H^0(X, \F)\otimes_\mathbb Z \mathbb Z[u_1, \dots, u_N].$
Using this identification, let $f \in H^0(\mathbb A^N_X, p^*\F)$
denote the section corresponding to $\sum_{1\le i\le N} f_i\otimes u_i$.
Apply now Proposition~\ref{constructible-univ} to the data
$\pi: \mathbb A^N_X\to \mathbb A^N_Y$, $p^*\F$, and $f$, to obtain 
the locally constructible subset $T:=T_{p^*\F, f}$ of $\mathbb A^N_Y$.

Fix $y\in Y$, and let $z$ be a $k(y)$-rational point in $ \mathbb A^N_Y$ above $y$. 
We may write $ z=(\alpha_1, \dots, \alpha_N)\in k(y)^N$.
The fiber of $\pi$ above $z$ is 
isomorphic to $X_y$, and the section   
$ f_z\in (p^*\F)_z$ is identified with 
$\sum_i \alpha_i f_i \in H^0(X_y, \F_y)$. 
Therefore, it follows from the definitions that $z\in T_y$ if and only if $z\in \Sigma(y)$, and the
first part of the proposition is proved.  

(a) The compatibility of 
$T$ with respect to a base change $Y'\to Y$ results from Lemma \ref{(i)}. 

(b) The inequality on the dimensions follows from \ref{lem.construct} (c).
By the compatibility described in (a), we are immediately reduced to the
case $Y=\Spec k$ for a field $k$, which is discussed in \ref{specialcase}.
\end{proof}

\end{section} 

\begin{section}{Sections in an affine space avoiding pro-constructible subsets} 
\label{Constructible subsets}

\medskip 
The following theorem is an essential part of our method for 
producing interesting closed  subschemes of a scheme $X$ when $X \to S$ is projective and
$S$ is affine. 

\begin{theorem} \label{globalize} 
Let $S=\Spec R$ be a noetherian affine scheme. Let 
$T:=T_1\cup \ldots \cup T_m$ 
be a finite union of pro-constructible subsets 
of $\mathbb A^N_S$. Suppose that:
\begin{enumerate}[\rm (1)]
\item There exists an open subset $V\subseteq S$ with 
zero-dimensional complement such that 
for all $i\le m$, $\dim (T_i\cap \mathbb A^N_{V})<N$ and 
 $(T_i)_s$ is constructible in $\mathbb A^N_{k(s)}$ for all $s\in V$. 
\item For all $s\in S$, 
there exists a $k(s)$-rational point in $\A^N_{k(s)}$ which does not 
belong to $T_s$. 
\end{enumerate}
\smallskip
Then there exists a section $\sigma$ of $\pi: \A^N_S\to S$
such that $\sigma(S) \cap T = \emptyset$. 
\end{theorem}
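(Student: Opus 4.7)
The plan is induction on $N$. Setting up: $Z := S \setminus V$ is a closed zero-dimensional subset of the noetherian scheme $S$, hence a finite set of closed points $\{s_1,\dots,s_r\}$ with maximal ideals $\mathfrak{m}_j$, and hypothesis (2) provides at each $s_j$ a rational point $\bar\alpha^{(j)} \in k(s_j)^N \setminus T_{s_j}$. The base case $N = 0$ is trivial, since then $\mathbb{A}^0_S = S$ and (2) forces $T = \emptyset$.

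For the $N = 1$ case, Lemma \ref{lem.construct}(d) applied to $T \cap \mathbb{A}^1_V$ (pro-constructible in the noetherian scheme $\mathbb{A}^1_V$, of dimension $< 1$) gives $T \cap \mathbb{A}^1_V = \{P_1,\dots,P_k\}$, a finite set of points. Avoiding $P_\ell$ via a section $\sigma_a$ is a constraint only when $P_\ell$ is $k(v_\ell)$-rational, where $v_\ell := \pi(P_\ell)$: we then require $a \not\equiv \alpha_\ell \pmod{\mathfrak{p}_{v_\ell}}$ for the residue $\alpha_\ell \in k(v_\ell)$; otherwise the condition is vacuous. Together with the residue conditions at each $s_j \in Z$ provided by hypothesis (2), we obtain a finite list of prime-residue avoidance conditions on $a \in R$. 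The Chinese Remainder Theorem at the pairwise comaximal $\mathfrak{m}_j$'s fixes $a$ modulo $I := \mathfrak{m}_1 \cdots \mathfrak{m}_r$; since each $\mathfrak{p}_{v_\ell}$ differs from every $\mathfrak{m}_j$ (the $v_\ell \in V$ are disjoint from $Z$), we have $I \not\subseteq \mathfrak{p}_{v_\ell}$, so $I$ surjects onto $R/\mathfrak{p}_{v_\ell}$, leaving room to adjust $a$ within its class modulo $I$ so as to meet the remaining avoidance conditions by a standard prime-avoidance argument.

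For the inductive step $N \ge 2$, write $\mathbb{A}^N_S = \mathbb{A}^{N-1}_S \times_S \mathbb{A}^1_S$ and seek $a_N \in R$ so that the pullback $T'$ of $T$ along the closed immersion $\iota_{a_N}: \mathbb{A}^{N-1}_S \hookrightarrow \mathbb{A}^N_S$, $(x_1,\dots,x_{N-1}) \mapsto (x_1,\dots,x_{N-1},a_N)$, still satisfies hypotheses (1) and (2) with $N$ replaced by $N - 1$; the induction hypothesis applied to $T'$ then produces $(a_1,\dots,a_{N-1})$. The requirements on $a_N$ are: (A) $a_N \equiv \bar\alpha_N^{(j)} \pmod{\mathfrak{m}_j}$ for each $j$, so that $(\bar\alpha_1^{(j)},\dots,\bar\alpha_{N-1}^{(j)})$ witnesses (2) for $T'$ at $s_j$; (B) $\dim(T'_i \cap \mathbb{A}^{N-1}_V) < N - 1$ for each $i$, obstructed only by finitely many ``vertical constants'' arising from irreducible components of $T_i \cap \mathbb{A}^N_V$ which project non-dominantly onto $\mathbb{A}^1_V$ (identified using Lemma \ref{lem.construct}(c)--(d)); and (C) for every $s \in V$, some $k(s)$-rational point of $\mathbb{A}^{N-1}_{k(s)}$ avoids $T'_s$. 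Via Proposition \ref{constructible-conditions} and its base-change compatibility (Lemma \ref{(i)}), the locus of ``bad $a_N$'' violating (B) or (C) assembles into a pro-constructible subset $\widetilde T \subseteq \mathbb{A}^1_S$. The claim is that $\widetilde T$ itself satisfies hypotheses (1) and (2) of the theorem for $N = 1$: condition (1) from the dimensional bounds on $T_i$ together with fiberwise constructibility from Proposition \ref{constructible-conditions}; condition (2) at $s \in V$ from hypothesis (2) for $T$ at $s$ (the last coordinate $\bar\alpha_N^{(s)}$ yields a $k(s)$-rational value outside $\widetilde T_s$), and at $s_j \in Z$ from (A). The $N = 1$ case just established then produces the required $a_N$.

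The principal obstacle lies in the inductive step: verifying that the pro-constructible locus $\widetilde T \subseteq \mathbb{A}^1_S$ satisfies hypothesis (2) of the theorem at every $s \in V$, i.e., that some $k(s)$-rational choice of $a_N \pmod{s}$ keeps conditions (B) and (C) alive at $s$. This requires encoding the fiberwise ``bad'' condition via the linear-algebra construction of Proposition \ref{constructible-conditions}, exploiting the base-change compatibility of Lemma \ref{(i)}, and extracting the needed rational point from hypothesis (2) for the original $T$ at $s$ combined with the fiber-dimension bound from (1).
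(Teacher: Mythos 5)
Your overall strategy—induction on $N$, slicing by a hyperplane in the last coordinate, and patching the prescribed residues at the bad set $Z$ via CRT—is the same as the paper's (which slices by $V(t_1+a_1)$). However, the execution has gaps at exactly the places where the paper works hardest, and you acknowledge the central one without resolving it.

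\textbf{The $N=1$ step.} You assert that $I\not\subseteq\mathfrak p_{v_\ell}$ implies ``$I$ surjects onto $R/\mathfrak p_{v_\ell}$.'' This is false (take $R=\mathbb Z[x]$, $I=(2)$, $\mathfrak p=(x)$: then $I\not\subseteq\mathfrak p$ but the image of $I$ in $R/\mathfrak p\cong\mathbb Z$ is $2\mathbb Z$). The actual statement needed is $I\not\subseteq\bigcup_\ell(c_\ell+\mathfrak q_\ell)$, which is what Lemma~\ref{trans-pal-2} provides—but only under the hypothesis that each $R/\mathfrak q_\ell$ is \emph{infinite}. When some $\mathfrak q_\ell$ has finite residue field, the coset-avoidance argument can fail as stated. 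The paper handles this by enlarging $Z$ to absorb the finite-residue-field primes among the $\mathfrak q_\ell$; this is legitimate because, by Lemma~\ref{finiteresidue}, the points $s$ with $|k(s)|\le c$ form a finite set of \emph{closed} points, so $Z$ stays finite and $S\setminus Z$ stays open with zero-dimensional complement. Your write-up never engages with the finite-residue-field issue, so the ``standard prime-avoidance argument'' is doing unjustified work.

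\textbf{The inductive step.} You propose to assemble the bad $a_N$'s into a pro-constructible $\widetilde T\subseteq\mathbb A^1_S$ and feed it to the $N=1$ case. Two problems. First, it is not clear that Proposition~\ref{constructible-conditions} produces such a $\widetilde T$; that proposition encodes a specific vanishing-at-generic-points condition, not the condition ``$T'_s$ contains all $k(s)$-rational points of $\mathbb A^{N-1}_{k(s)}$.'' Second—and this you do acknowledge at the end—verifying hypothesis~(2) for $\widetilde T$ is precisely where the argument must produce a $k(s)$-rational value of $a_N$ that simultaneously keeps (B) and (C) alive at $s$. Hypothesis~(2) for $T$ at $s$ hands you \emph{one} good vector $\bar\alpha^{(s)}$; its last coordinate avoids $\widetilde T^{(C)}_s$, but nothing forces it to avoid $\widetilde T^{(B)}_s$ (the finitely many ``vertical constants''). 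You need enough slack in each fiber to dodge both sets at once, and a single witness is not enough. The paper supplies the needed slack via Claim~(a): there is a \emph{uniform} bound $\delta$ such that each $T_s$ ($s\in V$) is contained in a degree-$\le\delta$ hypersurface. Intersecting with the degree-$1$ hyperplane $H_s$ then leaves $T_s\cap H_s$ inside a degree-$\le\delta$ hypersurface of $H_s\cong\mathbb A^{N-1}_{k(s)}$, which by a Schwartz--Zippel-type count misses a $k(s)$-rational point as soon as $|k(s)|>\delta$; the finitely many $s$ with $|k(s)|\le\delta$ go into $Z$, where hypothesis~(2) is applied directly. Without an analogue of Claim~(a), your step from $N$ to $N-1$ cannot guarantee that the sliced $T$ still satisfies hypothesis~(2) on all of $V$, and the induction stalls. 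So the proposal, as written, identifies the correct obstacle but does not supply the mechanism—the uniform degree bound—that the paper uses to overcome it.
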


\proof 
We proceed by induction on $N$, using  Claims (a) and (b) below.  

\medskip
\noindent {\bf Claim (a).} {\it There exists $\delta\ge 1$ such that for all $s\in V$, 
$T_s$ is contained in a hyper\-sur\-face  in $\mathbb A^N_{k(s)}$ of 
degree at most  $ \delta$}.

\proof 
It is enough to prove the claim for each $T_i$. So to lighten the notation
we set $T:=T_i$ in this proof. 
Thus, by hypothesis, $\dim T\cap \mathbb A^N_V<N$. 
We start by proving that for each $s \in V$, there exist a positive
integer $\delta_s$ and an ind-constructible subset (see
\ref{pro-constr}) $W_s$ of $V$ containing
$s$,   such that for each $s' \in W_s$, $T_{s'} $ is contained in a
hypersurface of degree $\delta_s$ in ${\mathbb A}^N_{k(s')}$. Indeed, let $s \in V$.  As 
$\dim T_s\le \dim T\cap \mathbb A^N_V<N$, and $T_s$ is constructible,  
$T_s$ is not dense in ${\mathbb A}^N_{k(s)}$ (Lemma \ref{lem.construct}(a)). 
Thus, there exists some polynomial $f_s$ of degree $\delta_s>0$ whose
zero locus contains $T_s$. Hence, for some affine open neighborhood 
$V_s$ of $s$, we can find a polynomial 
$f \in \cO_V(V_s)[t_1,\dots, t_N]$ of degree $\delta_s$, lifting 
$f_s$ and defining a closed subscheme $V(f)$ of ${\mathbb A}^N_{V_s}$. 

Let $W_1:= \pi({\mathbb A}^N_{V_s} \setminus V(f))$, which is constructible in $V$
by Chevalley's Theorem.  Let $W_2$ be the 
complement in $V$ of $ \pi(T \cap ({\mathbb A}^N_{V_s} \setminus V(f)))$, which is 
ind-constructible in $V$ since $\pi(T \cap ({\mathbb A}^N_{V_s} \setminus V(f)))$ is pro-constructible in $V$ (\cite{EGA}, IV.1.9.5 (vii)).
Hence, both $W_1$ and $W_2$ are ind-constructible and contain $s$. The intersection $W_s:= W_1 \cap W_2$
is the desired ind-constructible subset containing $s$.
Since $V$ is quasi-compact because it is noetherian, and since each $W_s$ is ind-constructible, it follows from 
\cite{EGA}, IV.1.9.9,  
that there exist finitely many points $s_1, \dots, s_n $ of $V$ such that
$V = W_{s_1} \cup \ldots \cup W_{s_n}$. We can take 
$\delta:= \max_i\{ \delta_{s_i} \}$, and Claim (a) is proved.  \qed
\medskip 

A proof of the following lemma in the affine case is given 
in \cite{Samuel}, Proposition 13. We provide here an alternate proof.

\begin{lemma} \label{finiteresidue} Let S be any scheme. Let $c \in \mathbb N$.
Then the subset $\{ s\in S \mid \card(k(s))
\le c\}$ is closed in $S$ and has dimension at most $ 0$. When $S$ is noetherian,
this subset is then finite. 
\end{lemma}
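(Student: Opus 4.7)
The plan is to reduce immediately to the affine case $S = \Spec A$: closedness is local on $S$, and a noetherian scheme admits a finite affine cover, so all three assertions will propagate from the affine setting. In that setting, the main step is to describe $E_c := \{s \in S : \card(k(s)) \leq c\}$ explicitly as a finite union of closed subschemes cut out by the polynomials $X^q - X$ for prime powers $q \leq c$.

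Concretely, for each prime power $q \leq c$ define the ideal $I_q := (a^q - a : a \in A) \subset A$. The central claim is the identity $E_c = \bigcup_q V(I_q)$, where $q$ ranges over prime powers $\leq c$. The easy inclusion uses that every element of a finite field $\mathbb F_{q'}$ with $q' \leq c$ is a root of $X^{q'} - X$. For the reverse inclusion, if $\mathfrak p \supseteq I_q$ then every element of the integral domain $A/\mathfrak p$ is a root of $X^q - X$; in particular each nonzero $\bar a$ satisfies $\bar a^{q-1} = 1$, so $A/\mathfrak p$ is a field, and this field has at most $q$ elements because $X^q - X$ has at most $q$ roots.

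The three conclusions then drop out. Closedness of $E_c$ is immediate in the affine case as a finite union of closed sets and globalizes by locality. The identity also shows that each $s \in E_c$ corresponds to a maximal ideal of $A$, hence is a closed point of $S$, and that every $V(I_q)$ has Krull dimension zero. A short density argument then yields $\dim E_c \leq 0$: any irreducible closed $Z \subseteq E_c$ meeting an affine open $U \subseteq S$ has $Z \cap U$ open and nonempty in the irreducible $Z$, hence irreducible and closed in $E_c \cap U$, and forced by the finite union into some $V(I_q)$, so $Z \cap U$ is a singleton $\{s\}$; then $Z \subseteq \overline{\{s\}}^S = \{s\}$ because $s$ is closed in $S$. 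Finally, when $S$ is noetherian each $A/I_q$ is noetherian of Krull dimension zero, hence Artinian with finite spectrum, and finiteness of $E_c$ follows by taking a finite affine cover of $S$. The main delicate point is ensuring the identity $E_c = \bigcup_q V(I_q)$ holds as an equality rather than merely as an inclusion, which is exactly what the bound on the number of roots of $X^q - X$ in a field provides.
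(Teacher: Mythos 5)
Your proof is correct and takes a genuinely different route from the paper's, although both turn on the polynomial $X^q - X$. The paper first reduces, via a base change, to showing that for an $\mathbb F_q$-scheme $S$ the set $S(\mathbb F_q)$ is closed of dimension $\le 0$: it equips the Zariski closure $Z$ of $S(\mathbb F_q)$ with the reduced structure, uses density of $U(\mathbb F_q)$ in each reduced affine open $U \subseteq Z$ to force $f^q - f = 0$ in $\mathcal O_Z(U)$, and concludes that every irreducible component of $U$ is a rational point. You instead exhibit the locus directly over $\Spec A$ as the finite union $\bigcup_q V(I_q)$ with $I_q = (a^q - a : a \in A)$ and $q$ a prime power $\le c$, proving equality from the observation that a prime containing $I_q$ has residue ring equal to a field with at most $q$ elements. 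Your route avoids the base change to $S_{\mathbb F_q}$ and the density-on-a-reduced-closure argument, produces an explicit scheme-theoretic witness for the closed set, and makes the noetherian finiteness immediate, since $A/I_q$ is zero-dimensional noetherian and therefore Artinian with finite spectrum; the paper's version stays closer to the familiar slogan that rational points over a finite field form a closed subset. One small step deserves a word in your density argument: you conclude that the singleton $\{s\}$ is closed in all of $S$ from $s$ being a maximal ideal of $A$. A point closed in one affine chart need not be closed in $S$ in general; here, however, $s$ has finite residue field, so $s$ is a maximal ideal of \emph{every} affine open that contains it, and since every specialization of $s$ lies in every such affine open, $\overline{\{s\}} = \{s\}$ as claimed.
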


\proof It is enough to prove that when $S$ is a scheme 
over a finite prime field $\mathbb F_p$, and $q$ is a power of $p$, 
the set $\{ s\in S \mid \card(k(s))=q \}$ is closed of dimension $\le 0$.

Let ${\mathbb F}_q$ be a field with $q$ elements. Then any point 
$s\in S$ with $\card(k(s))=q$ is the image by the projection
$S_{{\mathbb F}_q} \to S$ of a rational point of $S_{{\mathbb F}_q}$. 
Therefore we can suppose that $S$ is a 
${\mathbb F}_q$-scheme and we have to show that $S({\mathbb F}_q)$ 
is closed of dimension $0$. Let $Z$ be the Zariski closure of $S({\mathbb F}_q)$ in
$S$, endowed with the reduced structure.  
Let $U$ be an affine open subset of $Z$. Let $f\in \cO_Z(U)$. 
For any $x\in U({\mathbb F}_q)$, $(f^q-f)(x)=0$ in $k(x)$, hence $x\in V(f^q-f)$.
As $U({\mathbb F}_q)$ is dense in $U$ and $U$ is reduced, we have
$f^q-f=0$. For any irreducible component $\Gamma$ of $U$, this
identity then holds on $\cO(\Gamma)$, so $\Gamma$ is just a rational point. 
Hence $U=U({\mathbb F}_q)$ and $\dim U=0$. Consequently, $Z=S({\mathbb F}_q)$
is closed and has dimension $0$. 
\qed

\medskip
The key to the proof of Theorem \ref{globalize}
is the following assertion:

\medskip
\noindent {\bf Claim (b).} {\it Suppose $N\ge 1$. 
Then there exist $t:=t_1 +a_1 \in R[t_1,\dots, t_N]$ with $a_1\in R$, 
and an open subset $U\subseteq S$ with 
zero-dimensional complement, such that  
$H:=V(t)$ is $S$-isomorphic to  $\mathbb A^{N-1}_S$ and the pro-constructible
subsets $T_1\cap H$, $\dots$, $T_m\cap H$ of $H$ 
satisfy: 
\begin{enumerate}[\rm (i)]
\item For all $i\le m$, $\dim (T_i\cap H_U)<N-1$,
and $(T_i \cap H)_s$ is constructible in $H_s$ for all $s\in U$. 
\item For all $s\in S$, 
there exists a $k(s)$-rational point in $H_s$ which does not 
belong to $T_s \cap H_s$. 
\end{enumerate}}
\smallskip 

Using Claim (b), we  conclude the proof of Theorem \ref{globalize} as follows.
First, note that when $N=0$, Condition \ref{globalize}\ (2) implies that 
$T=\emptyset$ and the theorem trivially holds true. 
When $N\ge 1$, we apply Claim (b) repeatedly to obtain a sequence of closed
sets $$\mathbb A^N_S \supset V(t_1+a_1) \supset  \dots \supset 
 V(t_1+a_1, t_2+a_2, \dots, t_N+a_N).$$
The latter set is the image of the desired section, as we saw in the 
case $N=0$.

\medskip
\noindent {\it Proof of} Claim (b):  
Let $\{\xi_1,\dots, \xi_{\rho}\}$ be the set of generic points of all the
irreducible components of the pro-constructible sets $T_i\cap \A^N_V$, $i=1,\dots, m$
(see \ref{lem.construct} (d) for the existence of generic points). Upon renumbering  these points  
if necessary, we can assume that for some $r\le \rho$,  
the image of $\xi_i$ under $\pi: \A^N_S\to S$ has finite 
residue field if and only if $i> r$. Let $\delta>0$.  
Let $Z$ be the union of $S\setminus V$ with 
$\{\pi(\xi_{r+1}),\ldots, \pi(\xi_{\rho})\}$ and with the finite subset 
of the closed points $s$ of $S$ satisfying $\card (k(s)) \le \delta$ (that this set is finite
follows from \ref{finiteresidue}). We will later set $\delta$ appropriately to be able to use Claim (a). 
For each $s \in Z$, we can use  \ref{globalize} (2) and fix a
$k(s)$-rational point $x_s \in {\mathbb A}_{k(s)}^N \setminus {T_s}$. 

We now construct a closed subset $V(t) \subset \mathbb A^N_S$ which contains $x_s$ for all $s\in Z$, and  does not contain any
$\xi_i$ with $i\le r$. 
Since every point of $Z$ is closed in $S$, the Chinese Remainder Theorem
implies that the  canonical map $R\to \prod_{s\in Z} k(s)$ 
is surjective. Let $a \in R$ be such that $a \equiv  t_1(x_s)$ in $k(s)$, for all $s\in Z$.
Replacing $t_1$ by $t_1-a$, we can assume that $t_1(x_s)=0$ for all $s\in Z$. 
Let 
$\p_{j} \subset R[t_1,\dots, t_N]$ be the prime ideal corresponding to $\xi_j$. 
Let $\m_s \subset R$ denote the maximal ideal of $R$ corresponding to $s \in Z$. 
Let $I :=\cap_{s \in Z} \m_s$, and in case  $Z=\emptyset$, we let $I:=R$. 
For $t \in R[t_1,\dots, t_N]$, let $I+t:=\{a+t \mid a\in I\}$.
{\it We claim that:} 
$$ I+t_1 \not\subseteq \cup_{1\le j\le r}\p_j.$$
Indeed, the intersection $(I+t_1)\cap\p_j$ is either empty, or 
contains $a_j + t_1$ for some $a_j\in I$. In the latter case, 
$(I+t_1)\cap\p_j=t_1+a_j+ (\p_j\cap I)$. 
If $ I+t_1 \subseteq \cup_{1\le j\le r}\p_j$, then every $t_1+a$ with $a \in I$ 
belongs to some $t_1+a_j+ (\p_j\cap I)$. Let $\q_j:=R\cap \p_j$.
It follows that 
$$I\subseteq \cup_j (a_j+\q_j) $$ 
where the union runs over a subset of $\{1, \ldots, r\}$. 
Since the domains $R/\q_j$ are all infinite when $j \leq r$, 
Lemma \ref{trans-pal-2} below  implies  
that $I$ is contained in some $\q_{j_0}$ for $1 \leq j_0\le r$. As 
$I =\cap_{s \in Z} \m_s$, we find that $\q_{j_0}= \m_s$ for some $s\in Z$. This is a contradiction,
since for $j\leq r$, $\pi(\xi_j)$ does not belong to $Z$ because
the residue field of $\pi(\xi_j)$ is infinite and 
$\pi(\xi_j) \notin S\setminus V$. This proves our claim.

Now that the claim is proved, we can choose 
$t\in (I+t_1)\setminus \cup_{1\le j\le r}\p_j$.
Clearly, the closed subset $H:=V(t) \subset \mathbb A^N_S$   does not contain any
$\xi_i$ with $i\le r$. 
Since $t$ has the form  $t=t_1+a_1$ for some $a_1 \in I = \cap_{s \in Z} \m_s$, 
we find that $V(t)$ contains $x_s$ for all $s \in Z$. 
Let $U:=S \setminus Z$. 
The complement of $U$   in $S$ is a finite set of closed points of $S$.
It is clear that $H:=V(t)$ is $S$-isomorphic to $ \mathbb A^{N-1}_S$,
and that for each $i$, the fibers of $T_i\cap H \to S$ are constructible. 

Let us now prove (i), i.e., that   $\dim(T_i\cap H_{U}) < N-1$ for all $i\le m$. 
Let $\Gamma$ be an irreducible component of some $T_i\cap \mathbb A^N_V$, 
with generic point $ \xi_j$ for some $j$. If $j>r$, then $\pi(\xi_j)\in Z$ and $\Gamma\cap
\mathbb A^N_U=\emptyset$. Suppose now that $j\le r$. Then $\Gamma\cap \mathbb A^N_U$
is non-empty and open in $\Gamma$, hence irreducible. 
By construction, $H $ does not contain $\xi_j$ since $j \leq r$.
So $\Gamma\cap H_U$ is a proper
closed subset of the irreducible space $\Gamma\cap \mathbb A^N_U$. Thus
$$\dim (\Gamma\cap H_U)< \dim (\Gamma\cap \mathbb A^N_U) \le \dim \Gamma<N.$$ 
As $T_i\cap H_U$ is the finite union of its various 
closed subsets $\Gamma\cap H_U$, 
this implies that $\dim (T_i\cap H_U)< N-1$.

Let us now prove (ii), i.e., that for all $s \in S$, $H_s$ contains a $k(s)$-rational 
point that does not belong to $T_s$.
When $s\in Z$, $H$ contains the $k(s)$-rational point $x_s$  and this point  does not belong to $T_s$.
Let now $s \notin Z$. Then $|k(s)|\geq \delta +1$ by construction. Choose 
now $\delta$ so that the conclusion of Claim (a) holds:
for all $s\in V$, 
$T_s$ is contained in a hyper\-sur\-face  in $\mathbb A^N_{k(s)}$ of   
degree at most  $ \delta$.
Then, since $t$ has degree $1$, we find that 
$H_s\cap T$ is contained in a hypersurface $V(f)$ of $H_s$ with 
$\deg(f)\leq  \delta$. 

We conclude that $H_s$ contains a $k(s)$-rational 
point that does not belong to $T_s$ using the following claim: 
{\it Assume that $k$ is either an infinite field or that $|k|=q  \ge \delta+1$.
Let $f\in k[T_1,\ldots, T_\ell]$ with $\deg(f) \le \delta$, $f\neq 0$. 
Then $V(f)(k)\varsubsetneq \mathbb A^\ell(k)$.}
When $k$ is a finite field, we  use the bound 
$|V(f)(k)| \leq \delta q^{\ell-1} + (q^{\ell-1} -1)/(q-1)<q^\ell$  found in \cite{Ser2}.
When $k$ is infinite, we can use induction on $\ell$ to prove the claim.
\qed 
\medskip  

Our next lemma follows from \cite{McAdam}, Theorem 5. 
We provide here a more direct proof using the earlier reference \cite{Neumann}. 
\begin{lemma} \label{trans-pal-2} 
Let $R$ be a commutative ring, and let $\q_1,\ldots, \q_r$
be (not necessarily distinct) prime ideals of $R$ with infinite 
quotients $R/\q_i$ for all $i=1,
\dots, r$.
Let $I$ be an ideal of $R$ and suppose that
 there exist $a_1,\ldots, a_r\in R$ such that 
$$I\subseteq \cup_{1\le i\le r} (a_i+\q_i).$$
Then $I$ is contained in the union of those 
$a_i+\q_i$ with $I\subseteq \q_i$. In particular, 
$I$ is contained in at least one $\q_i$. 
\end{lemma}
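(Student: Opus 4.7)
The plan is to deduce this from a classical lemma of B.H. Neumann on coverings of a group by finitely many cosets of subgroups, applied to the additive group $(R,+)$. Recall Neumann's result: if an (abstract) group $G$ is the union of finitely many cosets $g_1H_1\cup\cdots\cup g_rH_r$ of subgroups, then the cosets corresponding to those $H_i$ of \emph{finite} index in $G$ already cover $G$; equivalently, any $H_i$ appearing in such a covering and whose coset is not superfluous must have finite index in $G$.

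First I would view the inclusion $I\subseteq\bigcup_i(a_i+\q_i)$ as a covering of the additive group $(I,+)$ by the translates $(a_i+\q_i)\cap I$. Each non-empty intersection $(a_i+\q_i)\cap I$ is a coset of the subgroup $I\cap\q_i$ in $I$ (pick any $b_i\in(a_i+\q_i)\cap I$; then $(a_i+\q_i)\cap I=b_i+(I\cap\q_i)$). Thus $I$ is covered by finitely many cosets of the additive subgroups $H_i:=I\cap\q_i$, and Neumann's lemma (applied inside the abelian group $(I,+)$) tells us that $I$ is already covered by those cosets with $[I:H_i]<\infty$.

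The second step is to identify precisely when $[I:I\cap\q_i]$ is finite. The natural additive injection $I/(I\cap\q_i)\hookrightarrow R/\q_i$ realises $I/(I\cap\q_i)$ as the additive group of a non-zero ideal of the integral domain $R/\q_i$ whenever $I\not\subseteq\q_i$. Since $R/\q_i$ is infinite by hypothesis, any such non-zero ideal is infinite (it contains $\alpha\cdot(R/\q_i)$ for any non-zero $\alpha$, and multiplication by $\alpha$ is injective). Hence $[I:I\cap\q_i]<\infty$ forces $I\subseteq\q_i$, in which case $H_i=I$ and the single coset $a_i+\q_i$ contains $I$ entirely.

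Combining the two steps, one keeps only those indices $i$ with $I\subseteq\q_i$, and these suffice to cover $I$; in particular at least one such index exists, proving both assertions of the lemma. The one potentially delicate point is that Neumann's lemma must be applied to the possibly non-finitely-generated abelian group $(I,+)$, but his original argument in \cite{Neumann} is valid in that generality, so no extra work is needed there.
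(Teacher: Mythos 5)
Your proposal is correct and is essentially the paper's own proof: both pass to the additive group $(I,+)$, express each non-empty $(a_i+\q_i)\cap I$ as a coset of the subgroup $\q_i\cap I$, invoke Neumann's covering lemma to keep only the finite-index subgroups, and then observe that $I/(\q_i\cap I)\cong(I+\q_i)/\q_i$ is a finite ideal of the infinite integral domain $R/\q_i$, hence zero, forcing $I\subseteq\q_i$. The only cosmetic difference is that you spell out why a non-zero ideal of an infinite domain must be infinite, which the paper leaves implicit.
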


\proof We have $I=\cup_{i} ((a_i+\q_i)\cap I)$. If 
$(a_i+\q_i)\cap I\ne\emptyset$, then it is equal to 
$\alpha_i + (\q_i\cap I)$ for some $\alpha_i\in I$. Hence
$$I=\cup_i (\alpha_i + (\q_i\cap I))$$
where the union runs on part of $\{1,\ldots, r\}$. By \cite{Neumann}, 4.4,
$I$ is the union of those 
$\alpha_i+(\q_i\cap I)$ with $I/(\q_i\cap I)$ finite. 
For any such $i$, the ideal $(I+\q_i)/\q_i$ of $R/\q_i$ is finite and,
hence, equal to $(0)$ because $R/\q_i$ is an infinite domain.
\qed

\begin{remark} One can show that the conclusion of Theorem \ref{globalize} holds 
without assuming in \ref{globalize} (1)
that $T_s$ is constructible in ${\mathbb A}_{k(s)}^N$ for all $s \in V$.
Since we will not need this statement, let us only note that
{\it when $S$ has only finitely many points  with
finite residue field, then the conclusion of 
Theorem {\rm \ref{globalize}} holds if in {\rm \ref{globalize} (1)} we remove 
the hypothesis that $T_s$ is constructible for all $s \in V$.} 
Indeed, with this hypothesis, we do not need to use Claim (a). 
First shrink $V$ so that $k(s)$ is infinite for all $s\in V$,
and then proceed to construct the closed subset $H=V(t)$ discussed in Claim (b).
The use of Claim (a) in the proof of (ii) in Claim (b) can be avoided 
using our next lemma.
\end{remark}

Let $k$ be an infinite field, and let $V \subset {\mathbb A}^N_k$ be a closed subset.
The property that {\it if $\dim(V)<N$, then $V(k) \neq  {\mathbb A}^N_k(k)$},
can be generalized as follows.

\begin{lemma} \label{proconstructible.gen} Let $k$ be an infinite field.
Let $T \subset {\mathbb A}^N_k$ be a pro-constructible subset with $\dim(T) <N$. 
Then $T$ does not  contain all $k$-rational points of ${\mathbb A}^N_k $.
\end{lemma}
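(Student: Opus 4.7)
The plan is to prove the contrapositive: assuming $T$ is pro-constructible in $\mathbb A^N_k$ and contains every $k$-rational point, I will produce a chain of length $N$ of irreducible closed subsets of $T$, forcing $\dim T\ge N$.

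The first step is to show that $T$ contains the generic point $\eta$ of $\mathbb A^N_k$. Write $T=\bigcap_\alpha C_\alpha$ as an intersection of constructible subsets. Each $C_\alpha$ contains $\mathbb A^N_k(k)$, and since $k$ is infinite no nonzero polynomial in $k[t_1,\dots,t_N]$ vanishes on $k^N$, so $\mathbb A^N_k(k)$ is Zariski-dense in $\mathbb A^N_k$ and hence $\overline{C_\alpha}=\mathbb A^N_k$. A constructible subset that is dense in an irreducible space contains a nonempty open subset (this is the observation already used in the proof of Lemma~\ref{lem.construct}(a)), so $C_\alpha$ contains $\eta$. Therefore $\eta\in \bigcap_\alpha C_\alpha=T$.

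To promote this to a full chain I slice by coordinate hyperplanes. For $m=0,1,\dots,N$, set $H_m:=V(t_1,\dots,t_m)\subset \mathbb A^N_k$, so that $H_m\cong \mathbb A^{N-m}_k$ with generic point $\eta_m$. Since $H_m$ is closed in $\mathbb A^N_k$, the intersection $T\cap H_m=\bigcap_\alpha (C_\alpha\cap H_m)$ is pro-constructible in $H_m$, and it contains every $k$-rational point of $H_m$ (those being $k$-rational points of $\mathbb A^N_k$, hence in $T$). The argument of the preceding paragraph, applied to $T\cap H_m$ inside $H_m$, then yields $\eta_m\in T\cap H_m\subseteq T$ for every $m$.

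Each $\eta_{m+1}$ lies in $H_{m+1}\subsetneq H_m=\overline{\{\eta_m\}}$, so it is a proper specialization of $\eta_m$; taking closures inside $T$ therefore yields the strictly descending chain of irreducible closed subsets $T=\overline{\{\eta_0\}}^{\,T}\supsetneq \overline{\{\eta_1\}}^{\,T}\supsetneq\cdots\supsetneq \overline{\{\eta_N\}}^{\,T}$ of length $N$, forcing $\dim T\ge N$ and contradicting the hypothesis. The one delicate point is the very first step: pro-constructibility is precisely what converts a statement about $k$-rational points into a statement about the generic point. Once $\eta\in T$ is established, the slicing by coordinate hyperplanes encounters no further obstacle, since the hypotheses are inherited by each $T\cap H_m$.
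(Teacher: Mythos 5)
Your proof is correct, and it reaches the same chain-of-linear-subspaces conclusion as the paper, but it establishes the key first step — that $T$ contains the generic point of $\mathbb{A}^N_k$ — by a genuinely different route. The paper first proves that $T$ is irreducible, by observing that a cover $T=(V(f)\cap T)\cup(V(g)\cap T)$ forces $V(fg)$ to contain all $k$-points, hence $V(fg)=\mathbb{A}^N_k$; it then invokes Lemma~\ref{lem.construct}(d) (pro-constructible subsets of noetherian schemes have generic points for their irreducible components) to produce a generic point $\xi$ of $T$, and argues that $\overline{\{\xi\}}$ must be all of $\mathbb{A}^N_k$. You instead unpack pro-constructibility as an intersection $T=\bigcap_\alpha C_\alpha$ of constructible sets, note that each $C_\alpha$ is dense (because the $k$-points are dense, $k$ being infinite) and therefore contains a dense open, hence the generic point; intersecting gives $\eta\in T$ directly. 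Your version is more elementary in that it avoids both the irreducibility reduction and Lemma~\ref{lem.construct}(d), relying only on the density observation already used in the proof of Lemma~\ref{lem.construct}(a); the paper's version is slightly more self-contained topologically, working with $T$ intrinsically rather than with a chosen presentation as an intersection. The final slicing step is the same in substance — you descend through $H_m=V(t_1,\dots,t_m)$ while the paper ascends through $F_0\subset\cdots\subset F_N$ — and both produce the required chain of length $N$.
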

\begin{proof}
Assume that  $T$  contains all $k$-rational points of ${\mathbb A}^N_k $.
We claim first that $T$ is irreducible. Indeed,  if $T=(V(f)\cap T) \cup
(V(g)\cap T)$, then $V(fg)=V(f)\cup V(g)$ contains all $k$-rational points of ${\mathbb A}^N_k$.
Thus $V(fg)= {\mathbb A}^N_k$ and either $V(f) \cap T = T$ or $V(g) \cap T = T$.
Since $T$ is irreducible, it has a generic point $\xi$ (see \ref{lem.construct} (d)), and the closure $F$ of $\xi$ in ${\mathbb A}^N_k$
contains all $k$-rational points of ${\mathbb A}^N_k$. Hence, $F= {\mathbb A}^N_k$, so that $T$ then contains the generic point of ${\mathbb A}^N_k$.
Consider now an increasing sequence of closed linear subspaces $F_0 \subset F_1 \subset \dots \subset F_{N}$ contained in ${\mathbb A}^N_k$, with $F_i \cong {\mathbb A}^i_k$.
Then $T \cap F_i$ contains all $k$-rational points of $F_i$ by hypothesis, and the discussion above shows that it contains then the generic point of $F_i$. It follows that $\dim(T) =N$.
\end{proof}

\begin{remark} The hypothesis in \ref{globalize}\ (1) on the dimension of $T$ is needed.
Indeed, let $S= \Spec {\mathbb Z}$, and $N=1$. 
Consider the closed subset $V(t^3-t)$ of $\Spec {\mathbb Z}[t] = {\mathbb A}^1_{\mathbb Z}$. Let $T$ be the constructible subset of ${\mathbb A}^1_{\mathbb Z}$
obtained by removing from $V(t^3-t)$ the maximal ideals $(2,t-1)$ and $(3,t-1)$. 
Then, for all $s \in S$, the fiber $T_s$ is distinct from ${\mathbb A}^1_{k(s)}(k(s))$, and $\dim T_s =0$. 
However, $\dim T=1$, and we note now that there exists no section of ${\mathbb A}^1_{\mathbb Z}$
disjoint from $T$. Indeed, let $V(t-a)$ be a section. If it is disjoint from $T$, 
then $a \neq 0,1,-1$, and $6 \mid a-1$. So there exists a prime $p>3$ with $p \mid a$,
and $V(t-a)$ meets $T$ at the point $(p, t)$. 

For a more geometric example, let $k$ be any infinite field. 
Let $S=\Spec k[u]$ and $\A^1_S= \Spec k[u,t]$. When $T:=V(t^2-u) \subset \A^1_S $, 
then $\A^1_S\setminus T$ does not contain any section $V(t-g(u))$ of $\A^1_S$. Indeed,
otherwise $(t^2-u, t-g(u))=(1)$, and  $g(u)^2-u$ would be an element of $k^*$. 
\end{remark}

\end{section}

\begin{section}{Existence of hypersurfaces} 
\label{Main application}

Let us start by introducing the terminology needed to state the main results of this section.

\begin{emp} \label{zerolocusinvertible} Let $X$ be any scheme. 
A global section $f$ of an invertible sheaf ${\mathcal L}$ on  $X$
 defines a closed subset $H_f$ of $X$,  
 consisting of all points $x \in X$ where the stalk $f_x$ does not generate ${\mathcal L}_x$.
 Since $\cO_X f\subseteq {\mathcal L}$, 
 the ideal sheaf ${\mathcal I}:=
(\cO_X f)\otimes {\mathcal L}^{-1}$ endows $H_f$ with the structure of
a closed subscheme of $X$.
When $X$ is noetherian and $H_f \neq \emptyset$, it follows from Krull's Principal Ideal Theorem that 
any irreducible component $\Gamma$ of $H_f$ has codimension at most $1$ in $X$.

Assume now that $X \to \Spec R$ is a projective morphism, and write  $X=\Proj A$,
where $A$ is the quotient of a polynomial ring $R[T_0,\dots, T_N]$ by a homogeneous ideal $I$.
Let $\cO_X(1)$ denote the  very ample sheaf arising from this
presentation of $X$.  
Let $f\in A$ 
be a homogeneous element of degree $n$. Then $f$ can be identified with a global section $f\in H^0(X, \cO_X(n))$, and $H_f$ is
the closed subscheme $V_+(f)$ of $X$ defined by the homogeneous ideal $fA$. 
When $X \to S$ is a quasi-projective morphism
and $f$ is a global section of a very ample invertible sheaf ${\mathcal L}$ relative to $X \to S$, we may also sometimes denote the 
closed subset $H_f$ of $X$ by $V_+(f)$. 

Let now $S$ be any 
affine scheme and $X \to S$ any morphism.
We  call the closed subscheme $H_f$ of $X$ a \emph{hypersurface} (relative to $X \to S$) when no
irreducible component of \emph{positive dimension} of $X_s$ is contained in $H_f$, for all $s\in S$. 
If, moreover, the ideal sheaf ${\mathcal I}$ is invertible, 
we say that the hypersurface $H_f$ is \emph{locally principal}.
Note that in this case, $H_f$ is the support 
of an effective Cartier divisor on $ X$.
Hypersurfaces satisfy the following elementary properties.
\end{emp}

\begin{lemma} \label{hypersurfaces-properties} 
Let $S$ be affine. 
Let $X \to S$ be a finitely presented morphism. 
Let ${\mathcal L}$ be an invertible sheaf on $X$ and let $f \in H^0(X,
{\mathcal L})$
be such that  $H:= H_f$ 
is a hypersurface on $X$ relative to $X \to S$.  
\begin{enumerate}[\rm (1)]
\item If $\dim X_s\ge 1$, then 
$\dim H_s\leq \dim X_s -1$. If, moreover, 
$X \to S$ is projective, ${\mathcal L}$ is ample, 
and $H \neq \emptyset$, then $H_s$ meets every irreducible component of positive dimension of $X_s$, and in particular $\dim H_s=\dim X_s -1$.
\item  The morphism $H\to S$ is finitely presented.  
\item Assume that $X\to S$ is flat of finite presentation. 
Then $H$ is locally principal and flat over $S$ if 
and only if for all $s\in S$, $H$ does not contain 
any associated point of $X_s$. 
\item Assume $S$ noetherian. If $H$ does not contain any associated point of $X$,
then $H$ is locally principal.
\end{enumerate}
\end{lemma}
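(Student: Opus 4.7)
The plan is to work locally on a cover of $X$ by affine opens $U$ trivializing $\mathcal{L}$, so that on each such $U$ the section $f$ restricts to an element $f_U \in \mathcal{O}_X(U)$ and $H \cap U = V(f_U)$ is cut out by a single equation. With this reduction in hand, each of (1)--(4) becomes a classical question about the behavior of one element in a commutative ring.

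For (1), I would invoke Krull's Hauptidealsatz to see that every irreducible component of $H_s$ has codimension at most one in $X_s$ at each of its points. The hypersurface hypothesis forbids $H$ from containing any positive-dimensional component $\Gamma$ of $X_s$, so, passing to $\Gamma_{\mathrm{red}}$, the intersection $H_s \cap \Gamma$ is a proper closed subset of $\Gamma$ locally cut out by one equation, hence of dimension $\dim \Gamma - 1$. Taking $\Gamma$ of maximal dimension in $X_s$ gives $\dim H_s \leq \dim X_s - 1$. Under the additional ampleness assumption I would argue by contradiction: if some positive-dimensional component $\Gamma$ of $X_s$ missed $H$, then $f|_\Gamma$ would be a nowhere vanishing section of $\mathcal{L}|_\Gamma$, hence $\mathcal{L}|_\Gamma \simeq \mathcal{O}_\Gamma$; but restriction of an ample line bundle to a closed positive-dimensional projective subscheme remains ample, and no ample bundle on such a scheme can be trivial (for then $H^0$ would be finite-dimensional under all powers, contradicting the existence of the very ample embedding it defines).

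Part (2) is routine: $H \hookrightarrow X$ is locally defined by a single equation on trivializing opens, hence locally of finite presentation; composed with the finitely presented morphism $X \to S$ this gives $H \to S$ locally of finite presentation, and quasi-compactness and quasi-separatedness are inherited through the closed immersion $H \hookrightarrow X$. For (4), the noetherian hypothesis lets me identify zero-divisors in each stalk $\mathcal{O}_{X,x}$ with elements of its associated primes; since $f_U$ avoids every associated point of $X$ by assumption, it is a non-zero-divisor at each stalk, so the ideal $(f_U)$ is locally principal.

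The substantive work lies in part (3). My plan is to analyze the sequence $0 \to \mathcal{O}_U \xrightarrow{\,\cdot f_U\,} \mathcal{O}_U \to \mathcal{O}_{H \cap U} \to 0$ via Tor: since $\mathcal{O}_U$ is $\mathcal{O}_S$-flat, tensoring over $\mathcal{O}_S$ with $k(s)$ identifies $\operatorname{Tor}_1^{\mathcal{O}_S}(\mathcal{O}_{H \cap U}, k(s))$ with the kernel of multiplication by $f_U$ on $\mathcal{O}_{X_s}$. The local flatness criterion then shows that $H \to S$ is flat if and only if $f_U$ is a non-zero-divisor on every $\mathcal{O}_{X_s}$, equivalently $H$ avoids every associated point of every fiber. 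This handles the ``only if'' direction (and already yields half of ``if''); what remains for ``if'' is to deduce local principality, namely that $f_U$ is itself a non-zero-divisor on $\mathcal{O}_U$. For this I would invoke the equality $\operatorname{Ass}(\mathcal{O}_X) = \bigsqcup_{s \in S} \operatorname{Ass}(\mathcal{O}_{X_s})$ valid for a flat finitely presented morphism $X \to S$. The main obstacle is that $S$ is not assumed noetherian, so I plan to reduce to the noetherian case by the standard finite-presentation limit argument of \cite{EGA}, IV, \S8, writing $S$ as an inverse limit of noetherian affines and descending the whole datum $(X, \mathcal{L}, f)$, where the classical associated-primes decomposition applies verbatim.
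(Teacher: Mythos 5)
Your proposal is correct and tracks the paper's proof closely for parts (2) and (4), but takes genuinely different routes in parts (1) and (3). For the ample part of (1), the paper's argument is that the complement $X_f \cap X_s = (X_s)_{f_s}$ is affine (a nonvanishing section of an ample sheaf on a projective scheme has affine complement), and an affine scheme cannot contain a proper closed subscheme of positive dimension; you argue instead that a component $\Gamma$ disjoint from $H$ would make $\mathcal{L}|_\Gamma$ simultaneously trivial (trivialized by $f|_\Gamma$) and ample (restriction of an ample sheaf to a closed subscheme of a projective scheme), which is impossible in positive dimension. These are essentially dual formulations of the same fact and both are valid. For (3), the paper simply cites \cite{EGA}, IV.11.3.8, which is stated for flat, finitely presented morphisms over an arbitrary base and already contains the equivalence you want; you instead reprove it via Tor and the local flatness criterion. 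Your version does work in substance, but note two things: first, as you yourself flag, the sequence should initially be written only right-exact, $\mathcal{O}_U \xrightarrow{\cdot f_U} \mathcal{O}_U \to \mathcal{O}_{H\cap U} \to 0$, since left exactness (i.e.\ $f_U$ being a non-zero-divisor) is part of the conclusion in the ``if'' direction, so the Tor identification should be phrased for the ``only if'' direction and the local flatness criterion invoked for ``if''; second, the local criterion and the equality $\Ass(\cO_X)=\bigcup_s \Ass(\cO_{X_s})$ (\cite{EGA}, IV.3.3.1) both require a noetherian base, so the finite-presentation limit argument you sketch is indeed needed, whereas the paper's citation of IV.11.3.8 sidesteps this. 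Neither route is wrong; the paper's is just shorter because the needed statement is already available in EGA in the non-noetherian finitely presented setting.
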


\proof (1) Recall that by convention, if $H_s$ is empty, then $\dim H_s <0$, and the inequality is satisfied.
Assume now that $H_s$ is not empty. By hypothesis, $H_s$ does not contain any irreducible component 
of $X_s$ of positive dimension. Since $H_s$ is locally defined by one equation, we obtain that  $\dim H_s\leq \dim X_s-1$.
The  strict inequality  
may occur for instance in case $\dim X_s \geq 2$, and $H_s$ does not meet any  component of $X_s$ of maximal dimension. 

Consider now the open set $X_f:= X \setminus H$. Under our additional
hypotheses, for any $s\in S$, 
$X_f \cap X_s=(X_s)_{f_s}$ is affine and, thus, can only contain
irreducible components of dimension $0$ of the projective scheme
$X_s$. 

(2) Results from the fact that 
$H$ is locally defined by a single equation in $X$.

(3) See \cite{EGA}, IV.11.3.8, c) $\Leftrightarrow$  a). Each fiber $X_s$ is noetherian. Use the fact that in a noetherian ring,
an element is regular if and only if it is not contained in any associated prime.

(4) The property is local on $X$, so we can suppose $X=\Spec A$ is affine and
$\mathcal L=\cO_X\cdot e$ is free. So $f=he$ for some $h\in A$. 
The hypothesis $H\cap \Ass(X)=\emptyset$ implies that $h$ is a 
regular element of $A$.
So the ideal sheaf $\mathcal I=(\cO_X f)\otimes 
\mathcal L^{-1}$ is invertible. 
\qed

\medskip 

We can now state the  main results of this section. 

\begin{theorem} \label{exist-hyp} 
Let $S$ be an affine noetherian scheme of finite dimension, and let 
$X\to S$ be a quasi-projective morphism with a given very 
ample invertible sheaf $\cO_X(1)$. 
\begin{enumerate}[{\rm (i)}] 
\item Let $C$ be a closed subscheme of $X$;
\item Let $F_1, \dots, F_m$ be locally closed subsets\footnote{
Recall that a {\it locally closed} subset $F$ of a topological space $X$ is 
the intersection of an open subset $U$ of $X$ with a closed subset $Z$ of $X$.
When $X$ is a scheme, we can endow 
$F$ with the structure
of a {\it subscheme} of $X$ by considering $U$ as an open subscheme of $X$ and 
$F$ as the closed subscheme $Z \cap U$ of $U$ endowed with the reduced induced structure.
} of $X$ such that for
  all $s\in S$ and for all $i\le m$, $C_s$ does not contain
any  irreducible
component  of positive dimension of $(F_i)_s$; 
\item Let $A$ be a finite subset of $X$ such that $A\cap C=\emptyset$. 
\end{enumerate}
Then there exists $n_0>0$ such that for all $n\ge n_0$, there exists
a global section $f$ of $\cO_X(n)$ such that:
\begin{enumerate}[\rm (1)]
\item $C$ is a closed subscheme of $H_f$, 
\item for all $s \in S$ and for all $i\le m$, $H_f$ does not contain any irreducible component  of positive dimension of
$F_i\cap X_s$, and  
\item $H_f\cap A=\emptyset$. 
Moreover, 
\item if, for all $s \in S$,  $C$ does not contain any irreducible component  of
positive dimension of $X_s$, then there exists $f$ as above such that $H_f$ is a
hypersurface relative to $X \to S$. 
If in addition $C\cap \Ass(X)=\emptyset$, then there exists $f$ as above such that $H_f$ is a
locally principal hypersurface.
\end{enumerate}
\end{theorem}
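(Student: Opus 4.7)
The plan is to follow the method outlined in \ref{sum}: encode the desired properties of $H_f$ as an avoidance condition on a finite union $T$ of pro-constructible subsets of $\mathbb A^N_S$, and then apply Theorem~\ref{globalize} to extract a section of $\mathbb A^N_S \to S$ avoiding $T$. I would first reduce to the projective case by passing to a projective closure $\bar X/S$ on which $\cO_X(1)$ extends (perhaps after replacing it by a positive multiple) and by replacing $C$ by its schematic closure $\bar C \subseteq \bar X$. Let $\mathcal I \subseteq \cO_{\bar X}$ be the ideal sheaf of $\bar C$. By Serre's theorems, for $n$ large enough, $\mathcal I(n) := \mathcal I \otimes \cO_{\bar X}(n)$ is globally generated on $\bar X$; in particular, at every point $x \in \bar X \setminus \bar C$ some global section of $\mathcal I(n)$ does not vanish at $x$. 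Fix such an $n$ and a finite $R$-module generating set $f_1,\dots,f_N$ of $H^0(\bar X, \mathcal I(n))$. Any $f = \sum_{i=1}^N a_i f_i$ with $a_i \in R$ then lies in $H^0(\bar X, \mathcal I(n))$, so $H_f \supseteq C$ automatically; this handles requirement (1).

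The bad set $T \subseteq \mathbb A^N_S$ is assembled in two pieces. For (3), I would apply Proposition~\ref{constructible-conditions} to the data $(A \to S, \cO_A(n), f_1|_A, \dots, f_N|_A)$, with $A$ carrying its reduced scheme structure; this yields a locally constructible $T_0 \subseteq \mathbb A^N_S$ whose $k(s)$-rational points record those $(\alpha_i)$ for which $\sum \alpha_i f_i$ vanishes at some $a \in A$ lying over $s$. For (2), Chevalley's upper semi-continuity of fibre dimension (\cite{EGA}, IV.13.1.3) provides, for each $i \le m$, a closed subscheme $F_i^+ \subseteq F_i$ whose fibres $(F_i^+)_s$ are the unions of the positive-dimensional irreducible components of $(F_i)_s$. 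Applying Proposition~\ref{constructible-conditions} to $(F_i^+ \to S, \cO_{F_i^+}(n), f_1|_{F_i^+}, \dots, f_N|_{F_i^+})$ yields a locally constructible $T_i \subseteq \mathbb A^N_S$ whose $k(s)$-rational points are exactly those $(\alpha_j)$ for which $\sum_j \alpha_j f_j$ vanishes at some positive-dimensional generic point of $(F_i)_s$. Set $T := T_0 \cup T_1 \cup \dots \cup T_m$.

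The crux is verification of the hypotheses of Theorem~\ref{globalize}. Hypothesis (2) of \ref{globalize} is the classical fibrewise Avoidance Lemma over $k(s)$: since each $T_s$ is a union of finitely many proper $k(s)$-linear subvarieties, $T_s$ does not exhaust the $k(s)$-rational points of $\mathbb A^N_{k(s)}$, the finite residue-field case being handled by a degree estimate as in the proof of \ref{globalize}. Hypothesis (1), the substantive point, requires $\dim(T \cap \mathbb A^N_V) < N$ for some open $V \subseteq S$ with zero-dimensional complement. By Proposition~\ref{constructible-conditions}(b), each fibre dimension $\dim (T_i)_s$ equals the maximum dimension of the kernels of the $k(s)$-linear maps $(\alpha_1,\dots,\alpha_N) \mapsto \sum_j \alpha_j f_j(\xi)$ at the generic points $\xi$ of $(F_i^+)_s$; because global generation of $\mathcal I(n)$ forces the rank of each such evaluation map to grow with $n$, for $n$ sufficiently large the fibrewise codimensions dominate $\dim V$, while the finitely many closed points of $S$ where the fibre behaviour degenerates are absorbed into $S \setminus V$. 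This dimension estimate is the principal obstacle, since naively the global dimension of $T$ grows with $\dim S$, and overcoming this requires the Serre-type rank growth to be exploited uniformly in $s$.

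Theorem~\ref{globalize} then furnishes $(a_1,\dots,a_N) \in R^N$ whose associated section of $\mathbb A^N_S \to S$ is disjoint from $T$; the element $f := \sum_i a_i f_i$, viewed in $H^0(X, \cO_X(n))$, satisfies (1)--(3) by construction. For (4), I would rerun the construction with $F_1,\dots,F_m$ augmented by $X$ itself (legitimate since by hypothesis $C$ contains no positive-dimensional component of any $X_s$, and this then forces $H_f$ to be a hypersurface relative to $X \to S$); under the further hypothesis $C \cap \Ass(X) = \emptyset$, I would additionally augment $A$ by the finite set $\Ass(X)$ (which is disjoint from $C$ and finite because $S$ is noetherian), so that $H_f \cap \Ass(X) = \emptyset$, and invoke Lemma~\ref{hypersurfaces-properties}(4) to conclude that $H_f$ is locally principal.
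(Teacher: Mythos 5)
Your overall architecture matches the paper's: reduce to the projective case via a projective closure, twist the ideal sheaf of $\bar C$ by $\cO(n)$ for $n\gg 0$, encode the bad sections as a finite union $T$ of pro\-/constructible subsets of $\mathbb A^N_S$ via Proposition~\ref{constructible-conditions}, and invoke Theorem~\ref{globalize}. The handling of part (4), by adjoining $X$ to the $F_i$'s and $\Ass(X)$ to $A$ and invoking Lemma~\ref{hypersurfaces-properties}(4), is exactly the paper's argument. And you correctly flag that the dimension bound $\dim(T\cap\mathbb A^N_V)<N$ needed for hypothesis (1) of \ref{globalize} is the substantive difficulty, though your explanation (``global generation forces the rank to grow'') is imprecise: the paper (Lemma~\ref{cor.constructible2}) reduces to geometrically integral fibers via Lemma~\ref{use.it}, then bounds $\dim(T_F)_s\le N-\dim_{k(s)}H^0(X_s,\oline{\J}_s(n))$ using that $H^0(X_s,\oline{\J}_s(n))\to\cO_{X_s}(n)\otimes k(\xi)$ is \emph{injective} on an integral fiber, and finally forces $\dim_{k(s)}H^0(X_s,\oline{\J}_s(n))\ge 1+\dim S$ uniformly in $s$ through the finiteness of the set of Hilbert polynomials $P_{X_s},P_{C_s}$ (\ref{Mumford}(iii)). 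Global generation alone does not yield a codimension bound of the right size.

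The genuine error is in your treatment of hypothesis (2) of \ref{globalize}. You claim that because each $T_s$ is a finite union of proper $k(s)$-linear subspaces, a $k(s)$-rational point outside $T_s$ always exists, ``the finite residue-field case being handled by a degree estimate as in the proof of \ref{globalize}.'' This reasoning is circular and, taken literally, false. The degree estimate you cite lives \emph{inside} the proof of \ref{globalize} and cannot be used to verify that theorem's hypotheses. And a finite union of proper $k(s)$-subspaces can certainly exhaust $\mathbb A^N(k(s))$ once $k(s)$ is a small finite field and the number of subspaces exceeds $|k(s)|$. The paper resolves this in Lemma~\ref{existence-fn}: the number of generic points of positive-dimensional components of $(F_i)_s$ together with the points of $A\cap X_s$ is bounded uniformly in $s$ by some $c_0$ (via \cite{EGA}, IV.9.7.8), so Lemma~\ref{finiteresidue} shows that only finitely many $s$ have $|k(s)|\le 2c_0$; for those $s$, one must \emph{further increase} $n_0$ and invoke Lemma~\ref{fn-bis}(b), which is a nontrivial graded prime-avoidance argument (Lemma~\ref{avoid}), not a consequence of the classical avoidance lemma over a field. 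Your proposal provides no substitute for this, so as written the hypothesis-(2) verification fails over Dedekind domains whose closed points have small finite residue fields (e.g., $\Spec\mathbb Z$ or $\Spec\mathbb F_p[x]$), which are precisely the cases one most wants to cover.
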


We will first give a complete proof of Theorem \ref{exist-hyp}
in the case where $X \to S$ is projective in \ref{Proofreductiondimension2}, after a series of technical
lemmas. The proof of \ref{exist-hyp}
when $X\to S $ is only assumed to be quasi-projective 
is given in \ref{quasiproj}.
Theorem \ref{exist-hyp} will be generalized to the case where $S$  is not noetherian in
\ref{bertini-type-0}.

Theorem \ref{pro.reductiondimension2} below is the key to reducing the proof of the Moving Lemma
\ref{mv-1-cycle-local} (a)
 to the case of relative dimension 1.
This theorem is stated in a slightly different form in the introduction, and we note 
in \ref{lem.remcodim} (3) that the two versions are compatible.
 
\begin{theorem} \label{pro.reductiondimension2}
Let $S$ be an affine noetherian scheme of finite dimension, and let 
$X\to S$ be a quasi-projective morphism with a given very 
ample invertible sheaf $\cO_X(1)$ relative to $X \to S$. Assume that 
the hypotheses {\rm (i)}, {\rm (ii)}, and {\rm (iii)} in {\rm \ref{exist-hyp}} hold. 
Suppose further 
that 
\begin{enumerate}[\rm (a)]
\item $C\to S$ is
finite,  
\item $C\to X$ is a regular immersion 
and $C$ has pure codimension\footnote{By {\it pure} codimension $d$, we mean that
  every irreducible component of $C$ has codimension $d$ in $X$.} $d> \dim S$ in $X$, and
\item for all $s\in S$, $\codim(C_s, X_s)\ge d$.
\end{enumerate}
Then there exists $n_0>0$ such that for all $n\ge n_0$ there exists
a global section $f$ of $\cO_X(n)$ such that 
$H_f$ satisfies {\rm (1)}, {\rm (2)}, and {\rm (3)} in {\rm  \ref{exist-hyp}}, and such that
$H_f$ is a locally principal hypersurface,  
$C \to H_f$ is a regular immersion, and $C$ has pure
codimension $d-1$ in $H_f$.

Suppose now
that $\dim S= 1$. 
Then there exists a closed subscheme $Y$ of $X$ such that
$C$ is a closed subscheme of $Y$ defined by an invertible sheaf of ideals of $Y$ 
(i.e.,  $C$ corresponds to an effective Cartier divisor on $Y$). Moreover,  
for all $s\in S$ and all $i\le m$, any irreducible 
component $\Gamma$ of $F_i \cap X_s$ 
is such that $\dim(\Gamma \cap Y_s) \leq \max(\dim(\Gamma)-(d-1),0)$.
In particular, if $(F_i)_s$ has positive codimension in $X_s$ in 
a neighborhood of $C_s$, then $F_i \cap Y_s$ has dimension at most $0$ 
in a neighborhood of $C_s$. 
\end{theorem}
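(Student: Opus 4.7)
Following the method outlined in \ref{sum}, I would produce for each sufficiently large $n$ a pro-constructible ``bad'' subset $T \subseteq \mathbb{A}^N_S$ and apply Theorem~\ref{globalize} to obtain a section $\sigma: S \to \mathbb{A}^N_S$ avoiding $T$. Its coordinates $(a_1,\dots,a_N)\in R^N$ will yield the desired $f = \sum a_i f_i$, where $f_1,\dots,f_N$ are $R$-module generators of $H^0(X,\mathcal{I}(n))$ with $\mathcal{I}$ the ideal sheaf of $C$ in $X$; as usual one first reduces to the case where $X \to S$ is projective.

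The bad set decomposes as $T = T_0 \cup T_{\reg}$. The set $T_0$ is the one built in the proof of Theorem~\ref{exist-hyp} to guarantee the conclusions (1)--(3) together with local principality of $H_f$; note that hypothesis (b) with $d \ge 1$ forces $C\cap\Ass(X)=\emptyset$, so \ref{exist-hyp}(4) applies. The new set $T_{\reg}$ will encode failure of the regular immersion property. By (b), $\mathcal{N}^\vee := \mathcal{I}/\mathcal{I}^2$ is locally free of rank $d$ on $C$. Choose $n$ large enough that $\mathcal{I}(n)$ is globally generated and --- via the short exact sequence $0\to\mathcal{I}^2\to\mathcal{I}\to\mathcal{N}^\vee\to 0$ twisted by $\cO_X(n)$ together with Serre vanishing for $H^1(X,\mathcal{I}^2(n))$ --- the restriction map $H^0(X,\mathcal{I}(n))\to H^0(C,\mathcal{N}^\vee\otimes\cO_X(n))$ is surjective. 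Now apply Proposition~\ref{constructible-conditions} to the finite morphism $\pi: C\to S$, the coherent sheaf $\mathcal{F}:=\mathcal{N}^\vee\otimes\cO_X(n)|_C$, and the images $g_1,\dots,g_N$ of $f_1,\dots,f_N$ in $H^0(C,\mathcal{F})$. A $k(s)$-point $(\alpha_i)$ lies in $(T_{\reg})_s$ exactly when $\sum\alpha_i g_i$ vanishes at some (necessarily closed) generic point $c$ of $C_s$; by the standard criterion for regular immersions, this is precisely the condition under which $C\hookrightarrow H_f$ fails to be regularly immersed in codimension $d-1$ at $c$ for $f=\sum\alpha_i f_i$.

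The dimension bound for $T_{\reg}$ is where the hypothesis $d>\dim S$ enters critically, and this is the main obstacle of the proof. For each closed point $c\in C_s$, the composition
$$k(s)^N \twoheadrightarrow H^0(X,\mathcal{I}(n))\otimes k(s) \twoheadrightarrow H^0(C,\mathcal{F})\otimes k(s) \twoheadrightarrow \mathcal{F}(c)$$
is surjective (first map: choice of generators; second: Serre-vanishing surjectivity above; third: $C\to S$ is affine-finite so the coherent $\pi_*\mathcal{F}$ on the affine base surjects onto any of its fibers). Since $\mathcal{F}(c)$ has $k(s)$-dimension $d\cdot[k(c):k(s)]\ge d$, each kernel has $k(s)$-codimension at least $d$, and $(T_{\reg})_s$ is a finite union of such kernels because $C_s$ is finite; hence $\dim(T_{\reg})_s\le N-d$. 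Proposition~\ref{constructible-conditions}(b) combined with Lemma~\ref{lem.construct}(c) then gives $\dim T_{\reg}\le\dim S+(N-d)<N$. Together with the analogous bounds on $T_0$ inherited from the proof of Theorem~\ref{exist-hyp}, Theorem~\ref{globalize} applies and produces the desired $f$, completing the first part.

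For the case $\dim S=1$, I would iterate: set $Y_0:=X$, and at step $k\le d-1$, apply the first part to $Y_{k-1}$ (in which $C$ is regularly immersed of pure codimension $d-k+1\ge 2>\dim S$, and $\codim(C_s,(Y_{k-1})_s)\ge d-k+1$ by induction) to obtain a locally principal hypersurface $Y_k\subseteq Y_{k-1}$ in which $C$ is regularly immersed in pure codimension $d-k$, and such that $Y_k$ contains no positive-dimensional irreducible component of $F_i\cap (Y_{k-1})_s$. By induction on $k$, for each irreducible component $\Gamma$ of $F_i\cap X_s$ we obtain $\dim(\Gamma\cap(Y_k)_s)\le\max(\dim\Gamma-k,0)$, since each step drops the dimension of positive-dimensional components by exactly one. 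Setting $Y:=Y_{d-1}$ gives $C$ as an effective Cartier divisor on $Y$ together with the stated bound. The ``in particular'' assertion follows because (c) gives $\dim X_s\ge d$ near $C_s$, so positive codimension of $(F_i)_s$ forces $\dim(F_i\cap X_s)\le\dim X_s-1$ near $C_s$, and $d-1$ successive reductions bring this to the asserted bound.
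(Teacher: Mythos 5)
Your strategy is essentially the paper's proof (\ref{Proofreductiondimension2-a} together with the reduction step \ref{quasiproj}): same decomposition of the bad locus into the set from the proof of \ref{exist-hyp} plus a set (the paper's $T_C$, your $T_{\reg}$) cutting out failure of the regular-immersion condition, same identification of that set via Proposition~\ref{constructible-conditions} applied to $\J(n)|_C\cong (\J/\J^2)(n)|_C$, same fiberwise bound $\dim(T_{\reg})_s\le N-d$ from local freeness of rank $d$, same use of $d>\dim S$ to globalize via Lemma~\ref{lem.construct}(c), and the same $(d-1)$-fold iteration with Lemma~\ref{lem.remcodim}(2) in the case $\dim S=1$. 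The one step you leave implicit is Condition~(2) of Theorem~\ref{globalize}: the bound $\dim T_s\le N-1$ does not by itself produce a $k(s)$-rational point of $\mathbb A^N_{k(s)}$ avoiding $T_s$ when $k(s)$ is finite, and the paper supplies this fiberwise existence statement through Lemma~\ref{existence-fn}(b) (built on the graded prime-avoidance Lemma~\ref{avoid}); you should make that ingredient explicit when fleshing out the argument.
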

\proof 
The main part of Theorem \ref{pro.reductiondimension2}
is given a complete proof in the case where $X \to S$ is projective in \ref{Proofreductiondimension2-a}. The proof 
when $X\to S $ is only assumed to be quasi-projective 
is given in \ref{quasiproj}. We prove here the end of the statement of Theorem \ref{pro.reductiondimension2},
where we assume that 
$\dim S = 1$.
Apply Theorem \ref{pro.reductiondimension2}  
$(d-1)$
times, starting with 
$X':=V_+(f)$,  $F'_i:=F_i\cap V_+(f)$, and $C \subseteq X'$. 
Note that at each step Condition \ref{pro.reductiondimension2} (c) 
holds by Lemma \ref{lem.remcodim} (2). 
 \qed

\begin{lemma} \label{lem.remcodim}
Let $S$ be a noetherian scheme, and let $\pi: X\to S$ be a morphism of 
finite type. Let $C$ be a 
closed subset of $X$, with $C \to S$ finite. 
\begin{enumerate}[\rm (1)]
\item Let $s \in S$ be such that $C_s$ is not empty. Then the following are equivalent:
\begin{enumerate}[\rm (a)]
\item $\codim(C_s, X_s)\ge d$. 
\item Every point $x$ of $C_s$ is contained in an irreducible component of
$X_s$ of dimension at least equal to $d$ (equivalently, $\dim_x X_s\ge d$ for all $x \in C_s$)\footnote{
Recall that $\dim_x X_s$ is the infimum of $\dim U$, where $U$ runs through the
open neighborhoods of $x$ in $X_s$.} 
\end{enumerate}
\item Let ${\mathcal L}$ be a line bundle on $X$ with a global section $f$ defining a 
closed subscheme $H_f$ which contains $C$. Let $s\in S$. Suppose that 
$\codim(C_s, X_s)\ge d$. 
Then $\codim(C_s, (H_f)_s)\ge d-1$. 
\item  Assume that $C$ has codimension $d\ge 0$ in $X$
and that each irreducible component of $C$ dominates an irreducible
component of $S$ (e.g., when $C\to S$ is flat).  
Then for all $s\in S$, 
$\codim(C_s, X_s)\ge d$. 
In particular, if $X/S $ and $C$ satisfy the hypotheses of the version of {\rm \ref{pro.reductiondimension2}}
given in the introduction, then they satisfy the hypotheses of Theorem {\rm \ref{pro.reductiondimension2}} as stated above.
\end{enumerate}
\end{lemma}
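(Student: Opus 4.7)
The plan is to handle the three parts in turn; each reduces to a local dimension bound at closed points $x \in C_s$.

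For (1), since $C \to S$ is finite, $C_s$ is a finite set of closed points of $X_s$, each one its own irreducible component, so $\codim(C_s, X_s) = \min_{x \in C_s} \dim \cO_{X_s, x}$, which equals $\min_{x} \dim_x X_s$ because $X_s$ is catenary of finite type over $k(s)$ and $x$ is closed. The equivalence of (a) and (b) is then immediate. For (2), I apply (1) to $H_f$: at each $x \in C_s$, $(H_f)_s$ is cut out in $X_s$ locally by a single $\bar f \in \cO_{X_s, x}$ that is not a unit (since $x \in H_f$), so Krull's principal ideal theorem yields $\dim \cO_{(H_f)_s, x} \geq \dim \cO_{X_s, x} - 1 \geq d-1$.

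For (3), the core of the lemma, fix $s \in S$ with $C_s \neq \emptyset$ and $x \in C_s$; by (1) it suffices to show $\dim_x X_s \geq d$. Choose an irreducible component $C_0$ of $C$ through $x$, and let $\eta$ denote the generic point of the irreducible component $S_0$ of $S$ that $C_0$ dominates (such $S_0$ exists by hypothesis). Since $C_0 \to S_0$ is finite surjective, the generic fiber $(C_0)_\eta$ has dimension zero, and $\eta_{C_0}$ is one of its closed points, hence also a closed point of the ambient fiber $X_\eta$. Because $S_0$ is the unique irreducible component of $S$ through $\eta$, the local ring $\cO_{S, \eta}$ has a unique minimal prime $\p$, with $\cO_{S, \eta}/\p = K(S_0)$, and $\p$ is nilpotent since $\cO_{S, \eta}$ is Noetherian; consequently $\p \cO_{X, \eta_{C_0}}$ is a nilpotent ideal of $\cO_{X, \eta_{C_0}}$, so
\[
\cO_{X_\eta, \eta_{C_0}} \;=\; \cO_{X, \eta_{C_0}} \otimes_{\cO_{S, \eta}} K(S_0) \;=\; \cO_{X, \eta_{C_0}}\big/\p \cO_{X, \eta_{C_0}}
\]
has the same Krull dimension as $\cO_{X, \eta_{C_0}}$, namely $\codim(C_0, X) \geq d$. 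Since $\eta_{C_0}$ is closed in $X_\eta$ and $X_\eta$ is of finite type over the field $K(S_0)$, one has $\dim_{\eta_{C_0}} X_\eta = \dim \cO_{X_\eta, \eta_{C_0}} \geq d$. Upper semi-continuity of fiber dimension (EGA IV.13.1.3) applied to $X \to S$ now propagates this bound to every specialization of $\eta_{C_0}$; in particular, $x \in C_0 = \overline{\{\eta_{C_0}\}}$ satisfies $\dim_x X_s \geq d$, and part (1) yields $\codim(C_s, X_s) \geq d$.

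For the final assertion, the introduction's hypothesis that $C$ is irreducible and $C \to S$ is finite surjective forces $S$ itself to be irreducible (the image of an irreducible space is irreducible), so the unique component of $C$ dominates the unique component of $S$ and $C$ has pure codimension $d$; part (3) then supplies the codimension condition $\codim(C_s, X_s) \geq d$. The principal obstacle lies in (3): the crucial point is that $\eta_{C_0}$ is a closed point of the generic fiber $X_\eta$, so its local fiber dimension equals the Krull dimension of the stalk and can be read off directly from $\codim(C_0, X) \geq d$; fiber-dimension semi-continuity then transports the bound down to $X_s$ at $x$, bypassing any need to invoke catenarity of $X$ or $S$.
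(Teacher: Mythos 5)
Your proof is correct and follows essentially the same route as the paper: part (1) via the fact that $C_s$ consists of finitely many closed points, part (2) by a Krull-height estimate at each $x\in C_s$ (the paper phrases this through irreducible components rather than local rings, but it is the same idea), and part (3) by computing $\dim_\xi X_{\pi(\xi)}$ at a generic point $\xi$ of a component of $C$ and then invoking upper semi-continuity of fiber dimension (EGA IV.13.1.3) to propagate the bound to all of $C$.
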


\proof (1)  This is immediate since, $X_s$ being of finite type over
$k(s)$, $C_s$ is the union of finitely many closed points of $X_s$. 

(2) We can suppose $C_s$ is not empty. Let $x \in C_s$. 
Then $x$ is contained in an irreducible component $\Gamma'$ of $X_s$ 
of dimension at least equal to $d$. Consider an irreducible component 
$\Gamma $ of $\Gamma' \cap (H_f)_s$ which contains $x$. 
Since 
$\Gamma' \cap (H_f)_s$ is defined in $\Gamma'$ by a single equation,
we find that 
$\dim(\Gamma)  \ge \dim(\Gamma')- 1 \ge d - 1$, as desired. 

(3) 
Let $\xi$ be a generic point of $C$.
By hypothesis, $\pi(\xi)$ is a generic point of $S$ and $\xi$ is closed in $X_{\pi(\xi)}$. So 
$$\dim_{\xi} X_{\pi(\xi)}=\dim \cO_{X_{\pi(\xi)}, \xi}=\dim \cO_{X, \xi}\ge\codim_{\xi}(C, X)\ge d.$$ 
The set $\{ x\in X | \dim_x X_{\pi(x)} \ge d\}$ is closed (\cite{EGA}, IV.13.1.3). Since this set
contains the generic points of $C$, it contains $C$. Hence, when $C_s $ is not empty,
$\codim(C_s, X_s)\ge d$ by (1)(b). 
When $C_s=\emptyset$, $\codim(C_s,X_s)=+\infty$ by definition and the statement of (3) also holds.  

In the version of {\rm \ref{pro.reductiondimension2}}
given in the introduction, we assume that $C$ is irreducible, that $C \to S$ is finite and surjective, and that $C$ has codimension $d > \dim S$ in $X$.
It follows  then from (3) that (c) in Theorem {\rm \ref{pro.reductiondimension2}} is automatically satisfied.
\qed 

\begin{notation}  
\label{emp.notation} 
\label{emp.proofpro.reductiondimension2} 
We fix here some notation needed in the proofs of
\ref{exist-hyp} and \ref{pro.reductiondimension2}. 
Let $S = \Spec R $ be a noetherian affine scheme. 
Consider a 
projective morphism  $X\to S$. 
Fix a very ample sheaf $\cO_X(1)$ on $X$ relative to $S$. 
As usual, if $\F$ is any quasi-coherent sheaf on $X$ and $s \in S$, 
let $\F_s$ denote the pull-back of $\F$ to the fiber $X_s$ and, 
if $x\in X$, $\F(x):=\F_x\otimes k(x)$ (see \ref{zero.locus}).  

Let $C \subseteq X $ be a closed subscheme defined by an ideal sheaf
$\J$. 
For $n\ge 1$, set $\J(n):= \J \otimes \cO_X(n)$,
and for $s \in S$, let $ \J_s(n):= \J_s \otimes \cO_{X_s}(n)=\J(n)_s$.
Let $\oline{\J}_s$ denote the image of $\J_s\to \cO_{X_s}$. When $x \in C \cap X_s$, we note the following natural isomorphisms of $k(x)$-vector spaces:
$$ (\J(n)|_C)_s(x) \longrightarrow \J_s(n)/\J_s^2(n) \otimes k(x) \longrightarrow \J_s(n) \otimes k(x) \longrightarrow \J(n) \otimes k(x)$$
and 
$$\oline{\J}_s(n)/\oline{\J}_s^2(n) \otimes k(x) \longrightarrow \oline{\J}_s(n) \otimes k(x).$$
To prove Theorem~\ref{exist-hyp} and Theorem
\ref{pro.reductiondimension2}, we will show the existence of 
$f \in H^0(X, \J(n))$, for all $n$ sufficiently large,
such that the associated closed subscheme $ H_f \subset X$ satisfies 
the conclusions of the theorems. 
To enable us to use the results of the previous section
to produce the desired $f$, we define the following sets.

Let  $n$ be big enough such that $\J(n)$ is generated by its global sections.
Fix a system of generators $f_1, \dots, f_N$ of $H^0(X, \J(n))$. 
Let $s\in S$. Denote by $\overline{f}_{i,s}$ the image of $f_i$ in 
$H^0(X_s, \oline{\J}_s(n))$.
Let $F$ be a locally closed subset of $X$. 
\begin{quote}  
$\bullet$ 
Let $\Sigma_F(s)$ denote the set of $(\alpha_1,\ldots, \alpha_N)
\in k(s)^N$ such that the closed subset 
$V_+(\sum_{i=1}^N  \alpha_i \overline{f}_{i,s})$ in $X_s$, defined by the 
global section
$\sum_{i=1}^N  \alpha_i \overline{f}_{i,s}$ of $\cO_{X_s}(n)$,
 contains at least one irreducible 
component of $F_s$ of positive dimension. 
\end{quote}
For the purpose of \ref{pro.reductiondimension2}, we will also
consider the following set. 
\begin{quote} 
$\bullet$ Let $\Sigma_C(s)$ denote the set of $(\alpha_1,\ldots, \alpha_N)
\in k(s)^N$ for which there exists 
$x\in C \cap X_s$ such that the image of 
$\sum_{i=1}^N  \alpha_i (f_{i}|_{X_s})\in H^0(X_s, \J_s(n))$ vanishes
in $\J_s(n) \otimes k(x)$.
\end{quote}

To lighten the notation, we will not always explicitly use symbols to
make it clear that indeed  the sets $\Sigma_C(s)$ and $\Sigma_F(s)$
depend on $n$ and on $f_1, \dots, f_N$. We will use the fact that if 
$f \in H^0(X, \J(n))$ and $\overline{f}_{s}$ is its image in  
$\oline{\J}_s(n)$, then $V_+(f) \cap X_s = V_+(\overline{f}_{s})$.
\end{notation}

\begin{lemma} \label{remove-isolated-pts}  
Let $S$ be an affine noetherian
scheme, and let $X\to S$ be a morphism of finite type. 
Let $F$ be a locally closed subset of $X$. 
Let ${\bf F}$ be the union of the irreducible components of positive
dimension of $F_s$, when $s$ runs over all points of $S$. Then 
${\bf F}$ is closed in $F$.

Assume now that $X\to S$, $n$, $\J(n)$, and $\{f_1,\dots, f_N\}$ are as above in {\rm \ref{emp.notation}}.
Then
\begin{enumerate}[{\rm (1)}]
\item  
$\Sigma_{F}(s)=\Sigma_{\bf{F}}(s)$ for all $s \in S$. 
\item There exists a natural constructible subset $T_F$ 
of $\mathbb A^N_S$ such that for all $s\in S $, $\Sigma_F(s)$ is exactly 
the set of $k(s)$-rational points of $\mathbb A^N_{k(s)}$ 
contained in $(T_F)_s$, 
\end{enumerate}
\end{lemma}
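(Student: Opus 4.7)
The proof has three parts. First, for the closedness of ${\bf F}$ in $F$, apply upper semi-continuity of fiber dimension (\cite{EGA}, IV.13.1.3) to the finite-type morphism $\pi|_F : F \to S$: the function $x \mapsto \dim_x F_{\pi(x)}$ is upper semi-continuous on $F$, so the subset where this dimension is at least $1$ is closed in $F$. For a noetherian space, $\dim_x F_s$ equals the maximum dimension of an irreducible component of $F_s$ containing $x$, so this closed subset is exactly ${\bf F}$.

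For assertion (1), observe that ${\bf F} \cap F_s$ is the union of the positive-dimensional irreducible components of $F_s$ (the various $F_s$ being pairwise disjoint in $X$). These are irreducible and closed in ${\bf F}_s$ and together cover it, so they are precisely the irreducible components of ${\bf F}_s$, and all have positive dimension. The defining conditions of $\Sigma_F(s)$ and $\Sigma_{{\bf F}}(s)$ therefore coincide.

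For assertion (2), endow ${\bf F}$ with its reduced induced scheme structure and consider the finitely presented morphism $\pi|_{{\bf F}} : {\bf F} \to S$. Apply Proposition \ref{constructible-conditions} to this morphism, together with the coherent (hence finitely presented) sheaf $\cO_X(n)|_{{\bf F}}$ and the global sections $f_1|_{{\bf F}}, \dots, f_N|_{{\bf F}}$. This produces a locally constructible subset $T_F \subseteq \mathbb A^N_S$ whose set of $k(s)$-rational points in the fiber $(T_F)_s$ is exactly the collection of $(\alpha_1, \dots, \alpha_N) \in k(s)^N$ for which $\sum_i \alpha_i (f_i|_{{\bf F}})_s$ vanishes at some generic point of ${\bf F}_s$. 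Vanishing at such a generic point $\xi$ of an irreducible component $\Gamma$ of ${\bf F}_s$ is equivalent, since $V_+$ is closed, to $\Gamma \subseteq V_+(\sum_i \alpha_i \bar f_{i,s})$. By the identification in assertion (1), this is precisely the condition $(\alpha_1, \dots, \alpha_N) \in \Sigma_F(s)$. Finally, $\mathbb A^N_S$ is noetherian, so locally constructible coincides with constructible (\cite{EGA}, IV.1.8.1).

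The main obstacle is the bookkeeping step in (2): verifying that the ``vanishing at a generic point'' condition produced by Proposition \ref{constructible-conditions} applied to ${\bf F} \to S$ matches the ``containing a positive-dimensional component of $F_s$'' condition defining $\Sigma_F(s)$. This matching is exactly what the preliminary identification of irreducible components in assertion (1) provides, so no deeper input is required.
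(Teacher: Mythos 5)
Your proof is correct and follows essentially the same route as the paper's: reduce to the closed subset $\mathbf{F}$, identify $\Sigma_F(s)$ with $\Sigma_{\mathbf{F}}(s)$, put the reduced structure on $\mathbf{F}$, and feed the restricted data into Proposition \ref{constructible-conditions}. The only cosmetic difference is that for the closedness of $\mathbf{F}$ you invoke upper semi-continuity of $x\mapsto\dim_x F_{\pi(x)}$ (EGA IV.13.1.3), whereas the paper uses the openness of the locus of fiberwise-isolated points (EGA IV.13.1.4); these are equivalent in this setting and both are fine.
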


\begin{proof}  Endow $F$ with the structure of a reduced subscheme of
$X$ and consider the induced morphism $g: F \to S$. Then the set of $x \in F$ 
such that $x$ is isolated in $g^{-1}(g(x))$ is open in $F$ (\cite{EGA}, IV.13.1.4).
Thus,  ${\bf F}$ is closed in $F$. 

(1) By construction, for all $s\in S$, 
$F_s$ and ${\bf F}_s$ have the same irreducible components of 
positive dimension, so $\Sigma_F(s)=\Sigma_{\bf F}(s)$ for all $s \in
S$. 

(2) By (1) we can replace $F$ by ${\bf F}$ and suppose that for all
$s\in S$, $F_s$ contains no isolated point. 
Endow $F$ with the structure of a reduced subscheme of $X$. 
Let $\cO_F(1):=\cO_X(1)|_F$. 
Consider the following data: the morphism of finite type $F \to S$, 
the sheaf $ \cO_F(n)$, and the sections $h_1,\dots, h_N$ in $H^0(F,
\cO_F(n))$, with $h_i:= f_i|_F$. 
We associate to this data, for each $s \in S$, the subset $\Sigma(s) $ as in \ref{sig}.
We claim that for each $s \in S$, we have $\Sigma_F(s)=\Sigma(s)$. 
For convenience, recall that 
$$
\begin{array}{c}
\Sigma(s)=\left\{ (\alpha_1, \ldots, \alpha_N)\in k(s)^N \ \Big| \ 
\sum_{i=1}^N \alpha_i h_{i,s} \ \text{\rm vanishes at some generic point of} 
\ F_s\right\}.
\end{array} 
$$
Let $f\in H^0(X, \J(n))\subseteq H^0(X, \cO_X(n))$ 
and let $h=f|_F\in H^0(F, \cO_F(n))$. Recall that $\bar{f}_s$ denotes the image of $f_s$ under the natural map $\J_s(n) \to  \cO_{X_s}(n)$.
Thus, $\bar{f}_s$ is nothing but the image of $f\in H^0(X, \cO_X(n))$ under the natural map $H^0(X, \cO_X(n)) \to H^0(X_s, \cO_{X_s}(n))$.
For any $s\in S$ and for any $x\in F_s$, 
we have 
$$x\in V_+(\bar{f}_s) \Longleftrightarrow \bar{f}_s(x)=0\in \cO_X(n)\otimes k(x)
\Longleftrightarrow h_s(x)=0 \in\cO_F(n)\otimes k(x).$$ 
Since we are assuming that $F_s$ does not have any irreducible
component of dimension $0$, $\Sigma_F(s)$ is equal to 
$$ \begin{array}{c}
 \left\{ (\alpha_1, \ldots, \alpha_N)\in k(s)^N \ \Big| \ 
\sum_{i=1}^N  \alpha_i \overline{f}_{i,s} \ \text{\rm vanishes at some generic point of } F_s \right\}. 
\end{array}
$$
Therefore, $\Sigma_F(s)=\Sigma(s)$. We can thus apply 
Proposition~\ref{constructible-conditions} to the above data 
$F \to S$, $ \cO_F(n)$, and the sections $h_1,\dots, h_N$, 
to obtain 
a natural constructible subset $T_F$ of $\mathbb A^N_S$ 
such that for all $s\in S$, 
$\Sigma_F(s)$ is exactly the set of $k(s)$-rational points 
of $\mathbb A^N_{k(s)}$ contained in $(T_F)_s$.
\end{proof}

\medskip
Our goal  now is to bound the dimension of $(T_F)_s$ 
so that Theorem \ref{globalize}  can be used 
to produce the desired $f  \in H^0(X, \J(n))$. 
Let $V/k$ be a projective variety over a field $k$, endowed with a
very ample invertible sheaf $\cO_V(1)$. Recall that the {\it Hilbert
  polynomial} $P_V(t)\in \mathbb Q[t]$ is the unique polynomial such that 
$P_V(n)=\chi(\cO_V(n))$ for all 
integers $n$ (where 
$\chi(\G)$ denotes as usual the Euler-Poincar\'e characteristic 
of a coherent sheaf $\G$). 
A finiteness result for the Hilbert polynomials of the fibers of a projective
morphism, needed in the final step of the 
proof of our next lemma, is recalled in \ref{Mumford}. 

\begin{lemma} \label{cor.constructible2} 
Let $S=\Spec R$ be an affine noetherian scheme and let $X\to S$ be a projective
morphism. Let $\cO_X(1)$ be a very ample invertible sheaf relative to $X \to S$.
Let $C$ be a closed subscheme of $X$ with ideal sheaf $\J$,
and let $F$ be a locally closed subset of $X$. Assume  that for all $s \in S$, 
no irreducible component of $F_s$ of positive dimension is contained in $C_s$. 

Let $c\in {\mathbb N}$. 
Then there exists $n_0\in {\mathbb N}$ such 
that for all $n\ge n_0$ 
and for any choice $\{f_1,\ldots, f_N\}$ of generators of $H^0(X, \J(n))$,  
the constructible subset $T_F\subseteq \mathbb A^N_S$ introduced
in {\rm \ref{remove-isolated-pts} (2)} satisfies $\dim (T_F)_s \leq N-c$ for all $s \in S$. 
\end{lemma}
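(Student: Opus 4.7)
My plan is to unpack $\dim (T_F)_s$ using Proposition \ref{constructible-conditions}(b), identify the image of the relevant evaluation map with global sections of an appropriate ideal sheaf on the closure of each positive-dimensional component of $F_s$, and then use Serre vanishing and a Mumford-type boundedness of Hilbert polynomials to make all estimates uniform.

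By Lemma \ref{remove-isolated-pts}(1), after replacing $F$ by $\mathbf{F}$, I may assume every irreducible component of every $F_s$ is positive-dimensional. Applying Proposition \ref{constructible-conditions}(b) to the data $(F\to S,\ \cO_F(n),\ \{h_i := f_i|_F\})$ gives
$$\dim (T_F)_s = \max_{\xi}\, \dim_{k(s)} \ker\bigl(\phi_{\xi,n}\colon k(s)^N \to \cO_F(n) \otimes k(\xi)\bigr),$$
where $\xi$ runs over the generic points of the components $\Gamma$ of $F_s$. Since $f_1,\dots,f_N$ generate $H^0(X,\J(n))$ as an $R$-module, $\mathrm{Im}(\phi_{\xi,n})$ coincides with the image of $H^0(X,\J(n))\otimes_R k(s) \to \cO_F(n) \otimes k(\xi)$, a subspace independent of the choice of generators; since $\dim \ker(\phi_{\xi,n}) + \dim \mathrm{Im}(\phi_{\xi,n}) = N$, it suffices to bound this image from below by $c$ uniformly in $s$ and $\Gamma$.

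Fix $s$ and $\Gamma$, and let $\bar\Gamma$ be the Zariski closure of $\Gamma$ in $X_s$ with its reduced structure; it is an integral projective $k(s)$-scheme of dimension $\geq 1$ with function field $k(\xi)$. Put $\J_{\bar\Gamma} := \J\cdot \cO_{\bar\Gamma}$. Because no positive-dimensional component of $F_s$ is contained in $C$, we have $\bar\Gamma \not\subseteq C$, so $\J_{\bar\Gamma,\xi} = \cO_{\bar\Gamma,\xi}$ and $\J_{\bar\Gamma}$ is torsion-free; in particular the restriction $H^0(\bar\Gamma, \J_{\bar\Gamma}(n)) \hookrightarrow \cO_{\bar\Gamma}(n)_\xi \otimes k(\xi) \cong k(\xi)$ is injective. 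I will show that for $n \gg 0$ the composition
$$H^0(X,\J(n))\otimes_R k(s) \longrightarrow H^0(X_s,\J_s(n)) \longrightarrow H^0(\bar\Gamma,\J_{\bar\Gamma}(n))$$
is surjective: the first arrow by cohomology and base change applied once $R^1\pi_*\J(n)$ vanishes, and the second by applying Serre vanishing to the kernel in the exact sequence $0 \to I_{\bar\Gamma}\J_s(n) \to \J_s(n) \to \J_{\bar\Gamma}(n) \to 0$. Consequently $\dim_{k(s)} \mathrm{Im}(\phi_{\xi,n}) = \dim_{k(s)} H^0(\bar\Gamma,\J_{\bar\Gamma}(n))$.

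From $0 \to \J_{\bar\Gamma}(n) \to \cO_{\bar\Gamma}(n) \to \cO_{\bar\Gamma\cap C}(n) \to 0$ and Serre vanishing, for $n$ large this dimension equals $P_{\bar\Gamma}(n) - P_{\bar\Gamma\cap C}(n)$; this is a polynomial in $n$ of degree $\dim\bar\Gamma \geq 1$, strictly exceeding $\dim(\bar\Gamma\cap C)$ because $\bar\Gamma$ is irreducible and $\bar\Gamma \not\subseteq C$, and so it tends to $\infty$. The main obstacle, and the reason the proof invokes \ref{Mumford}, is producing a single $n_0$ that works for every $s$ and every $\Gamma$: as $s$ ranges over $S$ and $\Gamma$ over positive-dimensional components of $F_s$, the Hilbert polynomials of the integral subschemes $\bar\Gamma$ of $X_s$ and of their intersections $\bar\Gamma\cap C$ each range over a finite set of polynomials. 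Hence the Serre vanishings and the inequality $P_{\bar\Gamma}(n) - P_{\bar\Gamma\cap C}(n) \geq c$ all hold uniformly for $n \geq n_0$, yielding $\dim(T_F)_s \leq N-c$ as required.
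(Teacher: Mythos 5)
Your high-level strategy — identifying $\dim (T_F)_s$ via Proposition~\ref{constructible-conditions}(b), factoring the evaluation map through sections of an ideal sheaf on the closure $\bar\Gamma$ of each positive-dimensional component, and then lower-bounding $\dim H^0(\bar\Gamma,\J_{\bar\Gamma}(n))$ via a difference of Hilbert polynomials — is sound and matches the paper's in spirit. However, there is a genuine gap at the step you yourself flag as ``the main obstacle,'' and the way you dispose of it is not justified.

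You assert that ``the Hilbert polynomials of the integral subschemes $\bar\Gamma$ of $X_s$ and of their intersections $\bar\Gamma\cap C$ each range over a finite set of polynomials,'' attributing this to \ref{Mumford}. But \ref{Mumford}(iii) gives finiteness only for the Hilbert polynomials of the \emph{full fibers} $X_s$ of a projective morphism $X\to S$, not for irreducible components of fibers. There is no flat (or even projective-over-$S$) family whose fibers are the $\bar\Gamma$'s as $s$ and $\Gamma$ vary: the components of $Z_s$ (where $Z$ is the closure of $F$) can split and jump in ways that no single flat family captures. Making the $\bar\Gamma$'s into fibers is precisely what the paper's Lemma~\ref{use.it} accomplishes — it produces a quasi-finite surjective $S'\to S$ and closed subschemes $\Gamma_1,\dots,\Gamma_m$ of $X'=X\times_S S'$ with geometrically integral fibers such that the irreducible components of $X'_{s'}$ are exactly the $(\Gamma_i)_{s'}$. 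Only after this reduction (preceded by replacing $X$ with the closure $Z$ of $F$ so that $F$ is dense, and transporting generators of $H^0(X,\J(n))$ along the way) can one legitimately invoke \ref{Mumford}(iii) for the projective morphisms $\Gamma_i\to S'$ and $C\times_X\Gamma_i\to S'$. Your proof omits this entire reduction and thus the finiteness you need.

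Relatedly, the various Serre-vanishing and cohomology-and-base-change steps (surjectivity of $H^0(X,\J(n))\otimes k(s)\to H^0(X_s,\J_s(n))$, surjectivity of $H^0(X_s,\J_s(n))\to H^0(\bar\Gamma,\J_{\bar\Gamma}(n))$, and vanishing of $H^1$ on $\bar\Gamma$ and $\bar\Gamma\cap C$) are invoked ``for $n$ large,'' but you need a single $n_0$ valid for \emph{all} $s$ and \emph{all} $\Gamma$ simultaneously. Finiteness of Hilbert polynomials alone does not force a uniform $m$-regularity bound for the infinite collection of sheaves $\J_{\bar\Gamma}$, $I_{\bar\Gamma}\J_s$, etc.; this uniformity in the paper is obtained through the generic-flatness stratification used in \ref{Jn-barJn} and through \ref{Mumford2}, again downstream of the structural reduction you skipped. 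To repair the proof you would essentially have to rebuild that machinery.
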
 

\proof Lemma \ref{remove-isolated-pts} (1) shows that we can suppose that the locally closed subset
$F$ is such that for all $s \in S$, $F_s$ has 
no isolated point. We now further reduce to the case where
$F$ is open and dense in $X$. 

Let $Z$ be the Zariski closure of 
$F$ in $X$. Then $F$ is open and dense in $Z$. 
Endow $Z$ with the induced structure of reduced closed subscheme. 
Denote by $\J\cO_Z$ the 
image of $\J$ under the natural homomorphism $\cO_X \to \cO_Z$. 
This sheaf   is the sheaf of ideals associated with the image of the
closed immersion $C \times_X Z \to Z$. 
The morphism of $\cO_X$-modules $\J \to \J\cO_Z$ is surjective with kernel ${\mathcal K}$. Since $\cO_X(1)$ is very ample,
we find that there exists $n_0>0$ such that for all $n \geq n_0$, 
$H^1(X, {\mathcal K}(n)) =(0)$, so that the natural map
$$H^0(X, \J(n)) \longrightarrow H^0(Z, \J\cO_Z(n))$$ 
is surjective. Fix $n \geq n_0$, and fix a system of generators $\{f_1,\ldots, f_N\}$ of $H^0(X,\J(n))$.
It follows that  the images of $f_1, \dots, f_N$ generate 
the $R$-module $H^0(Z, \J\cO_Z(n))$. Note that 
$C\times_X Z$ does not contain any irreducible component of $F_s$ for
all $s$. It follows that it suffices to prove 
the bound on the dimension of $(T_F)_s$ when $Z=X$, that is, when $F$
is open and dense in $X$. 
We need  the following fact:

\begin{lemma} \label{use.it}
Let $S$ be a noetherian scheme. Let $X\to S$ be a morphism of 
finite type. Then there exist an affine scheme $S'$ and a quasi-finite 
surjective morphism of finite type 
$S' \to S$ with the following properties: 
\begin{enumerate}[{\rm (a)}] 
\item $S'$ is the disjoint union of its irreducible components.
\item Let 
$X':=X\times_S S'$, 
and let $\Gamma_1, \dots, \Gamma_m$ be the irreducible components of $X'$
endowed with the induced structure of reduced closed schemes.  
Then for $i=1,\dots, m$, the fibers of $\Gamma_{i}\to S'$ are either empty or 
geometrically integral.
\item For each $s'\in S'$,
the irreducible components of $X'_{s'}$ are exactly the irreducible components  of the 
non-empty $(\Gamma_i)_{s'}$, $i=1, \dots, m$. 
\end{enumerate}
\end{lemma}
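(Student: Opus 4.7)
The plan is to argue by Noetherian induction on $S$. To establish the lemma for $X\to S$, it suffices to find a dense open $U\subseteq S$ together with an affine, quasi-finite, surjective morphism $\widetilde{U}\to U$ satisfying (a)--(c) relative to $X\times_S U\to U$: the Noetherian induction hypothesis applied to the proper closed subscheme $S\setminus U$ (endowed with the reduced structure) supplies an analogous $\widetilde{Z}\to S\setminus U$, and the disjoint union $S':=\widetilde{U}\sqcup\widetilde{Z}$ is then an affine, quasi-finite, surjective morphism over $S$ satisfying (a)--(c) tautologically. By the same induction applied to a finite affine open cover of $S$ one reduces to the case where $S$ is affine; then, replacing $S$ by the disjoint union of its finitely many irreducible components taken with the reduced structure (a finite surjective cover), one further reduces to $S$ affine and integral.

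With $S$ affine integral and generic point $\eta$, the generic fiber $X_\eta$ is of finite type over $k(\eta)$. Combining a finite purely inseparable extension that makes the reduced irreducible components geometrically reduced with a finite separable extension that splits off the geometrically irreducible components, I obtain a finite extension $L/k(\eta)$ such that every reduced irreducible component of $X_\eta\otimes_{k(\eta)} L$ is geometrically integral over $L$. Let $S_1$ be the normalization of $S$ in $L$: an integral affine scheme with generic point $\eta'$ and $k(\eta')=L$, and $S_1\to S$ is finite surjective. Set $X_1:=X\times_S S_1$, and let $\Gamma_1,\dots,\Gamma_n$ denote the scheme-theoretic closures in $X_1$, taken with the reduced induced structure, of the generic points of the irreducible components of $(X_1)_{\eta'}$. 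Each $\Gamma_j$ is integral, dominates $S_1$, and its generic fiber over $S_1$ is geometrically integral over $L$.

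I would then shrink $S_1$ to a dense open $V$ on which three conditions hold simultaneously: (i) every irreducible component of $X_1$ that does not dominate $S_1$ has image disjoint from $V$, so that $\Gamma_1,\dots,\Gamma_n$ account for every irreducible component of $X_1\times_{S_1} V$; (ii) for each $j$ and each $v\in V$, the fiber $(\Gamma_j)_v$ is either empty or geometrically integral, obtained by applying the constructibility theorem EGA IV.9.7.7 to each morphism $\Gamma_j\to S_1$ and intersecting the resulting finitely many dense opens; and (iii) for each pair $i\ne j$ and each $v\in V$, $(\Gamma_i)_v\not\subseteq \Gamma_j$, using that the image in $S_1$ of the open subset $\Gamma_i\setminus \Gamma_j$ of $\Gamma_i$ is constructible and contains $\eta'$. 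The image of $V$ in $S$ under the finite map $S_1\to S$ is constructible and contains $\eta$, so it contains a dense principal affine open $U\subseteq S$; covering the preimage of $U$ in $V$ by finitely many principal affine opens $D(f_1),\dots,D(f_r)$ of $S_1$ whose union surjects onto $U$, I set $\widetilde{U}:=\coprod_{i=1}^r D(f_i)$. This $\widetilde{U}$ is affine, quasi-finite and surjective over $U$, and is a disjoint union of integral (hence irreducible) schemes, so (a) holds; properties (b) and (c) for $\widetilde{U}\to U$ follow from (i)--(iii). The main delicate point will be verifying (iii), namely that fiberwise no $(\Gamma_i)_v$ is swallowed by a distinct $\Gamma_j$, together with confirming that the cover by principal affine opens preserves quasi-finite surjectivity onto $U$ without destroying property (a).
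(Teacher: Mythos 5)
Your overall strategy coincides with the paper's: Noetherian induction on $S$, reduction to $S$ affine and integral, a finite extension $L$ of the residue field at the generic point so that the reduced irreducible components of the generic fiber become geometrically integral, spreading this out over a quasi-finite cover of a dense open, and finishing by induction on the closed complement. Your way of arranging condition (c) is a mild variation: where the paper invokes EGA IV.9.7.8 (local constancy of the number of geometric irreducible components in the fibers) to conclude that the $(\Gamma_i)_v$ really are the distinct components, you instead impose the explicit condition (iii) by applying Chevalley's theorem to each $\Gamma_i\setminus\Gamma_j\to S_1$ and shrinking so that the fiber $(\Gamma_i\setminus\Gamma_j)_v$ is never empty. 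Both routes work; yours is marginally more elementary.

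There is one genuine gap. You write: ``Let $S_1$ be the normalization of $S$ in $L$: an integral affine scheme \dots and $S_1\to S$ is finite surjective.'' For an arbitrary noetherian domain $A=\cO(S)$ this is false: Nagata's examples give noetherian (even local) domains whose integral closure in a finite field extension is not module-finite, and may not even be noetherian. The finiteness is not a cosmetic issue here — if $S_1\to S$ is not of finite type, then $S_1$ need not be noetherian, and all the subsequent appeals to Chevalley constructibility for the morphisms $\Gamma_i\to S_1$, $\Gamma_i\setminus\Gamma_j\to S_1$, and $X_1\to S_1$ lose their justification. The repair is easy and is exactly what the paper does tacitly: rather than normalizing, choose generators $\alpha_1,\dots,\alpha_r$ of $L$ over $K=\Frac(A)$, clear denominators to obtain $c_i\in A\setminus\{0\}$ with $c_i\alpha_i$ integral over $A$, and set $B:=A[c_1\alpha_1,\dots,c_r\alpha_r]$. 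Then $B$ is a domain, module-finite over $A$, with $\Frac(B)=L$, and $S_1:=\Spec B\to S$ is a finite surjective morphism from an integral affine noetherian scheme, which is all your argument needs.
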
 

\begin{proof} We proceed by noetherian induction on $S$. We can suppose $S$ is
reduced
and $X\to S$ is dominant. 
First consider the case $S=\Spec K$ for some 
field $K$. Then there exists a finite 
extension $L/K$ such that each irreducible component of 
$X_L$, endowed with the structure of reduced closed subscheme, 
is geometrically integral 
(see \cite{EGA}, IV.4.5.11 and IV.4.6.6). The lemma is proved with 
$S'=\Spec L$. 

Suppose now that the property is true for any strict closed subscheme $Z$ 
of $S$ and for the scheme of finite type $X\times_S Z\to Z$. If $S$ is reducible
with irreducible components $S_1,\dots, S_\ell$, then by the induction hypothesis 
we can find $S'_i\to S_i$ with the desired properties (a)-(c). Then
it is enough to take $S'$ equal to the disjoint union of the $S'_i$. 
Now we are reduced to the case $S$ is integral. 
Let $\eta$ be the generic point of $S$ and let $K=k(\eta)$. Let $L/K$
be a finite extension defined as in the zero-dimensional case above. 
Restricting $S$ to a dense open subset $V$ if necessary, 
we can find a finite  surjective morphism $\pi: U\to V$ with $U$ integral that extends 
$\Spec L\to \Spec K$. 
Let $X_1, \dots, X_r$ be the (integral) irreducible components of $X\times_V U$. 
Their generic fibers over $U$ are geometrically integral.

It follows from \cite{EGA}, IV.9.7.7, that there exists a dense open
subset $U'$ of $U$ such that $X_i \times_{V} U' \to U'$ has
geometrically integral fibers for all 
$i=1,\dots,r$. Restricting $U'$ further if necessary, we can suppose 
that the number of geometric irreducible components in the fibers of 
$X \times_{V} U' \to U'$ is constant 
(\cite{EGA}, IV.9.7.8). Note now that for each $y \in U'$, the irreducible 
components of $(X\times_V U')_y$ are exactly the fibers $(X_i)_y$, 
$i=1, \dots, r$. 
As $S\setminus \pi(U\setminus U')$ is open and dense in $S$, it contains
a dense affine open subset $V'$ of $S$. 
By induction hypothesis, there exists $T'\to (S\setminus V')_{\mathrm{red}}$ with 
the desired properties (a)-(c). Let $S'$ be the disjoint union 
of $\pi^{-1}(V')$ with $T'$. It is clear that $S'$ satisfies the
properties (a)-(c). 
\end{proof}

Let us now return to the proof of Lemma \ref{cor.constructible2}.
We now proceed to  prove that {\it it suffices to bound the dimension  
of $(T_F)_s$ for all $s\in S$ when all fibers of $X \to S$ are integral}. 
To prove this reduction, we use the fact that the formation of $T_F$
is compatible with any base change $S'\to S$  as in 
\ref{constructible-conditions} (a), and the fact that the dimension of
a fiber $(T_F)_s$ is invariant by finite field extensions in the sense of 
\ref{lem.construct} (b). Finally, the conditions that 
$C_s$ does not contain any irreducible component of $F_s$ is also 
preserved by base change. While making these reductions, 
care will be needed to keep track of the hypothesis that 
$f_1,\dots, f_N$ generate $H^0(X, \J(n))$. 

Let $g: S' \to S$ be as in Lemma \ref{use.it} with natural morphism $g': X \times_S S' \to X$.
Let $F'$ be the pre-image of $F$ in $X\times_S S'$. 
For any $s\in S$ and $s'\in S'$ lying over $s$, 
$\dim (T_F)_s=\dim (T_F)_{s'}$ and $(T_F)_{s'}=(T_{F'})_{s'}$ is the finite union of the 
$(T_{\Gamma_i\cap F'})_{s'}$. 
Increasing $n_0$ if necessary, we find using Fact~\ref{Mumford} (i)
that the natural map
$$H^0(X, \J(n))\otimes \cO(S')\longrightarrow H^0(X', g'^*\J(n))$$ is 
an isomorphism. Denote now by $\J\cO_{\Gamma_i}$ the 
image of $g'^{*}\J$ under the natural map $g'^{*}\J \to \cO_{X'} \to \cO_{\Gamma_i}$.
The morphism $g'^{*}\J \to \J\cO_{\Gamma_i}$ of $\cO_{X'}$-modules is surjective. 
Increasing $n_0 $ further if necessary, we find that 
$$H^0(X', g'^*\J(n)) \longrightarrow H^0(\Gamma_i, \J\cO_{\Gamma_i}(n))$$ is 
surjective for all $i=1,\dots, m$, where the twisting is done with the very ample sheaf
$\cO_{X'}(1):= g'^* \cO_X(1)$ relative to $X' \to S'$. It follows that the images 
of $f_1,\dots, f_N$ in $H^0(\Gamma_i, \J\cO_{\Gamma_i}(n))$ 
also form a system of generators of $H^0(\Gamma_i, \J\cO_{\Gamma_i}(n))$. Therefore, we can replace $X\to S$ with $\Gamma\to S'$ for
$\Gamma$ equal to some $\Gamma_i$. Now we are in the situation where
all fibers of $X\to S$ are integral.

By \ref{constructible-conditions} (b), if $F_s\ne\emptyset$, 
$\dim (T_F)_s$ is the dimension of the kernel of the natural map 
$$k(s)^N \to \cO_{F_s}(n)\otimes k(\xi)=\cO_{X_s}(n)\otimes k(\xi)$$
defined by the $\bar{f}_{i,s}$ and 
where $\xi$ is the generic point of $X_s$. This map is given
by sections in $H^0(X, \J(n))$, so it factorizes into a sequence of linear maps
$$ k(s)^N\to H^0(X, \J(n))\otimes k(s) \to H^0(X_s, \J_s(n))
\to H^0(X_s, \oline{\J}_s(n))\to \cO_{X_s}(n)\otimes k(\xi),$$ 
where the first one is surjective because $f_1,\dots, f_N$ generate
$H^0(X, \J(n))$, the composition of the second and the third 
is surjective
(independently of $s$) by \ref{Jn-barJn} (a) (after increasing $n_0$ if necessary so that \ref{Jn-barJn} (a)
can be applied), 
and the last one is 
injective because $X_s$ is integral. If $F_s=\emptyset$, then
$(T_{F})_s=\emptyset$.  
Therefore, in any case 
$$\dim (T_F)_s \le N - \dim_{k(s)} H^0(X_s, \oline{\J}_s(n)).$$ 

We now end the proof of Lemma \ref{cor.constructible2} by showing that after increasing $n_0$ if necessary,
we have $\dim_{k(s)} H^0(X_s, \oline{\J}_s(n)) \geq c$
for all $s\in S_F:=\{ s\in S \mid F_s\ne\emptyset\}$. We note that for
all $s\in S_F$, $\dim F_s>0$ (since $F_s$ has no isolated point)
so $C_s$ does not contain $F_s$ and,  thus, $C_s\ne X_s$. As $X_s$ is
irreducible, we have $\dim C_{s}<\dim X_{s}$. It follows that 
the Hilbert  polynomial  $P_{C_{s}}(t)$ of $C_{s}$ satisfies 
$\deg P_{C_{s}}(t)<\deg P_{X_{s}}(t)$. Since the set of all Hilbert 
polynomials $P_{X_{s}}(t)$ and $  P_{C_{s}}(t)$ 
with $s\in S$ 
is finite (\ref{Mumford} (iii)), and since such polynomials have positive 
leading coefficient (\cite{Har}, III.9.10), we can assume, increasing $n_0$ if necessary, that 
$$ P_{X_{s}}(n)-P_{C_{s}}(n)\ge c$$ 
for all $s\in S_F$. Using \ref{Mumford} (ii), and increasing $n_0$ further if necessary,  we find that 
$$H^i(X_s, \cO_{X_{s}}(n)) =(0)
=H^i(C_{s}, \cO_{C_{s}}(n))$$
for all $i\ge 1$ and for all $s\in S$. 
We have
$P_{X_{s}}(n) = \chi(\cO_{X_{s}}(n))$, and $P_{C_{s}}(n) = \chi(\cO_{C_{s}}(n))$ for all $n \geq 1$.
Therefore, using the above vanishings for $i>0$, we find that
for all $s\in S$, 
$$P_{X_s}(n)-P_{C_s}(n)= \dim H^0(X_{s}, \cO_{X_s}(n))-
\dim H^0(C_{s}, \cO_{C_s}(n)).$$
Hence, for all $s \in S_F$,
$$\dim H^0(X_{s}, \oline{\J}_s(n)) \geq \dim H^0(X_{s}, \cO_{X_{s}}(n))
-\dim H^0(C_{s}, \cO_{C_{s}}(n))\ge c$$ 
and the lemma is proved. 
\qed 

\medskip 
Assume now that $C \to S$ is  as in Theorem \ref{pro.reductiondimension2}, and let $\J$ denote the ideal sheaf of $C$ in $X$, 
as in \ref{emp.notation}.
In particular, $C \to S$ is finite, $C \to X$ is a regular immersion, $C$ has pure codimension $d$ in $X$, 
and for all $s \in S$, $\codim(C_s,X_s) \geq d$.
This latter hypothesis and \ref{lem.remcodim} (1.b) imply that
$C_s$ does not contain any isolated point of $X_s$. 
Therefore, for any $x\in C_s$, 
$(\oline{\J}_s)_x\ne 0$ and, hence, both $\oline{\J}_s(n)/\oline{\J}_s^2(n)
\otimes k(x)$ and $\J_s(n)/\J_s^2(n) \otimes k(x) $, are non-zero.  
In fact, as $C\to X$ is a regular immersion, 
$\J(n)/\J^2(n)$ is a rank $d$ vector bundle on $C$. 

\begin{lemma} \label{cor.constructible} 
Assume that $C \to S$ is as in Theorem {\rm \ref{pro.reductiondimension2}}, with $C$ of pure codimension $d$ in $X$  
and suppose $X\to S$ is projective. 
Keep the notation in {\rm \ref{emp.notation}}.
Let $n_0>0$ be an integer  such that for all $n \geq n_0$,  $\J(n)$ is generated by its global sections, and $H^1(X,\J^2(n)) =(0)$.
Then for all $n \geq n_0$, and for any system of generators $f_1, \dots, f_N$ of $H^0(X, \J(n))$,
there exists  a constructible subset  $T_C $ of $\mathbb A^N_S$ 
such that 
\begin{enumerate}[{\rm (i)}]
\item for all $s\in S$, $\Sigma_C(s)$ is exactly the set of 
$k(s)$-rational points of $\mathbb A^N_{k(s)}$ contained in $(T_C)_s$, and  
\item $\dim (T_C)_s \le N-d$.
\end{enumerate} 
\end{lemma}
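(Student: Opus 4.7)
Plan: Apply Proposition~\ref{constructible-conditions} to the finite (hence finitely presented, since $S$ is noetherian) morphism $C\to S$, with the finitely presented sheaf $\F:=\J(n)|_C=\J(n)/\J^2(n)$ and the sections $g_i:=f_i|_C\in H^0(C,\F)$. The resulting set $\Sigma(s)$ in the proposition consists of those $(\alpha_1,\dots,\alpha_N)\in k(s)^N$ for which $\sum_i\alpha_i (g_i)_s$ vanishes at some generic point of $C_s$. Since $C\to S$ is finite, $C_s$ is zero-dimensional, so every point of $C_s$ is a generic point; and the canonical isomorphisms recalled in \ref{emp.notation} identify $\F(x)$ with $\J_s(n)\otimes k(x)$ at each $x\in C\cap X_s$, sending $g_i(x)$ to the image of $f_i|_{X_s}$. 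Hence $\Sigma(s)=\Sigma_C(s)$. The proposition then yields a locally constructible subset $T_C\subseteq\mathbb A^N_S$ satisfying (i); it is in fact constructible because $\mathbb A^N_S$ is noetherian.

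For (ii) I invoke the explicit description in \ref{constructible-conditions} (b): since all points of $C_s$ are generic,
$$
\dim(T_C)_s\;=\;\max_{x\in C_s}\,\dim_{k(s)}\ker\bigl(k(s)^N\longrightarrow \F(x)\bigr).
$$
Because $C\to X$ is a regular immersion of codimension $d$, the sheaf $\F=\J(n)/\J^2(n)$ is locally free of rank $d$ on $C$, so $\dim_{k(x)}\F(x)=d$ for every $x\in C$. The crux of the argument, and the step I expect to be the main obstacle, is to show that the evaluation $k(s)^N\to \F(x)$ is \emph{surjective} for every $s\in S$ and every $x\in C_s$: if so, its kernel has $k(s)$-dimension $N-d\,[k(x):k(s)]\leq N-d$, which yields the required bound.

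To verify this surjectivity, I factor the map as
$$
k(s)^N\twoheadrightarrow H^0(X,\J(n))\otimes_R k(s)\longrightarrow H^0(C,\F)\otimes_R k(s)\longrightarrow \F(x).
$$
The first arrow is surjective because $f_1,\dots,f_N$ generate $H^0(X,\J(n))$ as an $R$-module. The second is surjective because the short exact sequence $0\to\J^2(n)\to\J(n)\to\J(n)/\J^2(n)\to 0$, together with the hypothesis $H^1(X,\J^2(n))=(0)$, makes $H^0(X,\J(n))\to H^0(X,\J(n)/\J^2(n))=H^0(C,\F)$ surjective; this is precisely the role played by the vanishing hypothesis. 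The third arrow is surjective because $C\to S$ is finite and $S$ is affine, so $C=\Spec B$ is affine, and the evaluation $H^0(C,\F)\to\F(x)$ is nothing but the surjection of the $B$-module corresponding to $\F$ onto its fiber at $x$. Composing, $k(s)^N\twoheadrightarrow \F(x)$, which completes the proof of (ii).
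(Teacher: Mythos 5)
Your proposal is correct and follows essentially the same route as the paper's proof: apply Proposition~\ref{constructible-conditions} to $(C\to S,\ \F=\J(n)|_C,\ f_i|_C)$, observe that $\Sigma_C(s)=\Sigma(s)$ because $C_s$ is finite hence every point is generic, and bound $\dim(T_C)_s$ via \ref{constructible-conditions}~(b) after checking that the evaluation maps $k(s)^N\to\F(x)$ are surjective, which you obtain—just as the paper does—from the $H^1(X,\J^2(n))=0$ hypothesis (making $H^0(X,\J(n))\to H^0(C,\F)$ surjective), the fact that the $f_i$ generate $H^0(X,\J(n))$, and the affineness of $C$ over $S$ (the paper phrases the last point through the isomorphism $H^0(C,\F)\otimes k(s)\simeq\oplus_{x\in C_s}(\F_s)_x$, while you factor through $H^0(C,\F)\otimes k(s)\to\F(x)$ directly, but these are the same observation).
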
 

\proof Consider the data consisting of the morphism $C\to S$, the 
sheaf $\F:=\J(n)|_C$ on $C$, and the images of $f_1,\dots, f_N$ under the
natural map $H^0(X,\J(n) ) \to H^0(C, \F)$. To this data is associated 
in \ref{sig} a set $\Sigma(s)$ for any $s \in S$. As $C_s$ is finite, 
$\Sigma_C(s)$ is nothing but the set $\Sigma(s)$.
We thus apply Proposition \ref{constructible-conditions} to the above 
data to obtain a constructible set $T_C$ of $\mathbb A^N_S$ 
such that for all $s\in S$, 
$\Sigma_C(s)$ is exactly the set of $k(s)$-rational points 
of $\mathbb A^N_{k(s)}$ contained in $(T_C)_s$.

Our additional hypothesis implies that the images of the sections 
$f_1,\dots, f_N$ generate $H^0(X,\J(n)/\J^2(n))$, which we identify with $ H^0(C, \F)$. 
Since $C \to S$ and $S$ are affine, and $C_s$ is finite 
for each $s \in S$, we have isomorphisms 
$$H^0(C, \F) \otimes k(s) \simeq 
H^0(C_s,\F_s) \simeq \oplus_{x \in C_s} (\F_s)_x.$$
It follows that for each $x \in C_s$, the natural map 
$H^0(C_s,\F_s) \to (\F_s)_x$ is surjective.
As $(\F_s)_x$ is free of rank $d$ and the images of $f_1, \dots, f_N$
generate $H^0(C_s, \F_s)$, we find that the linear maps $k(s)^N \to \F(x)$ in  
\ref{constructible-conditions} (b) are surjective for all $s \in
S$. It follows that $\dim (T_C)_s \le N-d$ (the equality holds if
$C_s\ne\emptyset$). 
\qed

\begin{emp} \label{Proofreductiondimension2}
{\it Proof of Theorem {\rm \ref{exist-hyp}} when $\pi: X \to S$ is projective. } 
Let $\{F_1, \dots, F_m\}$ be the   locally closed 
subsets of $X$  given in (ii) of the statement of the theorem.
When $C$ does not contain any irreducible component  of
positive dimension of $X_s$  for all $s \in S$, we set $F_0:=X$ and argue below using the set 
$\{F_0, F_1, \dots, F_m\}$.
Let $A$ denote the finite set given in (iii). 
When  $C\cap \Ass(X)=\emptyset$, we 
enlarge $A$ if necessary by adjoining to it the finite set $\Ass(X)$. 

Let $A_0\subset X$ be the union of $A$ with the set of the generic points
of the irreducible components of positive dimension  of $(F_{1})_s, \dots, (F_{m})_s$, for all $s\in S$. 
When relevant, we also add to $A_0$  the generic points
of the irreducible components of positive dimension  of $(F_{0})_s$, for all $s\in S$.
Using \cite{EGA}, IV.9.7.8, we see that the number of points in 
$A_0\cap X_s$ is bounded when $s$ varies in $S$. 
We are thus in a position to apply Lemma~\ref{existence-fn} (a) with the set $A_0$.
Let $c:=1+\dim S$. 
Let $n_0$ be an integer satisfying simultaneously 
the conclusion of Lemma~\ref{existence-fn} (a) for $A_0$, and of Lemma~\ref{cor.constructible2} for $c$ and for each locally closed subset $F=F_i$, with $i=1,\dots, m$, and $i=0$ when relevant.

Fix now $n\ge n_0$, and fix  $f_1, \dots, f_N$ a system of generators
of $H^0(X, \J(n))$.
Increasing $n_0$ if necessary, we can assume using 
Lemma \ref{Jn-barJn} that for  all $s \in S$, the composition of the canonical maps 
$$ 
H^0(X, \J(n)) \otimes k(s) \longrightarrow H^0(X_s, \J_s(n)) 
\longrightarrow H^0(X_s, \oline{\J}_s(n))
$$ 
is surjective. 
Let $T_{F_1}, \dots, T_{F_m}$ be the constructible 
subsets of ${\mathbb A}_S^N$ pertaining to $\Sigma_{F_1}(s), \dots, 
\Sigma_{F_m}(s)$ and whose existence is proved in 
\ref{remove-isolated-pts} (2). When relevant, we also consider  $T_{F_0}$ and $\Sigma_{F_0}(s)$.
Since Lemma~\ref{cor.constructible2} is applicable 
for $c$ and for each $F=F_i$, we find that  for all $s\in S$ we have 
$$\dim (T_{F_i})_s\le N-c=N-1-\dim S.$$
It follows from \ref{lem.construct} (c) that $\dim T_{F_i}\le N-1$.

Let $\pi(A) := \{s_1, \dots, s_r\} \subseteq S$. Fix $s_j \in \pi(A)$, and 
for each $x\in A\cap X_{s_j}$, consider the 
hyperplane of $\mathbb A^N_{k(s_j)}$ defined by 
$\sum_i \alpha_i f_i(x)=0$. This is
indeed a hyperplane because otherwise $f_i(x)=0$ for all $i\le N$ at  
$x$, which would imply that $x\in C$, but 
$A\cap C=\emptyset$ by hypothesis. 
Denote by $T_{A_j}$   the finite union of
all such hyperplanes of $\mathbb A^N_{k(s_j)}$, for each $x\in A\cap X_{s_j}$.
The subset $T_{A_j}$ is 
pro-constructible in $\mathbb A^N_S$ (see \ref{pro-constr}).
It has dimension $N-1$, and its fibers $(T_{A_j})_s$ are constructible for each $s \in S$ (and $(T_{A_j})_s$ is empty if $s \neq s_j$).

We now apply Theorem \ref{globalize} to the set of 
pro-constructible subsets $T_{A_j}$, $j=1,\dots, r$ and $T_{F_i}$, $i=1,\dots, m$, and $i=0$ when relevant.
Our discussion so far implies that these pro-constructible subsets all satisfy Condition (1) in 
\ref{globalize} with $V=S$. Let $T=(\cup_j T_{A_j})\cup (\cup_i
T_{F_i})$. For each $s \in S$,  
the element $f_{s,n} \in H^0(X_s,\J_s(n))$
exhibited in Lemma~\ref{existence-fn} (a) gives rise to  a $k(s)$-rational 
point of $\mathbb A_{k(s)}^N$ not contained in $T_s$. So Condition
 (2) in \ref{globalize} is also satisfied by $T$. 
We can thus apply Theorem \ref{globalize} to find a  
section $(a_1,\ldots, a_N)\in R^N=\mathbb A_S^N(S)$ such that for all $s \in S$,
$(a_1(s), \ldots, a_N(s))$ is a $k(s)$-rational point of $\mathbb A^N_{k(s)}$ 
that is not contained in $T_s$. 

Let $f:=\sum_{i=1}^N a_if_i$ and consider the closed subscheme
$H_f \subset X$. As $f\in H^0(X, \J(n))$, $C$ is a closed
subscheme of $H_f$. By definition of $T_{F_i}$ and $T_{A_j}$, 
for all $s\in S$ and for all $0 \leq i\le m$, $H_f$ does not contain any 
irreducible component of $(F_i)_s$ of positive dimension and 
$H_f\cap A=\emptyset$. This proves the conclusions (1), (2), and (3) of \ref{exist-hyp}.

When the hypothesis of (4) is satisfied, we have included in our proof above conditions
pertaining to $F_0=X$, and we find then that  $H_f$ contains no irreducible 
component of $X_s$. It  is thus by definition a hypersurface relative to $X \to S$. If furthermore
$C\cap \Ass(X)=\emptyset$, as we enlarged $A$ to include $\Ass(X)$, 
we have $H_f\cap \Ass(X)=\emptyset$.  Hence, it follows from Lemma~\ref{hypersurfaces-properties} (4)
that $H_f$ is 
locally principal.  
This proves (4), and completes the proof of Theorem \ref{exist-hyp} when 
$X\to S$ is projective.
\qed 
\end{emp}

\begin{emp} \label{Proofreductiondimension2-a} 
{\it Proof of Theorem {\rm \ref{pro.reductiondimension2}} when $\pi: X \to S$ is projective. }
We assume now that $C \to S$ is finite. Thus $C_s$ is finite for each $s \in S$, and we find 
that 
$C_s$ does not contain any irreducible component of positive dimension of $X_s$. 
 Let $\{F_1, \dots, F_m\}$ be the   locally closed 
subsets of $X$  given in (ii) of \ref{exist-hyp}.
We set $F_0:=X$ and argue as in the proof of \ref{exist-hyp} above using the set 
$\{F_0, F_1, \dots, F_m\}$.
Let $A$ denote the finite set given in (iii) of \ref{exist-hyp}. 
We have that $C\cap\Ass(X)=\emptyset$: indeed, for all $x\in C$, 
 $\mathrm{depth}(\cO_{X,x})\ge d>0$, so that $x\notin \mathrm{Ass}(X)$.
We therefore enlarge $A$ if necessary by adjoining to it the finite set $\Ass(X)$. 
We define $A_0$ and $c:=1+\dim S$ exactly as in the proof of \ref{exist-hyp} in \ref{Proofreductiondimension2}.
Let $n_0$ be an integer satisfying simultaneously 
the conclusion of Lemma~\ref{existence-fn} (b) for $A_0$, of Lemma~\ref{cor.constructible}, and of Lemma~\ref{cor.constructible2} for $c$ and for each locally closed subset $F=F_i$, with $i=0,1,\dots, m$.

Fix now $n\ge n_0$, and fix  $f_1, \dots, f_N$ a system of generators
of $H^0(X, \J(n))$.
Increasing $n_0$ if necessary, we can assume using 
Lemma \ref{Jn-barJn} that for  all $s \in S$, the composition of the canonical maps 
$$ 
H^0(X, \J(n)) \otimes k(s) \longrightarrow H^0(X_s, \J_s(n)) 
\longrightarrow H^0(X_s, \oline{\J}_s(n))
$$ 
is surjective. 
Let $T_{F_0}, T_{F_1}, \dots, T_{F_m}$ be the constructible 
subsets of ${\mathbb A}_S^N$ pertaining to $\Sigma_{F_0}(s)$, $\Sigma_{F_1}(s), \dots$, 
$\Sigma_{F_m}(s)$, and whose existence is proved in 
\ref{remove-isolated-pts} (2). 
As in the proof \ref{Proofreductiondimension2}, we find that $\dim T_{F_i}\le N-1$ for each $i=0,\dots, m$.
Define now  $T_{A_j}$, $j=1,\dots, r$ as in the proof
\ref{Proofreductiondimension2}. Again,  $T_{A_j}$ is 
pro-constructible in $\mathbb A^N_S$,
it has dimension $N-1$, and its fibers $(T_{A_j})_s$ are constructible for each $s \in S$. 

Since Lemma \ref{cor.constructible} is applicable, we can also consider  the constructible
subset $T_C$  of $\mathbb A^N_S$ 
pertaining to $\Sigma_C(s)$. Since we assume that $d > \dim S$, we find from \ref{cor.constructible}
that $$ \dim (T_C)_s\le N-d\le N-(\dim S+1) $$ 
for all $s\in S$. Thus, it follows from  \ref{lem.construct} (c) that
$\dim T_C\leq N-1$. 

As in \ref{Proofreductiondimension2}, we set $T$ to be the union of the sets $T_{F_i}$, $i=0,\dots, m$,
and $T_{A_j}$, $j=1,\dots, r$. Lemma~\ref{existence-fn} (b)
implies that $\mathbb A^N_{k(s)}$ is not contained in 
$(T_C\cup T)_s$ because $(\oline{\J}_s(n)/\oline{\J}^2_s(n))\otimes
k(x)\ne 0$ for all $x\in C_s$ (see the paragraph before \ref{cor.constructible}). 
Applying Theorem~\ref{globalize}
to the pro-constructible subsets $T_C$, $T_{F_i}$, $i=0,\dots, m$, and $ T_{A_j}$, $j=1,\dots, r$, we find 
$(a_1, \dots, a_N)\in \mathbb A^N_S(S)$ such that 
for all $s \in S$,
$(a_1(s), \ldots, a_N(s))$ is  a $k(s)$-rational point of $\mathbb A^N_{k(s)}$ 
that is not contained in $(T_C\cup T)_s$.  

Let $f:=\sum_{i=1}^N a_if_i$ and consider the closed subscheme
$H_f \subset X$. As in \ref{Proofreductiondimension2}, we find that  $H_f$ satisfies the conclusions
(1), (2), and (3)  of
\ref{exist-hyp},  and that $H_f$ is a locally principal hypersurface. 

It remains to use the properties of the set   $T_C$ to show that $C$ is regularly immersed in 
$H_f$, and that $C$ is pure of codimension $d-1$ in $H_f$. 
Indeed, this is a local question.  
Fix $x \in C$. Let $I:= \J_x \subset \cO_{X,x}$ and let $g \in I$
correspond to the section $f$. 
Since the image of $g$ in $I/I^2 \otimes k(x)$ is non-zero by the
definition of $T_C$, the image 
of $g$ in the free $\cO_{C,x}$-module $I/I^2$ can be completed into 
a basis of $I/I^2$, and
it is then  well-known that $g$ belongs to a regular sequence
generating $I$. 
This concludes the proof of Theorem \ref{pro.reductiondimension2} when 
$X\to S$ is projective.  \qed
\end{emp} 

\begin{emp} \label{quasiproj}
{\it Proof of Theorems {\rm \ref{exist-hyp}} and {\rm \ref{pro.reductiondimension2}} when 
$X\to S$ is quasi-projective. }
Since $\cO_X(1)$ is assumed to be very ample relative to $X \to S$, there exists
a projective morphism 
$\overline{X} \to S$ with an open immersion $X \to \overline{X}$ of 
$S$-schemes, and a very ample sheaf $\cO_{\overline{X}}(1)$ relative to $\overline{X} \to S$ 
which restricts on $X$ to the given sheaf $\cO_X(1)$.

Let us first prove Theorem {\rm \ref{exist-hyp}}. We are given in {\rm \ref{exist-hyp}} (i) a closed subscheme $C$ of $X$. 
Let $\oline{C}$ be the scheme-theoretical closure of $C$ in
$\oline{X}$. We are given in {\rm \ref{exist-hyp}} (ii)
$m$ locally closed subsets $F_1, \dots, F_m$ of $X$.
Since $X$ is open in $\oline{X}$, each set $F_i$ is again locally closed in 
$\oline{X}$.  It is clear that the finite subset $A \subset X$ given in {\rm \ref{exist-hyp}} (iii) which does not intersect
$C$ is such that $A \subset \oline{X}$ does not intersect $\oline{C}$.
 
 We are thus in the position to apply Theorem \ref{exist-hyp}
 to the projective morphism $\overline{X} \to S$
 with the data $ \oline{C}$, $ F_1, \dots, F_m$, and $ A$.
When  $C$ satisfies the first
hypothesis of \ref{exist-hyp} (4), we set $F_0 :=X$ and add the locally closed subset $F_0$
to the list  $F_1,\dots, F_m$, as in the proof \ref{Proofreductiondimension2}.
When $C\cap \Ass(X)=\emptyset$, we replace $A$ 
by  $A \cup \Ass(X)$. 
We can then conclude that there exists $n_0>0$ such that
for any $n\ge n_0$, there exists a global section $f $ of $ \cO_{\overline{X}}(n)$ such that the  closed subscheme $H_f$ in $\oline{X}$ contains 
$\oline{C}$ as a closed subscheme and satisfies the conclusions (2), (3), and, when relevant, (4), of \ref{exist-hyp} for $\overline{X} \to S$.
 The restriction of $f$ to $\cO_X(n)$ defines the desired closed subscheme $H_f \cap X$ 
satisfying the conclusions of Theorem \ref{exist-hyp} for $X \to S$.

Let us now prove Theorem \ref{pro.reductiondimension2}, where
we assume that $C \to S$ is finite and, hence, proper. It follows that 
$\oline{C}=C$. 
We apply Theorem  \ref{pro.reductiondimension2} to the projective morphism $\oline{X} \to S$, 
and the data $C$, $F_1,\dots, F_m$, and $A$. 
We can then conclude that there exists $n_0>0$ such that
for any $n\ge n_0$, there exists a global section $f $ of $ \cO_{\overline{X}}(n)$ such that the  closed subscheme $H_f$ in $\oline{X}$ contains 
$\oline{C}$ as a closed subscheme and satisfies the conclusions (2), (3), and (4), of \ref{exist-hyp} for $\overline{X} \to S$.
The restriction of $f$ to $\cO_X(n)$ defines the desired closed subscheme $H_f \cap X$ 
satisfying the conclusions of Theorem \ref{pro.reductiondimension2} for $X \to S$.
\qed
\end{emp}

\end{section}

\begin{section}{Variations on the classical Avoidance Lemma}
\label{compute-coh}

In this section, we prove various assertions used in the 
proofs of Theorem~\ref{exist-hyp} and Theorem~\ref{pro.reductiondimension2}. 
The main result in this section is  Lemma \ref{existence-fn}.

\begin{facts} \label{Mumford}
Let $S$ be a noetherian scheme, and let $\pi: X \to S$ be a projective morphism. Let $\cO_X(1)$ be a very ample sheaf relative to $\pi$, 
and let ${\mathcal F}$ be any coherent sheaf on $X$.
\begin{enumerate}[\rm (i)]
\item Let $g:S'\to S$ be  a morphism of finite type, and consider the cartesian square
$$
\xymatrix{
X':=X \times_S S'    \ar@{>}[r]^{\hspace*{12mm} g'}  \ar[d]^{\pi'}  & X \ar[d]^{\pi} 
\\
S'  \ar@{>}[r]^{g} & S. \\  
}
$$ 
Then there exists a positive integer $n_0$ such that for all $n \geq n_0$, 
the canonical morphism $g^*\pi_*({\mathcal F}(n)) \longrightarrow \pi'_{*}g'^{*}({\mathcal F}(n))$ is an isomorphism.
\item There exists a positive integer $n_0$ such that for all $n \geq n_0$
and for all $s \in S$, $H^i(X_s, {\mathcal F}_s(n))=(0)$ for all $i>0$,  
and $\pi_*\F(n) \otimes k(s) \longrightarrow  H^0(X_s, {\mathcal F}_s(n))$ is an isomorphism.
\item The set of Hilbert polynomials $\{ P_{X_s}(t)\in \mathbb Q[t] \mid s
  \in S\}$ is finite. 
\end{enumerate}
\end{facts}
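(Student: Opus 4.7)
The plan is to deduce all three parts from Serre's theorems on coherent sheaves on projective schemes (\cite{EGA}, III.2.2.1) together with the standard graded-module formalism, reducing each assertion to the affine case. Since $S$ is noetherian, the required integer $n_0$ can always be taken as the maximum over a finite affine cover, so I may assume $S = \Spec A$ (and in (i) also $S' = \Spec B$) noetherian affine throughout. Embedding $X \hookrightarrow \mathbb P^N_S$ via a suitable power of $\cO_X(1)$, I would present $\F = \widetilde M$ for a finitely generated graded module $M$ over $R := A[T_0, \ldots, T_N]$.

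For (ii), Serre's theorem yields $n_0$ such that for all $n \ge n_0$, $R^i\pi_*\F(n) = 0$ for $i > 0$ and the natural map $M_n \to H^0(X, \F(n))$ is an isomorphism of finitely generated $A$-modules. Applied to $X_s \subseteq \mathbb P^N_{k(s)}$ with the graded $k(s)$-module $M \otimes_A k(s)$ presenting $\F_s$, the same theorem gives, for $n$ sufficiently large, the vanishing $H^i(X_s, \F_s(n)) = 0$ for $i > 0$ and the identification $H^0(X_s, \F_s(n)) = (M \otimes_A k(s))_n = M_n \otimes_A k(s)$, which matches $\pi_*\F(n) \otimes_A k(s)$. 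The crux is the \emph{uniformity} of $n_0$ in $s$; this is the content of the classical cohomology-and-base-change package (\cite{EGA}, III.7.7), which ensures that once the higher direct images $R^i\pi_*\F(n)$ vanish, the formation of $\pi_*\F(n)$ commutes with arbitrary base change $\Spec k(s) \to S$.

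For (i), given $g: S' \to S$ of finite type and reducing to the affine case, $g'^{*}\F$ corresponds to the graded $B$-module $M \otimes_A B$, which is finitely generated over $B[T_0, \ldots, T_N]$. Applying Serre's theorem simultaneously to $X/S$ and to $X'/S'$, with $n_0$ large enough for both, one obtains $\pi_*\F(n) = \widetilde{M_n}$ and $\pi'_*g'^{*}\F(n) = \widetilde{(M \otimes_A B)_n} = \widetilde{M_n \otimes_A B} = g^*\pi_*\F(n)$; the canonical morphism $g^*\pi_*\F(n) \to \pi'_*g'^{*}\F(n)$ is then the tautological identification.

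For (iii), I would argue by noetherian induction on closed subsets of $S$. By generic flatness (\cite{EGA}, IV.6.9.1) applied to each irreducible component in turn, there is a dense open $U \subseteq S$ over which $\pi$ is flat. On a flat projective family the Hilbert polynomials $\chi(X_s, \cO_{X_s}(n))$ are locally constant in $s$ (\cite{Har}, III.9.9, applied to each twist $\cO_X(n)$), so only finitely many polynomials occur over $U$. The reduced complement $(S \setminus U)_\red$ is a strict closed subscheme of $S$ to which the induction hypothesis applies, yielding finitely many fiber Hilbert polynomials there; since these are unchanged by base change to $(S \setminus U)_\red$, the proof is complete. The main obstacle in the whole program is the uniformity in $s$ for (ii); once that is established, parts (i) and (iii) follow rather formally.
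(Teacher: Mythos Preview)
Your arguments for (i) and (iii) are sound. For (i), taking $n_0$ large enough for the two fixed morphisms $X/A$ and $X'/B$ simultaneously is exactly right and matches the paper's reduction to $\mathbb P^N$ over an affine base. For (iii), your noetherian induction via generic flatness is a valid alternative to the paper's approach (which simply cites Mumford's uniform regularity bound); both work, and yours is arguably more self-contained.

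The gap is in (ii). You correctly identify the crux as the \emph{uniformity} of $n_0$ in $s$, but your proposed resolution via \cite{EGA}, III.7.7 does not apply: the cohomology-and-base-change theorems there (7.7.5, 7.7.10, etc.) require $\F$ to be flat over $S$, which is not assumed. Without flatness, the vanishing of $R^i\pi_*\F(n)$ for $i>0$ does \emph{not} formally imply that the base change map $\pi_*\F(n)\otimes k(s)\to H^0(X_s,\F_s(n))$ is an isomorphism, nor that $H^i(X_s,\F_s(n))=0$. Concretely, in your graded-module language: it is true that $\F_s=\widetilde{M\otimes_A k(s)}$ and that Serre's theorem gives $(M\otimes_A k(s))_n\cong H^0(X_s,\F_s(n))$ for $n\ge n_0(s)$, but nothing you have written bounds $n_0(s)$ independently of $s$.

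There are two standard fixes. The paper's route (the reference to Mumford's \emph{Lectures}, p.~50 and p.~58) is Castelnuovo--Mumford regularity: a finite graded free resolution $F_\bullet\to M$ over $A[T_0,\dots,T_N]$ has bounded twists, and since $F_\bullet\otimes_A k(s)$ is still a complex of direct sums of $\cO(-a_{ij})$'s with the \emph{same} twists, one obtains a bound on the regularity of $\F_s$ that is uniform in $s$. Alternatively, you can recycle your own argument for (iii): stratify $S$ by generic flatness into finitely many locally closed $S_j$ over which $\F$ becomes flat, apply (i) to identify $(\pi_*\F(n))|_{S_j}$ with $(\pi_j)_*(\F|_{S_j})(n)$ for $n$ large, and then invoke the flat base-change theorem on each stratum.
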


\proof The properties in the statements are local on the base, and we may thus assume that $S$ is affine.
In this case, there is no ambiguity in the definition of a projective morphism, as all standard definitions coincide when the target is affine
(\cite{EGA}, II.5.5.4 (ii)).
The proofs of (i) and (ii) when $X= {\mathbb P}^d_S$ and any coherent
sheaf $\F$ can be found, for instance, 
in \cite{MumL}, p.\ 50, (i), and  \cite{MumL}, p.\ 58, (i) (see also 
  \cite{Sernesi}, step 3 in the proof of Theorem 4.2.11). The
  statement (iii) follows from \cite{MumL}, p.\ 58, (ii). The general 
case follows immediately using the 
closed $S$-immersion $i: X \to {\mathbb P}^d_S$ defining $\cO_X(1)$. 
\qed

\begin{emp} \label{m-regular} Let us recall the definition and 
properties of $m$-regular sheaves needed in our next lemmas.
Let $X$ be a projective variety over a field $k$, 
with a fixed very ample sheaf $\cO_X(1)$. 
Let $\F$ be a coherent sheaf on $X$, and let $\F(n) := \F \otimes \cO_X(n)$.
Let $m\in \mathbb Z$. Recall (\cite{MumL}, Lecture 14, p. 99) that  $\F$ is called
\emph{$m$-regular} if $H^i(X, \F(m-i))=0$ for all $i\ge 1$. 

Assume that $\F$ is 
\emph{$m$}-regular. Then it is known (see, e.g., \cite{Sernesi}, Proposition 4.1.1)
that for all $n\ge m$, 
\begin{enumerate}[\rm (a)]
\item
$\F$ is $n$-regular, 
\item $H^i(X, \F(n))=0$ for all 
$i\ge 1$, 
\item $\F(n)$ is generated by its global sections, 
and 
\item The canonical homomorphism
$$ 
H^0(X, \F(n))\otimes H^0(X, \cO_X(1)) \longrightarrow H^0(X, \F(n+1))
$$ 
is surjective. 
\end{enumerate} 
\end{emp}

\begin{lemma} \label{Mumford2}
Let $S$ be a noetherian scheme, and let $\pi: X \to S$ be a projective 
morphism. Let $\cO_X(1)$ be a very ample sheaf relative to $\pi$, 
and let ${\mathcal F}$ be any coherent sheaf on $X$. 
Then there exists a positive integer $n_0$ such that for all $n \geq n_0$ and all $s\in S$, the sheaf ${\mathcal F}_s$ is $n$-regular on $X_s$.
\end{lemma}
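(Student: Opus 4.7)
The plan is to combine the uniform cohomology vanishing provided by Fact~\ref{Mumford}(ii) with a uniform upper bound on the fiber dimensions coming from Fact~\ref{Mumford}(iii). Recall that $n$-regularity of $\F_s$ on $X_s$ amounts to the vanishings $H^i(X_s, \F_s(n-i)) = 0$ for every $i \geq 1$; since on each fiber only finitely many values of $i$ can give nonzero higher cohomology, it suffices to find a single $n_0$ that simultaneously pushes all the relevant twists into the range where vanishing is known.

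First I would extract a uniform upper bound $d$ on $\dim X_s$, $s \in S$. By Fact~\ref{Mumford}(iii) the collection $\{P_{X_s}(t) : s \in S\}$ of Hilbert polynomials is finite, and since $\deg P_{X_s}(t) = \dim X_s$, the maximum $d$ of these degrees serves as the desired uniform bound (fibers with $X_s = \emptyset$ create no issue). Next, I would apply Fact~\ref{Mumford}(ii) directly to $\F$ to obtain an integer $n_1$ such that, for every $s \in S$, every $n \geq n_1$, and every $i > 0$, we have $H^i(X_s, \F_s(n)) = 0$.

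Finally, I would set $n_0 := n_1 + d$ and check the regularity condition for an arbitrary $s \in S$ and $n \geq n_0$. For $1 \leq i \leq d$, the twist satisfies $n - i \geq n_0 - d = n_1$, so $H^i(X_s, \F_s(n-i)) = 0$ by the choice of $n_1$. For $i > d$, we have $i > \dim X_s$, and Grothendieck's cohomological dimension theorem on the noetherian scheme $X_s$ gives $H^i(X_s, \F_s(n-i)) = 0$ automatically. Thus $\F_s$ is $n$-regular for every $s \in S$ and every $n \geq n_0$. No real obstacle is anticipated: the argument is a straightforward bookkeeping combination of the two facts already recalled, the only mildly subtle point being to notice that Fact~\ref{Mumford}(iii) is what rules out an unbounded sequence of fiber dimensions (since $S$ noetherian alone need not imply that $X$ has finite Krull dimension).
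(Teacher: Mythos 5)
Your proof is correct and follows essentially the same approach as the paper: combine the uniform vanishing from Fact~\ref{Mumford}(ii) with a uniform bound $d$ on the fiber dimensions, then take $n_0 = n_1 + d$ and invoke Grothendieck vanishing for $i > d$. The only cosmetic difference is that you extract the dimension bound from the finiteness of the Hilbert polynomials (Fact~\ref{Mumford}(iii)), whereas the paper cites \cite{EGA}, IV.13.1.7, directly; both are legitimate.
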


\begin{proof} Let $r$ denote the maximum of $\dim X_s$, $s \in S$. This maximum 
is finite (\cite{EGA}, IV.13.1.7). Then 
$H^i(X_s,  {\mathcal F}_s(n))=(0)$ for all $i \geq r+1 $ and for any $n$. 
Using \ref{Mumford} (ii), there exists $n_1>0$ such that 
$H^i(X_s, {\mathcal F}_s(n))=(0)$ for all $s \in S$, for all $n \geq n_1$, 
and for all $i>0$. It follows that 
$ {\mathcal F}_s $ is $n$-regular for all $s \in S$ and for all 
$n\geq n_0:=r+n_1$. 
\end{proof}

We now discuss a series of lemmas needed in the proof of \ref{existence-fn}.
\begin{lemma} \label{Jn-barJn} 
Let $\pi : X\to S$ be a projective scheme over a noetherian scheme $S$. Let
$\cO_X(1)$ be a very ample sheaf relative to $\pi$. 
Let $C$ be a closed subscheme of $X$, with  
 sheaf of ideals $\J$  in $\cO_X$. 
Let $\oline{\J}_s $ denote the image of  $\J_s$ in $\cO_{X_s} $.
Then there exists $n_0 \in {\mathbb N}$ such that for all $n\ge n_0$ and for
all $s\in S$, 
\begin{enumerate}[\rm (a)]
\item The canonical map 
$\pi_*\J(n)\otimes k(s)\longrightarrow H^0(X_s, \oline{\J}_s(n))$ is 
surjective. 
\item The sheaf $\oline{\J}_s$ is $n$-regular.
\end{enumerate}
\end{lemma}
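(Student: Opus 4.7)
The natural object to work with is the short exact sequence $0 \to \J \to \cO_X \to \cO_C \to 0$ on $X$. Pulling it back to the fiber $X_s$ (i.e., tensoring with $\cO_{X_s}$ over $\cO_X$) yields a right-exact sequence $\J_s \to \cO_{X_s} \to \cO_{C_s} \to 0$ whose kernel in the middle is precisely $\oline{\J}_s$, so that on $X_s$ we obtain the short exact sequence
\[
0 \longrightarrow \oline{\J}_s \longrightarrow \cO_{X_s} \longrightarrow \cO_{C_s} \longrightarrow 0.
\]
I would first establish (b), and then derive (a) from it by a diagram chase.

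For (b), the idea is to control $\oline{\J}_s$ by sandwiching it between $\cO_{X_s}$ and $\cO_{C_s}$, both of which are uniformly regular by Lemma~\ref{Mumford2}. Choose $n_0$ such that for all $n \ge n_0$ and all $s \in S$, both $\cO_{X_s}$ and $\cO_{C_s}$ are $n$-regular. Applying Fact~\ref{Mumford}(ii) to $\J$, enlarge $n_0$ if necessary so that $R^1\pi_*\J(n) = 0$ for all $n \ge n_0$; this forces $\pi_*\cO_X(n) \twoheadrightarrow \pi_*\cO_C(n)$, and combined with the isomorphisms $\pi_*\cO_X(n) \otimes k(s) \cong H^0(X_s, \cO_{X_s}(n))$ and $\pi_*\cO_C(n) \otimes k(s) \cong H^0(X_s, \cO_{C_s}(n))$ from Fact~\ref{Mumford}(ii), the map $H^0(X_s, \cO_{X_s}(n)) \to H^0(X_s, \cO_{C_s}(n))$ is surjective for every $s$ (again after enlarging $n_0$). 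Now for $m \ge n_0 + 1$ and $i \ge 2$, the long exact cohomology sequence of $0 \to \oline{\J}_s(m-i) \to \cO_{X_s}(m-i) \to \cO_{C_s}(m-i) \to 0$ gives $H^i(X_s, \oline{\J}_s(m-i)) = 0$ since it is squeezed between $H^{i-1}(X_s, \cO_{C_s}(m-i)) = 0$ (by $(m-1)$-regularity of $\cO_{C_s}$) and $H^i(X_s, \cO_{X_s}(m-i)) = 0$ (by $m$-regularity of $\cO_{X_s}$). For $i=1$ the vanishing $H^1(X_s, \oline{\J}_s(m-1)) = 0$ follows from the surjection $H^0(X_s, \cO_{X_s}(m-1)) \twoheadrightarrow H^0(X_s, \cO_{C_s}(m-1))$ (valid for $m-1 \ge n_0$) together with $H^1(X_s, \cO_{X_s}(m-1)) = 0$. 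This proves that $\oline{\J}_s$ is $m$-regular uniformly in $s$.

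For (a), assume (b) has been established with some threshold $n_0$. For $n \ge n_0$, part (b) applied at $i=1$ gives $H^1(X_s, \oline{\J}_s(n)) = 0$, so the bottom row of the commutative diagram
\[
\begin{array}{ccccccc}
\pi_*\J(n) \otimes k(s) & \to & \pi_*\cO_X(n) \otimes k(s) & \to & \pi_*\cO_C(n) \otimes k(s) & \to & 0 \\
\downarrow & & \downarrow \cong & & \downarrow \cong & & \\
0 \to H^0(X_s, \oline{\J}_s(n)) & \to & H^0(X_s, \cO_{X_s}(n)) & \to & H^0(X_s, \cO_{C_s}(n)) & \to & 0
\end{array}
\]
is short exact. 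The top row is right-exact, obtained by tensoring the exact sequence $\pi_*\J(n) \to \pi_*\cO_X(n) \to \pi_*\cO_C(n) \to 0$ (valid for $n \ge n_0$ because $R^1\pi_*\J(n) = 0$) with $k(s)$. The middle and right vertical arrows are isomorphisms by Fact~\ref{Mumford}(ii). A direct diagram chase then identifies the image of $\pi_*\J(n) \otimes k(s) \to H^0(X_s, \cO_{X_s}(n))$ with the kernel $H^0(X_s, \oline{\J}_s(n))$ of the bottom horizontal map, giving the desired surjectivity.

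The main subtlety is that the natural map $\J_s \to \oline{\J}_s$ is in general not an isomorphism (its kernel is a Tor term $\Tor_1^{\cO_S}(\cO_C, k(s))$ pulled back to $X$), so one cannot directly invoke Serre-type vanishing on $\J_s$. The trick that makes the argument go through is to bypass $\J_s$ entirely and work with $\oline{\J}_s$ via the clean sequence $0 \to \oline{\J}_s \to \cO_{X_s} \to \cO_{C_s} \to 0$, using base-change compatibility for $n$ large to import the necessary surjectivity on global sections from $X$ to each fiber $X_s$ uniformly.
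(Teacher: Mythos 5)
Your proof is correct, but it takes a genuinely different route from the paper's, and a shorter one. The paper's proof invokes the generic flatness theorem to stratify $S$ into finitely many locally closed (reduced) pieces $U_i$ over which $C_{U_i}\to U_i$ is flat, then introduces on each $X_{U_i}$ the image sheaf $\J'$ of $\J\otimes_{\cO_X}\cO_{X_{U_i}}\to\cO_{X_{U_i}}$, uses flatness of $C_{U_i}/U_i$ to identify $\J'_s$ with $\oline{\J}_s$ for $s\in U_i$, applies Lemma~\ref{Mumford2} and Fact~\ref{Mumford}(ii) to $\J'$ stratum by stratum, and finally takes the maximum of the resulting bounds. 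Your argument bypasses the stratification altogether: the key observation is that the sequence $0\to\oline{\J}_s\to\cO_{X_s}\to\cO_{C_s}\to 0$ is \emph{always} exact on $X_s$ (it is just the image--kernel decomposition of the right-exact pullback of $0\to\J\to\cO_X\to\cO_C\to 0$; no flatness enters), and that both $\cO_{X_s}$ and $\cO_{C_s}$ are fiberwise pullbacks of the globally defined coherent sheaves $\cO_X$ and $\cO_C$, so their regularity and base-change isomorphisms are controlled uniformly in $s$ by Lemma~\ref{Mumford2} and Fact~\ref{Mumford}(ii). The sandwich then yields uniform $m$-regularity of $\oline{\J}_s$, and a single diagram chase gives the surjectivity in (a). What this buys is a direct, stratification-free proof with a single uniform threshold $n_0$; what the paper's version buys is nothing essential here, though generic flatness is a well-worn tool and the stratified argument keeps the intermediate objects $\J'$ genuinely coherent on a family, which some readers may find conceptually more transparent.

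One small misattribution: the vanishing $R^1\pi_*\J(n)=0$ for $n\gg 0$ is not a consequence of Fact~\ref{Mumford}(ii); it is relative Serre vanishing for the projective morphism $\pi$ over the noetherian base $S$ (the paper itself invokes ``Serre Vanishing'' by name at the analogous step, for the kernel sheaf $\mathcal K$). Also, the parenthetical claim that $\ker(\J_s\to\oline{\J}_s)$ is ``$\Tor_1^{\cO_S}(\cO_C,k(s))$ pulled back to $X$'' is only literally correct when $X$ is flat over $S$; in general it is $\Tor_1^{\cO_X}(\cO_C,\cO_{X_s})$. Neither of these remarks affects the validity of the argument.
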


\proof 
By the generic flatness theorem \cite{EGA}, IV.6.9.3,  
there exist finitely many 
locally closed subsets $U_i$ of $S$ 
such that $S = \cup_i U_i$ (set theoretically), and such that when each 
$U_i$ is endowed with the structure of reduced subscheme of $S$, then 
$C_{U_i}:=C\times_S U_i\to U_i$ is flat. Refining each $U_i$ by an affine covering, we 
can suppose $U_i$ affine. 

Denote by $U$ one of these affine schemes $U_i$. 
Denote by $\mathcal K$ and $\J'$ the kernel and image
of the natural morphism $\J \otimes_{\cO_X} \cO_{X_U}  \to \cO_{X_{U}}$, with associated exact sequence of sheaves on $X_{U}$ 
$$ 0 \longrightarrow \mathcal K \longrightarrow \J\otimes_{\cO_X} \cO_{X_U} \longrightarrow \J'
\longrightarrow 0.$$
For all $n\in \mathbb Z$, we then have the exact sequence 
$$ 0 \to \mathcal K(n) \to \J(n)\otimes_{\cO_X} \cO_{X_U} \to 
\J'(n)\to 0.$$
Since $X_{U} \to U$ is projective, we can find $n_1$ such that 
$H^1(X_{U}, \mathcal K(n))=(0) $ for all $n \geq n_1$ (Serre Vanishing). 
Using \ref{Mumford} (ii), we find that by increasing $n_1$ if necessary,
we can assume that 
for all $n\ge n_1$ and for all $s\in U$, 
\begin{equation} \label{Jns}
H^0(X_{U}, \J'(n))\otimes k(s)\longrightarrow H^0(X_s, \J'(n)_s)
\end{equation}
is an isomorphism. The exact sequence 
$0\to \J'  \to \cO_{X_U} \to \cO_{C_U} \to 0$ induces 
an exact sequence $0\to \J'_s \to \cO_{X_s} \to \cO_{C_s} \to 0$ 
for all $s \in U$ because $C\times_S U\to U$ is flat. 
It follows that $\J'_s= \oline{\J}_s$. 
We can thus apply \ref{Mumford2} to the morphism $X_{U} \to U$ and the sheaf $\J'$ to obtain
that  $\oline{\J}_s$ is $n$-regular for all $n \geq n_1$  and all $s \in U$(after increasing $n_1$ further if necessary.)

For any $s \in U$ and for $n \geq n_1$, consider the commutative diagram: 
$$
\xymatrix{
H^0(X_{U}, \J(n)\otimes\cO_{U})\otimes k(s) 
\ar@{>>}[r] \ar[d]  \ar[dr] & H^0(X_{U}, \J'(n))\otimes k(s) \ar[d] \\
H^0(X_s, \J(n)_s)  \ar[r] & H^0(X_s, \oline{\J}_s(n)). \\  
}
$$
The top horizontal map is surjective because $H^1(X_{U}, \mathcal K(n))=(0)$, 
and the right vertical arrow is an isomorphism by the isomorphism \eqref{Jns}
above. Thus, 
the bottom arrow 
$H^0(X_s, \J(n)_s)  \to H^0(X_s, \oline{\J}_s(n))$ is surjective 
for all $n\ge n_1$ and all $s\in U$.

To complete the proof of  (b), 
it suffices to choose $n_0$ to be the maximum in the set of integers $n_1$ 
associated with each $U_i $ in the stratification.  To complete the proof of (a), 
we further increase $n_0$ if necessary to be able to  use the isomorphism in 
\ref{Mumford}\ (ii) applied to ${\mathcal F}=\J$ on $X\to S$. 
\qed

\begin{lemma} \label{fn-bis} 
Let $X$ be a projective variety over a field $k$ with a fixed very 
ample sheaf $\cO_X(1)$. Let $C$ be a closed subscheme of $X$. 
 Let  $\J$ denote the ideal sheaf of $C$ in $X$, 
and assume that $\J$ is $m_0$-regular for some $m_0 \geq 0$. 
Let $D$ be a finite set of closed points of $C$. Let $\{\xi_1, \dots, \xi_r\}$ be a finite subset of $X$ disjoint
from $C$.
\begin{enumerate}[{\rm (a)}] 
\item 
If ${\mathrm{Card}}(k)\geq r+{\mathrm{Card}}(D)$, then
for all $n\ge m_0$, there exists a section $f_n\in H^0(X, \J(n))$ 
such that $V_+(f_n)$ does not contain any $\xi_i$, 
and such that, for all $x\in D$ such that 
$(\J(n)/\J^2(n))\otimes k(x) \neq (0)$,
the image of $f_n$ in 
$(\J(n)/\J^2(n))\otimes k(x)$
is non-zero.
\item  
There exists an integer $n_0>0$ such that for all 
$n\ge n_0$, there exists a section $f_n\in H^0(X, \J(n))$ as in {\rm (a)}. 
\end{enumerate}
\end{lemma}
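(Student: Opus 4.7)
My plan is as follows.

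For part (a), since $\J$ is $m_0$-regular, $\J(n)$ is globally generated for every $n\ge m_0$ (cf.\ \ref{m-regular}). Hence the evaluation maps
$$
H^0(X,\J(n)) \twoheadrightarrow \J(n)\otimes k(\xi_i), \qquad
H^0(X,\J(n)) \twoheadrightarrow (\J(n)/\J^2(n))\otimes k(x),
$$
are surjective as $k$-linear maps. For $\xi_i$ the target is $\cO_X(n)\otimes k(\xi_i)\cong k(\xi_i)$ (as $\xi_i\notin C$ forces $\J_{\xi_i}=\cO_{X,\xi_i}$), and for $x\in D$ the target equals $\J_x/\mathfrak m_x\J_x$ (as $\J_x\subseteq \mathfrak m_x$ implies $\J_x^2\subseteq \mathfrak m_x\J_x$). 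Whenever the target is non-zero, the condition ``image non-zero'' defines a proper $k$-linear subspace of $H^0(X,\J(n))$; in total there are at most $r+\card(D)$ such proper subspaces. I then invoke the elementary fact that a $k$-vector space is never the union of $m$ proper subspaces when $\card(k)\ge m$, which under the assumption $\card(k)\ge r+\card(D)$ produces the desired $f_n$ outside the union of all bad subspaces.

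For part (b), where no lower bound on $\card(k)$ is available, I replace vector-space avoidance by a cohomological argument, at the cost of taking $n$ larger. First I reduce to the case that every $\xi_i$ is a closed point: the closure $\overline{\{\xi_i\}}$ is an integral closed subscheme not contained in $C$ (since $\xi_i\notin C$), and closed points are dense in a scheme of finite type over a field, so I pick a closed point $\bar\xi_i\in \overline{\{\xi_i\}}\setminus C$. Any section of $\J(n)$ that does not vanish at $\bar\xi_i$ also does not vanish at the generalization $\xi_i$: in a local trivialization around $\bar\xi_i$, such a section is a unit in $\cO_{X,\bar\xi_i}$, and units are preserved under the localization map $\cO_{X,\bar\xi_i}\to \cO_{X,\xi_i}$. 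Hence replacing $\xi_i$ by $\bar\xi_i$ strengthens the non-vanishing requirement without loss.

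With all $\xi_i$ now assumed closed, let $Y:=\{\xi_1,\dots,\xi_r\}\cup D$ be viewed as a reduced finite closed subscheme of $X$ with ideal sheaf $\mathcal I_Y$. Form the coherent skyscraper sheaf
$$
\mathcal H \ := \ \bigoplus_{y\in Y}(i_y)_*(\J(n)\otimes k(y)),
$$
where $i_y:\Spec k(y)\to X$ is the inclusion of the closed point $y$. The natural map $\J(n)\twoheadrightarrow \mathcal H$ is stalk-wise surjective (the points of $Y$ are pairwise disjoint in support), with kernel $(\mathcal I_Y\J)(n)$, yielding
$$
0 \longrightarrow (\mathcal I_Y\J)(n) \longrightarrow \J(n) \longrightarrow \mathcal H \longrightarrow 0.
$$
Serre vanishing applied to the coherent sheaf $\mathcal I_Y\J$ and the ample $\cO_X(1)$ gives an $n_0$ such that $H^1(X,(\mathcal I_Y\J)(n))=0$ for all $n\ge n_0$. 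The induced surjection $H^0(X,\J(n))\twoheadrightarrow \bigoplus_{y\in Y}\J(n)\otimes k(y)$ then admits a preimage of any tuple whose coordinate in each non-zero summand is chosen to be non-zero; any such preimage is the required $f_n$.

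The main obstacle is the reduction to closed $\xi_i$ in (b). If one kept the $\xi_i$ non-closed, the natural analog of $\mathcal H$ built from $(i_{\overline{\{\xi_i\}}})_*(\J(n)|_{\overline{\{\xi_i\}}})$ together with the skyscrapers at $D$ would fail to be stalk-wise surjective at points lying in $\overline{\{\xi_i\}}\cap \overline{\{\xi_j\}}$ or $\overline{\{\xi_i\}}\cap D$, since the corresponding ideals need not be coprime there. Working with disjoint closed points circumvents this issue.
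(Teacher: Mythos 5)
Your argument is correct. For part~(a) you take essentially the paper's route: cut out, for each bad evaluation, a proper $k$-subspace of $H^0(X,\J(n))$ and apply the union-of-subspaces bound (Lemma~\ref{union}); the only difference is that you treat the $\xi_i$ directly (noting $\J(n)(\xi_i)\cong\cO_X(n)(\xi_i)\neq 0$ and invoking global generation) rather than via the graded ring $B$, the homogeneous primes $\p_i$, and the surjectivity property~\ref{m-regular}(d). One small slip worth flagging: the evaluation maps $H^0(X,\J(n))\to\J(n)\otimes k(y)$ are in general \emph{not} surjective as $k$-linear maps when $k(y)\neq k$ (already for $X=\mathbb P^1_{\mathbb Q}$, $\J=\cO_X$, $n=0$, and $y$ a closed point with $k(y)=\mathbb Q(i)$); fortunately all you actually use, correctly, is that each map is non-zero and hence its kernel is a proper $k$-subspace.

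For part~(b) you take a genuinely different route from the paper's. The paper first produces $f$ meeting the $D$-conditions from the surjection $H^0(X,\J(n))\to H^0(D,\J(n)|_D)$ (obtained from $m$-regularity of $\J\J_D$), and then perturbs $f$ by an element of $H^0(X,\J^2(n))$---which cannot disturb the $D$-conditions---chosen via the graded prime-avoidance Lemma~\ref{avoid}(a) so that $f_n$ escapes all the $\p_i$. You instead replace each $\xi_i$ by a closed specialization $\bar\xi_i\in\overline{\{\xi_i\}}\setminus C$ (this is sound: $V_+(f_n)$ is closed, so missing $\bar\xi_i$ forces it to miss the generization $\xi_i$, and closed points of $X$ are dense in the nonempty locally closed set $\overline{\{\xi_i\}}\setminus C$ since $X$ is of finite type over $k$), assemble everything into the finite reduced $Y$, and apply Serre vanishing to $\mathcal I_Y\J$ to obtain a single uniform surjection $H^0(X,\J(n))\to\oplus_{y\in Y}\J(n)\otimes k(y)$ for $n\gg 0$. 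Your route is more uniform and dispenses with Lemma~\ref{avoid}(a) entirely; the paper's route avoids the reduction to closed points and makes explicit the useful fact that one is free to perturb by $H^0(X,\J^2(n))$. Both are valid.
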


\proof 
It suffices to prove the lemma for the subset
of $D$ obtained by removing
from $D$ all points $x$ such that 
$(\J(n)/\J(n)^2)\otimes k(x)=0$. 
We thus suppose now that $(\J(n)/\J(n)^2)\otimes k(x)\ne 0$
for all $x\in D$. Note also that the natural map $(\J(n)/\J(n)^2)\otimes k(x) \to \J(n)\otimes k(x)$
is an isomorphism for all $x\in D$, and we will use the latter expression.

(a)  Let $x \in D$ and $n \geq m_0$. Consider the $k$-linear map 
$$H^0(X, \J(n))\longrightarrow \J(n)\otimes k(x)$$ 
and denote by $H_x$ its kernel. 
Since $\J(n)$ is generated
by its global sections (\ref{m-regular} (c)), this map is non-zero
and   $H_x \neq H^0(X, \J(n))$.

Let $B= \oplus_{j \geq 0} H^0(X, \cO_X(j))$. This is a 
graded $k$-algebra and $X\simeq \Proj B$. 
Let $\p_1,\dots, \p_r$ be the homogeneous prime ideals of $B$ defining 
$\xi_1, \dots, \xi_r$.
Let $J$ be the homogeneous ideal $\oplus_{j \geq 0} H^0(X, \J(j))$ of $B$.
Then $C$ is the closed subscheme of $X$ defined by $J$. By hypothesis,
for each $i \leq r$, $\p_i$  neither contains $J$ nor $B(1)$. 
Let $J(n):=H^0(X, \J(n))$. 
We claim that for each $i \leq r$, 
$J(n)\cap \p_i$ is a proper 
subspace of $J(n)$.
Indeed, 
if $J(n_0)\cap \p_i = J(n_0)$ for some $n_0 \geq m_0$, then 
the surjectivity of the map in \ref{m-regular} (d) implies that 
$J(n)\cap \p_i = J(n)$ for all $n \geq n_0$. 
This would imply $C=V_+(J)\supseteq V_+(\p_i)\ni \xi_i$.

We have  constructed above at most $r+{\mathrm{Card}}(D)$ 
proper 
subspaces of $H^0(X, \J(n))$.  
Since $r+{\mathrm{Card}}(D) \leq {\mathrm{Card}}(k)$ by hypothesis, the union 
of these proper subspaces is not equal to $H^0(X,\J(n))$ (\ref{union}).
Since any element $f_n$ in the 
complement of the union of these 
subspaces satisfies the desired properties, (a) follows.
 
(b)  Let $\J_D$ be the ideal sheaf on $X$ defining the structure of
reduced closed subscheme on $D$. 
Choose $m \geq 0$ large enough such that both $\J$ and 
$ \J \J_D$ are 
$m$-regular. 
As $H^1(X, (\J \J_D)(n))=(0)$ for $n \geq m$ by \ref{m-regular} (b), the map
$$H^0(X, \J(n))\longrightarrow H^0(X, \J(n){|_D})
=H^0(X, \J(n)/\J \J_D(n)))$$
is surjective for all $n\ge m$. Note now the isomorphisms
$$H^0(X, \J(n){|_D})
\longrightarrow \oplus_{x\in D} (\J(n){|_D})_x \longrightarrow \oplus_{x\in D} \J(n)\otimes k(x).$$ 
Let then $f\in H^0(X, \J(n))$ be a section such that
its image in 
$\J(n)\otimes k(x)$
is non-zero 
for each $x \in D$. 
Keep the notation introduced in (a). Then $I:=\oplus_{n\ge 0} H^0(X, \J^2(n))$
is a homogeneous ideal of $B$ and $J^2\subseteq I\subseteq J$. 
Hence $I\not\subseteq \p_i$ for all $i\le r$, 
since otherwise $J\subseteq \p_i$, which contradicts 
the hypothesis that $\xi_i\notin C$.

Lemma \ref{avoid} (a) 
below implies then the existence of $n_0 \geq 0$ such that for all $n \geq n_0$, 
there exists $x_n\in I(n)$ 
such that  $f_n:=f+x_n\notin\cup_{1\le i\le r}\p_i$. 
We have $f_n\in J(n)=H^0(X, \J(n))$ and 
for all $x\in D$, 
$f_n$ is non-zero in 
$\J(n)\otimes k(x)$.
\qed

\medskip
The following Prime Avoidance Lemma for graded rings is needed in the proof of \ref{fn-bis}.
This lemma is slightly stronger than 4.11 in \cite{GLL1}. For related statements, see \cite{SH}, Theorem A.1.2., or \cite{Bou}, III, 1.4, Prop. 8, page 161.
We do not use the statement \ref{avoid} (b) in this article.
\begin{lemma}\label{avoid} 
Let $B=\oplus_{n\ge 0}B(n)$ be a graded ring. 
Let $I=\oplus_{n\ge 0}I(n)$ be a homogeneous ideal of $B$.
Let $\p_1,\dots,\p_r$ be homogeneous prime ideals of $B$ not containing $B(1)$
and not containing $I$. 
\begin{enumerate}[{\rm (a)}]
\item Then there exists an integer $n_0 \geq 0$ such
that for all $n\ge n_0$ and for all $f\in B(n)$, we have 
$$f+I(n)\not\subseteq \cup_{1\le i\le r} \p_i.$$    
\item Let $k$ be a field with $\mathrm{Card}(k)>r$, and assume that $B$ is a $k$-algebra. 
If $I$ can  be generated by elements of degree at most $ d$, then 
in {\rm (a)} we can take $n_0=d$. 
\end{enumerate} 
\end{lemma}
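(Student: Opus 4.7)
The plan for part (a) is to first discard redundant primes so that $\p_1, \ldots, \p_r$ become pairwise incomparable. Next, for each $i$ I will construct a homogeneous element $w_i \in I \cap \bigcap_{j \neq i} \p_j$ with $w_i \notin \p_i$: by incomparability, for each $j \neq i$ one can pick a homogeneous $x_{ij} \in \p_j \setminus \p_i$, and similarly a homogeneous $u_i \in I \setminus \p_i$ (both exist because $I$ and the $\p_j$ are homogeneous and $I \not\subseteq \p_i$), and then take $w_i := u_i \prod_{j \neq i} x_{ij}$. To reach arbitrarily large degrees, I will fix $b_i \in B(1) \setminus \p_i$ (which exists since $B(1) \not\subseteq \p_i$) and, for $n \geq \deg w_i$, define $w_i^{(n)} := w_i \, b_i^{n - \deg w_i} \in I(n)$; this element lies in $\p_j$ for every $j \neq i$ but not in $\p_i$.

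Taking $n_0 := \max_i \deg w_i$, the key step will be the following. Given $n \geq n_0$ and $f \in B(n)$, let $\sigma(f) := \{i : f \in \p_i\}$ and set $y := \sum_{i \in \sigma(f)} w_i^{(n)} \in I(n)$. A direct case analysis should show $f + y \notin \p_j$ for every $j$: if $j \in \sigma(f)$, then all summands $w_i^{(n)}$ with $i \neq j$ lie in $\p_j$, hence $y \equiv w_j^{(n)} \not\equiv 0 \pmod{\p_j}$, and combined with $f \in \p_j$ this gives $f + y \notin \p_j$; if $j \notin \sigma(f)$, every summand of $y$ lies in $\p_j$ (since $j \neq i$ for each $i \in \sigma(f)$), so $y \in \p_j$, and combined with $f \notin \p_j$ this gives $f + y \notin \p_j$.

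For part (b), I will fix $n \geq d$ and exploit that $I(n)$ and each $\p_i \cap I(n)$ are $k$-vector subspaces of $B(n)$. Supposing for contradiction that $f + I(n) \subseteq \bigcup_i \p_i$, the set $I(n)$ is then covered by the (possibly empty) affine $k$-subspaces $A_i := I(n) \cap (-f + \p_i)$, each a translate of $I(n) \cap \p_i$. Since $|k| > r$, the standard fact that a $k$-vector space is not a union of $r$ proper affine $k$-subspaces when $|k| > r$ (proved via nonvanishing of $\prod_i (L_i - c_i)$, where $H_i = \{L_i = c_i\}$, as a polynomial of total degree $r$ on a $k$-affine space with $|k| > r$) forces some $A_i$ to equal all of $I(n)$, whence $f + I(n) \subseteq \p_i$, so $f \in \p_i$ and $I(n) \subseteq \p_i$. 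But $I$, being homogeneous and generated by elements of degree $\leq d$, is generated by homogeneous such elements, and some such generator $u$ of degree $e \leq d$ is not in $\p_i$; then $u \, b^{n-e} \in I(n) \setminus \p_i$ for any $b \in B(1) \setminus \p_i$, a contradiction.

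The hard part will be ensuring that the case analysis on $\sigma(f)$ in part (a) is airtight and that all the separating elements $w_i^{(n)}$ and $b_i$ exist under the given hypotheses on the $\p_i$; once these are in hand, both parts reduce to controlled graded variants of classical prime avoidance, the graded flexibility in (a) replacing the scalar perturbations that would be available in (b).
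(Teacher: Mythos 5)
Your proof is correct and follows essentially the same route as the paper's: in (a) you explicitly build the separating element $w_i \in I \cap \bigcap_{j\neq i}\p_j$, $w_i\notin\p_i$, where the paper picks a homogeneous $\alpha$ from the ideal $I_i := I\cap\bigcap_{j\neq i}\p_j$, and the degree-raising with $b_i^{n-\deg w_i}$ and the case analysis on $\sigma(f)$ match the paper's construction of $t_j$ and the perturbation $f+\sum_{\p_j\ni f} t_j$. In (b) both arguments reduce to the fact that a $k$-vector space with $|k|>r$ is not a union of $r$ proper affine subspaces (the paper's Lemma \ref{union}), and both rule out $I(n)\subseteq\p_i$ by pushing a low-degree homogeneous generator of $I$ up to degree $n$ with a power of an element of $B(1)\setminus\p_i$.
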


\proof  We can suppose
that there are no inclusion relations between $\p_1, \dots, \p_r$. 
 
(a) Let $i\le r$ and set $I_i:=I\cap (\cap_{j\ne i}\p_j)$. 
We first observe that there exists $n_i\ge 0$ such that for 
all $n \ge n_i$, we have $I_i(n)\not\subseteq \p_i$. Indeed, 
as $I_i\not\subseteq \p_i$ and $I_i$ is homogeneous, we can find a 
homogeneous element $\alpha$ in $ I_i\setminus \p_i$. Let $t\in B(1)\setminus \p_i$. 
Set $n_i:=\deg \alpha$. Then for all $n\ge n_i$, we have 
$t^{n-n_i}\alpha \in I_i(n)\setminus \p_i$. 

Let $n_0:=\max_{1\le i\le r} \{ n_i\}$. Let $n\ge n_0$ and let $f\in B(n)$. 
If $f\notin\cup_i \p_i$, then clearly $f+I(n)\not\subseteq \cup_{1\le i\le r} \p_i$. 
Assume now that $f \in\cup_i \p_i$, and for each $j$ such that $f \in \p_j$, choose $t_j\in I_j(n)\setminus \p_j$. Then we easily verify that 
$$f+\sum_{\p_j\ni f} t_j \ \in (f+I(n))\setminus \cup_{1\le i\le r} \p_i.$$ 
 
 (b) Let $n\ge d$. For each $j \leq r$, let us show that $I(n)\not\subseteq \p_j$.
Suppose by contradiction that $I(n)\subseteq \p_j$,  and choose $t\in B(1)\setminus \p_j$. Then
$t^{n-e}I(e)\subseteq I(n)\subseteq \p_j$ for all $1 \leq e \leq d$. Hence, $I(e)\subseteq \p_j$, and
then $I\subseteq \p_j$ because $I$ can be generated by the union of the $I(e)$, $1 \leq e \leq d$. Contradiction.

Let $f\in B(n)$, and suppose that $f+I(n)\subseteq \cup_{1\le i\le r} \p_i$.
Then 
$$ I(n) = \cup_{1\le i \le r} \left((-f+\p_i)\cap I(n)\right). $$  
If $(-f+\p_i)\cap I(n)$ is not empty, pick $c_i \in ((-f+\p_i)\cap I(n))$, and let $W_i:= -c_i +((-f+\p_i)\cap I(n))$. 
The reader will easily check that $W_i$ is  a $k$-subspace
of the $k$-vector space $I(n)$. Moreover, we claim that $W_i \neq I(n)$. 
Indeed, if $W_i = I(n)$, then $I(n) = c_i + W_i = (-f+\p_i)\cap I(n)$. But then $f\in \p_i$,
which implies that  $I(n)\subseteq \p_i$, a contradiction.
Therefore, the $k$-vector space $I(n)$ is a finite union 
of at most $r$ proper $k$-affine subspaces, and this is also a contradiction (\ref{union}). 
\qed 

\begin{lemma} \label{union} 
Let $V$ be a vector space over a field $k$. For $i=1,\dots, m$, let
$v_i\in V$ and let $V_i$ be a proper subspace of $V$.
If 
$\mathrm{Card}(k)\ge m+1$,
then $$V\ne (v_1 +V_1) \cup \ldots \cup (v_m +V_m).$$ 
If  $\mathrm{Card}(k)\ge m$, then 
$V\ne V_1 \cup \ldots \cup  V_m.$
\end{lemma}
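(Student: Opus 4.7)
The plan is to prove both statements by induction on $m$ using a line-counting argument, handling the affine and linear cases in parallel. The base case $m=1$ is immediate since $v_1+V_1$ (resp.\ $V_1$) is a proper subset of $V$. For the inductive step, in each case I would first reduce to the irredundant situation where no one of the $m$ (affine) subspaces is contained in the union of the others: if say $v_m+V_m$ (resp.\ $V_m$) lies in the union of the others, then delete it and apply the inductive hypothesis, which is available since $\mathrm{Card}(k)$ exceeds $m-1$ in both settings.

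Once in the irredundant situation, pick $x$ lying in the first (affine) subspace but in none of the others, and $y$ lying outside the first (affine) subspace. For the affine case with $\mathrm{Card}(k)\ge m+1$, consider the line $L:=\{x+t(y-x):t\in k\}$, which has exactly $\mathrm{Card}(k)$ points. For each $i$, the intersection $L\cap (v_i+V_i)$ is an affine subspace of $L$, hence empty, a single point, or all of $L$. Since $L$ meets $v_1+V_1$ at $x$ (when $t=0$) but not at $y$ (when $t=1$), the intersection $L\cap(v_1+V_1)$ is a nonempty proper affine subspace of $L$, so it is a single point. For $i\ge 2$, the intersection cannot be all of $L$ since $x\in L$ but $x\notin v_i+V_i$. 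Counting, $L$ is covered by at most $1+(m-1)=m$ points, contradicting $\mathrm{Card}(L)\ge m+1$.

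For the linear case with $\mathrm{Card}(k)\ge m$, use instead the line $L:=\{y+tx:t\in k\}$. Since $V_1$ is a linear subspace containing $x$ but not $y$, a solution of $y+tx\in V_1$ would force $y\in V_1$, so $L\cap V_1=\emptyset$. For $i\ge 2$, the intersection $L\cap V_i$ is empty, a single point, or all of $L$; the last possibility would make both $y$ and $y+x$ lie in $V_i$, forcing $x\in V_i$ and contradicting the choice of $x$. Hence $L$ is covered by at most $m-1$ points, contradicting $\mathrm{Card}(L)\ge m$. The only non-mechanical step in the entire argument is the reduction to the irredundant case, which is precisely what justifies the existence of the element $x$; all remaining verifications are short and elementary.
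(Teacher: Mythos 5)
Your proof is correct, and it takes a genuinely different route from the paper's. The paper proves only the affine statement and reduces it to the linear one, which it cites from an external source (\cite{BBS}, Lemma 2): assuming $V = (v_1+V_1)\cup\ldots\cup(v_m+V_m)$, the paper fixes $x\in V_1$, takes an arbitrary $y\notin V_1$, and applies pigeonhole to the $m$ points $v_1+x+\lambda y$ (with $\lambda\in k^*$) distributed among the $m-1$ sets $v_i+V_i$, $i\ge 2$; this yields distinct $\lambda_1,\lambda_2$ with $(\lambda_1-\lambda_2)y\in V_i$, hence $y\in V_i$, so $V=V_1\cup\ldots\cup V_m$, contradicting the second statement since $\mathrm{Card}(k)\ge m+1\ge m$. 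Your argument instead proves both parts uniformly and self-containedly by induction on $m$, using the irredundancy reduction to make room for the line-counting. The trade-off is clear: the paper's pigeonhole reduction is shorter but leans on a cited result; your inductive line argument is longer (two parallel cases plus the irredundancy step) but needs no external reference and in fact yields a clean proof of the linear case as a byproduct. Both are correct. One small remark: in your linear case you implicitly use that $x\neq 0$, which for $m\ge 2$ follows from $x\notin V_2\ni 0$; worth noting explicitly, though the $m=1$ case is already dispatched as the base of the induction.
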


\begin{proof} Assume that
$\mathrm{Card}(k)\ge m+1$,
and that $V= (v_1 +V_1) \cup \ldots \cup (v_m +V_m)$. We claim then that 
 $V=  V_1 \cup \ldots \cup V_m$. Indeed, fix $x \in V_1$, and let $y \in V \setminus V_1$.
Since $\mathrm{Card}(k^*) \geq m$, we can find at least $m$ elements of the form 
$v_1+(x+\lambda y)$ with $\lambda \in k^*$, and 
  $$v_1+(x+\lambda y)\in V\setminus (v_1+V_1)\subseteq \cup_{2\le i\le m} (v_i+V_i).$$
Thus there  exist an index $i$ and distinct $\lambda_1 $, $\lambda_2$ in $k^*$ such that
$v_1+x+\lambda_1 y$ and $ v_1+x+\lambda_2 y$ both belong to $v_{i}+V_i$. It follows that 
$(\lambda_1-\lambda_2)y\in V_i$ and, thus, $y\in V_i$.
Hence, $V=  V_1 \cup \ldots \cup V_m$. The second 
statement of the lemma is well-known and can be found for instance in \cite{BBS}, Lemma 2.   
\end{proof}
\medskip 

Our final lemma in this section is a key ingredient in the proofs of
Theorem~\ref{exist-hyp} and Theorem~\ref{pro.reductiondimension2}
and is used to insure that 
Condition (2) in Theorem~\ref{globalize} holds for 
$n$ big enough uniformly in $s\in S$. 

\begin{lemma}  \label{existence-fn}
Let $S$ be a noetherian affine scheme, and let $X\to S$ be
projective. Let $\cO_X(1)$ be a very ample sheaf on $X$ relative to 
$X\to S$. Let $C:=V(\J)$ be a closed subscheme of $X$. 
Let $A_0$ be a subset of $X$ disjoint from $C$ and such that there
exists $c_0\in \mathbb N$ with
$\mathrm{Card} (A_0\cap X_s) \le c_0$ for all $ s\in S$. 
Let $\oline{\J}_s$ denote the image of  $\J_s $ in $\cO_{X_s} $. 
Then there exists $n_0 \geq 0$ such that for all $s \in S$ and
for all $n \geq n_0$,
\begin{enumerate}[{\rm (a)}] 
\item There exists $f_{s,n}\in H^0(X_s, \oline{\J}_s(n))$ 
whose zero locus $V_+(f_{s,n})$ in $X_s$ does not contain any
point of $A_0$.
\item Suppose that $C\to S$ is finite. Then  there exists $f_{s,n}\in H^0(X_s, \oline{\J}_s(n))$
as in {\rm (a)} such that  the image of $f_{s,n}$ in 
$(\oline{\J}_s(n)/\oline{\J}_s^2(n))\otimes k(x)$ is non-zero for all $x\in C$ with
$(\oline{\J}_s(n)/\oline{\J}^2_s(n))\otimes k(x)\neq (0)$. 
\end{enumerate} 
\end{lemma}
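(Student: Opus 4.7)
The plan is to apply Lemma \ref{fn-bis} fiber-by-fiber on $X_s$ with the ideal sheaf $\oline{\J}_s$, and then patch the resulting fiber-wise thresholds on $n$ into a single integer $n_0$ valid uniformly over $s\in S$. Uniformity will rest on three ingredients: a uniform $m$-regularity bound provided by Lemma \ref{Jn-barJn} (b), a uniform cardinality bound on the relevant finite sets of points, and the finiteness, given by Lemma \ref{finiteresidue}, of the locus where the residue field is small.

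First I would invoke \ref{Jn-barJn} (b) to obtain $n_1$ such that $\oline{\J}_s$ is $n$-regular on $X_s$ for every $n\geq n_1$ and every $s\in S$; this $n_1$ will serve as the ``$m_0$'' of \ref{fn-bis}. By hypothesis, $|A_0\cap X_s|\leq c_0$ independently of $s$. In case (b), since $C\to S$ is finite and hence projective, Fact \ref{Mumford} (iii) applied to $C\to S$ with the very ample sheaf $\cO_X(1)|_C$ shows that the Hilbert polynomials $P_{C_s}$ take only finitely many values; as $C_s$ is zero-dimensional, each such polynomial is the constant $\dim_{k(s)} H^0(C_s,\cO_{C_s})$, which bounds $|C\cap X_s|$ uniformly by some integer $c_1$.

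Next, I would set $c:=c_0$ in case (a) and $c:=c_0+c_1$ in case (b), and let $S_1:=\{s\in S : |k(s)|<c\}$, which is finite by \ref{finiteresidue}. For $s\in S\setminus S_1$ and $n\geq n_1$, I would apply \ref{fn-bis} (a) to $X_s/k(s)$ with ideal sheaf $\oline{\J}_s$, points $\{\xi_1,\dots,\xi_r\}:=A_0\cap X_s$ (disjoint from $C\cap X_s$ by the assumption on $A_0$), and $D:=\emptyset$ in case (a) or $D:=C\cap X_s$ in case (b); the cardinality condition $|k(s)|\geq r+|D|$ is precisely what $s\notin S_1$ guarantees, and the output is the desired section $f_{s,n}$. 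For each of the finitely many $s\in S_1$, Lemma \ref{fn-bis} (b) supplies some $n_0(s)$ such that a suitable section exists for every $n\geq n_0(s)$; taking $n_0:=\max(n_1,\max_{s\in S_1}n_0(s))$ then works for all $s\in S$ simultaneously. The main obstacle is precisely the uniformity in $s$, and the most delicate step is the uniform bound on $|C\cap X_s|$ needed to apply \ref{fn-bis} (a) off $S_1$; once that bound is in place, the remaining small-residue-field phenomenon is confined to the finite set $S_1$ on which one reverts to \ref{fn-bis} (b).
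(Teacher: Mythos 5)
Your proof matches the paper's argument. The paper also reduces to Lemma~\ref{fn-bis} applied fiberwise, using \ref{Jn-barJn}~(b) for the uniform $n$-regularity of $\oline{\J}_s$, Lemma~\ref{finiteresidue} to confine the small-residue-field fibers to a finite set (the paper's $Z_0$, your $S_1$), \ref{fn-bis}~(a) off that set, and \ref{fn-bis}~(b) with a raised threshold on it. The only difference is cosmetic: the paper simply enlarges $c_0$ so that $\mathrm{Card}(C_s)\le c_0$ without spelling out a justification, whereas you supply one via \ref{Mumford}~(iii) applied to $C\to S$; an even more elementary route is to note that $C=\Spec B$ with $B$ a finite $R$-module, so $\mathrm{Card}(C_s)\le\dim_{k(s)}(B\otimes k(s))$ is bounded by the number of $R$-module generators of $B$.
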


\proof 
When $C\to S$ is
finite, we increase $c_0$ if necessary so we can assume that $\mathrm{Card}(C_s)\le c_0$ for all $s\in S$.  
Let 
$$Z_0:=\{ s\in S \mid {\mathrm{Card}}(k(s)) \le 2c_0\}.$$
Lemma \ref{finiteresidue} shows that $Z_0$ is a finite set.

Let $n_0$ be such that Lemma \ref{Jn-barJn} applies. 
Fix $n>n_0$. It follows from \ref{Jn-barJn} (b) that $\oline{\J}_s$ is $n$-regular for all $s \in S$.
Let $s \in S \setminus Z_0$. Then ${\mathrm{Card}}(k(s)) > {\mathrm{Card}}(A_0) + {\mathrm{Card}}(C_s)$.   
Parts (a) and (b) both follow from Lemma \ref{fn-bis} (a) applied to $\oline{\J}_s$, with $D$ empty 
in the proof of (a), and $D = C_s$ in the proof of (b).
For the remaining finitely many points $s\in Z_0$, we increase $n_0$ if necessary so that we can 
use Lemma \ref{fn-bis} (b) for each $s \in Z_0$.
\qed 

\end{section}

\begin{section}{Avoidance lemma for families} \label{hyperf}

We present in this section further applications of our method. Our first result below is a generalization of Theorem~\ref{exist-hyp}, where the noetherian hypothesis on the base has been removed. 

\begin{theorem} \label{bertini-type-0}
Let $S$ be an affine scheme, and let $X\to S$ be a 
quasi-projective and finitely presented morphism. Let
 $\cO_X(1)$ be a very ample sheaf relative to $X \to S$. Let 
\begin{enumerate}[{\rm (i)}]
\item $C$ be a closed subscheme of $X$, 
finitely presented over $S$;
\item $F_1, \dots, F_m$ be 
subschemes of $X$ of finite presentation over $S$;
\item $A$ be a finite subset of $X$ such that $A\cap C=\emptyset$. 
\end{enumerate} 
Assume that for all $s \in S$, $C$ does not contain any
irreducible component of positive dimension of $(F_i)_s$ and of $X_s$. 
Then there exists $n_0>0$ such that for all $n\ge n_0$, there exists 
a global section $f$ of $\cO_X(n)$ such that: 
\begin{enumerate}[\rm (1)]
\item The closed subscheme $H_f$ of $X$ is a hypersurface 
that contains $C$ as a closed subscheme; 
\item \label{HF} For all $s \in S$ and for all $i\le m$, $H_f$ does not contain 
any irreducible component of positive dimension of $(F_i)_s$; and
\item \label{add-A} $H_f\cap A=\emptyset$. 
\end{enumerate}
Assume in addition that $S$ is noetherian, and 
that $C\cap\Ass(X)=\emptyset$. Then there exists such a hypersurface 
$H_f$ which is locally principal. 
\end{theorem}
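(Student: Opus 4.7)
The plan is to reduce Theorem~\ref{bertini-type-0} to Theorem~\ref{exist-hyp} via noetherian approximation. Write $R = \varinjlim_\lambda R_\lambda$ as the filtered colimit over its finitely generated $\mathbb{Z}$-subalgebras, so each $S_\lambda := \Spec R_\lambda$ is affine, noetherian, and of finite Krull dimension, with $S = \varprojlim_\lambda S_\lambda$. By the standard finite-presentation descent results of \cite{EGA}, IV, \S 8, together with the descent of very ampleness, we fix $\lambda_0$ and descend all the given data to $S_{\lambda_0}$: a quasi-projective finitely presented morphism $X_{\lambda_0}\to S_{\lambda_0}$ with a very ample sheaf $\cO_{X_{\lambda_0}}(1)$, and finitely presented subschemes $C_{\lambda_0}, F_{1,\lambda_0},\ldots,F_{m,\lambda_0}$ of $X_{\lambda_0}$, whose base change to $S$ recovers the original data. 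For $\lambda\ge\lambda_0$, set $X_\lambda := X_{\lambda_0}\times_{S_{\lambda_0}} S_\lambda$, let $p_\lambda\colon X\to X_\lambda$ denote the projection, and let $A_\lambda := p_\lambda(A)$; this is a finite subset of $X_\lambda$, disjoint from $C_\lambda$ since $C = p_\lambda^{-1}(C_\lambda)$.

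The key step is to arrange that the fiberwise hypothesis descends to $X_\lambda\to S_\lambda$ for $\lambda$ sufficiently large. Applying Lemma~\ref{remove-isolated-pts} to each $F_{i,\lambda_0}$ and to $X_{\lambda_0}$, we obtain closed subsets $\mathbf{F}_{i,\lambda_0}$ and $\mathbf{X}_{\lambda_0}$ whose fibers at any $s_\lambda$ are the union of the positive dimensional irreducible components of the corresponding fibers. The ``bad'' locus $B_{\lambda_0}\subseteq S_{\lambda_0}$, consisting of $s_{\lambda_0}$ for which $C_{\lambda_0}$ contains some irreducible component of $(\mathbf{F}_{i,\lambda_0})_{s_{\lambda_0}}$ or of $(\mathbf{X}_{\lambda_0})_{s_{\lambda_0}}$, is constructible in $S_{\lambda_0}$: it is the image under the finite-type projection $C_{\lambda_0}\cap\mathbf{F}_{i,\lambda_0} \to S_{\lambda_0}$ (resp.\ $C_{\lambda_0}\cap\mathbf{X}_{\lambda_0}\to S_{\lambda_0}$) of the constructible locus of points that are generic in their positive-dimensional fiber component, by Chevalley combined with the constructibility of fiberwise dimension loci (\cite{EGA}, IV.9). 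Since taking positive-dimensional fiber components commutes with change of base field, $\pi_{\mu\lambda_0}^{-1}(B_{\lambda_0})$ coincides with the analogous bad locus for $X_\mu\to S_\mu$, and by hypothesis its pullback to $S$ is empty. The standard limit formalism for constructible sets (\cite{EGA}, IV.8) then yields some $\lambda \ge \lambda_0$ with $\pi_{\lambda\lambda_0}^{-1}(B_{\lambda_0}) = \emptyset$ in $S_\lambda$, so the hypothesis of Theorem~\ref{exist-hyp} holds for $X_\lambda\to S_\lambda$.

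Theorem~\ref{exist-hyp} now produces, for every sufficiently large $n$, a section $f_\lambda \in H^0(X_\lambda, \cO_{X_\lambda}(n))$ whose hypersurface satisfies conclusions (1)--(3) over $S_\lambda$; setting $f := p_\lambda^* f_\lambda$, the equality $H_f = p_\lambda^{-1}(H_{f_\lambda})$ transfers these conclusions to $X$. For the final assertion, assume $S$ is noetherian and $C\cap \Ass(X) = \emptyset$; then $X$ is noetherian, $\Ass(X)$ is finite, and rerunning the argument with $A$ replaced by $A\cup \Ass(X)$ yields $H_f\cap \Ass(X) = \emptyset$, whence $H_f$ is locally principal by Lemma~\ref{hypersurfaces-properties}(4). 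The main obstacle lies in verifying the descent of the positive-dimension hypothesis in the second paragraph: precisely identifying the bad locus, checking its constructibility via a careful analysis of fiberwise irreducible components, and confirming its compatibility with base change; the remainder of the proof is a routine application of the limit techniques of \cite{EGA}, IV, \S 8.
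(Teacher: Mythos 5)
Your proposal is correct, and it reaches the conclusion by a genuinely different route from the paper in the one non-routine step of the noetherian approximation. The paper sidesteps the descent-of-hypothesis problem entirely: after descending $X, C, \cO_X(1), F_1,\dots,F_m$ to an affine $S_0$ of finite type over $\mathbb{Z}$, it replaces each $F_{i,0}$ by the locally closed set $F_{i,0}\setminus C_0$ (and implicitly $X_0$ by $X_0\setminus C_0$). Since these sets are disjoint from $C_0$, the hypothesis of Theorem~\ref{exist-hyp} is trivially satisfied over $S_0$ with no further argument; the only residual work is the short check that, because $C = C_0\times_{S_0} S$, the generic point of a positive-dimensional component of $(F_i)_s$ maps to a generic point of a component of $(F_{i,0}\setminus C_0)_{s_0}$, so the conclusion for the modified data pulls back to the conclusion for $F_i$ on $X$. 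You instead keep the $F_{i,\lambda}$ unchanged and argue that the fiberwise hypothesis itself descends: you define the bad locus $B_{\lambda_0}\subseteq S_{\lambda_0}$, prove it constructible and compatible with base change, observe its preimage in $S$ is empty, and invoke the limit formalism (EGA IV.8.3.4) to kill it at some finite level $\lambda$. Both arguments are correct. The paper's device is lighter, trading the constructibility analysis for a one-paragraph transfer argument; your approach requires the extra bookkeeping but is more canonical in spirit, and the constructibility you need is in fact exactly the content of the paper's Proposition~\ref{constructible-univ} applied to $T := C_{\lambda_0}\cap\mathbf{F}_{i,\lambda_0}$ inside $\mathbf{F}_{i,\lambda_0}\to S_{\lambda_0}$, which you could have cited instead of resketching. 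The one point you pass over a little quickly is the compatibility of $B_{\lambda_0}$ with base change: after a residue-field extension an irreducible fiber component pulls back to a \emph{union} of components, so one should say explicitly that $C_{\mu,s'}$ contains one of these components if and only if $C_{\lambda_0,s}$ contains the original (which holds because $C_\mu$ is the preimage of $C_{\lambda_0}$, $C$ is closed, and the component of $(\mathbf{F}_{i,\mu})_{s'}$ surjects onto the original component). This is true and easy, but it is the kind of detail worth writing out.
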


\proof The last statement when $S$ is noetherian is immediate from the 
main statement of the theorem: simply apply the main statement of the 
theorem with $A$ replaced by $A\cup\Ass(X)$. The fact that $H_f$ is 
locally principal when $H_f \cap \Ass(X) = \emptyset$ is noted in 
\ref{hypersurfaces-properties} (4). 

Let us now prove the main statement of the theorem. 
First we add $X$ to the set of subschemes $F_i$. Then the 
property of $H_f$ being a hypersurface results from 
\ref{bertini-type-0} (2). 
Our main task is to reduce to the case $S$ 
is noetherian and of finite dimension, in order to then apply
Theorem~\ref{exist-hyp}. 
Using \cite{EGA}, IV.8.9.1 and IV.8.10.5, we find the existence of  an affine scheme $S_0$ of 
finite type over $\mathbb Z$, and of  a morphism $S\to S_0$ such that all the objects of Theorem 
\ref{bertini-type-0} descend to $S_0$. More precisely,
there exists   a quasi-projective 
scheme $X_0\to S_0$ such that $X$ is isomorphic to $X_0\times_{S_0} S$.
We will denote by $p : X\to X_0$ the associated `first projection' morphism.  
There also exists a very ample sheaf  $\cO_{X_0}(1)$ relative to $X_0 \to S_0$ whose pull-back to 
$X$ is $\cO_{X}(1)$. There exists a closed 
subscheme $C_0$ of $X_0$
such that $C$ is isomorphic to $C_0 \times_{S_0} S$. 
 Finally, there exists 
subschemes $F_{1,0}, \dots, F_{m,0}$ of $X_0$ such that $F_i$ is
isomorphic to $F_{i,0} \times_{S_0} S$. Let $A_0:=p(A)$. 

Since $S_0$ is of finite type over $\mathbb Z$, $S_0$ is noetherian and 
of finite dimension. The data $X_0, C_0, \{ (F_{1,0}\setminus C_0), 
\dots, (F_{m,0}\setminus C_0) \}, A_0$ 
satisfy the hypothesis of Theorem~\ref{exist-hyp}. 
Let $n_0> 0$ and, for all $n\ge n_0$, 
an $f_0\in H^0(X_0, \cO_{X_0}(n))$ be given 
by Theorem~\ref{exist-hyp} with respect to these data. 
Let $H_{f}$ be the closed subscheme of $X$ define by the canonical
image of $f_0$ in $H^0(X, \cO_X(n))$. Then $H_f=H_{f_0}\times_{S_0} S$ 
contains $C$ as a closed subscheme and $H_f\cap A=\emptyset$. 
It remains to check Condition (2) of \ref{bertini-type-0}. 
Let $\xi$ be the generic point of an irreducible component 
of positive dimension of $F_{i,s}$. Let $s=p(s_0)$ and let
$\xi_0=p(\xi)$. Then an open neighborhood of $\xi$ in $(F_i)_s$
has empty intersection with $C$. As $C=C_0\times_{S_0} S$,
this implies that the same is true for $\xi_0$ in $(F_{i,0})_s$. 
Hence $\xi_0$ is the generic point of an irreducible component of
$(F_{i,0}\setminus C_0)_s$ of positive dimension. Thus 
$\xi_0\notin H_{f_0}$ and $\xi\notin H_f$.
\qed

\begin{remark} 
The classical Avoidance Lemma states that if $X/k$ is a quasi-projective
scheme over a field, $C \subsetneq X$ is a closed subset of positive codimension, 
and $\xi_1,\dots, \xi_r$ are points of $X$ not contained in $C$, then there exists a 
hypersurface $H$ in $X$ such that $C \subseteq H$ and $\xi_1, \dots, \xi_r \notin H$.

Let $S$ be a noetherian scheme, and let $X/S$ be a quasi-projective scheme. 
One may wonder whether it is possible 
to strengthen Theorem \ref{bertini-type-0}, the Avoidance Lemma for Families, 
by strengthening its Condition (\ref{HF}). 
The following example shows that  Theorem \ref{bertini-type-0}
does not hold if Condition (\ref{HF}) is replaced by the stronger 
Condition (\ref{HF}'): For all $s \in S$,
 $H_f$ does not contain any irreducible component of $F_s$.
 
 Let $S=\Spec R$ be a Dedekind scheme such that $\Pic(S)$ is not a torsion group (see, e.g., 
\cite{Gol}, Cor.\ 2).
Let ${\mathcal L}$ be an 
invertible sheaf on $S$ of infinite order.
Consider as in \ref{threesections}
the   scheme $X={\mathbb P}(\cO_S \oplus {\mathcal L})$ with its natural projective morphism $X \to S$.
Let $F$ be the union of the two horizontal sections $C_0$ and $C_{\infty}$. If Theorem \ref{bertini-type-0} with Condition (\ref{HF}')
holds, then there   exists a hypersurface $H_f$ which is a 
finite quasi-section  
of $X \to S$ (as defined in  \ref{def.finite-qs}), and which does 
not meet $F$. Proposition 
\ref{threesections} shows that this can only happen when ${\mathcal L}$ has finite order. 
\end{remark} 
\begin{remark} \label{ex.avoidance}
Let $k$ be any field, and let $X/k$ be an irreducible proper scheme over $k$. 
Let $C \subsetneq X$ be a closed subscheme, 
and let $\xi_1,\dots, \xi_r$  be points of $X$ not contained in $C$.
We may ask whether an Avoidance Lemma holds for $X/k$ in the following senses: (1) Does there exist a 
line bundle ${\mathcal L}$ on $X$ and a section $f \in {\mathcal L}(X)$ such that the closed subscheme $H_f$ 
contains $C$  and $\xi_1, \dots, \xi_r \notin H_f$? We may also ask (2) whether  there exists a codimension $1$ subscheme $H$ of $X$  such that  
 $H$ contains $C$  and $\xi_1, \dots, \xi_r \notin H$.

The answer to the first question is negative, as there exist proper schemes $X/k$ with $\Pic(X)= (0)$.
For instance, a normal proper surface $X/k$ over an uncountable field $k$  with $\Pic(X)= (0)$ is constructed in 
\cite{Sch}, section 3. 

The answer to the second question is also negative when $X/k$ is not smooth. Recall the example of Nagata-Mumford
(\cite{Art}, pp.\ 32-33). Consider the projective plane ${\mathbb P}^2_k/k$, and fix an elliptic curve $E/k$ in it, with origin $O$.
Assume that $E(k)$ contains a point $x$ of infinite order. Fix ten distinct multiples $n_ix$, $i=1,\dots, 10$.
Blow up ${\mathbb P}^2_k$ at these ten points to get a scheme $Y/k$.
Since $x$ has infinite order, any codimension $1$ closed subset of $Y$ intersects the strict transform
$F$ of $E$ in $Y$. Now $F$ has negative self-intersection on $Y$ by construction, and so there exists  an algebraic space $Z$ and a morphism $Y \to Z$ which contracts $F$.
The algebraic space $Z$ is not a scheme, since the image $z$ of $F$ in $Z$
cannot be contained in an open affine of $Z$.
It follows from \cite{LM}, 16.6.2, that there exists a scheme $X$ with a finite surjective morphism $X \to  Z$. The finite set consisting of the preimage of $z$ in $X$
meets every codimension $1$ subscheme $H$ of $X$.
\end{remark}

\begin{remark} Let $S$ be an affine scheme and let $X \to S$ be projective and smooth. Fix a very ample invertible sheaf $\cO_X(1)$ relative to $X\to S$ as in Theorem \ref{bertini-type-0}.
It is not possible in general to find $n>0$ and a global section $f \in \cO_X(n)$ such that $H_f \to S$ is smooth.
Examples of N.\ Fakhruddin illustrating this point can be found in \cite{Poo}, 5.14 and 5.15.
M.\ Nishi (\cite{Nishi}, and \cite{DH}, Remarks, page 80, (b)) gave an example of a nonsingular cubic surface $C$ in ${\mathbb P}^4_k$
which is not contained in any nonsingular hypersurface of ${\mathbb P}^4_k$.
Our next two corollaries are examples of weaker `theorems of Bertini-type for families', where for instance smooth is replaced by Cohen-Macaulay.
\end{remark}

Recall that a locally noetherian scheme $Z$ is $(\mathrm{S}_\ell)$ for some integer $\ell\ge 0$ 
if for all $z\in Z$, the depth of $\cO_{Z,z}$ is at least equal 
to $\min\{\ell, \dim\cO_{Z,z}\}$ (\cite{EGA}, IV.5.7.2). 

\begin{corollary}\label{bertini-cor1} 
Let $S$ be an affine scheme, and let $X\to S$ be a 
quasi-projective and finitely presented morphism. 
Let $C$ be a closed subscheme of $X$ 
finitely presented over $S$.  
Assume that for all $s \in S$, $C$ does not contain any
irreducible component of   positive 
dimension of  $X_s$.
 Suppose that for some $\ell \geq 1$, $X_s$ is $(\mathrm{S}_\ell)$ for
 all $s \in S$. Let $\cO_X(1)$ be a very ample sheaf relative to $X \to S$.

Then there exists $n_0>0$ such that for all $n\ge n_0$, there exists 
a global section $f$ of $\cO_X(n)$ 
such that $ H_f \subset X$ is a hypersurface   containing 
$C$ as a closed subscheme, and 
the fibers of $H_f\to S$ are $(\mathrm{S}_{\ell-1})$. 
In particular, if the fibers of $X\to S$ are Cohen-Macaulay, then
the same is true of the fibers of  $H_f\to S$. Moreover:
\begin{enumerate}[\rm (a)]
\item Assume that $X_s$ has no isolated point 
for all $s \in S$. 
If $X\to S$ is flat, then $H_f\to S$ can be assumed to be flat and locally principal.
\item Assume that  $S$ is noetherian and
that $\Ass(X) \cap C= \emptyset$. 
Then  $H_f \to S$ can be assumed to be  locally principal.
\end{enumerate}
\end{corollary}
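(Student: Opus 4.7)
The strategy is to invoke Theorem \ref{bertini-type-0} directly and then transfer the Serre condition from the fibers of $X\to S$ to those of $H_f\to S$ by a standard depth calculation. Specifically, I apply Theorem \ref{bertini-type-0} to $X\to S$ with the given $C$, with no auxiliary subschemes (that is, $m=0$), and with $A=\emptyset$. The hypothesis that $C$ contains no positive-dimensional irreducible component of $X_s$ for any $s\in S$ is exactly what Theorem \ref{bertini-type-0} requires; this yields $n_0>0$ such that for every $n\geq n_0$ there is a global section $f$ of $\cO_X(n)$ whose associated subscheme $H_f$ is a hypersurface relative to $X\to S$ containing $C$.

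It remains to check that the fibers of $H_f\to S$ are $(\mathrm{S}_{\ell-1})$. Fix $s\in S$ and $x\in (H_f)_s$; in a local trivialization of $\cO_X(n)$ the section $f$ becomes a single element $g\in\cO_{X_s,x}$ cutting out $(H_f)_s$ near $x$. If $\dim\cO_{X_s,x}=0$ then $\dim\cO_{(H_f)_s,x}=0$ and the condition is trivial. Otherwise $\dim_x X_s\geq 1$, so $x$ lies on some positive-dimensional irreducible component of $X_s$; in particular $\{x\}$ is not itself an irreducible component (else it would be strictly contained in a larger irreducible closed subset). Hence the maximal ideal of $\cO_{X_s,x}$ is not a minimal prime, and every minimal prime of $\cO_{X_s,x}$ corresponds to a positive-dimensional irreducible component of $X_s$ through $x$. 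Since $X_s$ is $(\mathrm{S}_\ell)$ with $\ell\geq 1$, the associated primes of $\cO_{X_s,x}$ coincide with its minimal primes, so by the hypersurface property $g$ lies in no associated prime and is therefore a non-zero-divisor on $\cO_{X_s,x}$. The standard depth formula then yields
$$
\mathrm{depth}\,\cO_{(H_f)_s,x} \;=\; \mathrm{depth}\,\cO_{X_s,x}-1 \;\geq\; \min(\ell,\dim\cO_{X_s,x})-1 \;=\; \min(\ell-1,\dim\cO_{(H_f)_s,x}),
$$
proving $(\mathrm{S}_{\ell-1})$ at $x$. The Cohen--Macaulay claim is the same computation: at a non-isolated point both inequalities are equalities, so $\mathrm{depth}\,\cO_{(H_f)_s,x}=\dim\cO_{(H_f)_s,x}$.

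For part (a), the hypothesis that no $X_s$ has an isolated point means $\dim_x X_s\geq 1$ for every $x\in X_s$, so the argument above shows that $g$ is a non-zero-divisor at every point of $(H_f)_s$; consequently $H_f$ meets no associated point of any fiber $X_s$, and Lemma \ref{hypersurfaces-properties}(3), combined with the flatness of $X\to S$, yields that $H_f\to S$ is flat and locally principal. For part (b), I invoke the last clause of Theorem \ref{bertini-type-0} directly: under the hypotheses that $S$ is noetherian and $\Ass(X)\cap C=\emptyset$, enlarging the avoidance set $A$ to include $\Ass(X)$ produces an $H_f$ that is locally principal. The proof has no substantive obstacle; the only point requiring care is ensuring that the hypersurface condition, combined with $(\mathrm{S}_\ell)$ for $\ell\geq 1$, forces the local defining equation of $H_f$ to be a non-zero-divisor wherever $\dim\cO_{X_s,x}\geq 1$, which is dispatched by the minimal-primes analysis above.
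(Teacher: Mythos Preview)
Your proof is correct and follows essentially the same approach as the paper: apply Theorem~\ref{bertini-type-0}, then observe that since $X_s$ is $(\mathrm{S}_\ell)$ with $\ell\ge 1$ there are no embedded points, so the local equation of $H_f$ is regular at every non-isolated point of the fiber, whence $(\mathrm{S}_{\ell-1})$; parts (a) and (b) are handled exactly as you do, via Lemma~\ref{hypersurfaces-properties}(3) and the last clause of Theorem~\ref{bertini-type-0}. The only cosmetic difference is that the paper sets $m=1$, $F_1=X$ in the application of Theorem~\ref{bertini-type-0}, which is redundant since the conclusion already forces $H_f$ to be a hypersurface; your choice $m=0$ is equally valid.
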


\proof We apply Theorem \ref{bertini-type-0} to $X\to S$ and $C$, with 
$F_1=X$, $m=1$ and with $A= \emptyset$.
Let $H:= H_f$ be a 
hypersurface in $X$ as given by \ref{bertini-type-0}. 
For all $s\in S$, $H_s$ does not contain any irreducible component of 
$X_s$ of positive dimension. Since $X_s$ is $(\mathrm{S}_\ell)$ with $\ell\ge 1$, $X_s$ has no embedded
points. At an isolated point of $X_s$ contained in $H$, $H_s$ is trivially $(\mathrm{S}_{k})$ for any $k \geq 0$.
At all other points, it follows that $H_s$ is locally generated everywhere by a regular 
element and, thus,  $H_s$ is $(\mathrm{S}_{\ell-1})$. 

The statement (a) follows from \ref{hypersurfaces-properties} (3).
For (b), we apply Theorem \ref{bertini-type-0} and 
Lemma~\ref{hypersurfaces-properties} (4) 
to $X\to S$ and $C$, with  $F_1=X$ and with $A=\Ass(X)$. 
\qed

\begin{corollary} \label{generic-S1} 
Let $S$ be an affine irreducible scheme of dimension $1$. Let 
$X\to S$ be quasi-projective and flat of finite presentation. 
Assume that its  generic fiber is $(\mathrm{S}_1)$, and  
that it does not contain any isolated point. Let 
$\cO_X(1)$ be a very ample sheaf relative to $X \to S$. 
Then there exists $n_0>0$ such that for all $n\ge n_0$, there exists 
a global section $f$ of $\cO_X(n)$  
 such that 
$H_f \subset X$ is  a locally principal hypersurface, flat over $S$. 
\end{corollary}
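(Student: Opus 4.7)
The plan is to apply Theorem \ref{bertini-type-0} with a carefully chosen finite avoidance set $A$ so that the resulting hypersurface $H_f$ misses every associated point of every fiber of $\pi:X\to S$, and then to invoke Lemma \ref{hypersurfaces-properties} (3) to conclude flatness of $H_f\to S$. By the spreading-out argument used in the proof of Theorem \ref{bertini-type-0}, I reduce to the case where $S=\Spec R$ is noetherian with $R$ a one-dimensional integral domain of generic point $\eta$.

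The pivotal step is to locate the bad fibers inside a finite subset of $S$. By the openness of the $(\mathrm{S}_1)$-locus in the fibers of a flat finitely presented morphism (\cite{EGA}, IV.12.1.6), the non-$(\mathrm{S}_1)$-locus $Z\subseteq X$ is closed, and its image $\pi(Z)\subseteq S$ is constructible by Chevalley's theorem. Since $X_\eta$ is $(\mathrm{S}_1)$ we have $\eta\notin\pi(Z)$, and since $S$ is integral of dimension one, every constructible subset of $S$ avoiding $\eta$ is finite; write $\pi(Z)=\{s_1,\dots,s_r\}$. Setting $A:=\bigcup_{i=1}^{r}\Ass(X_{s_i})$ gives a finite subset of $X$ (each $X_{s_i}$ being noetherian) disjoint from $C:=\emptyset$.

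Theorem \ref{bertini-type-0} applied with $C=\emptyset$, $F_1:=X$, and the finite set $A$ then yields $n_0$ such that for every $n\ge n_0$ there exists a section $f\in H^0(X,\cO_X(n))$ whose hypersurface $H_f$ contains no positive-dimensional component of any $X_s$ and avoids $A$. It suffices now, by Lemma \ref{hypersurfaces-properties} (3), to check $H_f\cap\Ass(X_s)=\emptyset$ for every $s\in S$. For $s\in\{s_1,\dots,s_r\}$ this follows from $\Ass(X_s)\subseteq A$; for $s\notin\{s_1,\dots,s_r\}$ the fiber $X_s$ is $(\mathrm{S}_1)$, so its associated points are precisely the generic points of its irreducible components, and it remains to verify that each such component has positive dimension. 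Indeed, if $x\in X_s$ were isolated in $X_s$, then by local constancy of relative dimension for flat finitely presented morphisms there would be an open neighborhood $U\subseteq X$ of $x$ on which $\pi|_U$ is quasi-finite; openness of flat morphisms gives $\eta\in\pi(U)$, so $U_\eta$ is a nonempty zero-dimensional open subscheme of $X_\eta$, contradicting the hypothesis that $X_\eta$ has no isolated points.

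The main obstacle is this reduction step: confining the set of fibers at which one must control associated points to a finite subset of $S$. This rests on the openness of the $(\mathrm{S}_1)$-locus of fibers from \cite{EGA}, IV.12, together with the one-dimensional hypothesis through the observation that in a one-dimensional integral noetherian scheme every constructible subset avoiding the generic point is finite. Once $A$ is in hand, the remainder is a direct application of Theorem \ref{bertini-type-0} and the flatness criterion of Lemma \ref{hypersurfaces-properties} (3).
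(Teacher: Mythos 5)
Your argument is correct and essentially reproduces the paper's own proof: both identify the closed non-$(\mathrm{S}_1)$ locus of the fibers, conclude from $\dim S=1$ and $X_\eta$ being $(\mathrm{S}_1)$ that its image in $S$ is finite, take $A$ to be the finite union of the associated points of the non-$(\mathrm{S}_1)$ fibers, apply Theorem~\ref{bertini-type-0} with $C=\emptyset$, and conclude via Lemma~\ref{hypersurfaces-properties}(3), using that no fiber has an isolated point (which follows from flatness, openness of the isolated-in-fiber locus, and the hypothesis on $X_\eta$). The only superficial differences are your opening spreading-out reduction (the paper works directly over the given $S$) and your phrasing of the no-isolated-points step via local constancy of relative dimension rather than openness of the isolated-point locus.
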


\proof Consider the set $M$ of all $x \in X$ such that 
$X_s$ is not $(\mathrm{S}_1)$ at $x$ (or, equivalently, the set of all $x\in X$ such that $x$ is contained in an embedded 
component of $X_s$). 
Then this set is constructible (\cite{EGA}, IV.9.9.2 (viii), and even closed since $X\to S$ is flat, \cite{EGA}, IV.12.1.1 (iii)).
Since the generic fiber is $(\mathrm{S}_1)$, the image of $M$ in $S$  must be finite because $S$ has dimension $1$. 
Therefore, there are only finitely many $s \in S$ such that the fiber $X_s$ is not $(\mathrm{S}_1)$.
Let $M'$ denote the set of associated points in the fibers that are not $(\mathrm{S}_1)$. This set is finite.
Apply now Theorem \ref{bertini-type-0} with the very ample sheaf $\cO_X(1)$ and 
with $F_i=C=\emptyset$ and $A=M'$, to find a hypersurface $H_f$ which does not intersect $M'$.
This hypersurface is locally principal and flat over $S$  
by \ref{hypersurfaces-properties} (3). Indeed, by construction, $(H_f)_s$ does not contain any 
irreducible component of $X_s$ of positive dimension. Our hypothesis on the generic fiber 
having no isolated point implies that $X_s$ has no isolated point
for all $s$, since the set of all points that are isolated in their fibers is open (\cite{EGA}, IV.13.1.4). 
\qed 

\smallskip
We discuss below one additional application of Theorem \ref{globalize}.

\begin{proposition} \label{generic_smoothness} 
Let $S=\Spec R$ be an affine scheme and let $\pi:X\to S$ be 
projective and finitely presented. 
Let $C$ be a closed subscheme of $X$, finitely presented over $S$. 
Let $\cO_X(1)$ be a very ample sheaf relative to $X \to S$. 
Let $Z \subset S$ be a finite subset. Suppose that  
\begin{enumerate}[{\rm (i)}] 
\item $\pi:X\to S$ is smooth at every point of  $\pi^{-1}(Z)$;
\item for all $s\in Z$, $C_{s}$ is smooth and 
$\codim_x(C_s, X_s)>\frac{1}{2}\dim_x X_s$ for all $x\in C_s$;
\item for all $s\in S$, $C_s$ does not contain any irreducible
component  of positive dimension of $X_s$, and for all $s \in Z$, $X_s$ has no isolated point.
\end{enumerate} 
Then there exists an integer $n_0$ such that for all $n \geq n_0$, there exists
a global section $f$ of $\cO_X(n)$ such that $H_f$ is a hypersurface containing $C$ as a closed subscheme 
and such that $H_f\to S$ is smooth in an open neighborhood of $\pi^{-1}(Z)\cap H_f$. 
\end{proposition}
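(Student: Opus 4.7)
The argument fits the method of \ref{sum}, with an extra pro-constructible bad set detecting failure of smoothness above $Z$. Let $\J$ denote the ideal sheaf of $C$ in $X$. I plan, for $n$ sufficiently large, to build a finite union $T$ of pro-constructible subsets of $\mathbb{A}^N_S$ (where $N$ is the size of a chosen generating family of $H^0(X,\J(n))$) whose fibers have dimension $<N$ and admit $k(s)$-rational points in the complement; Theorem~\ref{globalize} will then produce $(a_1,\dots,a_N)\in R^N$ whose associated $f=\sum a_i f_i$ satisfies the conclusion. First I reduce to $S$ noetherian of finite dimension by the noetherian-approximation argument used in the proof of Theorem~\ref{bertini-type-0}; the finite set $Z$ descends and hypotheses (i)-(iii) are open conditions. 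Increasing $n$, Lemma~\ref{Jn-barJn} makes $H^0(X,\J(n))\otimes k(s)\twoheadrightarrow H^0(X_s,\overline{\J}_s(n))$ surjective for all $s\in S$, and the construction in the proof of Theorem~\ref{exist-hyp}, applied with the locally closed subset $F_0:=X$ adjoined, furnishes a pro-constructible subset $T^{(1)}\subset\mathbb{A}^N_S$, with all fibers of dimension $<N$, whose avoidance forces $H_f$ to be a hypersurface containing $C$.

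\textbf{Smoothness bad set over $Z$.} Fix $s\in Z$. Writing $\bar f_{s,\alpha}=\sum_i\alpha_i\bar f_{i,s}$, set
\[
\Sigma_{\mathrm{sm}}(s):=\bigl\{\alpha\in k(s)^N:V_+(\bar f_{s,\alpha})\subset X_s\text{ is not smooth somewhere}\bigr\}.
\]
Since $X_s$ is smooth and $\Omega^1_{X_s/k(s)}$ is locally free, the incidence scheme
\[
I_s:=\{(x,\alpha)\in X_s\times\mathbb{A}^N_{k(s)}:\bar f_{s,\alpha}(x)=0\text{ and }d\bar f_{s,\alpha}(x)=0\text{ in }\Omega^1_{X_s/k(s)}(n)(x)\}
\]
is closed, and a variant of Proposition~\ref{constructible-conditions} identifies $\Sigma_{\mathrm{sm}}(s)$ with the set of $k(s)$-points of the constructible image $T^{(2)}_s:=\mathrm{pr}_2(I_s)\subset\mathbb{A}^N_{k(s)}$. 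After increasing $n$ so that global sections surject onto the relevant first-order data (an analogue of~\ref{Jn-barJn} for the sheaves $\overline{\J}_s/\overline{\J}_s^{\,2}\otimes\cO_{X_s}(n)$, $\cO_{X_s}(n)$, and $\Omega^1_{X_s/k(s)}(n)$ at all points), I compute fiber dimensions of $I_s\to X_s$: at $\xi\in X_s\setminus C_s$ the fiber is a $k(\xi)$-linear subspace of $\mathbb{A}^N_{k(\xi)}$ of $k(\xi)$-codimension $d_\xi+1$, where $d_\xi=\dim_\xi X_s$, contributing at most $N-1$ to $\dim I_s$; at $\xi\in C_s$ the vanishing of $d\bar f_{s,\alpha}$ factors through the locally free conormal sheaf $\overline{\J}_s/\overline{\J}_s^{\,2}$ of rank $c_\xi=\codim_\xi(C_s,X_s)$, yielding $k(\xi)$-codimension $c_\xi$ and contribution at most $\dim_\xi C_s+(N-c_\xi)=N+(d_\xi-2c_\xi)<N$ by hypothesis~(ii). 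Hence $\dim T^{(2)}_s<N$, and since $Z$ is finite, $T^{(2)}:=\bigcup_{s\in Z}T^{(2)}_s$ is a pro-constructible subset of $\mathbb{A}^N_S$ with constructible fibers of dimension $<N$.

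\textbf{Conclusion and the main obstacle.} Set $T:=T^{(1)}\cup T^{(2)}$. Hypothesis~\ref{globalize}(2) holds outside $Z$ by the proof of Theorem~\ref{exist-hyp}. For $s\in Z$ with $k(s)$ infinite, Lemma~\ref{proconstructible.gen} combined with $\dim T^{(2)}_s<N$ provides a good $k(s)$-rational point---this is the content of the classical Bertini with base points of Altman-Kleiman, valid precisely under the codimension hypothesis~(ii). The main obstacle is $s\in Z$ with $k(s)$ finite, where classical Bertini fails: one invokes a Poonen-style closed-point sieve (Bertini with basepoints over finite fields under~(ii)) to conclude that for $n$ large enough the density of good $k(s)$-rational sections in $H^0(X_s,\overline{\J}_s(n))$ is positive, hence at least one good section exists, which lifts to $H^0(X,\J(n))$ by~\ref{Jn-barJn}. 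Theorem~\ref{globalize} then yields the required $(a_i)$, and hence $f$. By construction, $H_f$ is a hypersurface containing $C$ and $(H_f)_s$ is smooth for every $s\in Z$; for $x\in\pi^{-1}(Z)\cap H_f$, hypothesis~(iii), smoothness of $\pi$ at $x$, and~\ref{hypersurfaces-properties}(3) together imply that $H_f\to S$ is flat at $x$, so $H_f\to S$ is smooth at $x$, and openness of smoothness produces the desired neighborhood.
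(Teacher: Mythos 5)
Your proposal is correct and follows essentially the same route as the paper: construct the hypersurface bad set as in Theorem~\ref{exist-hyp}, construct the smoothness bad set over each $s\in Z$, bound dimensions, verify the existence of good $k(s)$-rational points, and invoke Theorem~\ref{globalize}, then finish by combining smooth fibers with flatness near $\pi^{-1}(Z)$. The only organizational difference is that the paper packages the smoothness bad set and its dimension bound into Lemma~\ref{constructible-sm}, citing Kleiman--Altman (Theorem (7) of \cite{KA}) as a black box for the bound $\dim T_\sing\le N-1$ and for the rational point over infinite residue fields, whereas you carry out the fiber-dimension count on the incidence variety $I_s\to X_s$ directly (stratifying by $C_s$ and its complement, and using the codimension hypothesis (ii) to get $N+d_\xi-2c_\xi\le N-1$ over $C_s$); both still rely on Poonen's Bertini with basepoints over finite residue fields, and the conclusion is identical.

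Two small caveats worth tightening: the codimension count of the fibers of $I_s$ requires that the images of $f_1,\dots,f_N$ span the $1$-jets of $\overline\J_s(n)$ at points of $X_s\setminus C_s$ and the conormal fiber $(\overline\J_s/\overline\J_s^{\,2})(n)\otimes k(\xi)$ at points of $C_s$; this is what the increase of $n$ must guarantee uniformly over the finite set $Z$, and it is worth stating precisely rather than as ``the relevant first-order data.'' Second, the noetherian reduction requires the smoothness hypotheses (i)--(ii) to descend, which is exactly why the paper cites \cite{EGA} IV.17.7.8 (or IV.11.2.6); a passing reference to openness of conditions is not quite enough.
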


\begin{proof} 
By arguing as in the proof of Theorem~\ref{bertini-type-0} 
(using also \cite{EGA}, IV.11.2.6, or \cite{EGA}, IV.17.7.8), 
we find that it suffices to prove the proposition 
in  the case where $S$ is noetherian and has finite dimension.

Let $\J$ be the ideal sheaf defining $C$. 
As in the proof in \ref{Proofreductiondimension2} of Theorem~\ref{exist-hyp}, 
when $m=1$ and $F_1=X$, there exists $n_0$ such that 
for any $n\ge n_0$ and for any choice of generators 
$f_1, \dots, f_N$ of $H^0(X, \J(n))$, the associated constructible
subset $T_X\subseteq \mathbb A^N_S$ (see  \ref{remove-isolated-pts}) has dimension at most $ N -1$. 
For each $s\in Z$, 
we can apply Lemma~\ref{constructible-sm} to the following data: the morphism $X_s \to \Spec k(s)$, $C_s\subseteq X_s$
defined by the ideal sheaf $\oline{\J}_s   \subseteq \cO_{X_s}$ (notation as in \ref{emp.notation}), and the sections 
$\overline{f}_{i,s}$,  image of $f_i$ in 
$H^0(X_s,\oline{\J}_s(n))$, $i=1,\dots, N$.
We denote by $T_{Z,s}$ the constructible subset 
of $\mathbb A^N_{k(s)}$ associated in Lemma~\ref{constructible-sm} (1) with this data.
Note that $T_{Z,s}$ is then a pro-constructible subset 
of $\mathbb A^N_S$.

Let $n$ and $ f_1, \dots, f_N$ be as above. 
Consider the finite union $T$ of $T_X$ and the 
pro-constructible subsets $T_{Z,s}$, $s\in Z$. It is pro-constructible
with constructible fibers over $S$. 
Lemma~\ref{constructible-sm}   shows that 
Conditions (1) and (2) in \ref{globalize} (with $V=S$) are
satisfied, after increasing $n_0$ if necessary. Then by 
Theorem \ref{globalize}, 
there exists a  
section $(a_1,\ldots, a_N)\in R^N=\mathbb A_S^N(S)$ such that for all $s \in S$,
$(a_1(s), \ldots, a_N(s))$ is a $k(s)$-rational point of $\mathbb A^N_{k(s)}$ 
that is not contained in $T_s$. 
Let $f:=\sum_{i=1}^N a_if_i$ and consider the closed subscheme
$H_f \subset X$. As $f\in H^0(X, \J(n))$, $C$ is a closed
subscheme of $H_f$. By definition of $T_{X}$, we find that
for all $s\in S$, $H_f$ does not contain any 
irreducible component of $X_s$ of positive dimension, so that $H_f$ is a hypersurface relative to $X \to S$.
By definition of $T_{Z,s}$, we find that  $(H_f)_s$ is smooth for all $s\in Z$. 
Since $X \to S$ is flat in a neighborhood $U$ of $\pi^{-1}(Z)$, and since 
$X_s$ does not contain any isolated point when $s \in Z$, we find that 
$U \cap H_f \to S $ is flat at every point of $\pi^{-1}(Z)$ 
(\ref{hypersurfaces-properties} (3)).  
So $H_f\to S$ is smooth in a neighborhood
of $\pi^{-1}(Z)$ by the openness of the smooth locus. 
\end{proof}

\newcommand{\sing}{\mathrm{sing}}

\begin{lemma} \label{constructible-sm} 
Let $S$ be a noetherian scheme. Let 
$X\to S$ be a quasi-projective morphism.
Let $\cO_X(1)$ be a very ample sheaf relative to $X \to S$. Let $C$ be a closed subscheme of $X$ defined by a sheaf 
of ideals $\J$. 
Let $n\ge 1$ and let $f_1, \dots, f_N \in H^0(X, \J(n))$. 
\begin{enumerate}[{\rm (1)}] 
\item Define for any $s\in S$ 
$$\Sigma_{\sing}(s):=
\left\{ (a_1,\dots, a_N) \in k(s)^N \mid V_+(\sum_i a_i \overline{f}_{i,s})\subseteq X_{s} 
\text{\ is not smooth over } k(s)\right\}. $$ 
Then there exists a constructible subset $T_{\sing}$ of $\A^N_S$ such that
for all $s\in S$, the set of $k(s)$-rational points of $\A^N_S$ contained in $T$ is equal to $\Sigma_{\sing}(s)$.
Moreover, 
$T_{\sing}$ is compatible with base changes $S'\to S$ 
as in Proposition~{\rm \ref{constructible-conditions} (a)}. 
\item Let $k$ be a field and assume that $S=\Spec k$. Suppose also that $C$ and $X$ are projective 
and smooth over $k$, and that 
$\codim_x(C, X)> \frac{1}{2}\dim_x X$ for all $x\in C$. 

Then  there exists $n_0$ such that for any 
$n\ge n_0$ and for any choice of a system of generators 
$f_1, \dots, f_N$ of $H^0(X, \J(n))$, we have $\dim T_\sing\le N-1$, and
there exists 
$(a_1, \dots, a_N)\in k^N$ such that $V_+(\sum_i a_if_i)$ is smooth
and does not contain any irreducible component of $X$.
\end{enumerate}
\end{lemma}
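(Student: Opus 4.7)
For Part (1), I realize $T_{\sing}$ as the image of a closed subset of a universal hypersurface over $\A^N_S$. Let $p : \A^N_X \to X$ be the projection and write $u_1, \dots, u_N$ for the coordinate functions on $\A^N_S$; set $F := \sum_i u_i\, p^* f_i \in H^0(\A^N_X, p^*\J(n))$ and $\mathcal{H} := V_+(F) \subseteq \A^N_X$. By construction, the fiber of $\pi : \mathcal{H} \to \A^N_S$ over any $k(s)$-rational point $a = (a_1, \dots, a_N)$ lying over $s$ is exactly $V_+(\sum_i a_i \overline{f}_{i,s}) \subseteq X_s$. Openness of smoothness on the source (\cite{EGA}, IV.17.5.1) makes the non-smooth locus $Z \subseteq \mathcal{H}$ of $\pi$ closed; I put $T_{\sing} := \pi(Z)$. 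Chevalley's theorem provides constructibility of $T_{\sing}$, the matching with $\Sigma_{\sing}(s)$ on $k(s)$-rational points is immediate from the fiber description, and compatibility with a base change $S'\to S$ is automatic since formation of $\mathcal{H}$, of its non-smooth locus, and of the image in $\A^N_{S}$ all commute with base change.

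For Part (2), I would split the analysis of smoothness of $V_+(f)$ into two pieces: smoothness along $C$, controlled by the conormal sheaf, and smoothness on $X \setminus C$, controlled by classical Bertini. Along $C$, the conormal sheaf $\J/\J^2$ is a locally free $\cO_C$-module of rank $d_x := \codim_x(C, X)$, since $C \hookrightarrow X$ is a regular immersion between smooth schemes. For $n$ large, Serre vanishing yields $H^1(X, \J^2(n)) = 0$, hence the restriction $H^0(X, \J(n)) \twoheadrightarrow H^0(C, (\J/\J^2)(n))$ is surjective. The jacobian criterion says: $V_+(f)$ is smooth at $x \in C$ whenever the image of $f$ in the fiber $(\J/\J^2)(n) \otimes k(x)$ is non-zero. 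The hypothesis $d_x > \tfrac{1}{2}\dim_x X$ is equivalent to $\dim_x C < d_x$, and a dimension count on the incidence variety $\{(a, x) \in \A^N_k \times C : \sum_i a_i \overline{f}_i(x) = 0 \text{ in } (\J/\J^2)(n) \otimes k(x)\}$ --- using the surjectivity just established to guarantee codimension $d$ in $\A^N$ for each fixed $x$ --- bounds its projection to $\A^N_k$ by a closed subset of dimension $\le \dim C + (N - d) < N$, in the spirit of Lemma~\ref{cor.constructible}.

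On $X \setminus C$ I would invoke classical Bertini. For $n$ large the sections of $\J(n)$ define a locally closed immersion of $X \setminus C$ into the projective space $\mathbb{P}(H^0(X, \J(n))^\vee)$ (Serre vanishing applied to $\J(n)$ and to its suitable twists controls base-point-freeness and separation of points and tangents on $X \setminus C$, knowing that $\cO_X(n)$ is already very ample). Classical Bertini for smooth subvarieties of projective space then produces a dense open subset of $(a_i) \in \A^N_{\bar k}$ for which $V_+(\sum_i a_i f_i)$ is smooth on $X \setminus C$ and contains no irreducible component of $X$. Intersecting with the conormal-generic locus from the previous paragraph, the geometric bad set in $\A^N_{\bar k}$ is a proper closed subset, so $\dim T_{\sing} \le N - 1$. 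For a $k$-rational point in its complement: if $k$ is infinite this is automatic from the dimension bound; if $k$ is finite I would appeal to a Bertini theorem valid over finite fields (after Gabber/Poonen) to produce, for $n$ sufficiently large, one section $f_0 \in H^0(X, \J(n))$ with $V_+(f_0)$ smooth and meeting each component properly, and then read off the required $(a_1, \dots, a_N)$ from the expansion of $f_0$ in the fixed generating set.

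The principal obstacle is precisely this last point. Over a finite field, a closed subscheme of $\A^N_{\mathbb{F}_q}$ of dimension strictly less than $N$ can still contain every $\mathbb{F}_q$-rational point, so the bound $\dim T_{\sing} \le N - 1$ does not formally yield a $k$-point in its complement. Consequently $n_0$ must be chosen large enough not only to control the geometry of $\J/\J^2$ and to make the Bertini linear system base-point-free and tangent-separating on $X \setminus C$, but also to guarantee, via a positive-characteristic Bertini theorem, the existence of one smooth hypersurface section already defined over $k$ itself.
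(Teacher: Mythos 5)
Your proof of Part (1) has a genuine gap. You take $Z$ to be the non-smooth locus of the morphism $\pi : \mathcal{H} \to \A^N_S$, which is indeed closed (the complement of the open smooth locus of a finitely presented morphism), and set $T_{\sing} := \pi(Z)$. But smoothness of a morphism at a point is the conjunction of flatness and fiber smoothness, whereas $\Sigma_{\sing}(s)$ records only the latter. The universal hypersurface $\pi : \mathcal{H} \to \A^N_S$ is in general not flat: its fiber dimension jumps whenever $\sum_i a_i \overline{f}_{i,s}$ vanishes identically on an irreducible component of $X_s$, and at such points $\pi$ fails to be smooth even when the fiber is a perfectly smooth variety. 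Concretely, take $S = \Spec k$, $X = \mathbb{P}^1_k$, $\J = \cO_X$, $n = 2$, and $(f_1,f_2,f_3) = (x_0^2, x_0x_1, x_1^2)$, so $N = 3$. Over $a = 0 \in \A^3_k$ the fiber of $\mathcal{H}$ is all of $\mathbb{P}^1_k$, which is smooth, so $0 \notin \Sigma_{\sing}$; but $\pi$ is non-flat along $\{0\}\times\mathbb{P}^1$, so $0 \in \pi(Z)$. Thus your $T_{\sing}$ is strictly larger than the set required, and the asserted identity between $(T_{\sing})_s(k(s))$ and $\Sigma_{\sing}(s)$ fails. Worse, the locus you actually want --- the set $\mathcal{S} \subseteq \mathcal{H}$ where the \emph{fiber} of $\pi$ is not smooth --- is generally not closed: in the same example $\mathcal{S}$ consists of the double points lying over the punctured discriminant cone in $\A^3$, and its closure in $\mathcal{H}$ picks up points over $a = 0$ that do not belong to it. So one cannot appeal to openness of the morphism-smooth locus; the paper instead stratifies by the local fiber dimension of $\pi$ and compares with $\dim_{k(y)}(\Omega^1_{\mathcal{H}/\A^N_S} \otimes k(y))$ to conclude that $\mathcal{S}$ is constructible, and only then takes the image under $q$.

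Your treatment of Part (2) matches the paper in essence. Both rely on the Kleiman--Altman Bertini theorem to make the generic point of $\A^N_{\bar k}$ avoid $T_{\sing}$ (hence $\dim T_{\sing} \le N - 1$ for $n$ large), and both correctly observe that over a finite field this dimension bound alone does not produce a $k$-rational point in the complement, so one needs Poonen's finite-field Bertini theorem to supply a smooth member of the linear system defined over $k$ itself. The paper applies Kleiman--Altman and Poonen directly on each connected component of $X$ and reads off the coordinates $(a_1,\dots,a_N)$ of the resulting section; your intermediate decomposition into behaviour along $C$ (via $\J/\J^2$) and away from $C$ is a compatible refinement but is not needed for the paper's proof.
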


\begin{proof} (1) 
Write $\A^N_S=\Spec \cO_S[u_1, \dots, u_N]$. 
Consider  the natural projections 
$$p: \A^N_X\to X, \quad 
q: \A^N_X\to \mathbb A^N_S.$$
With the appropriate identifications, consider the global section 
$\sum_{1\le i\le N} u_i f_i$ of $p^*({\J}(n))$,
which defines the closed subscheme 
$Y:=V_+(\sum_{1\le i\le N} u_i {f}_i )$ of $\A^N_X$.
Consider 
$$\mathcal S: =\{ y\in Y \mid Y_{q(y)} \ \text{is not smooth at $y$ over }
k(q(y))\}.$$
Set $T_{\sing}:=q(\mathcal S)$. Clearly $T_{\sing}$ satisfies all the requirements of the lemma except
for the constructibility, which we now prove. By Chevalley's theorem, 
it is enough to show that $\mathcal S$ is constructible. 
The subset $\mathcal S$ is the union for 
$0\le d\le \max_{z\in \A^N_S} \dim Y_z$ (\cite{EGA}, IV.13.1.7) of the subsets
$$\mathcal S_d:=\{ y\in Y \mid \dim_y Y_{q(y)} \le d <
\dim_{k(y)} (\Omega^1_{Y/\A^N_S}\otimes k(y))\}.$$
Thus it is enough to show that $\mathcal S_d$ is constructible. 
The set $\{ y\in Y \mid \dim_y Y_{q(y)} \le d \}$ is open by
Chevalley's semi-continuity theorem (\cite{EGA}, IV.13.1.3). 
On the other hand, for any coherent sheaf $\F$ on $Y$, the subset
$\{ y\in Y \mid \dim_{k(y)}(\F\otimes k(y)) \ge d+1\}$ is closed
in $Y$. 
So $\mathcal S_d$ is constructible and (1) is proved. 

(2) By the compatibility with base changes, the dimension of $T_{\sing}$ 
can be computed over an algebraic closure of $k$. Now over an infinite field, 
Theorem (7) of \cite{KA}, p.\ 787, in the simplest possible case, 
where the subvariety is smooth and equal to the open subset in Theorem (7), 
implies that the generic point of $\A^N_k$ is not contained in $T_\sing$
for all $n$ big enough and for all systems of generators 
$\{f_1,\dots, f_N\}$ of $H^0(X, \J(n))$. 
As $T_\sing$ is constructible, we have $\dim T_\sing \le N-1$. 

Let $\Gamma_1,\dots, \Gamma_m$ be the connected components of $X$. 
They are irreducible and 
$$H^0(X, \J(n))=\oplus_{1\le i\le m} H^0(\Gamma_i, \J(n)|_{\Gamma_i}).$$ 
By hypothesis, $\dim C\cap \Gamma_i < \frac{1}{2} \dim \Gamma_i$. So by 
\cite{KA}, loc. cit., when $k$ is infinite, and by 
\cite{Poo2}, Theorem 1.1 (i) when $k$ is finite, increasing $n_0$ 
if necessary, for any $n\ge n_0$, there exists 
$g_i\in H^0(\Gamma_i, \J(n)|_{\Gamma_i})$ such that 
$V_+(g_i)\subset \Gamma_i$ is smooth and of dimension $\dim\Gamma_i -1$. 
Let $f:=g_1\oplus \cdots \oplus g_m\in H^0(X, \J(n))$. 
Then $V_+(f)$ is a smooth subvariety of $X$ not containing any 
irreducible component of $X$. Let $f_1,\dots, f_N$ be any system 
of generators of $H^0(X, \J(n))$, and write 
$f=\sum_i a_if_i$ with $a_i\in k$. Then 
$(a_1,\dots, a_N)\in k^N$ is the desired point. 
\end{proof}

\end{section}

\begin{section}{Finite quasi-sections} \label{finite-qs} 

Let $X\to S$ be a surjective morphism. We call a closed subscheme 
$T$ 
of $X$ a \emph{finite quasi-section}
when $T \to S$ is finite and surjective (\ref{def.finite-qs}). 
We establish in \ref{quasisections} the existence
of a finite quasi-section for certain types of projective morphisms.
The existence of quasi-finite quasi-sections locally on $S$ for flat or smooth morphisms is discussed in \cite{EGA}, IV.17.16.

When $S$ is integral noetherian of dimension $1$ 
and $X\to S$ is proper,
the existence of a finite quasi-section $T$ is well-known and easy to establish.
It suffices to take $T$ to be the Zariski closure of a closed point of the 
generic fiber of $X\to S$. Then $T\to S$ have fibers of dimension $0$ (see, e.g., \cite{Liubook}, 8.2.5), 
so it is quasi-finite 
and proper and, hence, finite. When $\dim S>1$, 
the process of taking the closure of a  closed point of the generic 
fiber does not always produce a closed subset {\it finite} over $S$, as the 
simple example below shows.

\begin{example} \label{easy}  Let $S=\Spec A$ with $A$ a noetherian integral domain, and let 
$K=\Frac(A)$. Let $X=\mathbb P^1_A$. 
Choose coordinates and write $X = {\rm Proj \ } A[t_0,t_1]$.
Let $P\in X_K(K)$ be given as $(a:b)$, with $a, b\in A\setminus \{ 0 \}$. 
When $(bt_0-at_1)$ is a 
prime ideal in $A[t_0, t_1]$, then $T:=V_+(bt_0-at_1)$ is the
Zariski closure of $P$ in $X$. When in addition $aA+bA\ne A$, $T$ is not 
finite over $S$. For a concrete example with $S$ regular of dimension $2$, 
take $k$ a field and $A=k[t, s]$,  with $a=t$, and $ b=s$. (Note that when 
$\dim(A)=1$ and $aA+bA\ne A$, the ideal $(bt_0-at_1)$ is never prime in $A[t_0,t_1]$).

More generally, to produce $K$-rational points on the generic fiber of ${\mathbb P}^n_S \to S$ for some $n>1$ whose closure is not finite over $S$, 
we can proceed as follows.
Let $T \to S$ be the blowing-up of $S$ with respect to a coherent sheaf of ideals $I$,
and choose $I$ so that $T\to S$ is not finite.
Then $T\to S$ is a projective morphism, and we can choose $T\to {\mathbb P}^n_S$ to be a closed immersion over $S$ for some $n>0$.
Let $\xi$ denote the generic point of the image of $T$ in $X:={\mathbb P}^n_S$.
Then $\xi$ is a closed point of the generic fiber of $X\to S$, and the closure of $\xi$ in $X$ is not finite over $S$.

The composition $\mathbb P^d_T= {\mathbb P}^d_S \times T \to T \to S$ is an example of a projective morphism which 
does not have any finite quasi-section. In this example, one irreducible fiber has dimension greater than $d$. 
\end{example}

Before turning to the main theorem of this section, let us note here
an instance of interest in arithmetic geometry where the closure of a rational point of the generic 
fiber is a section. 

\begin{proposition} \label{closurequasisection}
Let $S$ be a noetherian regular integral scheme, with function 
field $K$. Let $X\to S$ be a proper morphism such that no geometric 
fiber $X_{\bar{s}}$ contains a rational curve. Then 
any $K$-rational point of the generic fiber of $X\to S$ 
extends to a section over $S$.
\end{proposition}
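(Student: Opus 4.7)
The plan is as follows. Let $T$ be the scheme-theoretic closure in $X$ of the image of the given morphism $\Spec K \to X$, so that $T$ is an integral closed subscheme of $X$ whose generic point maps to the generic point of $S$ with trivial residue extension. Thus $f\colon T \to S$ is a proper birational morphism of noetherian integral schemes. It suffices to prove that $f$ is an isomorphism, for then the inverse composed with $T \hookrightarrow X$ is the desired section.

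First I would show that the non-isomorphism locus of $f$ has codimension $\ge 2$ in $S$. For a point $s\in S$ of codimension one, $R:=\cO_{S,s}$ is a discrete valuation ring with fraction field $K$; by the valuative criterion of properness applied to $f$ together with the given $K$-point of $T$, there is a unique section $\sigma\colon \Spec R \to T\times_S \Spec R$ of the base-changed morphism. Since $T$ is integral and $\Spec R \to S$ is a localization, $T\times_S \Spec R$ remains integral, and the generic point lies in the image of $\sigma$. Being a section of a separated morphism, $\sigma$ is a closed immersion; its associated ideal sheaf has support missing the generic point, but $T\times_S \Spec R$ is reduced, so that ideal sheaf vanishes and $\sigma$ is an isomorphism. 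Since the isomorphism locus of $f$ is open in $S$ and contains every codimension-one point, its complement $Z$ has codimension at least $2$.

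Next I argue by contradiction: suppose $Z\neq\emptyset$ and choose $s\in Z$. Because $f$ is proper and birational and fails to be an isomorphism in any neighborhood of $s$, the fiber $T_s$ has positive dimension at some point. At this stage I invoke the following principle, a consequence of Mori's bend-and-break technique and the theory of rational curves on algebraic varieties (see Koll\'ar, \emph{Rational Curves on Algebraic Varieties}, Ch.~VI, or Debarre, \emph{Higher-Dimensional Algebraic Geometry}, in the variety case): \emph{every positive-dimensional geometric fiber of a proper birational morphism to a regular noetherian scheme contains the image of a non-constant morphism $\mathbb{P}^1\to T_{\bar s}$}. Applying this to $f$, the geometric fiber $T_{\bar s}$ contains a rational curve; because $T\subseteq X$ is closed, the same curve lies in $X_{\bar s}$, contradicting the hypothesis. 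Therefore $Z=\emptyset$ and $f$ is an isomorphism.

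The main obstacle is justifying the rational-curve principle in the required noetherian-regular generality, as the standard references treat varieties over a field. To bridge this gap I would localize at $s$, pass to the completion $\widehat{\cO}_{S,s}$ (which by Cohen's structure theorem is a formal power series ring over a field or over a complete DVR), and use Artin approximation together with the variety-case result to descend a rational curve on a suitable finite-type approximation back to a rational curve in $T_{\bar s}$; the codimension-one step above is elementary, but this final passage is where the real work lies.
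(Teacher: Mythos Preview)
Your overall architecture matches the paper's: take the closure $T$ of the rational point, note that $f\colon T\to S$ is proper birational, and derive a contradiction by producing a rational curve in a positive-dimensional fiber. The difference is entirely in how the rational curve is produced, and that is where your proposal has a real gap.

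You invoke Mori's bend-and-break to find a rational curve in a positive-dimensional fiber of $f$. As you yourself note, the references state this for varieties over a field, and your sketch of the bridge---complete at $s$, use Cohen structure, then Artin approximation---is not a proof. Artin approximation does not obviously transport a rational curve on some finite-type model back into the actual fiber $T_{\bar s}$, and the bend-and-break argument itself (which in characteristic zero relies on reduction mod $p$) is delicate to run over a mixed-characteristic complete local base. So the heart of your argument is left unjustified.

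The paper avoids all of this with a much more elementary and classical device. Instead of working with the non-isomorphism locus in $S$, it looks at the exceptional set $E\subset T$. By van der Waerden's purity theorem (valid because $S$ is regular), $E$ is pure of codimension $1$ in $T$. Pick a generic point $\xi$ of $E$ with image $s$; the dimension formula (available since regular implies universally catenary) gives $\mathrm{trdeg}_{k(s)}k(\xi)=\dim\cO_{S,s}-1$. Normalizing $T$ and using Krull--Akizuki, one obtains a discrete valuation $\cO_v$ of $K$ centered at $s$ whose residue field has this maximal transcendence degree, i.e.\ a \emph{prime divisor} in Abhyankar's sense. Abhyankar's theorem (Proposition~3 of \cite{Abh}) then says that the residue field of such a valuation is the function field of a ruled variety of positive dimension over $k(s)$, which immediately yields a rational curve in $T_{\bar s}\subset X_{\bar s}$. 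This argument is pure commutative algebra and valuation theory; it works directly over an arbitrary noetherian regular base with no passage to varieties, no completion, and no approximation.

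Your codimension-one step is correct and pleasant, but it is subsumed by the purity-plus-dimension-formula computation: once $\xi\in E$ has codimension $1$ in $T$, the formula forces $\dim\cO_{S,s}\ge 2$ automatically (otherwise $\xi$ would be isolated in its fiber and $f$ would be a local isomorphism there). So the separate valuative-criterion argument is not needed.
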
 

\proof Let $T$ be the (reduced) Zariski closure of a rational
point of the generic fiber of $X\to S$. 
Consider the proper birational morphism $f: T\to S$.
Denote by $E:=E(f)$ the exceptional set of $f$, that is,
the set of points $x \in T$
such that $f$ is not a local isomorphism at $x$. 
Suppose $E\ne\emptyset$. 
Since $S$ is regular, by van der Waerden's purity theorem
(\cite{EGA}, IV.21.12.12 or \cite{Liubook}, 7.2.22),
$E$ has pure codimension $1$ in $T$. Let $\xi$ be a generic
point of $E$ and let $s=f(\xi)$. 
Using the dimension formula (\cite{EGA}, IV.5.5.8, \cite{Liubook}, 8.2.5) and because $S$ is 
regular hence universally catenary (\cite{EGA}, IV.5.6.4), we find 
$$ \mathrm{trdeg}_{k(s)} k(\xi)=\dim \cO_{S,s}-1.$$  
Let $\overline{T}\to T$ be the normalization of $T$ and let $\eta$ 
be a point of $\overline{T}$ lying over $\xi$. Then by Krull-Akizuki, 
$\cO_v:=\cO_{\overline{T}, \eta}$ is a discrete valuation ring. It has 
center $s$ in $S$. As $k(\eta)$ is algebraic (even finite) 
over $k(\xi)$, we have $\mathrm{trdeg}_{k(s)} k(\eta)=\dim
\cO_{S,s}-1$.  
So $\cO_v$ is a prime divisor of $K(S)$ in the sense of 
\cite{Abh}, Definition 1. It follows from 
a theorem of Abhyankar (\cite{Abh}, Proposition 3) 
that $k(\eta)$ is the function field of a ruled variety of positive
dimension over $k(s)$. One can also prove this result in a more geometric 
flavor as in \cite{Liubook}, Exercise 8.3.14 (a)-(b) (the 
hypothesis that the base scheme is Nagata is not needed in 
our situation as the local rings which intervene are all regular). 
So $\overline{T}_{\bar{s}}$ contains a rational curve. 
As $\overline{T}_{\bar{s}}\to T_{\bar{s}}$ is integral, 
the image of such a curve is a rational curve in $T_{\bar{s}}$. 
It follows that 
$X_{\bar{s}}$ contains a rational curve, and this is a contradiction.
So $E$ is empty and $T\to S$ is an isomorphism.
 \qed 

\begin{theorem} \label{quasisections}
Let $S$ be an affine scheme  and let $\pi:X\to S$ be a projective, finitely 
presented  
morphism. Suppose that all fibers of $X\to S$ 
are of the same dimension $d\ge 0$. Let $C$ be a finitely presented closed subscheme of $X$, 

with $C \to S$ finite but not necessarily surjective.
Then there exists a finite quasi-section $T \to S$ of finite presentation which contains $C$. 
Moreover:
\begin{enumerate}[\rm (1)]
\item Assume that $S$ is noetherian. If $C$ and $X$ are both irreducible, then there exists 
such a quasi-section with $T$ irreducible.
\item   
If $X\to S$ is flat with Cohen-Macaulay fibers (e.g., if
$S$ is regular and $X$ is Cohen-Macaulay),  
then there exists 
such a quasi-section with $T\to S$ flat. 
\item  
If $X \to S$ is flat and a local complete intersection morphism\footnote{Since the morphism $X\to S$ is flat, 
it is a local complete intersection morphism if and  only if every fiber is a local complete intersection morphism (see, e.g., \cite{Liubook}, 6.3.23).},  
then there exists such a quasi-section with $T\to S$ flat and a local complete intersection morphism.
\item 
Assume that $S$ is noetherian. Suppose that $\pi:X \to S$ has fibers pure 
of the same dimension, and that $C \to S$ is unramified. 
Let $Z$ be a finite subset of $S$ (such as the set of generic points of $\pi(C)$), and suppose that there
exists  an open subset $U$ of $S$ containing $Z$
such that $X \times_S U \to U$ is smooth. Then there exists 
such a quasi-section $T$ of $X \to S$ and an open set $V \subseteq U$ containing $Z$
such that $T \times_S V \to V$ is \'etale.
\end{enumerate}
\end{theorem}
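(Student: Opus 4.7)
The plan is to reduce to the case where $S$ is noetherian of finite dimension by the spreading-out techniques of \cite{EGA}, IV.8 (as in the proof of Theorem~\ref{bertini-type-0}), and then induct on the common fiber dimension $d$. The base case $d=0$ is immediate: $\pi:X\to S$ is then projective and quasi-finite, hence finite, and surjective because every fiber is nonempty, so $T:=X$ is a finite quasi-section containing $C$. For the inductive step $d\ge 1$, apply Theorem~\ref{bertini-type-0} with the given $C$, no $F_i$, and $A=\emptyset$: the hypothesis that $C$ avoids positive-dimensional components of the $X_s$ is automatic since $C_s$ is finite while $\dim X_s=d\ge 1$. This produces a hypersurface $H_f\subseteq X$ containing $C$, and Lemma~\ref{hypersurfaces-properties}(1) forces $\dim H_{f,s}=d-1$ uniformly in $s$ (since $\cO_X(1)$ is ample, $H_f$ meets every positive-dimensional component of every fiber). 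Thus $H_f\to S$ is again projective, finitely presented, with fibers of common dimension $d-1$, and the inductive hypothesis applied to $H_f$ and $C\subseteq H_f$ produces the desired $T$.

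For the refinements (2), (3), and (4), the inductive strategy is the same but with a stronger Bertini input at each step. For (2) one uses Corollary~\ref{bertini-cor1} (part (a); its hypothesis that $X_s$ has no isolated points is met because $d\ge 1$) to obtain an $H_f$ that is flat over $S$ with Cohen--Macaulay fibers. For (3), the same corollary ensures each $H_f$ is locally principal and $S$-flat, so that $H_f\hookrightarrow X$ is a regular immersion; after $d$ iterations, $T$ is cut out from $X$ locally by a regular sequence of length $d$, so $T\to S$ is $S$-flat and a local complete intersection. For (4), we replace Theorem~\ref{bertini-type-0} by Proposition~\ref{generic_smoothness}: since $C\to S$ is unramified, each $C_s$ is étale over $k(s)$ and hence smooth, and the codimension inequality $\codim_x(C_s,X_s)=d-i>(d-i)/2$ holds at the $i$-th inductive step for every $i<d$. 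The proposition produces $H_f$ smooth over $S$ in a neighborhood of $\pi^{-1}(Z)$; after $d$ iterations $T\to S$ is finite and smooth near $\pi^{-1}(Z)$, hence étale over an open $V\subseteq U$ containing $Z$ (obtained by removing from $S$ the image of the closed non-étale locus, using properness of $T\to S$).

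The most delicate refinement is (1). When $X$ and $C$ are irreducible, $\pi$ is surjective so $S$ is irreducible with generic point $\eta$, and $X_\eta$ is an integral projective variety over $k(\eta)$. A general Bertini section of $X_\eta$ need not be geometrically irreducible over the arbitrary field $k(\eta)$, so preserving irreducibility across the induction is nontrivial. The plan is to refine the choice of $f$ so that, on the integral generic fiber, $H_{f,\eta}$ is geometrically integral, which is possible by a classical Bertini irreducibility theorem whenever $\dim X_\eta\ge 2$; the unique irreducible component of $H_f$ whose generic fiber is $H_{f,\eta}$ then dominates $S$, and when $\pi(C)=S$ it contains $C=\overline{\{c\}}$, where $c$ is the generic point of $C$ lying over $\eta$. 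The case $\pi(C)\ne S$ is handled by a preliminary step in which $C$ is enlarged to an irreducible closed subset $C'\subseteq X$ with $\pi(C')=S$, constructed by forming the closure of a well-chosen closed point of $X_\eta$ and applying Theorem~\ref{bertini-type-0} to force the desired specializations. The main obstacle is to enforce the generic-fiber irreducibility condition simultaneously with the fiberwise constructibility conditions of Section~\ref{Constructible subsets}, uniformly over all $s\in S$; concretely, one augments the union of "bad" pro-constructible sets in Theorem~\ref{globalize} by a stratum corresponding to sections whose generic-fiber restriction is reducible, and verifies that this stratum has codimension at least $1$ in $\A^N_\eta$. Iterating down to $d=0$ then yields an irreducible $T$.
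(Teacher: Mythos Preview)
Your arguments for the main statement and for (4) are essentially the paper's; for (3) you correctly observe that a flat, locally principal hypersurface in an l.c.i.\ morphism remains l.c.i. There is, however, a gap in (2) and (3): the hypothesis of Corollary~\ref{bertini-cor1}(a) that no $X_s$ has isolated points does \emph{not} follow from $\dim X_s=d\ge1$, since a fiber of dimension $d$ may well have $0$-dimensional connected components. The paper repairs this by noting that, under flatness with Cohen--Macaulay fibers, the locus $X_0\subseteq X$ of points isolated in their fiber is open and closed (combine \cite{EGA}, IV.13.1.4, with \cite{EGA}, IV.12.1.1(ii), using that Cohen--Macaulay fibers are locally equidimensional); one then keeps $X_0\to S$ (already finite flat) as one piece of $T$ and runs the induction only on $X':=X\setminus X_0$, whose fibers have no isolated points by construction.

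For (1) your plan is considerably more elaborate than needed, and the sketch is incomplete: you do not rule out vertical components of $H_f$, you do not verify that the ``reducible generic fiber'' stratum is pro-constructible of the required codimension in $\A^N_S$, and the last inductive step (ambient of fiber dimension $1$) is not covered by the Bertini irreducibility hypothesis $\dim\ge 2$. The paper bypasses all of this. Once $H_f\supseteq C$ is produced by Theorem~\ref{bertini-type-0}, one simply chooses \emph{any} irreducible component $\Gamma$ of $H_f$ containing the irreducible set $C$; Lemma~\ref{components-dom}, applied with $X$ irreducible and all fibers of dimension $d\ge1$, shows that every irreducible component of $H_f$ dominates $S$ and is equidimensional over $S$ of dimension $d-1$. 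No control of the generic fiber of $H_f$, no case distinction on whether $\pi(C)=S$, and no additional Bertini input are required. To make $C$ a closed \emph{subscheme} of $\Gamma$ (not merely a subset), one equips $\Gamma$ with the structure sheaf $\cO_X/\J_\Gamma^m$ for $m$ large enough that $\J_\Gamma^m\subseteq\J_C$, and then iterates.
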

\proof
To prove the first conclusion of the theorem, it suffices to show that $X/S$ has a finite quasi-section $T$ of finite presentation.
Then $T \cup C$ is a finite quasi-section which contains $C$.
 If $d=0$, then $X\to S$ itself is finite. Suppose $d\ge 1$. 
It follows from Theorem \ref{bertini-type-0}, with $ A=\emptyset$ and 
$F=\emptyset$, that there exists a hypersurface $H$ in $X$.
By definition of a hypersurface, for all $s \in S$, 
$H_s$ does not contain any irreducible component of $X_s$ of positive dimension.
Lemma \ref{hypersurfaces-properties}(1) and our hypotheses show that every fiber $H_s$ has dimension $d-1$.
Lemma \ref{hypersurfaces-properties}(2) shows that 
$H/S $ is also finitely presented. Repeating this 
process another $d-1$ times produces the desired quasi-section. 

(1) Since $X$ is assumed irreducible and since the fibers of $X \to S$ are all not empty by hypothesis, 
we find that $X\to S$ is surjective and that $S$ is irreducible. When $d=0$, $X\to S$ is then an irreducible finite quasi-section, and contains $C$
as a closed subscheme. Assume now that $d \geq 1$. 
Then we can find a hypersurface $H_f$ which contains $C$ as a closed subscheme (\ref{bertini-type-0}).  
Since $S$ is noetherian, we can use \ref{components-dom} below and the assumption that $C$ is irreducible to  find 
an irreducible component $\Gamma$ of $H_f$ which contains
(set-theoretically) $C$, dominates $S$, and such that all fibers of $\Gamma \to S$ have 
dimension $d-1$. Let $\J_C$ and  $\J_{\Gamma}$ denote the sheaves of ideals in $\cO_X$ defining
$C$ and $\Gamma$, respectively. Then some positive power $\J_{\Gamma}^m$ is contained in $\J_C$, and we endow
the irreducible closed set $\Gamma$ with the structure of scheme given by 
the structure sheaf $\cO_X/\J_{\Gamma}^m$. By construction, the scheme $\Gamma$ is irreducible and contains
$C$ as a closed  subscheme. If $d-1>0$, we 
repeat the process
with $\Gamma \to S$. 

(2) When $d=0$, the statement is obvious. Assume now that $d >0$. 
Since $X_s$ has no embedded point for all $s \in S$, we  find that for each $i \geq 0$, 
the set $X_i$ of all $x \in X$ such that every irreducible component of $X_{\pi(x)}$ passing through $x$ has dimension $i$ is open in $X$
(\cite{EGA}, IV.12.1.1 (ii), using here that $X \to S$ is flat). 
Moreover, since $X_s$ is Cohen-Macaulay for all $s$, the irreducible components of $X_s$ passing 
through a given point $x$ have the same dimension.  
We find that $X$ is the disjoint union of the open sets $X_i$. 
Each $X_i \to S$ is of finite presentation, since each $X_i$ is open and closed in $X$ (\cite{EGA}, IV.1.6.2 (i)).

Consider now $X_0 \to S$, which is clearly quasi-finite of finite presentation and flat. 
Since  $X \to S $ is projective, $X_0 \to S$ is then also finite (\cite{EGA} IV.8.11.1). We apply Corollary \ref{bertini-cor1} (a) to the 
finitely presented scheme $X':=(X \setminus X_0) \to S$ and the finite quasi-section
$C' := C \times_X X'$. We obtain a hypersurface $H'$ containing $C'$, and using the same method as in the proof of the first statement of the 
theorem, we obtain a finite flat quasi-section $T'$ of $X' \to S$ containing $C'$. 
Then $T:= T' \cup X_0$ is the desired finite flat quasi-section. 

To prove (3), we proceed as in (2), and remark that the hypersurface $H'$  obtained from 
Corollary \ref{bertini-cor1} (a) is flat and locally principal, so that its fiber ${H'}_s$ is
l.c.i.\ over $k(s)$ when $X_s$ is. By hypothesis, $X_0/S$ has only l.c.i\ fibers, and (3) follows.

(4) 
When $d=0$, $X \to S$ is the desired
finite quasi-section, since it is \'etale over the given open subset $U$ of $S$. Assume now that $d>0$. 
By hypothesis, $C \to S$ is finite and unramified, so that for each $s \in S$, $C_s \to \Spec k(s)$ is smooth. Moreover,
since we are assuming that the fibers are pure of dimension $d$, Condition (iii) in \ref{generic_smoothness} is satisfied.
We can therefore apply Proposition~\ref{generic_smoothness} with $Z$, to find a hypersurface $H_f$ of $X\to S$ containing $C$ as
a closed subscheme, with $H_f$
smooth over an open neighborhood $W$ of $Z$ in $S$. 
For all $s\in S$, $X_s$ is pure of dimension $d$ and
$(H_f)_s$ is a hypersurface
in $X_s$. Thus, $(H_f)_s$ is pure of dimension $d-1$ for all $s\in S$. Therefore, the above discussion 
can be applied  to the morphism $H_f \to S$, which induces a smooth morphism $H_f\times_S W\to W$, to produce a hypersurface $H_{f_2} $ of $H_f\to S$
containing $C$ as a closed subscheme, with $H_{f_2}$
smooth over an open neighborhood $W_2$ of $Z$ in $S$.
Thus, we obtain  
the desired  finite quasi-section after $d$ such steps. 
\qed 

\begin{lemma}\label{components-dom}
Let $S$ be affine, noetherian, and irreducible, with generic point $\eta$. Let $\pi: X\to S$ be a
morphism of finite type. 
For each irreducible component $\Delta $ of $X$, suppose that 
$\Delta \to S$   
has generic fiber of positive dimension.  
Let $\mathcal L$ be an invertible sheaf on $X$ with a global section $f$, and assume that
$H:=H_f \subset X$ is a hypersurface relative to $X \to S$.
Then:
\begin{enumerate}[{\rm (1)}]
\item Each irreducible component $\Gamma$ of $H$ dominates $S$. 
\item Assume in addition that for some $d\ge 1$, 
the fibers of each morphism $\Delta \to S$ 
all have dimension $d$.
Then $X\to S$ is equidimensional 
of dimension $d$  and $H\to S$ is 
equidimensional of dimension $d-1$. 
\end{enumerate}
\end{lemma}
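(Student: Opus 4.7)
My plan is as follows. First I would observe that the combined hypotheses---every irreducible component $\Delta$ of $X$ has $\Delta_\eta$ of positive dimension, and $H$ is a hypersurface relative to $X\to S$---force $H$ to contain no irreducible component of $X$. Indeed, if $H\supseteq \Delta$ for some component $\Delta$, then its generic point $\delta$ lies in $H$, so $H_\eta$ contains the closure of $\{\delta\}$ in $X_\eta$, a positive-dimensional irreducible component of $X_\eta$, contradicting the hypersurface condition at $s=\eta$. Consequently $f$ is a non-zero-divisor at every point of $X$, $H$ is an effective Cartier divisor, and Krull's Hauptidealsatz guarantees that each irreducible component $\Gamma$ of $H$ has codimension $1$ in $X$, and in every irreducible component $\Delta$ of $X$ containing the generic point $\xi$ of $\Gamma$.

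For (1) I will argue by contradiction, assuming $s_0:=\pi(\xi)\ne \eta$. Pick any component $\Delta$ of $X$ through $\xi$, equipped with its reduced integral structure. Dominance of $\Delta\to S$ (forced by the positive-dimensional generic fiber hypothesis) yields an injection of local rings $\cO_{S,s_0}\hookrightarrow \cO_{\Delta,\xi}$ whose target is a $1$-dimensional noetherian local domain. As $s_0\ne \eta$, the ideal $\m_{s_0}\cO_{\Delta,\xi}$ is nonzero, hence automatically $\m_\xi$-primary in this $1$-dimensional domain, so the fiber ring $\cO_{\Delta_{s_0},\xi}$ has dimension $0$ and $\xi$ is a generic point of $\Delta_{s_0}$. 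Setting $\Lambda$ to be the closure of $\{\xi\}$ in $X_{s_0}$---which, since $\Delta_{s_0}$ is closed in $X_{s_0}$, coincides with the unique component of $\Delta_{s_0}$ having $\xi$ as generic point---upper semicontinuity of fiber dimension (EGA IV.13.1.3) applied to $\Delta\to S$, combined with $\dim \Delta_\eta \ge 1$, yields $\dim \Lambda \ge 1$; moreover $\Lambda\subseteq H_{s_0}$ because $\xi\in H$ and $H$ is closed.

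The main obstacle will be to promote $\Lambda$ from an irreducible component of $\Delta_{s_0}$ to an irreducible component of the full fiber $X_{s_0}$; only then does the hypersurface condition deliver a contradiction. My remedy will be to apply the same local analysis uniformly to every irreducible component $\Delta'$ of $X$ passing through $\xi$: since $\Gamma=\overline{\{\xi\}}$ (closure in $X$) lies inside every such $\Delta'$, the codimension-$1$ analysis carries over verbatim, and the resulting closure of $\xi$ inside $\Delta'_{s_0}$ is again $\Lambda$. Any irreducible subset of $X_{s_0}$ strictly containing $\Lambda$ must, by the finite decomposition $X_{s_0}=\bigcup_{\Delta'}\Delta'_{s_0}$ and its own irreducibility, lie inside some $\Delta'_{s_0}$ with $\Delta'$ passing through $\xi$, contradicting the maximality of $\Lambda$ in that $\Delta'_{s_0}$. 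Hence $\Lambda$ is a positive-dimensional component of $X_{s_0}$ contained in $H_{s_0}$, contradicting the hypersurface hypothesis and proving (1). For (2), the equidimensionality hypothesis gives $\dim X_s=d$ for every $s\in S$ as a finite union of equidimensional $d$-dimensional pieces, Krull's Hauptidealsatz applied to the local equation of $H$ inside $X_s$ forces every irreducible component of $H_s$ to have dimension at least $d-1$, and the hypersurface condition rules out dimension $d$; hence $H\to S$ is equidimensional of dimension $d-1$.
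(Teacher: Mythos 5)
For part (1), your argument is correct but takes a genuinely different route from the paper's. The paper sets $Z:=\overline{\pi(\Gamma)}$, shows that $\codim(\Gamma,X_Z)>0$ (otherwise $\Gamma$ contains a positive-dimensional component of some fiber) and that $\codim(X_Z,X)>0$ when $Z\neq S$, and concludes $\codim(\Gamma,X)\geq 2$, contradicting Krull's principal ideal theorem. You instead work locally at the generic point $\xi$ of $\Gamma$: using that $\cO_{\Delta',\xi}$ is a one-dimensional local domain for every component $\Delta'$ of $X$ through $\xi$, you show that $\xi$ is simultaneously a generic point of every $\Delta'_{s_0}$, and that this forces $\Lambda=\overline{\{\xi\}}$ (closure in $X_{s_0}$) to be a positive-dimensional irreducible component of the full fiber $X_{s_0}$ contained in $H_{s_0}$, contradicting the hypersurface condition. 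Your uniformization over all components through $\xi$ correctly handles the central subtlety, namely passing from a component of $\Delta_{s_0}$ to a component of $X_{s_0}$.

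In part (2) there is a gap. You assert that the hypothesis presents $X_s$ as ``a finite union of equidimensional $d$-dimensional pieces,'' but the hypothesis says only that $\dim\Delta_s=d$ for each component $\Delta$; it does \emph{not} say that each $\Delta_s$ is equidimensional. This matters: your Krull argument tacitly uses the identity $\dim\Lambda=d-\dim\cO_{X_s,\lambda}$ for an irreducible component $\Lambda$ of $H_s$ with generic point $\lambda$, and this identity requires $X_s$ to be equidimensional of dimension $d$ (catenarity, being of finite type over a field, is automatic; equidimensionality is not). The missing step can be supplied by the same semicontinuity you already invoked in (1): by EGA IV.13.1.3, the locus $\{x\in\Delta\mid \dim_x\Delta_{\pi(x)}\geq d\}$ is closed; it contains the generic point of $\Delta$ since $\dim\Delta_\eta=d$, hence equals $\Delta$; evaluating at the generic point of any component of $\Delta_s$ shows that component has dimension exactly $d$, so $\Delta_s$ --- and then $X_s$ --- is equidimensional of dimension $d$. (The paper instead obtains the lower bound $\dim_x\Delta_s\geq d$ from EGA IV.13.1.6 and then appeals to the criterion EGA IV.13.3.3.) Finally, note that to verify that $H\to S$ is an equidimensional \emph{morphism} in the sense of EGA IV.13.3.2, the fiberwise dimension count must be combined with part (1), i.e.\ with the fact that every component of $H$ dominates $S$.
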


\proof (1) Apply 
\cite{EGA}, IV.13.1.1, to each morphism $\Delta \to S$
to find that for all $s\in S$, 
all irreducible components of $X_s$ have 
positive dimension. 
Let $\Gamma$ be an irreducible component of $H$. Let $Z$ denote 
the Zariski closure of $\pi(\Gamma)$ in $S$. We need to show that $Z=S$. 
Let us first show by contradiction that $\codim(\Gamma, X_Z)>0$. Otherwise, $\Gamma$ 
contains an irreducible component $T$ of $X_Z$. Let $t$ be the generic
point of $T$. Since $T_{\pi(t)}$ is irreducible and 
dense in $T$, it is an irreducible component of $X_{\pi(t)}$. In particular, 
$T_{\pi(t)}$ has positive dimension, and is contained in $\Gamma$. This contradicts 
our   hypothesis that $H$
is a hypersurface. 
 
Every irreducible component of $X_Z$ is contained in an irreducible component of $X$, and 
every irreducible component of $X$ has non-empty 
generic fiber. Thus, if  $Z\ne S$, then $\codim(X_Z, X)>0$, and 
$$\codim(\Gamma, X)\ge \codim(\Gamma, X_Z)+\codim(X_Z, X)\ge 2.$$
This is a contradiction with the inequality   $\codim(\Gamma, X)\leq 1$, which follows from
 Krull's Principal Ideal Theorem. Hence, $Z=S$. 

(2) Let us first show that $X\to S$ is equidimensional of dimension $d\ge 1$.
The definition of {\it equidimensional} is found in \cite{EGA}, IV.13.3.2. 
We use \cite{EGA}, IV.13.3.3
to prove our claim. Indeed, our hypotheses imply that the image under $X \to S$ of each irreducible component $\Delta$ of $X$
is   $S$, and that the generic fibers of all induced morphisms $\Delta \to S$
have equal dimension $d$. Let $x \in \Delta$, and let $s \in S$ be its image. 
We have $\dim_x \Delta_s \leq \dim \Delta_s=d$.
We find using \cite{EGA}, IV.13.1.6, that  $\dim_x \Delta_s \geq \dim \Delta_\eta=d$.
Our claim follows immediately.

Let now
$\Gamma$ be an irreducible component of $H$. We know from (1) that $\Gamma \to S$ 
is dominant. Thus, using \cite{EGA}, IV.13.3.3, 
to show that $H \to S$ is equidimensional of dimension $d-1$, it suffices to 
show that $\Gamma \to S$ is equidimensional of dimension $d-1$.
Since $\Gamma_\eta$ is an irreducible component of the hypersurface
$H_\eta\cap \Delta_\eta$ of  $\Delta_\eta$, we have
$ \dim \Gamma_\eta= \dim \Delta_\eta - 1 = \dim X_\eta -1$.

Let $s \in S$ be such that $\Gamma_s $ is not empty. 
Our hypothesis 
that $H$ is a hypersurface 
implies that $\Gamma_s$ does not 
contain any irreducible component of $X_s$ of positive dimension.  
Thus, $\dim\Gamma_s\le \dim X_s-1=d-1$. By \cite{EGA}, IV.13.1.6, 
$\Gamma_s$ is equidimensional of dimension $d-1$. It follows that  $\Gamma\to S$ is equidimensional of dimension $d-1$. 
\qed 

\begin{remark} Let $S$ be an affine integral scheme. The scheme $X:= {\mathbb P}^1_S \sqcup S$ is an $S$-scheme in a natural way, and every irreducible component 
of $X$ dominates $S$. 
Any proper closed subset of $S$ defined by a principal ideal is a hypersurface $H_0$ of $S$. Thus, there exist 
hypersurfaces $H:=H_1 \sqcup H_0$ of $X$ such that 
the irreducible component $H_0$  of $H$ does not dominate $S$. As \ref{components-dom} (a) shows,
this cannot happen when every irreducible component of $X$ has a generic fiber of positive dimension.
\end{remark}

\begin{remark} Let $S$ be a noetherian affine scheme. A variant of Theorem \ref{quasisections} can be obtained
when the morphism $\pi:X\to S$ is only assumed to be quasi-projective, but satisfies the following 
additional condition: There exists a scheme $\overline{X}$ with a projective morphism $\pi': \overline{X} \to S$ 
having all fibers of dimension $d > 0$, 
 and  an open $S$-immersion $X \to \overline{X}$ with dense image and $\dim (\overline{X}\setminus X)<d$. 
Keeping all other hypotheses of Theorem \ref{quasisections}
in place, its conclusions then also hold under the above weaker hypotheses on $\pi:X\to S$. 
The proof of this variant is similar to the proof of Theorem \ref{quasisections}, and consists in applying Theorem  \ref{bertini-type-0} $d$ times, 
starting with 
the data $\overline{X}$, $C$, $F:=\overline{X}\setminus X$, and the finite set $A$ containing the generic points of $F$. 
\end{remark}

\begin{remark} Let $S$ be  an  affine scheme, and let $ X \to S$ be a {\it smooth}, projective, and surjective, morphism.
We may ask whether $X \to S$ always admits a {\it finite \'etale} quasi-section. 
(The existence of a quasi-finite \'etale quasi-section is proved in \cite{EGA}, IV.17.16.3 (ii).)
The answer to the above question is known in two cases of arithmetic interest.

  First, let $S$ be a smooth affine geometrically irreducible curve over a finite field.
Let $X \to S$ be a  smooth  and surjective morphism, with geometrically irreducible generic fiber.
Then $X/S$ has  a   finite   \'etale quasi-section (\cite{Tam}, Theorem (0.1)).

Let now $S = \Spec {\mathbb Z}$. The answer to this question in this case is negative, 
as examples of K. Buzzard \cite{Buz} show.
Indeed, a positive answer to this question 
over $S = \Spec {\mathbb Z}$ would imply that any smooth, projective, surjective, morphism $X \to \Spec {\mathbb Z}$  has a generic fiber 
which has a ${\mathbb Q}$-rational point. The hypersurface $X/S$ in ${\mathbb P}^7_{\mathbb Z}$ defined by the quadratic form $f(x_1,\dots, x_8)$
associated with the $E_8$-lattice is smooth over $S$ because the determinant of the associated symmetric matrix is $\pm 1$, and the generic fiber
of $X/S$  has no ${\mathbb R}$-points because $f$ is positive definite.

Let $S= \Spec \cO_K$, where $K$ is a number field. Let $L/K$ denote the extension maximal with the property that 
the integral closure $\cO_L$ of $\cO_K$ in $L$ is unramified over $\cO_K$. Does the above question have a positive 
answer if $L/K$ is infinite? Obviously, if it is possible to find such a $K$ and $L$ where $L \subset {\mathbb R}$, then 
the example of Buzzard would still show that the answer is negative. We do not know if examples of such $K$ exist. 
\end{remark}

Some conditions on the dimension of the fibers of a projective morphism 
$X \to S$ 
are indeed necessary for a finite quasi-section to 
exist, as the following proposition shows.

\begin{proposition}\label{fqs-dim} 
Let $X$ and $ S$ be irreducible noetherian  schemes. Let $\pi : X\to S$ be a  
proper morphism, and suppose that $\pi $ has a finite quasi-section 
$T$. 
\begin{enumerate}[\rm (a)]
\item Assume that $\pi: X\to S$ is generically finite. Then $\pi$ is finite. 
\item Assume  that
the generic fiber of $X\to S$ has dimension $1$. If $X$ is regular, then for all 
$s\in S$, $X_s$ has an irreducible component of dimension $1$. 
\end{enumerate}
\end{proposition}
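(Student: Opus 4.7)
The plan is to treat the two parts separately. Both rely on the fact that the finite quasi-section $T$ produces, in each fiber $X_s$, a distinguished closed point; for (b) I further exploit the regularity of $X$ to compute codimensions at such a point.

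For (a), the key observation is that, since $X$ is irreducible with generic point $\xi$ and $\pi$ is generically finite (and dominant, because $T\to S$ is surjective), one has $X_\eta=\{\xi\}$: any other point of $X_\eta$ would be a proper specialization of $\xi$ within $X_\eta$, yielding a chain of irreducible closed subsets of length $\ge 1$ and contradicting $\dim X_\eta=0$. Surjectivity of $T\to S$ forces $T_\eta\ne\emptyset$, and $T_\eta\subseteq X_\eta=\{\xi\}$ then gives $\xi\in T$; the closedness of $T$ upgrades this to $T\supseteq\overline{\{\xi\}}=X$, i.e.\ $T_\red=X_\red$ as topological subspaces of $X$. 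Since quasi-finiteness is a purely topological property, $\pi$ inherits the quasi-finiteness of $T\to S$; combined with properness, this gives the finiteness of $\pi$.

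For (b), I would first reduce to the case where $T$ is integral by replacing it with an irreducible component (endowed with its reduced structure) which dominates $S$; such a component exists because each $\pi(T_i)$ is closed in $S$ by properness of $T_i\to S$, their union is $S$, and $S$ is irreducible. Let $\tau$ be the generic point of $T$. Finiteness of $T\to S$ between integral schemes gives $T_\eta=\{\tau\}$, which is closed in $X_\eta$. The generic fiber $X_\eta$ is irreducible of dimension one (its unique generic point being $\xi$), so $\dim\cO_{X_\eta,\tau}=1$; and because $\cO_{S,\eta}=k(\eta)$ is a field, $\cO_{X_\eta,\tau}=\cO_{X,\tau}$, whence $\dim\cO_{X,\tau}=1$. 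Now fix any $s\in S$ and any $x_0\in T_s$, and let $\p:=I(T)_{x_0}\subset\cO_{X,x_0}$ be the prime corresponding to $\tau$. The regularity of $X$ makes $\cO_{X,x_0}$ catenary, so $\Ht(\p)+\dim(\cO_{X,x_0}/\p)=\dim\cO_{X,x_0}$; combined with $\dim(\cO_{X,x_0}/\p)=\dim\cO_{T,x_0}$ and $\Ht(\p)=\dim\cO_{X,\tau}=1$, this forces $\Ht(\p)=1$.

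Since the regular local ring $\cO_{X,x_0}$ is a UFD (Auslander--Buchsbaum), the height-one prime $\p$ is principal: $\p=(f)$. As $T_s$ is a finite set of closed points of $X_s$, the ring $\cO_{T_s,x_0}=\cO_{X_s,x_0}/(\bar f)$ has dimension zero, and Krull's Hauptidealsatz gives $\dim\cO_{X_s,x_0}\le 1$. The reverse inequality follows from upper semicontinuity of fiber dimensions (\cite{EGA}, IV.13.1.3) together with $\dim X_\eta=1$, so $\dim_{x_0}X_s=1$, exhibiting an irreducible component of $X_s$ of dimension one through $x_0$. The main obstacle is establishing $\Ht(\p)=1$, which is where regularity of $X$ enters essentially: one needs both the catenarity of $\cO_{X,x_0}$ for the additive dimension formula and the identification $\cO_{X,\tau}=\cO_{X_\eta,\tau}$ (valid because $\cO_{S,\eta}$ is a field). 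Once $\p$ is shown to be height one in a regular local ring, everything else---principality via the UFD property, Krull's principal ideal theorem, and semicontinuity---is standard.
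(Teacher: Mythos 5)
Your proof of (a) is essentially the paper's: identify $X_\eta$ with the single point $\xi$, use surjectivity of $T\to S$ to place $\xi$ in $T$, conclude $T_{\mathrm{red}}=X_{\mathrm{red}}$ topologically, and invoke quasi-finite + proper $\Rightarrow$ finite.

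For (b) you take the same conceptual route as the paper but in local form. The paper works with a whole irreducible component $\Gamma$ of $T$ dominating $S$, shows $\codim(\Gamma,X)=1$ by comparing generic fibers of $\Gamma$ and a codimension-one $Y\supseteq\Gamma$, and then uses regularity to view $\Gamma$ as a Cartier divisor before applying the fiber-dimension inequality and Chevalley semicontinuity; you instead localize at a point $x_0\in T_s$, compute $\Ht(\mathfrak p)=\dim\cO_{X,\tau}=1$ directly, use the UFD property of the regular local ring to write $\mathfrak p=(f)$, and then run Krull's PIT plus semicontinuity. These are the same ideas, one global and one local. Two small expository points. First, the sentence invoking catenarity and the additivity $\Ht(\mathfrak p)+\dim(\cO_{X,x_0}/\mathfrak p)=\dim\cO_{X,x_0}$ is redundant: you already obtain $\Ht(\mathfrak p)=1$ from the localization identity $(\cO_{X,x_0})_{\mathfrak p}\cong\cO_{X,\tau}$, so the catenarity appeal does no work and reads as circular. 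Second, the step ``$\cO_{S,\eta}=k(\eta)$ is a field'' tacitly assumes $S$ reduced, which the hypotheses do not state; this is harmless because $X$ regular implies $X$ reduced, so $\pi$ factors through $S_{\mathrm{red}}$ and one may replace $S$ by $S_{\mathrm{red}}$ at the outset (alternatively, $\mathfrak m_\eta\cO_{X,\tau}$ is a nilpotent ideal of a reduced ring, hence zero, so $\cO_{X_\eta,\tau}=\cO_{X,\tau}$ regardless), but you should say so. With these cosmetic repairs the argument is correct.
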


\proof (a) Since $\pi$ is generically finite and $X$ is irreducible, the generic fiber 
of $X\to S$ is reduced to one point, namely, the generic point of $X$. 
Since $T\to S$ is surjective, $T$ meets the generic fiber of $X\to S$, 
and so it contains the generic point of $X$. Thus, $T=X$ set-theoretically. 
Since $X_{\mathrm{red}}\subseteq T$, we find that is $X_{\mathrm{red}}$ is finite over $S$.
Since $X$ is then quasi-finite and proper, 
it is finite over $S$. 

(b) Let $\Gamma$ be an irreducible component of $T$ which surjects onto $S$. 
Let us first show that $\codim(\Gamma, X)=1$. 
Let $Y$ be an irreducible closed subset 
of $X$ of codimension $1$ 
which contains $\Gamma$. Since the generic fiber of $X\to S$ has dimension $1$,
the generic fibers of  $\Gamma \to S$ and $Y \to S$ are both irreducible 
and $0$-dimensional. Hence, these generic fibers are equal. Therefore, $\Gamma=Y$ 
and $\Gamma$ has codimension $1$ in $X$. 
Since $X$ is regular, $\Gamma$ is then the support of a Cartier divisor on $X$. 
By hypothesis, for all $s \in S$, $\Gamma_s$ is not empty, and has dimension $0$.
Thus, for all $s\in S$ and all $t\in \Gamma_s$, we have 
$0=\dim_t \Gamma_s\ge \dim_t X_s -1$.   
It follows that the irreducible components of $X_s$ which intersect 
$\Gamma_s$ all have dimension at most $ 1$. Since every irreducible component of $X_s$ has dimension
at least $1$ (\cite{EGA}, IV.13.1.1), (b) follows. 
\qed 

\begin{example} As the following example shows, it is not true in general
in Proposition~\ref{fqs-dim} (b) that for all $s \in S$, all irreducible components of $X_s$ have dimension $1$.
Let $S$ be regular of dimension $d\ge 2$. 
Fix a section $T$ of $\mathbb P^1_S\to S$. 
Let $x_0$ be a closed point of $\mathbb P^1_S$ not contained in $T$ and lying over a point
$s\in S$ with $\dim_{s} S=d$. 
Let $X\to \mathbb P^1_S$ be the blowing-up of $x_0$. Then $X$ is regular, and $X\to S$ has 
the preimage of $T$ as a section (and thus it has a finite quasi-section). 
However, $X_{s}$ consists of the union of a projective line and
the exceptional divisor $E$ of $X\to \mathbb P^1_S$, which has dimension $d$. So
$\dim X_{s}=d\ge 2$. 

Our next example shows that some regularity assumption on $X$ is necessary in
\ref{fqs-dim} (b).
Let $k$ be any field, $R:=k[ t_1, t_2]$, and $B:= R[u_0, u_1, u_2]/(t_1u_2-t_2u_1)$.
Consider the induced projective morphism
$$X:=\Proj ( B) \longrightarrow S:=  \Spec R = \mathbb A^2_{k}.$$ 
The scheme $X$ is singular at the point $P$ corresponding to the homogeneous ideal $(t_1,t_2,u_1,u_2)$ of $B$.
The fibers of $X \to S$ are isomorphic to $\mathbb P^1_{k(s)}$ if $s \neq (0,0)$.
When $s = (0,0)$, then $X_s$ is isomorphic to $\mathbb P^2_{k(s)}$.
The morphism $X \to S$ has a finite section $T$, corresponding to the homogeneous ideal
$(u_1,u_2)$. As expected in view of the proof of \ref{fqs-dim} (b), any section of $X\to S$, and in particular the section $T$, 
contains the singular point $P$.
\end{example}

We conclude this section with two applications of Theorem \ref{quasisections}.

\begin{proposition} \label{splitting} Let $A$ be a 
commutative ring. Let $M$ be a 
projective $A$-module of finite presentation with constant rank $r> 1$.
Then there exists an $A$-algebra $B$, finite and faithfully flat over $A$, with $B$ a local complete intersection over $A$,
such that $M\otimes_A B $ 
is isomorphic to a direct sum of projective $B$-modules of rank $1$.
\end{proposition}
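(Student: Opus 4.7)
The plan is to realize the splitting geometrically by taking repeated projective-bundle base changes, using Theorem~\ref{quasisections}(3) to produce at each step a finite flat l.c.i.\ quasi-section. Set $S=\Spec A$ and $X=\mathbb{P}(M):=\Proj(\operatorname{Sym}_A M)$. Because $M$ is projective of constant rank $r\ge 2$, it is locally free of rank $r$ on $S$, so $X\to S$ is Zariski-locally isomorphic to $\mathbb{P}^{r-1}_S$; in particular it is projective, smooth, flat, a local complete intersection morphism, and all of its fibers have the same positive dimension $r-1$. Theorem~\ref{quasisections}(3) applies to $X\to S$ (with $C=\emptyset$), yielding a closed subscheme $T\subset X$, finite surjective over $S$, with $T\to S$ flat and a local complete intersection morphism.

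Since $T\to S$ is affine, $T=\Spec B$ for an $A$-algebra $B$; being finite, flat, and surjective, $T\to S$ is faithfully flat, and by construction $B/A$ is a local complete intersection. By the universal property of $\mathbb{P}(M)$, the $S$-morphism $T\to X$ corresponds to an invertible quotient
\[
M\otimes_A B \twoheadrightarrow L
\]
of the pullback of $M$ to $T$. Since $L$ is projective of rank $1$, this surjection splits, giving $M\otimes_A B \cong L\oplus M'$, where $M'$ is a direct summand of a projective module, hence projective of finite presentation of constant rank $r-1$ over $B$.

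We then iterate. If $r-1>1$, apply the same construction to $(B,M')$ to obtain a finite faithfully flat l.c.i.\ $B$-algebra $B_1$ with $M'\otimes_B B_1$ splitting off another invertible $B_1$-module, and so on. After $r-1$ iterations we reach an $A$-algebra, which we call $B^{(r-1)}$, over which $M$ becomes a direct sum of $r$ invertible modules. The composition of finite morphisms is finite, of faithfully flat morphisms is faithfully flat, and of local complete intersection morphisms is a local complete intersection; hence $B^{(r-1)}/A$ has all the required properties.

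The main technical point is ensuring that Theorem~\ref{quasisections}(3) can be applied in the stated generality (without assuming $A$ noetherian), which reduces to checking that $\mathbb{P}(M)\to S$ is a projective, finitely presented, flat l.c.i.\ morphism with all fibers of the same positive dimension; this is immediate from the local triviality of $M$. A minor bookkeeping point is verifying the closure of the class of ``finite, faithfully flat, local complete intersection'' morphisms under composition, which is standard.
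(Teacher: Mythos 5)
Your proof is correct and follows essentially the same route as the paper: form $\mathbb{P}(M)\to\Spec A$, invoke Theorem~\ref{quasisections}(3) to get a finite faithfully flat l.c.i.\ quasi-section $\Spec B$, read off an invertible quotient of $M\otimes_A B$ from the universal property, split it off, and iterate $r-1$ times. The only cosmetic difference is that you make explicit the non-noetherian applicability of Theorem~\ref{quasisections}(3) and the stability of the relevant class of morphisms under composition, points the paper leaves implicit.
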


\proof 
 Let $S := \Spec A$. 
Let ${\mathcal M}$ denote the locally free $\cO_S$-module of rank $r$ associated with $M$.
Let $X := {\mathbb P}({\mathcal M})$. Then the natural map $X \to S$ is projective, smooth, and its fibers all have dimension $r-1$.
We are thus in a position to apply Theorem \ref{quasisections} (3) to obtain the existence of a finite flat quasi-section $f:T \to S$ as in \ref{quasisections} (3). 
In particular, $T=\Spec B$ for some finite and faithfully flat $A$-algebra $B$, with $B$ a local complete intersection over $A$. 
Moreover, the existence of an $S$-morphism $g:T \to X$ corresponds to the existence of an $\cO_T$-invertible sheaf ${\mathcal L}_1$
and of a surjective morphism $f^*{\mathcal M} \to {\mathcal L}_1$. Let ${\mathcal M}_1$ denote the kernel of this morphism.
The $\cO_T$-module ${\mathcal M}_1$ is locally free of rank $r-1$, and $f^*{\mathcal M} \cong {\mathcal L}_1 \oplus {\mathcal M}_1$. We may thus proceed as above and use Theorem \ref{quasisections} (3)
another $r-2$ times to obtain the conclusion of the corollary. \qed

\begin{remark} The proposition strengthens,  in the affine case, the classical splitting lemma for vector bundles
(\cite{F-L}, V.2.7). When 
$A$ is of finite type over an algebraically closed field $k$ and is regular, 
it is shown in \cite{Sum}, 3.1, that it is possible to find a finite faithfully flat {\it regular} $A$-algebra $B$ over which $M$ splits.

We provide now an example of a commutative ring $A$ with a finitely generated projective module $M$
which is not free and such that it is not possible to find a finite \emph{\'etale} $A$-algebra $B$ which splits $M$ into a
direct sum of rank $1$ projective modules. For this, we exhibit a ring $A$
such that the \'etale fundamental group of $\Spec A$ is trivial and such that $\Pic(A) = (0)$.
Then, if a projective module $M$ of finite rank is split over a finite \emph{\'etale} $A$-algebra $B$, it must be split over $A$.  Since 
$\Pic(A)=(0)$, we find then that $M$ is a free module. 
Let $n >2$ and consider the algebra
$$A:=\mathbb C[x_1,\dots, x_{2n}]/(x_1^2+ \dots + x_{2n}^2 -1).$$
This ring is regular, and it is well-known that it is a UFD, so that $\Pic(A)=(0)$
(see, e.g., \cite{Swan}, Theorem 5). It is shown in \cite{ST}, Theorem 3.1 (use $p=2$), that
for each $n>2$, there exists a projective module $M$ of 
rank $n-1$ which is not free. Let now $X:=\Spec A$. The \'etale fundamental group of $X$ is trivial 
if the topological fundamental group of $X({\mathbb C})$ is trivial (use \cite{SGA1}, XII, Corollaire 5.2).
The topological fundamental group of $X({\mathbb C})$ is trivial because there exists a retraction $X({\mathbb C}) \to S^{2n-1}$,
where $S^{2n-1}$ is the real sphere in ${\mathbb R}^{2n}$ given by the equation $x_1^2+ \dots + x_{2n}^2 =1$ (see, e.g., \cite{Wood}, section 2).
It is well-known that the fundamental group of $S^{2n-1}$ is trivial for all $n \geq 2$. Hence, the module $M$ cannot be split after a finite \'etale base change.
\end{remark}

Let $S$ be a scheme and let $U\subseteq S$ be an open subset. 
Given a family  $C \to U$ of stable curves over $U$, conditions are known (see, e.g.,  \cite{dJ-O})
to insure that  this family extends to a family of stable curves over $S$.
It is natural to consider the analogous problem of extending a given family $D \to Z$ of stable curves over a closed subset $Z$ of $S$. 
For this, we may use the existence of finite quasi-sections in
appropriate moduli spaces, as in the proposition below. 

Let $\overline{\mathcal M}:=\overline{\mathcal M}_{g,S}$ be 
the proper Deligne-Mumford stack of 
stable curves of genus $g$ over $S$ (see \cite{D-M}, 5.1).
Our next proposition uses the following statement: {\it Over $S=\Spec \mathbb Z$, the stack $\overline{\mathcal M}_{g,S}$ admits a coarse  moduli space
$\overline{M}_{g,\mathbb Z}$ 
which is a projective scheme over $\Spec \mathbb Z$}.  
Such a statement is found
in an appendix in GIT \cite{Mum}, page 228, with a sketch of proof. See also \cite{Kol}, 5.1, for another brief proof.

\begin{proposition} \label{extension.stable.curve}
Let $S$ be a 
noetherian affine scheme. 
Let $Z$ be a
closed subscheme of $S$,
and let $D\to Z$ be a stable
curve of genus $g\ge 2$. Then there exist a finite surjective 
morphism $S'\to S$  mapping each irreducible component
of $S'$ onto an irreducible component of $S$, a finite surjective 
morphism $Z' \to Z$, a closed $S$-immersion $Z' \to S'$,
and a stable curve $\mathcal D\to S'$ of genus $g$  
with a morphism $D\times_Z Z' \to \mathcal D$ 
such that 
the diagram below commutes and the top 
square in the diagram is cartesian: 
$$
\xymatrix{
D\times_Z Z' \ar[d] \ar@{^{(}->}[r] & {\mathcal D} \ar[d] \\
Z' \ar@{->>}[d] \ar@{^{(}->}[r] & S' \ar@{->>}[d]\\
Z \ar@{^{(}->}[r] & S. \\
}
$$ 
\end{proposition}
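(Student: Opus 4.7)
The plan is to interpret the family $D\to Z$ via its moduli map $\varphi : Z\to\overline{\mathcal M}_{g,S}$, pull back a universal family from an auxiliary finite cover of the moduli stack by a scheme, and then apply Theorem~\ref{quasisections} to produce the required $S'$.

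First, invoke Lemma~\ref{smooth_cover_mg} to obtain a scheme $P$, projective over $\Spec\mathbb Z$ with all fibers of $P\to\Spec\mathbb Z$ of dimension $3g-3$, equipped with a finite surjective morphism $P\to\overline{\mathcal M}_{g,\mathbb Z}$ such that the pullback of the universal stable curve is a genuine stable curve $\mathcal C\to P$. Set $P_S := P\times_{\mathbb Z} S$; then $P_S\to S$ is projective, finitely presented, all its fibers have dimension $3g-3$, and the induced morphism $P_S\to\overline{\mathcal M}_{g,S}$ remains finite surjective.

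Since $\overline{\mathcal M}_{g,S}\to S$ is separated, the graph of $\varphi$ realizes $Z$ as a closed subscheme of $Z\times_S\overline{\mathcal M}_{g,S}$. Base-changing this closed immersion along the finite surjective morphism $Z\times_S P_S\to Z\times_S\overline{\mathcal M}_{g,S}$ identifies $Y := Z\times_{\overline{\mathcal M}_{g,S}} P_S$ with a closed subscheme of $Z\times_S P_S$; the first projection $Y\to Z$ is finite surjective, and the restriction of $\mathcal C$ to $Y$ via $Y\to P_S\to P$ is canonically isomorphic to $D\times_Z Y$. Because $Z$ is closed in $S$, composing yields a closed immersion $Y\hookrightarrow P_S$ with $Y\to S$ finite.

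Apply Theorem~\ref{quasisections} to the morphism $P_S\to S$ with the closed subscheme $C := Y$: this produces a finite quasi-section $S'\to S$, realized as a closed subscheme of $P_S$ containing $Y$. Set $Z' := Y$, so that $Z'\hookrightarrow S'$ is a closed immersion and $Z'\to Z$ is finite surjective, and let $\mathcal D$ be the pullback of $\mathcal C$ along $S'\hookrightarrow P_S\to P$; then $\mathcal D\to S'$ is a stable curve of genus $g$ with $\mathcal D\times_{S'} Z'\simeq D\times_Z Z'$, producing the required commutative diagram with cartesian top square. For the condition that each irreducible component of $S'$ map onto a component of $S$, one decomposes $S$ into its irreducible components $S_\alpha$ (endowed with reduced structure), restricts $Z$ and $D$ accordingly, and applies Theorem~\ref{quasisections}(1) to a suitable irreducible component of $(P_{S_\alpha})_{\mathrm{red}}$ containing the corresponding $Y_\alpha$; the disjoint union of the resulting quasi-sections is the desired $S'$. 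The main obstacle is the construction of the cover $P\to\overline{\mathcal M}_{g,\mathbb Z}$ carrying an honest universal family, which is handled separately in Lemma~\ref{smooth_cover_mg}; granted this, the result is a direct application of our finite quasi-section theorem.
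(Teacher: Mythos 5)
The overall strategy — pass to a scheme cover of $\overline{\mathcal M}_{g}$, realize the family $D\to Z$ via its moduli point, and apply Theorem~\ref{quasisections} — is the paper's strategy, but your write-up contains a genuine gap at the step where you claim $Y\hookrightarrow P_S$ is a closed immersion.

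The graph $\Gamma_\varphi : Z \to Z\times_S\overline{\mathcal M}_{g,S}$ is \emph{not} a closed immersion. The stack $\overline{\mathcal M}_{g,S}$ is separated over $S$, which means its diagonal is \emph{finite}, not a closed immersion; the diagonal fails to be a monomorphism precisely because stable curves have nontrivial automorphism groups. Consequently $\Gamma_\varphi$, which is a pullback of that diagonal, is finite but not a monomorphism, and the induced map $Y := Z\times_{\overline{\mathcal M}_{g,S}}P_S \to P_S$ is only finite. Concretely, if $D_z$ has automorphism group $G\neq\{1\}$ and $p\in P_S$ lies over $\varphi(z)$, then the fiber of $Y\to P_S$ over $p$ is a $G$-torsor, not a single point. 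So $Y$ is not a closed subscheme of $P_S$, and you cannot feed $Y$ into Theorem~\ref{quasisections} as the subscheme $C$, nor can you set $Z' := Y$ and claim $Z'\hookrightarrow S'$ is a closed immersion. This is exactly why the paper introduces the schematic image $Z_0$ of $Z'=Y$ in $X$, applies Theorem~\ref{quasisections}(1) componentwise to $Z_0$ to get a quasi-section $S_0$ containing $Z_0$, and then takes a further finite faithfully flat cover $S'\to S_0$ (which is possible because $S_0$ is affine and $Y\to Z_0$ is finite) into which $Y$ embeds as a closed subscheme. That extra passage from $S_0$ to $S'$ is not optional; without it the diagram you draw does not exist.

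Two secondary issues. First, your reference to ``Lemma~\ref{smooth_cover_mg}'' is incorrect: that item is a \emph{Remark}, and it does not assert the existence of a finite scheme cover of $\overline{\mathcal M}_{g,\mathbb Z}$ carrying an honest stable curve; it only notes that under extra hypotheses (a prime $p$ invertible on $S$) the cover can be taken smooth. The correct reference for the finite surjective scheme cover of a separated noetherian DM stack is \cite{LM}, 16.6, combined with the discussion of the coarse space $\overline{M}_{g,\mathbb Z}$ to see that the cover is projective with constant fiber dimension; the paper's proof spells this out. Your idea of building $P$ once over $\Spec\mathbb Z$ and base changing is perfectly workable and arguably a bit cleaner than constructing $X$ directly over $S$, but it needs the right citation. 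Second, when you invoke Theorem~\ref{quasisections}(1) to get the component condition on $S'$, the hypothesis there requires both $C$ and the ambient scheme to be irreducible, so you must apply it to each irreducible component of $Z_0$ separately (not of $Y$, which may be quite disconnected) and glue as in the paper's proof of Theorem~\ref{quasisections}(1).
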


\proof 
Let $\overline{\mathcal M}:=\overline{\mathcal M}_{g,S}$ be 
the proper Deligne-Mumford stack of 
stable curves of genus $g$ over $S$ (see \cite{D-M}, 5.1). 
We first construct a finite surjective morphism 
$X\to \overline{\mathcal M}$ such that $X$ is a scheme, projective
over $S$ and with constant 
fiber dimensions over $S$. 
It is known that over $\mathbb Z$, the coarse  moduli space
$\overline{M}_{g,\mathbb Z}$ 
of $\overline{\mathcal M}$ is a projective scheme
and that its fibers over $\Spec {\mathbb Z}$  are all 
geometrically irreducible of  the same
dimension $3g-3$. Let 
$\overline{M}:=\overline{M}_{g,\mathbb Z}\times_{\Spec{\mathbb Z}} S$.
Then we have a canonical morphism 
$\overline{\mathcal M}\to \overline{M}$ which is proper and 
a universal homeomorphism 
(hence quasi-finite). 
By construction, the
$S$-scheme $\overline{M}$ is projective with constant fiber
dimension. 

Since  $\overline{\mathcal M}$ is a noetherian 
separated Deligne-Mumford stack, 
there exists a (representable) finite surjective morphism from a scheme 
$X$ to $\overline{\mathcal M}$ (\cite{LM}, 16.6).  
The composition $X \to\overline{\mathcal M}\to \overline{M}$ is a 
finite (because proper and 
quasi-finite) surjective morphism of schemes.  
Thus $X\to S$ is projective since $S$ is affine and 
$\overline{M}\to S$ is projective. 
So $X\to S$ is projective and all its fibers have the same dimension. 

The curve $D\to Z$ corresponds to an element in 
the set $\overline{\mathcal M}(Z)$, which in turn corresponds to a 
finite morphism $Z\to \overline{\mathcal M}$. 
So $Z':=Z\times_{\overline{\mathcal M}} X$ is a scheme, finite surjective over $Z$ and 
finite over $X$. Let $Z_0$ denote the schematic image of $Z'$ 
in $X$. It is finite over $S$.  

To be able to apply Theorem \ref{quasisections} (1), 
we note the following. 
Let $T$ be  the disjoint union of the
reduced irreducible components of $S$. 
Replacing if necessary $S$ with $T$ and $D\to Z$ with
$D\times_S T\to Z\times_S T$,  we easily reduce the proof 
of the proposition to the case where
$S$ is irreducible.  Once $S$ is assumed irreducible, we use the fact that $\overline{M}\to S$ 
is proper with irreducible fibers to find that $\overline{M}$ is also irreducible. Replacing
$X$ by an irreducible component of $X$ which dominates  $\overline{M}$, we can
suppose that $X$ is irreducible.

Theorem \ref{quasisections} (1) can then be applied to the morphism $X \to S$ 
 and to  
each irreducible component of $Z_0$. We obtain 
a finite quasi-section $S_0$ of $X/S$ containing (set-theoretically) 
$Z_0$ and such that each irreducible component of $S_0$ maps onto $S$.
Modifying the structure of closed subscheme on $S_0$ as in the 
proof of \ref{quasisections} (1), we can suppose that $Z_0$ is 
a subscheme of $S_0$.   

Because $S_0$ is affine, it is clear that there exists 
a scheme $S'$, finite and faithfully flat (and even l.c.i.) over $S_0$, 
and a closed immersion $Z' \to S'$ making the following diagram commute:
$$
\xymatrix{
Z'  \ar@{->>}[d] \ar@{^{(}->}[r] & S' \ar@{->>}[d] & \\
Z_0 \ar@{^{(}->}[r] & S_0 .\\
}
$$
As $S'\to S_0$ is flat,  each irreducible component of $S'$
maps onto an irreducible component of $S_0$, hence onto $S$. 

The stable curve $\mathcal D\to S'$ whose existence is asserted in the statement of Proposition \ref{extension.stable.curve} corresponds to 
the element of $\overline{\mathcal M}(S') $ given by the composition of the finite 
morphisms $S' \to S_0\to X\to \overline{\mathcal M}$. 
\qed

\begin{remark}\label{smooth_cover_mg} 
Consider the finite surjective $S$-morphism $X \to \overline{\mathcal M}$ introduced at the beginning of
the proof of \ref{extension.stable.curve} above.
If we can find such a  cover 
$X\to \overline{\mathcal M}$ such that $X\to S$ is flat with Cohen-Macaulay fibers
(resp., with l.c.i.\ fibers), then using Theorem~\ref{quasisections}\ (2) and (3),  we can further require in the statement of Proposition 
\ref{extension.stable.curve} 
that $S'\to S$ be finite and faithfully flat (resp., l.c.i.).

When some prime number $p$ is invertible in $\cO_S(S)$, then it is proved
in \cite{dJ-P}, 2.3.6.(1) and 2.3.7, that there exists  
such an $X$ which is even smooth over $S$. Therefore, in this case,
we can find a morphism $S'\to S$ which is finite,
faithfully flat, and l.c.i. 
\end{remark}
\end{section}

\begin{section}{Moving lemma for 1-cycles}
\label{mv-1c}

We review below the basic notation needed to state our moving lemma.
Let $X$ be a noetherian scheme. Let ${\mathcal Z}(X)$ denote the free 
abelian group on the set of closed integral subschemes of $X$.
An element of ${\mathcal Z}(X)$ is called a \emph{cycle}, and if $Y$ is an integral closed subscheme of $X$,
we denote by $[Y]$ the associated element in ${\mathcal Z}(X)$.

Let ${\mathcal K}_X$ denote the sheaf of 
meromorphic functions 
on 
$X$ (see \cite{K}, top of page 204 or 
\cite{Liubook}, Definition 7.1.13). Let $f\in \mathcal K_X^*(X)$. 
Its associated  
principal Cartier divisor is denoted by $\dv(f)$ and defines a cycle on $X$:
$$[\dv(f)]=\sum_{x} \ord_x(f_x)[\overline{\{x\}}]$$ 
where $x$ ranges through the points of codimension $1$ in $X$,
and $\ord_x: \mathcal K_{X,x}^* \to \mathbb Z$ is defined, for a regular element of $g \in \cO_{X,x}$,
to be the length of the $\cO_{X,x}$-module $\cO_{X,x}/(g)$. 
 
A cycle $Z$ is \emph{rationally equivalent to $0$} 
or \emph{rationally trivial}, 
if there are finitely many integral closed subschemes $Y_i$ and
non-zero rational functions $f_i$ on $Y_i$
such that 
$Z=\sum_i [\dv(f_i)]$. Two cycles $Z$ and $Z'$ are 
\emph{rationally equivalent} in $X$ if $Z-Z'$ is rationally 
equivalent to $0$. 
We denote by $\CH(X)$ the quotient of $\mathcal Z(X)$ by
the subgroup of rationally trivial cycles. 

A 
morphism of schemes of finite type $\pi : X\to Y$ induces by {\it push forward of cycles} 
a group homomorphism $\pi_*: {\mathcal Z}(X) \to
{\mathcal Z}(Y)$. If $Z $ is any closed integral subscheme of $X$, then 
$\pi_*([Z]):= [k(Z):k(\overline{\pi(Z)})] [\overline{\pi(Z)}]$, with the convention 
that  $[k(Z):k(\overline{\pi(Z)})]=0$ if the extension $k(Z)/k(\overline{\pi(Z)})$ is not 
finite.

\begin{emp} \label{emp.proper}
Let $S$ be a noetherian scheme which is universally catenary and equidimensional at every point (for instance, 
$S$ is regular). Assume that both $X\to S$ and $Y\to S$ are morphisms of finite type,
and let 
$\pi:X\to Y$ be a {\it proper} morphism of $S$-schemes. Let $C$ and $C'$ be 
two cycles on $X$ which are rationally equivalent. 
Then $\pi_*(C)$ and $\pi_*(C')$ are rationally equivalent on $Y$
 (\cite{Th}, Note 6.7, or Proposition 6.5 and  3.11). We denote by $\pi_*: \CH(X) \to \CH(Y)$ the induced morphism.
 For an example showing that the hypotheses on $S$ are needed for $\pi_*: \CH(X) \to \CH(Y)$ to be well-defined, 
 see \cite{GLL1}, 1.3.
\end{emp}

We are now ready to state the main  theorem  
of this section.
Recall that the support of a horizontal $1$-cycle $C$ in a scheme $X$ over 
a Dedekind scheme $S$
is 
a finite quasi-section (\ref{ConditionT*}).
 The definitions of {\it Condition} (T) and of {\it pictorsion} are given 
in {\rm (\ref{ConditionT})} and {\rm (\ref{ConditionT*})}, respectively.

\begin{theorem} \label{mv-1-cycle-local} 
Let $R$ be a Dedekind domain, and let $S:=\Spec R$. 
Let $X \to S$ be a flat and quasi-projective morphism, with $X$
integral. 
Let $C$ be a horizontal $1$-cycle on $X$. 
Let $F$ be a 
closed subset of $X$.
Assume that for all $s \in S$, $F\cap X_s$ and $\Supp(C) \cap X_s$ have
positive codimension in $X_s$. 
Assume in addition that
either
\begin{enumerate}[\rm (a)]
\item $R$ is pictorsion and the support of $C$ is contained in the regular locus of $X$, or
\item $R$ satisfies Condition \emph{(T)}.  
\end{enumerate}
Then some positive multiple $mC$ of 
$C$ is rationally equivalent
to a horizontal $1$-cycle $C'$ on $X$ whose support does not meet
$F$.
Under the assumption {\rm (a)}, if furthermore $R$ is  
semi-local, then we can take $m=1$.

Moreover, if $Y \to S$ is any separated morphism of finite type and $h: X \to Y$
is any $S$-morphism, then $h_*(mC)$ is rationally equivalent to $h_*(C')$ on $Y$.
\end{theorem}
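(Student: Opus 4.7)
The overall strategy is to reduce to the case where $X$ has relative dimension one over $S$ near $\Supp(C)$ by cutting $X$ down with a sequence of hypersurfaces containing $C$, and then to exploit the pictorsion/Condition (T) hypothesis on $R$ to realize the required rational equivalence on the resulting ``family of curves''.

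For the reduction, let $d=\codim(\Supp(C), X)$. Since $X\to S$ is flat and $C$ is horizontal, Lemma~\ref{lem.remcodim}(3) gives $\codim(C_s,X_s)\ge d$ for all $s\in S$. Under assumption (a), $\Supp(C)$ lies in the regular locus of $X$, so the inclusion $\Supp(C)\hookrightarrow X$ is a regular immersion of pure codimension $d$. If $d>\dim S$, we are in position to apply Theorem \ref{pro.reductiondimension2} iteratively $d-1$ times (with the closed set $F$ of the current statement playing the role of the $F_i$), producing a nested chain $X\supset X_1\supset \cdots\supset X_{d-1}\supset \Supp(C)$ in which each $X_{i+1}$ is a locally principal hypersurface of $X_i$ containing $\Supp(C)$, $\Supp(C)\hookrightarrow X_{d-1}$ is still a regular immersion of codimension $1$, and $F\cap X_i$ retains positive codimension in every fiber. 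In case $d\le\dim S$, one first blows up or twists by parameters in $S$ (as in the proof of \ref{mv-1-cycle-local} envisioned via \cite{GLL1}, 2.3) to raise $d$ above $\dim S$ before invoking \ref{pro.reductiondimension2}. The outcome is that, replacing $X$ by a suitable closed subscheme, we may assume $X\to S$ has fibers of dimension $1$ at every point of $\Supp(C)$, and $C$ is an effective Cartier divisor on $X$ whose support still has positive codimension disjoint from $F$ in every fiber.

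Next, we move $C$ on the resulting relative curve. After passing to a projective completion $\bar X$ of $X$ over $S$ and applying Theorem \ref{bertini-type-0} once more (with the finite set $A$ chosen to contain the generic points of $F\cap\bar X$ and of $\bar X\setminus X$), we reduce to the situation of a proper $S$-scheme $Z$ of relative dimension $1$ containing $\Supp(C)$ in which $C$ is a Cartier divisor and the ``bad locus'' to be avoided is a finite union of horizontal curves disjoint from $\Supp(C)$. The pictorsion assumption (a) applied to the finite $S$-scheme obtained from $Z$ (or, under (b), the direct use of Condition (T) as in \cite{MB1}) then makes $\cO_Z(C)$ torsion in $\Pic(Z)$, so some multiple $mC$ is the divisor of a rational function $f\in k(Z)^*$; a general $f$ chosen this way has $\dv(f)=mC-C'$ with $C'$ an effective horizontal $1$-cycle whose support avoids $F$. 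When $R$ is semi-local, $\Pic(Z)=(0)$ for any finite $S$-scheme $Z$, so we may take $m=1$. The rational function $f$ realizes the required rational equivalence $mC\sim C'$ on $X$ itself.

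For the push-forward statement, the $1$-dimensional subschemes $W$ carrying the rational functions witnessing $mC\sim C'$ are produced inside the projective completion above, hence are proper over $S$. Since $Y\to S$ is separated, each composition $W\hookrightarrow X\to Y$ is proper, and so by \ref{emp.proper} the proper push-forward of cycles sends $\sum_i[\dv(f_i|_{W_i})]$ to a sum of principal divisors on $Y$; this gives $h_*(mC)\sim h_*(C')$ in $\CH(Y)$. The main obstacle in the whole argument is the reduction step: one must iteratively produce hypersurfaces that simultaneously (i) contain $C$, (ii) keep $C$ regularly immersed, and (iii) meet $F$ in positive codimension in every fiber, and this is precisely what the delicate Theorem \ref{pro.reductiondimension2} is engineered to achieve. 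A secondary difficulty is converting the local movability of $C$ at each $s\in S$ into a global rational equivalence, which is exactly where the pictorsion/Condition (T) hypothesis on $R$ is unavoidable, as Example \ref{ex.extrahyp} demonstrates.
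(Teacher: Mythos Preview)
Your outline has the right shape, but there are two substantive gaps.

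First, under (a) you assert that ``$\Supp(C)$ lies in the regular locus of $X$, so the inclusion $\Supp(C)\hookrightarrow X$ is a regular immersion''. This is false: an integral curve $C$ finite over $S$ need not be a local complete intersection ring, even when embedded in a regular ambient scheme. For instance, over $S=\Spec k[u]$, the curve $C=\Spec k[t^3,t^4,t^5]$ (with $u\mapsto t^3$) sits inside the regular $X=\mathbb A^2_S$ but is not regularly immersed. The paper handles this by invoking \cite{GLL1}, Prop.~3.2, to produce a finite birational $D\to C$ with $D\to S$ an l.c.i.\ morphism, and then embedding $D$ into $X\times_S\mathbb P^N_S$; only after this maneuver is $D$ genuinely regularly immersed and Theorem~\ref{pro.reductiondimension2} applicable. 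Your ``blow up or twist'' remark does not substitute for this.

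Second, your use of the pictorsion hypothesis is off target. You write that pictorsion ``makes $\cO_Z(C)$ torsion in $\Pic(Z)$'', but $Z$ is a relative curve over $S$, hence two-dimensional, and there is no reason for $\Pic(Z)$ to be torsion. Pictorsion controls $\Pic$ only of schemes \emph{finite} over $S$. The paper instead passes to a projective compactification $\overline X$, forms the contraction $u:\overline X\to X'$ associated to $C$ (Proposition~\ref{contraction}), and applies pictorsion to the \emph{finite} $S$-scheme $F'=u(\overline F\cup Z\cup(\overline X\setminus X))\cup u(\Supp C)$: some power $\cO_{X'}(mC)|_{F'}$ is trivial, and a trivialization lifts (after Serre vanishing) to a global section whose divisor realizes $mC\sim C'$ with $\Supp(C')$ avoiding $F'$. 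This is the actual mechanism, and it is not captured by your sketch.

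Finally, your treatment of case (b) is too vague. Since $C$ need not lie in the regular locus, Theorem~\ref{pro.reductiondimension2} is unavailable; the paper instead uses Theorem~\ref{exist-hyp} to cut down to relative dimension one, then (via Frobenius twists \ref{normalization-smooth}, normalization \ref{normalization-finite}, and the torsion comparison \ref{Chow-radicial}) reduces to $X$ normal with smooth generic fiber, where Condition~(T) forces $X$ to be $\mathbb Q$-factorial and hence some $nC$ is Cartier. Only then does the endgame of case (a) apply. The push-forward statement also requires care: the rational equivalence lives on a compactification $\overline W$ that need not map to $Y$, so one must pass through the graph closure in $\overline W\times_S Y$ as in \cite{GLL1}, 2.4(2).
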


The proof of Theorem \ref{mv-1-cycle-local} is postponed to 
\ref{mv-1-cycle-local.Proof}. We first briefly introduce below needed facts about contraction morphisms.
We then discuss several statements needed in the proof of \ref{mv-1-cycle-local} (b) when $S$ is
not excellent.

\begin{proposition} \label{contraction}
Let $R$ be a Dedekind domain, 
and $S:=\Spec R$.
Let
$X\to S$ be a projective morphism of relative dimension $1$, 
with $X$ integral. Let $C$ be an effective 
Cartier divisor on $X$, flat over $S$. Then 
\begin{enumerate}[\rm (a)]
\item There exists $m_0 \geq 0$ such that the invertible
sheaf $\cO_X(mC)$ is generated by its global sections
for all $m\geq m_0 $. 
\item The morphism
$X':=\Proj \left(\oplus_{m\ge 0} H^0(X, \cO_X(mC))\right) \longrightarrow S$
is  projective, with $X'$ integral, and the canonical 
morphism $u : X\to X'$ is projective,  with
$u_*\cO_X=\cO_{X'}$ and connected fibers.
\item
For any vertical prime divisor 
$\Gamma$ on $X$, $u|_{\Gamma}$ is constant if 
$\Gamma\cap \Supp C=\emptyset$, and is finite otherwise.
\item Let $Z$ be the
union of the vertical prime divisors of $X$ disjoint from $\Supp C$.
Then $u$ induces an isomorphism $ X\setminus Z\to X'\setminus u(Z)$.\end{enumerate} 
\end{proposition}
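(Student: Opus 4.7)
The plan is to prove parts (a) through (d) in order, with (b) the main technical core. A guiding preliminary observation is that since $C$ is an effective Cartier divisor on the relative curve $X/S$ which is flat over $S$, the morphism $C \to S$ is quasi-finite and projective, hence finite. In particular, for each $s \in S$ and each irreducible component $\Gamma$ of the fiber $X_s$, the degree of $\cO_X(C)|_\Gamma$ is strictly positive when $\Gamma \cap \Supp C \neq \emptyset$ and zero otherwise.

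For (a), I would invoke cohomology and base change combined with Serre vanishing: for $m \gg 0$ and uniformly in $s \in S$, the map $f_*\cO_X(mC) \otimes k(s) \to H^0(X_s, \cO_{X_s}(mC_s))$ is surjective, where $f : X \to S$, thereby reducing global generation of $\cO_X(mC)$ to global generation of $\cO_{X_s}(mC_s)$ on each fiber. On a component $\Gamma$ of $X_s$ meeting $C_s$, the restricted line bundle becomes very ample for $m$ large; on a component disjoint from $C_s$, the restriction is trivial and is generated by constant sections. Patching sections across intersection points of components relies on uniform $H^1$-vanishing of $\cO_{X_s}(mC_s)$ twisted by the ideal sheaf of a point, valid for $m$ large by a Mumford-type regularity argument applied to the relative situation.

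For (b), fix $m \gg 0$ as in (a). The global sections of $\cO_X(mC)$ then determine an $S$-morphism $u_m : X \to \mathbb P(f_*\cO_X(mC))$ with projective, integral scheme-theoretic image $X'_m$. I would identify $X'_m$ with $\Proj(\bigoplus_{k \geq 0} H^0(X, \cO_X(kmC)))$ and verify that this construction stabilizes with $m$ to yield a canonical $X'$ (using finite generation of the section ring in this semi-ample setting). To obtain $u_*\cO_X = \cO_{X'}$ with connected fibers, factor $u$ via its Stein factorization $u = g \circ u'$, where $u'$ has connected fibers and $g$ is finite, then use that the construction of $X'$ from global sections on $X$ itself forces $g$ to be an isomorphism.

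Parts (c) and (d) then flow from the preceding. For (c), if $\Gamma \cap \Supp C = \emptyset$, then $\cO_X(mC)|_\Gamma \simeq \cO_\Gamma$; since $\Gamma$ is projective over a field, $H^0(\Gamma, \cO_\Gamma)$ is a field, forcing $u|_\Gamma$ to be constant. If instead $\Gamma \cap \Supp C \neq \emptyset$, then $\cO_X(mC)|_\Gamma$ is ample on $\Gamma$ for $m$ large, so $u|_\Gamma$ is finite. For (d), combining (b) and (c), the map $u$ is proper with connected fibers and is quasi-finite on $X \setminus Z$; the identity $u^{-1}(u(Z)) = Z$ holds because any component of $X$ outside $Z$ on which $u$ is finite cannot lie in a fiber of $u$, while connectedness of fibers forces any fiber meeting $Z$ to contain an entire component of $Z$. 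Zariski's Main Theorem combined with $u_*\cO_X = \cO_{X'}$ then delivers the desired isomorphism $X \setminus Z \to X' \setminus u(Z)$. The main obstacle I anticipate is the verification in (b) that the section ring is finitely generated so that $X'$ is a well-defined $S$-scheme and that the $\Proj$ construction stabilizes uniformly in $m$.
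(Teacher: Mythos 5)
Your overall architecture — reduce (a) to the fibers, construct $u$ and $X'$ via the image of the morphism associated to $\cO_X(mC)$ together with a Stein factorization in (b), deduce (c) by restricting the (trivial resp.\ positive-degree) line bundle to vertical components, and prove (d) from connectedness of fibers plus properness and quasi-finiteness — mirrors the route taken by the paper, which simply cites \cite{BLR}, 6.7/1--2 for (a)--(c) and only writes out (d). Your parts (c) and (d) are in substance the same as the paper's argument for (d).

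There is, however, a genuine gap in your argument for (a). You claim that patching sections across intersection points of fiber components ``relies on uniform $H^1$-vanishing of $\cO_{X_s}(mC_s)$ twisted by the ideal sheaf of a point, valid for $m$ large by a Mumford-type regularity argument.'' This vanishing does not hold: the sheaf $\cO_{X_s}(mC_s)$ is only \emph{semi}-ample on the fiber, being trivial on any vertical component $\Gamma$ disjoint from $\Supp C$. Concretely, take a closed fiber $X_s = \Gamma_1\cup\Gamma_2$ with $\Gamma_1\simeq\mathbb P^1$, $\Gamma_2$ a smooth curve of genus $\ge 1$, meeting transversally at one point $p$, and $C_s$ a point of $\Gamma_1\setminus\{p\}$. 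Then $\cO_{X_s}(mC_s)|_{\Gamma_2}\simeq\cO_{\Gamma_2}$ for all $m$, and the Mayer--Vietoris sequence $0\to\cO_{X_s}(mC_s)\to\cO_{\Gamma_1}(m)\oplus\cO_{\Gamma_2}\to k(p)\to 0$ gives $H^1(X_s,\cO_{X_s}(mC_s))\simeq H^1(\Gamma_2,\cO_{\Gamma_2})\ne 0$ for \emph{every} $m\ge 0$; twisting by $\mathcal I_x$ for $x\in\Gamma_2$ only makes $H^1$ larger. Mumford regularity (\ref{m-regular}) is a statement about twists by the ample sheaf $\cO_X(1)$, not by a semi-ample one, so no regularity argument kills this $H^1$. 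The conclusion (global generation of $\cO_{X_s}(mC_s)$) is still true in this example — the constant section $1\in H^0(\cO_X)\subset H^0(\cO_X(mC))$ already generates the sheaf away from $\Supp C$, and restriction maps of the form $H^0(X_s,\cO(mC_s))\to k(x)$ remain surjective — but the proof has to proceed by a direct patching argument on the components (or by the intersection-theoretic argument of \cite{BLR}, 6.7/1), not by $H^1$-vanishing. The same non-vanishing of $H^1$ also means that cohomology and base change does not immediately give surjectivity of $H^0(X,\cO_X(mC))\to H^0(X_s,\cO_{X_s}(mC_s))$; this is a second spot where care is needed that your outline glosses over. Finally, for (b) you flag finite generation of the section ring as the ``main obstacle'' but do not resolve it. In fact your own Stein factorization sets it up: the Stein factor $Y_m$ of $u_m:X\to\mathbb P(f_*\cO_X(mC))$ is projective over $S$ with $\cO(1)|_{Y_m}$ ample and $(u_m')_*\cO_X=\cO_{Y_m}$, whence by the projection formula $\bigoplus_k H^0(Y_m,\cO(k))=\bigoplus_k H^0(X,\cO_X(mkC))$ is finitely generated and $Y_m=\Proj$ of the Veronese $A^{(m)}$ of $A:=\bigoplus_k H^0(X,\cO_X(kC))$; since $\Proj A = \Proj A^{(m)}$, this produces $X'$ together with $u_*\cO_X=\cO_{X'}$ simultaneously, with no separate finite-generation lemma needed. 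Note also that the object you want to identify with $\Proj$ is the Stein factor $Y_m$, not the scheme-theoretic image $X'_m$, which is a priori only finitely dominated by $Y_m$.
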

\proof In \cite{BLR}, Theorem 1 in 6.7, a similar statement is proved, with 
$R$  local, and $X$   normal. (The normality is not assumed in \cite{Em} and \cite{Pie}.
A global base is considered in \cite{Liubook}, 8.3.30.) We leave it to the reader to check that the proof
of  \cite{BLR}, 6.7/1,  can be used {\it mutatis mutandis} to prove \ref{contraction}.
Part (a) follows from  the first part of the proof of  6.7/1. Part (b) follows from 6.7/2. 
Part (c) follows from the second part of the proof of 6.7/1. We now give a proof of (d).
The morphism $u$ is birational because it induces 
an isomorphism $X_{\eta} \to X'_{\eta}$ over the generic point $\eta$ of $S$, 
since $C_\eta$ is ample, being effective of positive degree.
It follows that $Z$ is the union of finitely many prime divisors of $X$. 
As $u$ has connected fibers, it follows from (c) that $Z=u^{-1}(u(Z))$.
The restriction $v : X\setminus Z\to X'\setminus u(Z)$ of $u$ is thus
projective and  quasi-finite. Therefore, $v$ is finite and, hence, affine. 
As $\cO_{X'\setminus \pi(Z)}=v_*\cO_{X\setminus Z}$, $v$ is an isomorphism.
\qed 
\medskip 

Let $K$ be a field of characteristic $p>0$. Let $K':=K^{p^{-\infty}}$ be the perfect closure of $K$.
Let $n \geq 0$ and set $q:= p^n$. Let $K^{1/q}$ denote the extension of $K$ in $K'$ generated by the $q$-th roots 
of all elements of $K$. Let $i: K \to K^{1/q}$ denote the natural inclusion, and let 
$\rho: K^{1/q}\to K$ be defined by   
$\lambda\mapsto \lambda^{q}$. The composition $F:= \rho \circ i: K \to K$ is the $q$-th Frobenius morphism of $K$. 
By definition, given a morphism $Y \to \Spec K$, the morphism $Y^{(q)} \to \Spec K$ is the base change $(Y \times_{\Spec K, F^*} \Spec K) \to \Spec K$. It follows that we have a natural isomorphism of $K$-schemes:
\begin{equation} \label{Frob}
 Y^{(q)}\simeq (Y\times_{\Spec K, i^*} {\Spec (K^{1/q})})\times_{\Spec (K^{1/q}), \rho^*}\Spec K.
\end{equation} 
\begin{lemma}\label{normalization-smooth} 
Let $K$ be a field of characteristic $p>0$.
Let $Y \to \Spec K$ be a morphism of finite type, with  $Y$  integral  of dimension 
$1$. Then there exists $n\geq 0$ such that 
the normalization of $(Y^{(p^n)})_{\mathrm{red}}$ is smooth over $K$. 
\end{lemma}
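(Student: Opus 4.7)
My plan is to use the isomorphism \eqref{Frob} to reduce to a question about purely inseparable base change, and then to pass to the perfect closure $K^{p^{-\infty}}$, where any reduced curve automatically has smooth normalization.

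Set $q := p^n$ and $W_n := (Y \times_K K^{1/q})_{\mathrm{red}}$. The isomorphism \eqref{Frob} identifies $Y^{(q)}$ with $W_n \times_{K^{1/q},\rho^*} \Spec K$, and since $\rho: K^{1/q} \to K$, $\lambda\mapsto\lambda^q$, is an isomorphism of abstract fields, this base change is a categorical equivalence from $K^{1/q}$-schemes to $K$-schemes, preserving reducedness, the formation of the normalization, and smoothness. Hence it suffices to exhibit $n$ for which the normalization $\widetilde W_n$ of $W_n$ is smooth over $K^{1/q}$. Note that $W_n$ is integral of dimension $1$ over $K^{1/q}$, since $\Spec K^{1/q} \to \Spec K$ is a universal homeomorphism (being radicial, integral, and surjective), hence so is $Y \times_K K^{1/q} \to Y$.

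Passing to the perfect closure $K^{p^{-\infty}} = \varinjlim_n K^{1/p^n}$, the reduced base change $W_\infty := (Y \times_K K^{p^{-\infty}})_{\mathrm{red}}$ is an integral $1$-dimensional scheme of finite type over the perfect field $K^{p^{-\infty}}$, so its normalization $\widetilde W_\infty$ is smooth over $K^{p^{-\infty}}$ (for a reduced curve over a perfect field, regularity coincides with smoothness). Combining the limit formalism for schemes of finite presentation (\cite{EGA}, IV.8.8.2 and IV.8.10.5) with the spreading-out of smoothness under filtered direct limits of affine bases (\cite{EGA}, IV.17.7.8), the smooth $K^{p^{-\infty}}$-scheme $\widetilde W_\infty$, together with the finite birational morphism $\widetilde W_\infty \to W_\infty$, descends for some $n$ to a smooth $K^{1/p^n}$-scheme $V$ of finite type equipped with a finite birational map $V \to W_n$.

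The step I expect to require the most care is the identification of $V$ with $\widetilde W_m$ for some $m \geq n$. After base change to $K^{1/p^m}$, the scheme $V \times_{K^{1/p^n}} K^{1/p^m}$ remains smooth, and hence normal; it is in particular reduced, so the induced map to $W_n \times_{K^{1/p^n}} K^{1/p^m}$ factors through $(W_n \times_{K^{1/p^n}} K^{1/p^m})_{\mathrm{red}} = W_m$. Since any finite birational morphism to a $1$-dimensional integral scheme whose source is normal must coincide with the normalization (being a maximal normal finite birational cover, characterized as the integral closure in the function field), we obtain $V \times_{K^{1/p^n}} K^{1/p^m} = \widetilde W_m$, which is then smooth over $K^{1/p^m}$, completing the proof.
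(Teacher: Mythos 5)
Your proposal is correct and follows essentially the same strategy as the paper: pass to the perfect closure (where the normalization of the reduced curve is automatically smooth), descend to a finite level via noetherian approximation, and use the isomorphism \eqref{Frob} to transport the result back to the Frobenius twist over $K$. The only difference is cosmetic — you invoke \eqref{Frob} at the start to reduce to a statement over $K^{1/q}$, whereas the paper performs the descent first and applies \eqref{Frob} at the end; the extra care you take with $m \geq n$ (to ensure the descended target agrees with the reduction $W_m$) corresponds to a subtlety the paper handles implicitly when it descends both the curve $Z$ and the morphism to $(Y_{K'})_{\mathrm{red}}$ simultaneously.
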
 
\begin{proof}  
The normalization 
$Z$ of $(Y_{K'})_{\mathrm{red}}$ is regular and, hence, smooth over the perfect closure $K'$. There 
is a finite sub-extension $L/K$ of $K'$ such that the curve $Z$ and the morphism 
$Z\to (Y_{K'})_{\mathrm{red}}$ are defined over $L$. 
This implies that the normalization of $(Y_L)_{\mathrm{red}}$ is 
$Z_{/L}$, hence smooth over $L$. Let $q=p^n$ be such that $L\subseteq
K^{1/q}$. As $Z_{/L}\to Y_L$ is finite and induces an isomorphism on the
residue fields at the generic points, the same is true for 
$$Z_{K^{1/q}}\longrightarrow (Y_L)_{K^{1/q}}=Y_{K^{1/q}}.$$ 
Using $\rho: K^{1/q}\to K$ and \eqref{Frob}, 
$$(Z_{K^{1/q}})_K \longrightarrow (Y_{K^{1/q}})_K\simeq Y^{(q)}$$ 
is finite and  induces an isomorphism on the
residue fields at the generic points. As the left-hand side is smooth, this
morphism is the normalization of $(Y^{(q)})_{\mathrm{red}}$. 
\end{proof} 

\begin{lemma}\label{Chow-radicial} Let $S$ be a universally catenary 
noetherian scheme which is equidimensional at every point. 
Let $\pi: X\to X_0$ be a finite surjective morphism of $S$-schemes of 
finite type, with induced homomorphism of Chow groups 
$\pi_* : \CH(X)\to  \CH(X_0)$. Then
\begin{enumerate}[{\rm (1)}] 
\item The cokernel of $\pi_*$ is a torsion group. 
\item If $\pi$ is a homeomorphism, then the kernel of $\pi_*$ is
also a torsion group. 
\end{enumerate}
\end{lemma}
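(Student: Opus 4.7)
My plan is to prove both statements by working at the level of cycle groups, exploiting that $\pi$ is finite surjective and, for (2), that a finite surjective homeomorphism is universally injective (radicial). Note that since $\pi$ is finite and hence proper, and since $S$ satisfies the hypotheses of \ref{emp.proper}, the pushforward $\pi_*$ is well-defined on Chow groups.

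For (1), I would let $Z_0\subset X_0$ be any integral closed subscheme. Since $\pi$ is finite and surjective, $\pi^{-1}(Z_0)$ is nonempty and maps surjectively onto $Z_0$, so the images of the finitely many irreducible components of $\pi^{-1}(Z_0)_{\mathrm{red}}$ are closed and cover $Z_0$; by irreducibility, at least one component $W$ satisfies $\pi(W)=Z_0$. Then $\pi_*[W]=d\,[Z_0]$ with $d=[k(W):k(Z_0)]\ge 1$, so every generator of $\CH(X_0)$ has a nonzero integer multiple in the image of $\pi_*$, proving (1).

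For (2), assume $\pi$ is a homeomorphism. Then $\pi$ is radicial, so $Z\mapsto \pi(Z)_{\mathrm{red}}$ is a bijection between integral closed subschemes of $X$ and of $X_0$, and for each such $Z$ the residue field extension $k(Z)/k(\pi(Z)_{\mathrm{red}})$ is purely inseparable of some degree $d_Z\ge 1$. This already shows that $\pi_*\colon \mathcal Z(X)\to \mathcal Z(X_0)$ is injective on cycle groups, since a nonzero sum $\sum n_i[Z_i]$ with distinct $Z_i$ maps to $\sum n_i d_{Z_i}[\pi(Z_i)_{\mathrm{red}}]$, which is nonzero because the target generators are distinct and $d_{Z_i}>0$. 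I would then take $\alpha\in\mathcal Z(X)$ with $\pi_*\alpha$ rationally trivial on $X_0$ and write $\pi_*\alpha=\sum_j \dv_{Y_{0,j}}(f_j)$ with $Y_{0,j}\subset X_0$ integral and $f_j\in k(Y_{0,j})^*$. Let $Y_j\subset X$ be the unique integral lift of $Y_{0,j}$ and set $q_j=[k(Y_j):k(Y_{0,j})]$, a power of the residue characteristic (or $1$ in characteristic $0$). Viewing $f_j\in k(Y_{0,j})^*\subset k(Y_j)^*$ and applying the projection formula to the finite purely inseparable map $Y_j\to Y_{0,j}$, for which $N(f_j)=f_j^{q_j}$, I obtain $\pi_*(\dv_{Y_j}(f_j))=q_j\,\dv_{Y_{0,j}}(f_j)$. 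Choosing $M$ divisible by all $q_j$ and setting $\gamma:=\sum_j \dv_{Y_j}(f_j^{M/q_j})\in\mathcal Z(X)$, one has that $\gamma$ is rationally trivial on $X$ and $\pi_*\gamma=M\,\pi_*\alpha=\pi_*(M\alpha)$. By the injectivity of $\pi_*$ on cycle groups, $M\alpha=\gamma$ in $\mathcal Z(X)$, hence $M\alpha=0$ in $\CH(X)$, proving (2).

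The main obstacle is the last step of (2): converting the rational triviality of $\pi_*\alpha$ on $X_0$ into a witness of rational triviality for a multiple of $\alpha$ on $X$. The projection formula combined with the purely inseparable nature of each $Y_j\to Y_{0,j}$ allows the $f_j$ to be lifted to $k(Y_j)^*$ after absorbing the norm factor $q_j$ into a common multiplier $M$, and radiciality then upgrades the resulting identity of pushforwards into an identity of cycles.
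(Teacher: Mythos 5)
Your proof is correct and takes essentially the same route as the paper: both arguments establish injectivity of $\pi_*$ on cycle groups from the bijection $Z \mapsto \pi(Z)_{\mathrm{red}}$ induced by the homeomorphism, and both then use the norm formula $\pi_*[\dv_W(\pi^*f)] = [k(W):k(W_0)]\,[\dv_{W_0}(f)]$ together with a common multiple of the degrees to lift a rational equivalence from $X_0$ back to $X$.

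One small correction: a finite surjective homeomorphism need not be radicial. For instance, $\Spec L \to \Spec K$ with $L/K$ a nontrivial finite separable field extension is a finite surjective homeomorphism of $K$-schemes of finite type whose residue extension is not purely inseparable, so the degrees $q_j$ need not be powers of the residue characteristic. Fortunately this does not affect your argument: the identity $N_{k(Y_j)/k(Y_{0,j})}(f_j) = f_j^{\,q_j}$ holds for \emph{any} finite field extension whenever $f_j$ lies in the base field, and the bijection on integral closed subschemes needs only that $\pi$ be a homeomorphism. The paper's proof accordingly makes no appeal to radiciality, and you should delete those claims; the rest of your argument is then identical in substance to the paper's.
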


\begin{proof} 
Our hypotheses on $S$ allow us to use \ref{emp.proper}, so that the morphism $\pi_* : \CH(X)\to  \CH(X_0)$
is well-defined.

(1) Let $Z_0$ be an integral closed subscheme on $X_0$, and let $Z$ be an irreducible component 
of $\pi^{-1}(Z_0)$ whose image in $X_0$ is $Z_0$. When $Z$ is endowed with the reduced induced structure, $Z \to Z_0$ is finite and surjective, 
and $\pi_*[Z]=[k(Z): k(Z_0)][Z_0]$. 
Hence, the cokernel of $\mathcal Z(X)\to \mathcal
Z(X_0)$ is torsion, and the same holds  for the corresponding homomorphism of Chow 
groups. 

(2) Let $W_0$ be an integral closed subscheme of $X_0$. Since $\pi$ is a homeomorphism,
$W:=\pi^{-1}(W_0)$ is irreducible, and we endow it with the reduced induced structure. 
The induced morphism $\pi: W \to W_0$ is finite and surjective between integral noetherian schemes. Let 
$f\in k(W_0)$ be a non-zero rational function.  Using for instance \cite{Liubook}, 7.1.38, we find that
$$\pi_*([\mathrm{div}_W(\pi^*f)])=[k(W): k(W_0)][\mathrm{div}_{W_0}(f)]. 
$$  
This implies that for every integer multiple $r $ of $[k(W): k(W_0)]$, 
$r[\mathrm{div}_{W_0}(f)]=\pi_*(D_r)$ for some principal
cycle $D_r$ on $X$. 

Now let $Z$ be any cycle on $X$ such that 
$\pi_*Z$ is principal on $X_0$. Then 
for a suitable integer $N$, $N\pi_*Z=\pi_*(D)$ for some principal 
cycle $D$ on $X$.  Since $\pi$ is a homeomorphism,   $\pi_* : \mathcal Z(X)\to \mathcal Z(X_0)$
is injective. Therefore, $NZ=D$ in $\mathcal Z(X)$, and the class of $NZ$ is trivial in $\mathcal A(X)$. 
\end{proof}

For our next proposition, recall that 
a normal scheme $X$ is called {\it ${\mathbb Q}$-factorial} if every Weil divisor $D$ on $X$ is such that some 
positive integer multiple of $D$ is the cycle associated with a Cartier divisor on $X$. 
 
\begin{proposition}\label{normalization-finite} 
Let $S$ be a Dedekind scheme with generic point $\eta$. 
Let $X\to S$ be a dominant morphism of finite type, with $X$ integral.
Suppose that the normalization of $X_\eta$ is smooth over $k(\eta)$. 
Then  \begin{enumerate}[\rm (a)]
\item The normalization morphism $\pi: X'\to X$ is finite. 
\item If $X$ is normal, then the following properties are true. 
\begin{enumerate}[{\rm (1)}] 
\item The completion $\widehat{\cO}_{X,x}$ is normal for all $x\in X$. 
\item The locus $\mathrm{Reg}(X)$ of regular points of $X$ is open in $X$. 
\item If $\dim X_\eta=1$ and $S$ 
satisfies {\rm Condition (T)}, then $X$ is $\mathbb Q$-factorial. 
\end{enumerate}
\end{enumerate}
\end{proposition}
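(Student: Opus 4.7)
I treat the four assertions in order, with (a) serving as the technical heart.

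For (a), the generic fiber $X_\eta$ is of finite type over the field $k(\eta)$ and so is Nagata; hence the smooth normalization $Y_\eta \to X_\eta$ is already finite. By spreading out, the normalization $\pi : X' \to X$ is finite above a dense open $U \subseteq S$, leaving only finitely many closed points $s_1, \dots, s_n \in S \setminus U$ at which finiteness must be checked locally. By Nagata's theorem on analytically unramified rings, it suffices to verify that each completion $\widehat{\cO}_{X,x}$ (for $x$ above some $s_i$) is reduced. I would prove this by base-changing to the excellent complete discrete valuation ring $\widehat{R}_{\mathfrak{m}_{s_i}}$: the scheme $X \times_S \Spec \widehat{R}_{\mathfrak{m}_{s_i}}$ is of finite type over an excellent ring, hence excellent, and its generic fiber inherits the smooth-normalization hypothesis from $X_\eta$ by field base change. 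Since completion is faithfully flat and does not see the passage from $R$ to $\widehat{R}_{\mathfrak{m}_{s_i}}$ at the level of $\widehat{\cO}_{X,x}$, reducedness of the completion follows. The main technical obstacle will be this transfer of reducedness.

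For (b)(1), assume $X$ normal, and let $x \in X$, $s := \pi(x)$. If $s = \eta$, then $X_\eta$ is open in $X$ hence normal; but $X_\eta$ agrees with its smooth normalization, so $X_\eta$ is smooth over $k(\eta)$, $\cO_{X,x}$ is regular, and $\widehat{\cO}_{X,x}$ is regular and a fortiori normal. If $s$ is closed, I apply Serre's criterion $R_1 + S_2$ to the completion: $S_2$ transfers from the $S_2$ ring $\cO_{X,x}$ (normal $\Rightarrow S_2$), while $R_1$ follows from the reducedness of $\widehat{\cO}_{X,x}$ obtained in (a), combined with the $R_1$ of $\cO_{X,x}$ (normal $\Rightarrow R_1$) and the smoothness of $X_\eta$, used to control the formal fibers of $\cO_{X,x}$ in codimension one. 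For (b)(2), openness of $\mathrm{Reg}(X)$ then follows from (b)(1) and a Jacobian argument: using a local finite presentation of $X$ over $S$, the non-regular locus is the vanishing of the appropriate minors of the relative Jacobian matrix (together with the preimage of $\mathrm{Sing}(S)$, a finite set of closed points), a closed condition; the normality of the completions from (b)(1) ensures this description captures all non-regular points of $X$.

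For (b)(3), assume $\dim X_\eta = 1$ and $S$ satisfies Condition (T). The scheme $X$ is then a normal $2$-dimensional scheme with smooth generic curve $X_\eta$. I must show each prime Weil divisor $D$ on $X$ is $\mathbb{Q}$-Cartier. For horizontal $D$, the normalization $\widetilde{D} \to S$ is a finite cover, and Condition (T)(a) gives $\Pic(\widetilde{D})$ torsion; since $X_\eta$ is smooth, $D_\eta$ is Cartier on $X_\eta$, and a pushforward-pullback argument along $\widetilde{D} \to S$ lifts the resulting line bundle to one on $X$ whose class differs from a multiple of $[D]$ only by a torsion element coming from $\Pic(\widetilde{D})$. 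For vertical $D$, contained in a closed fiber $X_s$ with $k(s)$ algebraic over a finite field by Condition (T)(b), a torsion $\Pic$ argument on an appropriate finite cover of $S$ shows a multiple of $D$ is locally principal in a neighborhood of $X_s$. Combining both types yields $\mathbb{Q}$-factoriality, with the expected obstacle being the coherent bookkeeping of horizontal and vertical contributions in $\mathrm{Cl}(X)$, probably via a residue sequence relating the two.
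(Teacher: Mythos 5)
Your route through Nagata's analytic unramifiedness theorem for part (a) is a legitimate alternative to the paper's descent argument; both ultimately rest on the excellence of $X\times_S\Spec\widehat R_{\mathfrak m_{s_i}}$ and the identification $\widehat{\cO}_{X,x}\cong\widehat{\cO}_{X_{\widehat R},x}$. Two cautions, though. First, the reducedness of $X_{\widehat R}$ requires an argument (it holds because $k(X)$ is the function field of a smooth $k(\eta)$-variety, hence separable over $k(\eta)$); the paper simplifies this by first replacing $X$ with a partial normalization $X''$, finite over $X$ with $X''_\eta$ smooth, and you would do well to make the same reduction. Second, reducedness of every $\widehat{\cO}_{X,x}$ controls only the stalks of $\pi_*\cO_{X'}$, and a quasi-coherent sheaf on a Noetherian scheme with finitely generated stalks need not be coherent (the sheaf associated to $\bigoplus_p\mathbb Z/p\mathbb Z$ on $\Spec\mathbb Z$ is a counterexample). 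You should instead establish finiteness after base change to each semi-local $\Spec\cO_{S,s_i}$, as the paper does, and patch: over a Dedekind base, trivial normalization over a dense open $V\subseteq S$ together with finiteness over the finitely many $\Spec\cO_{S,s_i}$ gives finiteness of $\pi$.

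The arguments you sketch for (b)(1) and (b)(2) contain genuine gaps. For (b)(1), the claim that $S_2$ ``transfers'' from $\cO_{X,x}$ to $\widehat{\cO}_{X,x}$ is false in general: the property $S_n$ ascends under the flat map $A\to\widehat A$ only when the formal fibers are Cohen--Macaulay (or at least $S_n$), which is exactly the excellence-type input you were hoping to avoid. Nagata constructed normal Noetherian local domains whose completions are not even reduced, so an unconditional transfer of $S_2$ (or even of $S_1$) cannot exist. Any correct way to control the formal fibers of $\cO_{X,x}$ here passes through the excellent scheme $X_{\widehat R}$, at which point the paper's argument --- $\widehat{\cO}_{X,x}$ is the completion of a local ring of the excellent normal scheme $X_{\widehat R}$, hence normal by \cite{EGA}, IV.7.8.3 --- is both shorter and complete; decomposing normality into $R_1+S_2$ and reverifying each separately buys nothing.

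For (b)(2), the Jacobian criterion detects the non-smooth locus of $X\to S$, not the non-regular locus of $X$, and these can differ: $\Spec\mathbb Z[\sqrt 2]\to\Spec\mathbb Z$ is regular everywhere but not smooth above $2$. Since $S$ is Dedekind, $\mathrm{Sing}(S)$ is empty, so your description would falsely identify the singular locus of $X$ with the non-smooth locus. Furthermore, without a Nagata-type hypothesis the singular locus of a Noetherian scheme need not be closed at all, so openness of $\mathrm{Reg}(X)$ is precisely the substance at issue. The paper's proof combines two facts: $X_V\subseteq\mathrm{Reg}(X)$ because $X\to S$ is smooth over the dense open $V$ and $S$ is regular; and for each of the finitely many $s\in S\setminus V$, openness of the regular locus of $X\times_S\Spec\cO_{S,s}$ follows from \cite{EGA}, IV.6.12.6 (ii), whose hypothesis is supplied by (b)(1). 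Your sketch for (b)(3) essentially proposes to rebuild \cite{MB1}, Lemme 3.3 and Th\'eor\`eme 2.8 from scratch, a substantial project; the paper instead cites those results and observes that the excellence hypothesis in Th\'eor\`eme 2.8 can be replaced by ``the completion of $\cO_{X,x}$ is normal'' (the output of (b)(1)), while (b)(2) supplies the isolatedness of singular points required by Lemme 3.3.
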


\begin{proof} When $S$ is assumed to be excellent, then $X$ is also excellent and most of the statements in the proposition
follow from this property. The statement \ref{normalization-finite}(b)(3) can be found in \cite{MB1}, 3.3.
We now give a proof of \ref{normalization-finite} without assuming that $S$ is excellent.

(a) We can and will assume that $X$ is affine. 
As $\pi_\eta: X'_\eta\to X_\eta$ is finite, 
there exists a factorization $X'\to X''\to X$ with 
$X''\to X$ finite and birational, and such that 
$X'_\eta\to X''_\eta$ is an isomorphism (simply 
take generators of $\cO_{X'_{\eta}}(X'_\eta)$ which belong to  $\cO_{X'}(X')$).
Replacing $X$ with $X''$, we can suppose that $X_\eta$ is smooth. 
The smooth locus of $X\to S$ is open and contains $X_\eta$, 
so it contains an open set of the form $X_V:=X\times_S V$ for some dense open subset 
$V$ of $S$. So $X'_V=X_V$ and we find that $\pi_*\cO_{X'}/\cO_X$ is 
supported on finitely many closed fibers 
$X_{s_1},\dots, X_{s_n}$.  

To show that $\pi$ is finite, it is enough to show 
that the normalization morphism of $X\times_S \Spec(\cO_{S,s_i})$ is finite for
all $s_i$. Therefore, we can suppose that $S=\Spec R$ for some discrete valuation ring
$R$. Let $\widehat{R}$ be the completion of $R$. As $X_\eta$ is smooth, 
the normalization morphism $\widehat{\pi}: (X_{\widehat{R}})'\to X_{\widehat{R}}$ 
is an isomorphism on the generic fiber. It is finite because $\widehat{R}$
is excellent. By \cite{Liubook}, 8.3.47 and 8.3.48, 
$\widehat{\pi}$ descends to 
a finite morphism $Z\to X$ over $R$. By faithfully flat descent, this
implies that $Z$ is normal and, thus, isomorphic to $X'$, and that $X'\to X$ is
finite and $X'_{\widehat{R}}=(X_{\widehat{R}})'$ is normal.

(b) Suppose now that $X$ is normal with smooth generic fiber. 
 To prove (1), let $x\in X$ with image $s\in S$. Then $\cO_{X,x}$ is also the 
local ring of $X\times_S\Spec\cO_{S,s}$ at $x$. To prove that its
completion is normal, we can thus suppose that $S$ is local. We can
even restrict to $s$ closed in $S$ as $X_V$ is regular. 
Let $R=\cO_{S,s}$. We saw above that $X_{\widehat{R}}$ is normal. 
As $\widehat{\cO}_{X,x}$ is 
also the completion of $\cO_{X_{\widehat{R}},x}$ 
(see, e.g., \cite{Liubook}, 8.3.49(b)), it is normal because
$X_{\widehat{R}}$ is excellent (\cite{EGA}, IV.7.8.3 (vii)). 

(2) We have $\mathrm{Reg}(X)\supseteq X_V$ and $\mathrm{Reg}(X)\cap
X_s=\mathrm{Reg}(X\times_S \Spec\cO_{S,s})$ for all $s\in S\setminus
V$. As $S\setminus V$ consists of finitely many closed points of
$S$, $\mathrm{Reg}(X)$ is open by \cite{EGA}, IV.6.12.6 (ii). 

(3) The statement of (3) is proved in \cite{MB1}, Lemme 3.3, provided that the 
singular points of $X$ are isolated,
and that \cite{MB1}, Th\'eor\`eme 2.8, holds when $A=\cO_{X,x}$. 
In our case, the singular points of $X$ are isolated by (2).
Th\'eor\`eme 2.8 in \cite{MB1} is proved under the hypothesis that  $A$ is excellent, but the proof in \cite{MB1} only 
uses the fact that the completion of $A$ is normal (in step 2.10). So in our case,  this property is satisfied by (1). 
\end{proof}

\begin{emp} \label{mv-1-cycle-local.Proof}
{\it Proof of Theorem {\rm \ref{mv-1-cycle-local}} when   {\rm (a)} holds.}  
It suffices to prove the theorem in the case where
the given $1$-cycle is the cycle associated with an integral closed subscheme of $X$ finite over $S$. 
We will denote again by $C $ this integral closed subscheme.
As in the proof of Theorem 2.3  
 in \cite{GLL1}, 
we reduce the proof of \ref{mv-1-cycle-local} to the case where $C \to X$ is 
a regular immersion\footnote{The hypothesis that $C\to X$ is a regular immersion
is equivalent to the condition that  $C \to X$ is a local complete intersection morphism
(see, e.g., \cite{Liubook}, 6.3.21).} 
as follows.

Proposition 3.2 in \cite{GLL1}  shows the existence of a finite birational
morphism $D\to C$ such that the composition $D\to C \to S$ is an l.c.i.\ 
morphism. Since $C$ is affine, 
there exists for some $N \in {\mathbb N}$ a closed immersion 
$D \to  C\times_S \mathbb P^N_S\subseteq X\times_S \mathbb P^N_S$.
Note that since $C$ is contained in the regular locus of $X$, 
then $D$ is contained in the regular locus of $X\times_S \mathbb P^N_S$.
We claim that it suffices to prove  the theorem 
for the $1$-cycle $D$ and the closed subset ${\bf F}:= F \times_S \mathbb P^N_S$ in the 
scheme $X\times_S \mathbb P^N_S$. 
Indeed, let  $D'$ be a horizontal $1$-cycle whose existence is asserted by the theorem in this case, with $mD$ rationally equivalent to $D'$. In particular, $\Supp(D') \cap {\bf F} = \emptyset$.
Consider the projection $p : X\times_S \mathbb P^N_S\to X$, 
which is a projective morphism. 
Then $p_*(D) = C$ because $D\to C$ is birational. 
It follows from \ref{emp.proper} that $mC=p_*(mD)$ 
is rationally equivalent to the horizontal $1$-cycle 
$C':= p_*(D')$ on $X$. Moreover, $\Supp(C') \cap F = \emptyset$.
Since $D/S$ is l.c.i., each local ring $\cO_{D,x}$, $x \in D$,
is an absolute complete intersection ring, and 
the closed immersion $D \to  X\times_S \mathbb P^N_S$ is a regular immersion 
(\cite{EGA}, IV.19.3.2). 
Finally, consider a morphism $h:X \to Y$ as in the last statement of the theorem.
Apply this statement to $mD$, $D'$, and to the associated morphism $h':X\times_S \mathbb P^N_S \to Y\times_S \mathbb P^N_S$.
Since the projection $Y\times_S \mathbb P^N_S \to Y $ is proper, we find as desired that $h_*(mC)$ is rationally equivalent 
to $h_*(C')$ on $Y$.

Let us now assume that $C \to X$ is a regular immersion.
Let $d$ denote the codimension of $C$ in $X$. 
If $d>1$, we can apply Theorem \ref{pro.reductiondimension2} (as stated in the introduction since $C$ is integral)
and obtain a closed subscheme $Y$ of $X$ such that $C$ is the support of
a Cartier divisor on $Y$ and such that $F \cap Y_s$ is finite for all $s\in S$.
Clearly, $C$ is also the support of a Cartier divisor on 
$Y_{\red}$, and on any irreducible component of $Y_{\red}$ passing
through $C$. Thus, we are reduced to proving the theorem when $X$ 
is integral of dimension $2$ and $F$ is quasi-finite over $S$.
Note that after this reduction process, we cannot and do not assume anymore 
that $C$ is contained in the regular locus of $X$.

When $d=1$, we do not apply \ref{pro.reductiondimension2}, but we note that in this case too $F \cap X_s$ is finite for all $s\in S$. Indeed, since $C\to S$ is finite, the generic point of $C$ is a closed
point in the generic fiber $X_{\eta} $ of $X \to S$. Since the codimension of $C$ in $X$ is $d=1$, and since the generic fiber
is a scheme of finite type over a field, we find that one irreducible component of $X_{\eta} $ has dimension $1$.
Since $S$ is a Dedekind scheme and $X\to S$ is flat with $X$ integral, we find that all fibers are equidimensional 
of dimension $\dim X_{\eta}=1$ (\cite{Liubook}, 4.4.16).
Hence, our hypothesis on $F$ implies that  
$F \to S$ is quasi-finite.
\end{emp}

\begin{emp} \label{endofproof}
Since $X/S$ is quasi-projective and $X$ is integral, there exists an integral  scheme  $\overline{X}$ with a projective morphism $\overline{X} \to S$
and an $S$-morphism $X \to \overline{X}$ which is an open immersion.
Let $\overline{F}$ be the Zariski closure of $F$ in $\overline{X}$. 
The closed subscheme  $\overline{F}$ is finite over $S$ because $F \to S$ is quasi-finite and $S$ has dimension $1$. Recall that by definition, a horizontal $1$-cycle on $X$ is 
finite over $S$. Hence,  $C$ is closed in $\overline{X}$. Since $C$ is the support of a Cartier divisor on $X$, 
we find that $C$ is also the support of a Cartier divisor
on $\overline{X}$.  
We are thus in a situation where we can consider  the contraction morphism  $u : \overline{X}\to X'$ associated to 
${C}$ in \ref{contraction}. Let $Z$ denote the union of the irreducible 
components $E$ of the fibers of $\overline{X}\to S$ such that 
$E \cap \Supp({C}) = \emptyset$. Let $U= X\setminus (Z\cap X)$. 
Then  $\Supp C \subseteq U$,
and $u|_U$ is an isomorphism onto its image. 
Let $F'=u(\overline{F}\cup Z\cup (\overline{X}\setminus X))\cup u(\Supp(C))$.
Then $X'\setminus F'\subseteq u(U)$, and $F'$ is finite over $S$. 
We endow $F'$ with the structure of a reduced closed subscheme of $X'$. 

Now suppose that $R$ is $\pictorsion$. 
Then $\Pic(F')$ is 
a torsion group by hypothesis.
So, fix $n>0$ such that 
$\cO_{X'}(nC)|_{F'}$ is trivial. 
Since $C$ meets every irreducible component of every fiber of $X'\to S$, 
the sheaf $\cO_{X'}(C)$ is relatively ample for $X'\to S$
(\cite{EGA}, III.4.7.1).
Let $\mathcal I$ denote the ideal sheaf of $F'$ in $X'$. Then there exists 
a multiple $m$ of $n$ such that 
$H^1(X', \mathcal I \otimes \cO_{X'}(mC))=(0) $. 
It follows that a trivialization of $\cO_{X'}(mC)|_{F'}$ 
lifts to a section $f\in H^0(X', \cO_{X'}(mC))$. 

Recall that by definition, $\cO_{X'}(mC)$ is a subsheaf of $\mathcal K_{X'}$.
We thus consider $f\in H^0(X', \cO_{X'}(mC)) \subseteq \mathcal K_{X'}(X')$ as 
a rational function. The support of the divisor ${\rm div}_{X'}(f)+mC$ is  
disjoint from $F'$ by construction. In particular, it is contained in $u(U)$ 
and is horizontal, and ${\rm div}_{X'}(f)$ has also its support contained in $u(U)$. 
Considering the pull-back of the divisors under $\overline{X}\to X'$
shows that the divisor $C':={\rm div}_{\overline{X}}(f)+mC$ is contained in $U$, disjoint from $F$, horizontal and linearly equivalent to $mC$ on $\overline{X}$. 

When $R$ is semi-local, the set $F' \subset X'$ is a finite set of 
points. Thus we may apply Proposition 6.2 of \cite{GLL1}
directly to 
the Cartier divisor whose support is $u(C)$ to find a Cartier divisor 
$D$ linearly equivalent to $u(C)$ and whose support does not meet $F'$.

It remains to prove the last statement of the theorem, which pertains to 
the morphism $h: X \to Y$.
To summarize, in the situation of \ref{mv-1-cycle-local} (a), when $C$ 
is integral, we found a closed integral 
subscheme $W$ of $X$ containing $C$, 
a projective scheme $\overline{W}/S$ containing $W$ as a dense open subset,
and $m\ge 1$ (with $m=1$ when $R$ is semi-local) such that $mC$ is 
rationally equivalent on $\overline{W}$ to some horizontal $1$-cycle $C'$ 
contained in $W$. The morphism $h : X\to Y$  in  the statement of \ref{mv-1-cycle-local} 
induces an $S$-morphism $h : W\to Y$. 
Our proof now proceeds as in \cite{GLL1}, proof of Proposition 2.4(2). For the convenience of the reader, 
we recall the main ideas of that proof here.

Let $g$ be  the function on $W$ such that $[\dv_W(g)] = mC - C'$,
Let $\Gamma\subseteq \overline{W}\times_S Y$ be the schematic closure of the graph 
of the rational map $\overline{W}\dasharrow Y$ induced by $h:W \to Y$. Let 
$p: \Gamma \to \overline{W} $ and $q: \Gamma\to Y$ be the associated projection maps over $S$.
Since $\Gamma$ is integral and its generic point maps to the generic 
point of $W$, 
the rational function $g$ on $W$ induces a rational function, again denoted by $g$, on $\Gamma$.
As $p : p^{-1}(W)\to W$ is an isomorphism,
we let $p^*(C)$ and $p^*(C')$ denote the preimages of $C$ and $C'$ in $p^{-1}(W)$; they are 
closed subschemes of $\Gamma$.
Since $g$ is an  invertible function in a neighborhood of $\overline{W} \setminus W$,
$[\dv_\Gamma(g)] =mp^*(C)-p^*(C')$,
and $p^*(mC)$ and $p^*(C')$ are rationally equivalent on $\Gamma$. 
Then, as $q$ is 
proper and $S$ is universally catenary, $q_*p^*(mC)$ and $q_*p^*C'$ are rationally equivalent in $Y$.
Since $h_*C =q_*p^*C$ and $h_*C'= q_*p^*C'$, we find that $h_*(mC)$ is rationally equivalent to $h_*(C') $ in $Y$. 
\qed 
\end{emp}

\noindent 
{\it Proof of Theorem {\rm \ref{mv-1-cycle-local}} when   {\rm (b)} holds.} 
It suffices to prove the theorem in the case where
the given $1$-cycle is the cycle associated with an integral closed subscheme of $X$ finite over $S$. 
We will denote again by $C $ this integral closed subscheme. 
By hypothesis, $X \to S$ is flat, so all its fibers are of the same dimension $d$.

Since $C$ is not empty,  $C_s$ is not empty, and thus has positive codimension in $X_s$ by hypothesis.
Therefore, we find that $d \geq 1$. Moreover, $C$ does not contain any  irreducible component of   positive 
dimension of $F_s$ and of $X_s$. If $d>1$, we fix a very ample invertible sheaf $\cO_X(1)$ on $X$ and apply Theorem \ref{exist-hyp} 
to $X \to S$, $C$, and $F$, to find that there exists  $n>0$ 
and a global section $f$ of $\cO_X(n)$ such that 
the closed subscheme $H_f$ of $X$ is a hypersurface 
that contains $C$ as a
closed subscheme, 
and such that for all $s \in S$, $H_f$ does not contain 
any irreducible component of positive dimension of $F_s$.
  Using Lemma \ref{components-dom} and the assumption that $C$ is integral, 
we can find 
an irreducible component $\Gamma$ of $H_f$ which contains $C$,  and such that all fibers of $\Gamma \to S$ have 
dimension $d-1$. If $d-1>1$, we repeat the process with $\Gamma$ endowed with the reduced induced structure, $C$, and $F \cap \Gamma$. 

It follows that we are reduced to proving the theorem when $X \to S$ 
has fibers of dimension $1$ and $X$ is integral. In this case, 
$F \to S$ is quasi-finite. We now reduce to the case where $X$ is normal and $X\to S$ has a 
smooth generic fiber. Let $K$ denote the function field of $S$. When $K$ has positive characteristic
$p>0$, consider the homeomorphism $\pi: X\to X^{(p^n)}$ with $n$ as in \ref{normalization-smooth},
so that the normalization of the reduced generic fiber of $X^{(p^n)}$ is smooth over $K$.
Applying \ref{Chow-radicial} to $\pi_*: {\mathcal A}(X) \to {\mathcal A}(X^{(p^n)})$, we find that it suffices to prove 
\ref{mv-1-cycle-local} 
for 
$X^{(p^n)}$, $\pi(C)$, and $\pi(F)$. So we can suppose that the normalization of 
the reduced generic fiber $X_{K}$ is smooth. 

Let $\pi: X'\to X$ be the normalization morphism. 
By \ref{normalization-finite} (a), this morphism is finite. 
Using \ref{Chow-radicial} applied to $\pi_*: {\mathcal A}(X') \to {\mathcal A}(X)$, 
we see that it is enough to prove \ref{mv-1-cycle-local}  for $X'$, $\pi^{-1}(C)$, and $\pi^{-1}(F)$. 
Replacing $X$ with $X'$ if necessary, we can now suppose that $X$ is normal, and that   
$X_K$ is smooth over $K$. 

We can now apply \ref{normalization-finite} (3) and we find that 
$X$ is $\mathbb Q$-factorial. So there exists 
an integer $n>0$ such that
the effective Weil divisor $nC$ is associated to a Cartier divisor on
$X$. We are thus reduced to the case where 
$C$ is a Cartier divisor on $X$, and the statement then follows from the end of the proof \ref{endofproof} of Case (a).
\qed

\smallskip
We show in our next theorem that 
in Rumely's Local-Global Principle as formulated in \cite{MB1}, 1.7, the 
hypothesis that the base scheme $S$ is excellent can be removed.
\begin{theorem} \label{RumelyLG} 
Let $S$ be a Dedekind scheme satisfying Condition {\rm (T)}. 
Let $X \to S$ be a separated surjective morphism of finite type. Assume that $X$ is irreducible and that the generic fiber of $X \to S$
is geometrically irreducible. Then $X \to S$ has a finite quasi-section.
\end{theorem}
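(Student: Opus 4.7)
The plan is to follow the argument of Moret-Bailly's proof of \cite{MB1}, 1.7, which establishes the theorem under the additional hypothesis that $S$ is excellent, and to replace each use of excellence by the corresponding non-excellent analogue already proved in this article. Excellence enters \cite{MB1}, 1.7, in essentially two places: first, to ensure that the normalization morphism of certain finite type integral $S$-schemes is finite (permitting a reduction to the normal case); and second, through an application of a moving lemma for horizontal $1$-cycles on a regular scheme quasi-projective and flat over $S$, namely \cite{MB1}, 2.3. These two roles are now played, without any excellence hypothesis on $S$, by Proposition~\ref{normalization-finite} (a) and Theorem~\ref{mv-1-cycle-local} (b), respectively; substituting them into Moret-Bailly's argument yields the theorem.

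Concretely, the first step is to reduce to the case where $X \to S$ is quasi-projective and admits a projective compactification. Applying Nagata's compactification gives a proper $\bar{X} \to S$ with $X$ open and dense in $\bar{X}$; Chow's lemma then furnishes a projective $S$-scheme $\tilde{X}$ with a projective surjection $\tilde{X} \to \bar{X}$, and passing to the preimage $X' \subset \tilde{X}$ of $X$ gives a quasi-projective $S$-scheme. A finite quasi-section $T' \subset X'$ projects to a closed subset $T \subset X$ that is finite over $S$, because the composition $T' \to X$ is proper, surjective onto $T$, and quasi-finite, so the standard argument that \emph{proper plus surjective onto a separated target yields proper}, combined with \emph{proper plus quasi-finite equals finite}, applies. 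One may then further apply Proposition~\ref{normalization-finite} (a) to assume $X$ is normal with smooth generic fiber, and pick a projective compactification $X \hookrightarrow \bar{X}$ with $\bar{X}$ integral; since $\bar{X}$ is integral and dominates $S$, the morphism $\bar{X} \to S$ is automatically flat, and after contracting or discarding fibral components disjoint from $X$ we arrange that $F := \bar{X}\setminus X$ meets every fiber $\bar{X}_s$ in positive codimension.

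The next step, as in \cite{MB1}, is to produce an initial irreducible horizontal $1$-cycle on $\bar{X}$, i.e., an integral closed subscheme $\bar{C} \subseteq \bar{X}$ finite and surjective over $S$. Using the geometric irreducibility of the generic fiber of $\bar{X} \to S$, combined with Rumely's original local--global principle applied over a suitable completion and a specialization argument, such a $\bar{C}$ exists; none of these ingredients requires excellence of $S$. Finally, Theorem~\ref{mv-1-cycle-local} (b) applies to $\bar{X} \to S$, the horizontal $1$-cycle $\bar{C}$, and the closed subset $F$: the support of $\bar{C}$ meets each fiber in finitely many points, and $F$ meets each fiber in positive codimension by construction, so both positive-codimension hypotheses are satisfied. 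The conclusion yields an integer $m \geq 1$ and a horizontal $1$-cycle $\bar{C}'$ on $\bar{X}$, rationally equivalent to $m\bar{C}$, with $\mathrm{Supp}(\bar{C}') \cap F = \emptyset$. Thus $\mathrm{Supp}(\bar{C}') \subseteq X$, and its reduced subscheme structure is the desired finite quasi-section of $X \to S$.

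The main obstacle to removing the excellence hypothesis from \cite{MB1}, 1.7, was the moving lemma itself; once Theorem~\ref{mv-1-cycle-local} is available, which constitutes the core technical work of the present article, the remaining reductions above are routine adaptations of arguments already in \cite{MB1}. The delicate auxiliary point is arranging the compactification $\bar{X}$ so that $F = \bar{X}\setminus X$ meets each fiber in positive codimension, but this can always be achieved by a further modification (contracting or removing purely-boundary fibral components) of any chosen initial compactification.
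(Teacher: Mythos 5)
Your route is genuinely different from the one in the paper, and it has several gaps that prevent the black-box application you are hoping for.

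The paper does not re-prove the theorem via the moving lemma at all. It observes that in \cite{MB1} the excellence of $S$ enters only in three places: in \cite{MB1}, 2.5 (dealt with by \ref{lem.torsiondegreed}~(2)), and in 2.8 and 3.3, which are $\mathbb{Q}$-factoriality statements for the local rings of relative curves. It then uses Moret-Bailly's own reduction (\cite{MB1}, 3.1) to pass to $X$ integral quasi-projective of relative dimension one, performs the Frobenius twist of \ref{normalization-smooth} and the normalization of \ref{normalization-finite}~(a) to reduce to $X$ normal with smooth generic fiber (each step clearly preserving the existence question for a finite quasi-section), and finally points out that the excellence used in \cite{MB1}, 2.8 and 3.3, can be replaced by the normality of the completions established in \ref{normalization-finite}~(b)(1). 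This patches \cite{MB1}'s proof in place; \ref{mv-1-cycle-local} is never invoked.

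Your plan has the following concrete problems. First, the reference to ``a moving lemma for horizontal $1$-cycles, namely \cite{MB1}, 2.3'' is wrong: \cite{MB1}, 2.3 is the statement that Condition~(T) implies pictorsion, not a moving lemma. Second, the production of the initial $1$-cycle $\bar C$ is overcomplicated: since you have already reduced to relative dimension one and $S$ is a Dedekind scheme, the Zariski closure in $\bar X$ of any closed point of the generic fiber is automatically finite over $S$ (as noted just after \ref{def.finite-qs}); Rumely's local--global principle and completions play no role here. Third, and most seriously, you cannot apply \ref{mv-1-cycle-local}~(b) to $\bar X$ with $F = \bar X \setminus X$ unless $F$ has positive codimension in every fiber of $\bar X \to S$, and this hypothesis fails whenever a whole fibral component of $\bar X_s$ is disjoint from $X$ --- which is not a degenerate possibility. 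You acknowledge this and propose to ``contract or discard'' such components, but discarding destroys properness, and contracting requires an effective Cartier divisor flat over $S$ and disjoint from those components, which in turn forces you to establish $\mathbb{Q}$-factoriality of $\bar X$ via \ref{normalization-finite}~(3) and then run the contraction of \ref{contraction} --- i.e.\ to replicate, outside the black box, exactly the arguments that occur inside the proof of \ref{mv-1-cycle-local}~(b) (see \ref{endofproof}). Even then, a boundary vertical component meeting $\mathrm{Supp}(\bar C)$ survives the contraction, so the hypothesis is still not guaranteed. In short, \ref{mv-1-cycle-local}~(b) cannot be used as a closed black box here; what the paper actually does is shorter and avoids these difficulties entirely.
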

\proof 
In \cite{MB1}, the hypothesis that $S$ is excellent is only used in 3.3 (which relies on 2.8) 
and, implicitly, in 2.5. The removal of the hypothesis that $S$ is excellent in 2.5 is addressed in  
\ref{lem.torsiondegreed} (2). To prove the 
Local-Global Principle, it is enough to prove it for integral 
quasi-projective schemes of relative dimension $1$ over $S$ 
(\cite{MB1}, 3.1). 

Assume that $S$ is not excellent. Consider a finite $S$-morphism $X \to X^{(p^n)}$ 
such that the normalization of the reduced generic fiber of $X^{(p^n)} \to S$ is smooth (\ref{normalization-smooth}).
Clearly, $X^{(p^n)} \to S$ has a finite quasi-section if and only if $X\to S$ has one. 
Similarly, since $(X^{(p^n)})_{\mathrm{red}} \to X^{(p^n)}$ is a finite $S$-morphism, 
$(X^{(p^n)})_{\mathrm{red}} \to S$ has a finite quasi-section if and only if $X^{(p^n)}\to S$ has one.
We also find from \ref{normalization-finite}  (a) that the normalization morphism  $X' \to (X^{(p^n)})_{\mathrm{red}}$ is finite, and again 
 $(X^{(p^n)})_{\mathrm{red}} \to S$ has a finite quasi-section if and only if $X' \to S$ has one.  
Thus we are reduced to the case where $X$ is normal and the generic fiber of $X \to S$ is smooth.
We now proceed as in the proof of \ref{normalization-finite} (3) to  remove 
the `excellent' hypothesis in \cite{MB1}, 2.8, and in \cite{MB1}, 3.3. 
\qed

\smallskip The following proposition is needed to produce the examples below which  conclude this section.
\begin{proposition} \label{threesections}
Let $S$ be a noetherian irreducible scheme. 
Let ${\mathcal L}$ be an invertible sheaf over $S$,  and consider the 
 scheme $X:= \mathbb P({\mathcal O}_S \oplus {\mathcal L})$,
with the associated projective\footnote{This morphism is projective by definition, see \cite{EGA}, II.5.5.2. It is then also proper (\cite{EGA}, II.5.5.3).} morphism $\pi: X \to S$. Denote by $C_0$ and $C_\infty$
the images of the two natural sections of $\pi$ obtained from the projections
${\mathcal O}_S \oplus {\mathcal L} \to {\mathcal O}_S$ and ${\mathcal O}_S \oplus {\mathcal L} \to {\mathcal L}$.
Suppose that there exists a finite flat quasi-section $g:Y\to S$ of $X \to S$ of degree $d$ which does not meet $F:= C_0 \cup C_\infty$.
Then ${\mathcal L}^{\otimes d}$ is trivial in $\Pic(S)$.
\end{proposition}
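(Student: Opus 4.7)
The plan is to compute the line bundles $\mathcal{O}_X(C_0)$ and $\mathcal{O}_X(C_\infty)$ on $X$ in terms of the tautological bundle $\mathcal{O}_X(1)$ and $\pi^*\mathcal{L}$, and then pull back along $g: Y \to X$ to produce a trivialization of $h^*\mathcal{L}$, where $h := \pi \circ g$. Finally I will apply the norm map of the finite flat morphism $h$ of constant rank $d$ to conclude.

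More precisely, with the EGA convention $\mathbb{P}(\mathcal{E}) = \operatorname{Proj}\operatorname{Sym}(\mathcal{E})$, the surjection $\pi^*(\mathcal{O}_S \oplus \mathcal{L}) \twoheadrightarrow \mathcal{O}_X(1)$ gives two distinguished sections:  $1 \in H^0(S, \mathcal{O}_S) \subseteq H^0(S, \mathcal{O}_S \oplus \mathcal{L}) = H^0(X, \mathcal{O}_X(1))$, whose zero locus is $C_\infty$, and the composite $\pi^*\mathcal{L} \hookrightarrow \pi^*(\mathcal{O}_S \oplus \mathcal{L}) \twoheadrightarrow \mathcal{O}_X(1)$, which is a global section of $\mathcal{O}_X(1) \otimes \pi^*\mathcal{L}^{-1}$ whose zero locus is $C_0$. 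Hence, as Cartier divisor classes,
\[
\mathcal{O}_X(C_\infty) \cong \mathcal{O}_X(1), \qquad \mathcal{O}_X(C_0) \cong \mathcal{O}_X(1) \otimes \pi^*\mathcal{L}^{-1},
\]
so that $\mathcal{O}_X(C_\infty - C_0) \cong \pi^*\mathcal{L}$.

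Because the image $g(Y)$ is disjoint from $F = C_0 \cup C_\infty$, the Cartier divisors $C_0$ and $C_\infty$ pull back to the zero divisor on $Y$; equivalently, $g^*\mathcal{O}_X(C_0)$ and $g^*\mathcal{O}_X(C_\infty)$ are canonically trivial. Consequently
\[
h^*\mathcal{L} \;=\; g^*\pi^*\mathcal{L} \;\cong\; g^*\mathcal{O}_X(C_\infty - C_0) \;\cong\; \mathcal{O}_Y.
\]
To finish, I would invoke the norm map $N_{Y/S}: \Pic(Y) \to \Pic(S)$ attached to the finite locally free morphism $h: Y \to S$ of constant rank $d$: for any $\mathcal{M} \in \Pic(S)$ one has $N_{Y/S}(h^*\mathcal{M}) \cong \mathcal{M}^{\otimes d}$ (this is standard; see EGA II, 6.5, or the general properties of norms of invertible sheaves). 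Applying this to $\mathcal{M} = \mathcal{L}$ and using $h^*\mathcal{L} \cong \mathcal{O}_Y$ yields
\[
\mathcal{L}^{\otimes d} \;\cong\; N_{Y/S}(h^*\mathcal{L}) \;\cong\; N_{Y/S}(\mathcal{O}_Y) \;\cong\; \mathcal{O}_S,
\]
as desired.

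The only mildly delicate step is the computation of $\mathcal{O}_X(C_0)$ and $\mathcal{O}_X(C_\infty)$, since it depends on the EGA/Grothendieck sign convention for $\mathbb{P}(\mathcal{E})$; however, the combination $\mathcal{O}_X(C_\infty) \otimes \mathcal{O}_X(C_0)^{-1}$ equals $\pi^*\mathcal{L}^{\pm 1}$ regardless of convention, and that is all that is needed since $\mathcal{L}^{\otimes d} \cong \mathcal{O}_S$ is equivalent to $\mathcal{L}^{-\otimes d} \cong \mathcal{O}_S$. Invoking the existence of the norm map for finite locally free morphisms (and verifying its effect on pullbacks) is routine.
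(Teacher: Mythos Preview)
Your argument is correct. The paper reaches the same intermediate conclusion, that $h^*\mathcal{L}\cong\mathcal{O}_Y$, but by a slightly different route: it base-changes $X$ to $Y$ to obtain $\mathbb{P}(\mathcal{O}_Y\oplus h^*\mathcal{L})\to Y$ with three pairwise disjoint sections, and then argues directly with the rank-one sub-bundle $\mathcal{N}\subset\mathcal{O}_Y\oplus h^*\mathcal{L}$ corresponding to the third section that both projections $\mathcal{N}\to\mathcal{O}_Y$ and $\mathcal{N}\to h^*\mathcal{L}$ are isomorphisms. Your approach instead stays on $X$, identifies $\mathcal{O}_X(C_\infty)\cong\mathcal{O}_X(1)$ and $\mathcal{O}_X(C_0)\cong\mathcal{O}_X(1)\otimes\pi^*\mathcal{L}^{-1}$, and pulls back along $g$. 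These are dual viewpoints on the same fact, and both are clean; yours has the minor advantage of avoiding the base change. For the final step the paper cites \cite{Gur}, 2.1, for the statement that the kernel of $h^*:\Pic(S)\to\Pic(Y)$ is killed by $d$, which is exactly your norm-map identity $N_{Y/S}\circ h^*=(\cdot)^{\otimes d}$ rephrased.
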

\proof  Let $X':= X \times_S Y$, with projection $\pi':X' \to Y$.
Clearly, $\pi'$ corresponds to the natural projection $\mathbb P({\mathcal O}_{Y} \oplus g^*{\mathcal L}) \to Y$.
We find that the morphism $\pi'$ has now three pairwise disjoint sections, corresponding to three 
homomorphisms from ${\mathcal O}_{Y} \oplus g^*{\mathcal L}$ to lines bundles, 
two of them being the obvious projection maps.

We claim that three such pairwise disjoint sections
can exist only if $\mathcal L':=g^*{\mathcal L}$ is the trivial invertible sheaf. 
Let $\mathcal N\subset \cO_{Y}\oplus {\mathcal L}'$ be the submodule 
corresponding to the third section (\cite{EGA}, II.4.2.4). 
For any $y\in Y$, $\mathcal N\otimes k(y)$ is different from 
$\mathcal L'\otimes k(y)$ (viewed as a submodule of $(\cO_{Y}\oplus {\mathcal L}')\otimes k(y)$) because in the fiber above $y$, 
the section defined by $\mathcal N$ is disjoint from the section defined by the projection to $\cO_{Y}$, 
so the image of 
$\mathcal N\otimes k(y)$ in the quotient $k(y)$ is non zero. Therefore 
the canonical map $\mathcal N\to \cO_{Y}\oplus {\mathcal L}' \to \cO_{Y}$ is surjective and, hence, it is an  isomorphism. 
Similarly, the canonical map $\mathcal N\to \mathcal L'$ is an isomorphism.
Therefore $\mathcal L'\simeq \cO_{Y}$. It is known (see, e.g., \cite{Gur}, 2.1) that since $Y \to S$ is finite and flat, the kernel of 
the induced map $\Pic(S) \to \Pic(Y)$ is killed by $d$.
\qed

\begin{example}\label{ex.extrahyp}
Let $R$ be  any Dedekind domain
and let $S= \Spec R$. 
Our next example shows that Theorem \ref{mv-1-cycle-local}
can hold  only if  $R$ has the property that $\Pic(R')$ is a torsion group for all
Dedekind domains $R'$ finite over $R$.

Indeed, choose an invertible sheaf ${\mathcal L}$ over $S$,  and consider the 
 scheme $X:= \mathbb P({\mathcal O}_S \oplus {\mathcal L})$,
with the associated smooth projective morphism $\pi: X \to S$. 
Let $C_0$ and $C_\infty$ be as in \ref{threesections}, and 
let $C:= C_0 + C_\infty$. Let $F:= \Supp(C)$. 

If Theorem \ref{mv-1-cycle-local} 
holds,
then a multiple of $C$ can be moved, and there exists a horizontal $1$-cycle $C'$ of $X$ such that $\Supp(C') \cap F = \emptyset$.
Hence, we find the existence of an integral subscheme $Y$ of $X$, finite and flat over $S$,
and disjoint from $F$.   Thus,  \ref{threesections} implies that ${\mathcal L}$ is a torsion element in $\Pic(S)$,
and for 
Theorem \ref{mv-1-cycle-local} 
to hold, it is necessary that $\Pic(S)$ be a torsion group.
Repeating the same argument starting with any invertible sheaf ${\mathcal L}'$ over any $S'$ (which is regular, 
and finite and flat over $S$)
and considering the map $\mathbb P({\mathcal O}_{S'} \oplus {\mathcal L}') \to S' \to S$, 
we find that  for 
Theorem \ref{mv-1-cycle-local}  
to hold, it is necessary that $\Pic(S')$ be a torsion group.
\end{example}

\begin{remark} An analogue of Theorem \ref{mv-1-cycle-local} cannot be expected to hold
when $S$ is assumed to be a smooth {\it proper} curve over a field $k$, even when $k$ is a finite field.
Indeed, suppose that $X \to S$ is given as in  \ref{mv-1-cycle-local} with both $X/k$ and $S/k$ smooth and proper.
Then any ample divisor $C$ on $X$ will have positive intersection number $(C \cdot D)_X$ with any curve $D$ on $X$.
Such an ample divisor then cannot be contained in a fiber of $X \to S$, and thus must be finite over $S$.
Set $F=\Supp(C)$. The conclusion of Theorem \ref{mv-1-cycle-local} cannot hold in this case: it is not possible to find on $X$ a divisor
rationally equivalent to $C$ which does not meet the closed set $F=\Supp(C)$. 
\end{remark}

\begin{example}\label{ex.extrahyp2}
Keep the notation introduced in Example \ref{ex.extrahyp}, and   
choose a non-trivial line bundle $\mathcal L$ of finite order $d>1$ in $\Pic(S)$.
Let $X:=\mathbb P(\cO_S\oplus \mathcal L)$. Let $F:= C_0 \cup C_\infty$. 
Theorem \ref{mv-1-cycle-local} (with the appropriate hypotheses on $S$) implies that a multiple $mC_0$ of $C_0$ can be moved away from $F$.
We claim that $C_0$ itself cannot  be moved away from $F$. Indeed, otherwise, 
there exist finitely many finite quasi-sections $Y_i \to S$ in $X \setminus F$ such that the greatest common divisor
of the degrees $d_i$ of $Y_i \to S$ is $1$ (because $C_0\to S$ has degree $1$). 
Hence, as in  \ref{ex.extrahyp}, we find that the order of $\mathcal L$
divides $d_i$ for all $i$. Since  $\mathcal L$ has order $d>1$ by construction, 
we have obtained a contradiction. 
In fact, we find that $dC_0$ is the smallest positive multiple of $C_0$ that could possibly be moved away from $F$.
\end{example}

\end{section}

\begin{section}{Finite morphisms to 
{${\mathbb P}^d_S$}.}
\label{finite-pd} 

Let $X \to S$ be an affine morphism of finite type, with $S=\Spec R$. 
Assume that $S$ is irreducible 
with generic point $\eta$, and let $d:= \dim X_\eta$. When $R=k$ is a field, 
the Normalization Theorem of E. Noether states that there exists a finite $k$-morphism $X \to {\mathbb A}_k^d$.
When $R$ is not a field, no {\it finite} $S$-morphism  $X \to {\mathbb
  A}_S^d$ may exists in general, 
even when $X\to S$ is surjective and $S$ is noetherian. 

When $R=k$ is a field, a stronger form of the Normalization Theorem
that applies to graded rings  (see, e.g., \cite{Eis}, 13.3) implies that every projective variety $X/k$ of dimension $d$
admits a finite $k$-morphism  $X \to {\mathbb P}_k^d$. Our main theorem in this section, Theorem \ref{theorem.finiteP^n} below, 
guarantees the existence of a finite $S$-morphism $X \to {\mathbb P}_S^d$ 
when $X\to S$ is projective with $R$ $\pictorsion$ (\ref{ConditionT*}), and $d:= \max\{\dim X_s, s\in S\}$. 
A converse to this statement is given in \ref{conversepictorsion}. We end this section 
with some remarks and examples of pictorsion rings.

\begin{theorem} \label{theorem.finiteP^n} \label{morphism-to-Pd}
Let $R$ be a $\pictorsion$ ring, and let $S:=\Spec R$.
Let $X \to S$ be a projective morphism, 
and set
$d:= \max\{\dim X_s, s\in S\}$.
Then  there exists a finite $S$-morphism
$r: X \to {\mathbb P}^d_S$.
If we assume in addition that  $\dim X_s= d$  for all $s\in S$,
then $r$ is surjective.
\end{theorem}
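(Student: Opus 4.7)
The plan is to construct $d+1$ sections $f_0,\dots,f_d$ of a common tensor power $\mathcal O_X(N)$ of the given very ample sheaf $\mathcal O_X(1)$ whose common zero locus on $X$ is empty. The induced morphism $r\colon X\to\mathbb P^d_S$ will then be finite by a direct ampleness argument, and surjective under the additional hypothesis that $\dim X_s=d$ for every $s$.

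First I would iteratively apply Theorem \ref{bertini-type-0} to cut down fiber dimensions. Set $H_0:=X$; assuming inductively that $H_i\subseteq X$ is closed, projective over $S$, with $\dim(H_i)_s\le d-i$ for all $s\in S$, apply Theorem \ref{bertini-type-0} to $H_i\to S$ with $C=\emptyset$, $F_1=H_i$, and $A=\emptyset$ to obtain a section $\bar f_i\in H^0(H_i,\mathcal O_{H_i}(n_i))$ whose zero scheme $H_{i+1}:=V_+(\bar f_i)\subseteq H_i$ is a hypersurface relative to $H_i\to S$, and in particular satisfies $\dim(H_{i+1})_s\le d-i-1$ for all $s$. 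For $n_i$ large enough, Serre vanishing applied to the ideal sheaf of $H_i$ in $X$ makes the restriction map $H^0(X,\mathcal O_X(n_i))\to H^0(H_i,\mathcal O_{H_i}(n_i))$ surjective, so $\bar f_i$ lifts to $f_i\in H^0(X,\mathcal O_X(n_i))$. After $d$ such steps, $H_d\subseteq X$ has fibers of dimension $\le 0$ and is proper over $S$, hence $H_d\to S$ is finite.

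The pictorsion hypothesis enters precisely at this last stage: since $H_d\to S$ is finite, $\Pic(H_d)$ is torsion by assumption, so some power $\mathcal O_X(n_d)|_{H_d}$ is trivial on $H_d$ and thus admits a nowhere-vanishing global section $\bar f_d$. For $n_d$ sufficiently large, Serre vanishing again yields a lift $f_d\in H^0(X,\mathcal O_X(n_d))$ of $\bar f_d$. Replacing each $f_i$ for $i<d$ by an appropriate power to reach a common degree $N$ (the underlying zero loci are unchanged), one obtains $d+1$ sections $f_0,\dots,f_d$ of $\mathcal O_X(N)$ whose common zero locus is contained in $V_+(f_d)\cap H_d=\emptyset$. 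They therefore define the sought morphism $r\colon X\to\mathbb P^d_S$.

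It remains to show $r$ is finite and, under the extra hypothesis, surjective. Finiteness is a standard ampleness argument: $r$ is proper since $X\to S$ is projective, and a positive-dimensional fiber $F$ of $r$ would force $r^*\mathcal O_{\mathbb P^d}(1)|_F=\mathcal O_X(N)|_F$ to be simultaneously trivial (as $r|_F$ is constant) and ample on a positive-dimensional proper scheme, a contradiction. When $\dim X_s=d$ for every $s\in S$, each $r_s\colon X_s\to\mathbb P^d_{k(s)}$ is finite with source of dimension $d$, so its image is a $d$-dimensional closed subset of $\mathbb P^d_{k(s)}$, necessarily equal to $\mathbb P^d_{k(s)}$; hence $r$ is surjective. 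The main subtleties I anticipate are the bookkeeping of degrees in the successive lifting steps and, if $S$ is not a priori noetherian, carrying out the standard descent reduction to the noetherian case (as at the start of the proof of Theorem \ref{bertini-type-0}) so that the hypotheses of \ref{bertini-type-0} are satisfied at each iteration.
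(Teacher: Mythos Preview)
Your proposal is correct and follows essentially the same strategy as the paper: iterate Theorem~\ref{bertini-type-0} to produce $f_0,\dots,f_{d-1}$ whose common zero locus $H_d$ is finite over $S$, invoke the pictorsion hypothesis to trivialise a power of $\mathcal O_X(1)$ on $H_d$, lift a basis via Serre vanishing to obtain $f_d$, equalise degrees, and conclude finiteness and surjectivity exactly as you do.

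The only notable difference is cosmetic. The paper applies Theorem~\ref{bertini-type-0} each time on the ambient $P=\mathbb P^N_S$ (taking $F_1$ to be the current intersection $X\cap H_{f_0}\cap\cdots$), so the sections $f_i$ live on $P$ from the outset and no lifting from $H_i$ to $X$ is needed until the very last step. Your variant applies \ref{bertini-type-0} on the successive $H_i$ and then lifts each $\bar f_i$ to $X$ via Serre vanishing for the ideal sheaf of $H_i$; this costs an extra vanishing argument at each stage but is otherwise equivalent (note that $V_+(f_0)\cap\cdots\cap V_+(f_i)=H_{i+1}$ inductively, so the common zero locus computation goes through). The paper also makes explicit the descent to a noetherian subring $R_0$ precisely at the Serre vanishing step for $f_d$, and handles the case where $X\to S$ is projective but not finitely presented by writing $X$ as a filtered intersection of finitely presented closed subschemes $X_\lambda\subseteq\mathbb P^N_S$ and using semicontinuity of fibre dimension; your closing remark anticipates both points.
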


\proof 
Identify $X$ with a closed subscheme of a projective space $P:=\mathbb P^N_S$.
Assume first that $X \to S$ is of finite presentation.
We first apply Theorem \ref{bertini-type-0} to the projective
scheme $P\to S$ with $\cO_P(1)$, $C=\emptyset$, $m=1$, and $F_1=X$, to find $n_0>0$
and  $f_0\in H^0(P, \cO_P(n_0))$ such that $X\cap H_{f_0}\to S$
has all its fibers of dimension $\le d-1$
(use \ref{hypersurfaces-properties}  (1)).
We apply again Theorem \ref{bertini-type-0}, this time to $P\to S$ and $\cO_P(n_0)$,
$C=\emptyset$, $m=1$,  and $F_1=X\cap H_{f_0}$. We find an integer $n_1$ and a section
$f_1\in H^0(P, \cO_P(n_0n_1))$ such that
$(X\cap H_{f_0})\cap H_{f_1}$ has fibers over $S$ of maximal
dimension $d-2$.
We continue this process $d-2$ additional times, to find a sequence of
homogeneous polynomials $f_0,\dots, f_{d-1}$ such that the closed
subscheme $Y:=X\cap H_{f_{d-1}}\cap \ldots \cap H_{f_0}$ has all
its fibers of dimension
at most $0$ and, hence, is finite over $S$ since it is projective
(see \cite{EGA}, IV.8.11.1).
Note that replacing $f_i$ by a positive power of $f_i$ does not change
the topological properties of the closed set $H_{f_i}$.
So we can suppose $f_1, \dots, f_{d-1}\in H^0(P, \cO_P(n))$ for
some $n>0$.

Since $S$ is $\pictorsion$, $\Pic(Y)$ is a torsion group.
So there exists $j\ge 1$ such that $\cO_P(nj)|_Y\simeq \cO_Y$.
Let $e\in H^0(Y, \cO_P(nj)|_Y)$ be a basis.
As $Y$ is finitely presented over $S$, both $Y$ and  $e$ can be defined on  
some noetherian subring $R_0$ of $R$ (\cite{EGA}, IV.8.9.1(iii)).
By Serre's Vanishing Theorem on $\mathbb P^N_{R_0}$ applied with the very ample sheaf $\cO_{\mathbb P^N_{R_0}}(nj)$, we can find  $k>0$  
such that $e^{\otimes k}$ lifts to a section
$f_d \in H^0(P, \cO_P(njk))$. It follows that $H_{f_d}\cap Y=\emptyset$. 

We have constructed $d+1$ sections 
$f_0^{jk}, \dots, f_{d-1}^{jk}, f_d$ in $H^0(P, \cO_P(njk))$,
whose zero loci on $X$ have empty intersection.
The restrictions to $X$ of these sections define
a morphism $r : X\to \mathbb P^d_S$. 
Since $X\to S$ is of finite presentation and  $\mathbb P^d_S\to S$ is separated of finite type, the  morphism $r : X\to \mathbb P^d_S$
is also of finite presentation (\cite{EGA} IV.1.6.2 (v), or \cite{EGA1}, I.6.3.8 (v)).
By a standard argument (see e.g. \cite{Ked}, Lemma 3), 
the morphism $r : X\to \mathbb P^d_S$ is finite.
When $\dim X_s=d$, as
$X_s\to \mathbb P^d_{k(s)}$ is finite, it is also surjective.

Let us consider now the general case where $X \to S$ is not assumed to be of finite presentation. 
The scheme $X $, as a closed subscheme of $\mathbb P^N_S$, corresponds to a graded ideal $J \subset R[T_0,\dots, T_N]$.
Then $X$ is a filtered intersection of subschemes $X_\lambda \subset\mathbb P^N_S$ defined by finitely generated graded subideals of $J$.
Thus each  natural morphism $f_\lambda: X_\lambda \to S$ is of finite presentation. 
The points $x$ of $X_\lambda$ where the fiber dimension dim$_x(f_\lambda^{-1}(f_\lambda(x))$ is greater than $d$ form a closed subset $E_\lambda$
of $X_\lambda$ (\cite{EGA}, IV.13.1.3). 
Since the fibers of $Z \to S$ are of finite type over a field, the corresponding set for $X \to S$ is nothing but $\cap_\lambda E_\lambda$, and by hypothesis, the former set is empty.
Since $\cap_\lambda E_\lambda$ is a filtered intersection of closed subsets in the quasi-compact space $\mathbb P^N_S$, we find that $E_{\lambda_0} $ is empty for some $\lambda_0$. We can thus apply the statement of the 
theorem to the morphism of finite presentation $f_{\lambda_0}: X_{\lambda_0} \to S$ and find a finite $S$-morphism $X_{\lambda_0} \to \mathbb P^d_S$.
Composing with the closed immersion $X \to X_{\lambda_0}$ produces the desired finite morphism $X \to \mathbb P^d_S$.
\qed

\begin{remark} \label{rem.flat}
Assume that the morphism $r: X \to {\mathbb P}^d_S$ obtained in the above theorem is finite and surjective. 
When $S$ is a noetherian regular scheme  and  $X$ is
Cohen-Macaulay and irreducible, then $r$ is also flat. 
Indeed, $\mathbb P^d_S$ is regular  since $S$ is. Since $r$ is finite and surjective, $X $ is irreducible, and $ \mathbb P^d_S$ is universally catenary,
we find that $\dim \cO_{X,x}= \dim \cO_{\mathbb P^d_S,r(x)}$ for all $x \in X$ (\cite{Liubook}, 8.2.6). 
We can then use 
\cite{A-K}, V.3.5, 
to show that $r$ is flat. 
\end{remark}

Let us note here one class of   projective morphisms $X \to \Spec R$ which satisfy the conclusion of \ref{theorem.finiteP^n} 
without a pictorsion hypothesis on $R$.
\begin{proposition}\label{NoPictorsionHyp}  Let $R$ be a noetherian ring of dimension $1$ with $S:=\Spec R$ connected. 
Let $\mathcal E$ be any locally free $\cO_S$-module of rank $r \geq 2$.
Then there exists a finite $S$-morphism ${\mathbb P}(\mathcal E) \to {\mathbb P}^{r-1}_S$ of degree $r^{r-1}$.
\end{proposition}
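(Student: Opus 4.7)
The plan is to use Serre's splitting theorem twice. Since $\dim R = 1 < r$, an iteration of Serre's theorem gives an isomorphism $\mathcal E \cong \cO_S^{r-1} \oplus \mathcal L$, where $\mathcal L := \det \mathcal E$. I would fix such a decomposition, denoting by $T_1,\ldots,T_{r-1}$ the basis sections of $\cO_S^{r-1}$ and by $U$ a local generator of $\mathcal L$, so that $T_1,\ldots,T_{r-1},U$ play the role of homogeneous coordinates on $\mathbb P(\mathcal E)$.

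Next I would consider the line bundle $\mathcal M := \cO_{\mathbb P(\mathcal E)}(r) \otimes \pi^{*}\mathcal L^{-1}$ on $\mathbb P(\mathcal E)$, whose push-forward is $\pi_{*}\mathcal M = \mathrm{Sym}^r\mathcal E \otimes \mathcal L^{-1} = \bigoplus_{k=0}^{r} \mathrm{Sym}^{r-k}(\cO_S^{r-1}) \otimes \mathcal L^{k-1}$. Inside this I would single out the rank-$r$ direct summand $\mathcal F$ spanned by the ``pure'' monomials $T_1^r,\ldots,T_{r-1}^r$ and $U^r$, so that $\mathcal F \cong (\mathcal L^{-1})^{r-1} \oplus \mathcal L^{r-1}$, with $\det \mathcal F = \mathcal L^{-(r-1)} \otimes \mathcal L^{r-1} = \cO_S$. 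A second application of Serre's theorem (to a rank $r \geq 2$ bundle of trivial determinant on a $1$-dimensional base) then yields $\mathcal F \cong \cO_S^{r}$. I would fix such an isomorphism and take $f_1,\ldots,f_r$ to be the global sections of $\mathcal F$ corresponding to the standard basis of $\cO_S^{r}$, viewed as global sections of $\mathcal M$ via the inclusion $\mathcal F \hookrightarrow \pi_{*}\mathcal M$.

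The key check is that $f_1,\ldots,f_r$ have no common zero on any fiber of $\pi$. Over $s \in S$, trivializing $\mathcal L_s$, each $f_j$ restricts to a homogeneous polynomial of the form $\sum_{k=1}^{r-1} a_{j,k}(s)\, T_k^r + c_j(s)\, U^r$, and the $r \times r$ matrix of coefficients represents the induced isomorphism $\mathcal F \otimes k(s) \to k(s)^r$. In particular this matrix is invertible, so any common zero would force $T_1^r = \cdots = T_{r-1}^r = U^r = 0$, which is impossible in $\mathbb P^{r-1}_{k(s)}$. Hence the $f_j$ define an $S$-morphism $\mathbb P(\mathcal E) \to \mathbb P^{r-1}_S$, which is finite since it is proper and quasi-finite; and since $\mathcal M$ restricts to $\cO_{\mathbb P^{r-1}}(r)$ on each geometric fiber, the degree equals $r^{r-1}$.

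The heart of the argument is the identification of the correct sub-bundle $\mathcal F$: one needs $r$ line-bundle summands of $\pi_{*}\mathcal M$ whose determinants multiply to $\cO_S$ so that Serre's theorem trivializes their direct sum, and simultaneously one needs the resulting common-zero condition on fibers to reduce to a nondegenerate linear problem. The choice of the ``pure power'' summands $T_j^r$ and $U^r$ meets both requirements. This is precisely what allows one to avoid any pictorsion hypothesis on $R$: Serre's splitting plays the role which, in the proof of Theorem~\ref{theorem.finiteP^n}, is taken by the existence of a nowhere-vanishing section of a general line bundle.
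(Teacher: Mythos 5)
Your proof is correct and is essentially the paper's argument in a different presentation: both split $\mathcal E\cong\cO_S^{r-1}\oplus\mathcal L$ by Serre's theorem, take the $r$-th power map in the resulting coordinates, and apply Serre a second time to trivialize the target rank-$r$ bundle. The paper factors the map through $\mathbb P(\cO_S^{r-1}\oplus\mathcal L^{\otimes r})$ and identifies this with $\mathbb P^{r-1}_S$ via $\cO_S^{r-1}\oplus\mathcal L^{\otimes r}\cong\mathcal L^{\oplus r}$, while you make the same identification after twisting by $\mathcal L^{-1}$, locating the trivial rank-$r$ sub-bundle $(\mathcal L^{-1})^{\oplus(r-1)}\oplus\mathcal L^{\otimes(r-1)}\cong\cO_S^{r}$ inside $\pi_*\bigl(\cO_{\mathbb P(\mathcal E)}(r)\otimes\pi^*\mathcal L^{-1}\bigr)$.
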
 

\proof Let $S$ be any scheme. Recall that given any locally free sheaf of rank $r$ of the form 
$\mathcal L_1 \oplus \dots \oplus \mathcal L_r$, with $\mathcal L_i$ invertible for $i=1,\dots, r$, there exists a finite $S$-morphism
$${\mathbb P}(\mathcal L_1 \oplus \dots \oplus \mathcal L_r)
\longrightarrow {\mathbb P}(\mathcal L_1^{\otimes d} \oplus \dots
\oplus \mathcal L_r^{\otimes d})$$
of degree $d^{r-1}$, defined on local trivializations by raising the
coordinates to the $d$-th tensor power.
 
Assume now that $S= \Spec R$ is connected, and recall that when $R$ is noetherian of dimension $1$, any  locally free $\cO_S$-module $\mathcal E$ of rank $r$ is 
isomorphic to a locally free $\cO_S$-module of the form $\cO_S^{r-1} \oplus \mathcal L$, where $\mathcal L$ is some invertible 
sheaf on $S$ (\cite{Ser}, Proposition 7)\footnote{When $k$ is an algebraically closed field and $R$ is a finitely generated regular $k$-algebra 
of dimension $2$, conditions on $R$ are given  in \cite{M-S}, Theorem 2, which ensure that such an isomorphism also exists for any locally free sheaf $\mathcal E$ on $\Spec R$.}, and $\wedge^r \mathcal E \cong \mathcal L$.
Consider the morphism ${\mathbb P}(\mathcal E) \to {\mathbb
  P}(\cO_S^{r-1} \oplus \mathcal L^{\otimes r})$ of degree $r^{r-1}$
described above. 
We claim that ${\mathbb P}(\cO_S^{r-1}\oplus \mathcal
L^{\otimes r})$ is isomorphic to $ \mathbb P^{r-1}_S$.
Indeed, 
we find  that $ \mathcal
L^{\oplus r}$ is isomorphic to $\mathcal \cO_S^{r-1}\oplus \mathcal L^{\otimes r}$ using the result quoted above, 
and ${\mathbb P}(\mathcal L^{\oplus r})$
is $S$-isomorphic to ${\mathbb P}(\cO_S^{r}) = \mathbb P^{r-1}_S$ (\cite{EGA}, II.4.1.4).
\qed
 
\smallskip
To prove a converse to Theorem \ref{theorem.finiteP^n} in \ref{conversepictorsion}, we will need the following proposition.

\begin{proposition} \label{ProjSpace} Let $S$ be a connected noetherian scheme. Let $\mathcal E$ be a locally free sheaf of rank $n+1$. 
Consider the natural projection morphism $\pi: {\mathbb P}(\mathcal E) \to S$.
\begin{enumerate}[\rm (a)]
\item 
Any invertible sheaf on ${\mathbb P}(\mathcal E)$ is isomorphic to a sheaf of the form $\cO_{{\mathbb P}(\mathcal E)}(m) \otimes \pi^*(\mathcal L)$, where $m \in {\mathbb Z}$  and $\mathcal L$
is an invertible sheaf on $S $.
\item Assume that $S=\Spec R$ is affine, and let  $f: {\mathbb P}^n_S \to {\mathbb P}^n_S$ be a finite morphism. 
Then $f^*(\cO_{\mathbb P^n_S}(1))$ is isomorphic to a sheaf of the form $\cO_{\mathbb P^n_S}(m) \otimes \pi^*(\mathcal L)$, where 
$\mathcal L$
is an invertible sheaf on $S $ of finite order and $m>0$.
\end{enumerate}
\end{proposition}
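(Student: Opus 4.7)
My plan is the standard fiber-by-fiber analysis of line bundles on projective bundles. For each $s\in S$, the restriction $\mathcal{M}|_{X_s}$ to the fiber $X_s \cong\mathbb{P}^n_{k(s)}$ is $\cO_{X_s}(m(s))$ for a unique $m(s)\in\mathbb{Z}$, and $s\mapsto m(s)$ is locally constant (for instance, the Hilbert polynomial of $\mathcal{M}|_{X_s}$ with respect to $\cO_{X_s}(1)$ is locally constant on $S$ in the flat proper family $\pi$, and this polynomial determines $m(s)$). Connectedness of $S$ then makes $m$ a single integer. Setting $\mathcal{N}:=\mathcal{M}\otimes \cO_{\mathbb{P}(\mathcal{E})}(-m)$, every fiber $\mathcal{N}|_{X_s}$ is trivial, so cohomology and base change (with constant $h^0=1$ and vanishing higher cohomology on fibers) yields that $\mathcal{L}:=\pi_*\mathcal{N}$ is an invertible sheaf on $S$ and that the adjunction $\pi^*\mathcal{L}\to\mathcal{N}$ is fiberwise an isomorphism between line bundles, hence a global isomorphism. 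This produces $\mathcal{M}\cong \cO(m)\otimes\pi^*\mathcal{L}$.

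\textbf{Part (b).} Since $f$ is of finite presentation, a standard limit argument descends $f$ to a morphism over a noetherian subring $R_0\subset R$; after working on each connected component of $\Spec R_0$, part (a) supplies a decomposition $f^*\cO(1)\cong \cO(m)\otimes\pi^*\mathcal{L}$, which pulls back to the corresponding decomposition over $\Spec R$. To see $m>0$, I restrict to a geometric fiber: $f_{\bar s}:\mathbb{P}^n_{\overline{k(s)}}\to\mathbb{P}^n_{\overline{k(s)}}$ is finite hence surjective, and $f_{\bar s}^*\cO(1)=\cO(m)$, but a morphism associated to $\cO(m)$ with $m\le 0$ is either constant or nonexistent, contradicting this surjectivity. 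For the torsion of $\mathcal{L}$, which is the main point, my plan is to invoke the Macaulay resultant in families. The morphism $f$ is encoded by $n+1$ global sections $F_0,\ldots,F_n$ of $f^*\cO(1)=\cO(m)\otimes\pi^*\mathcal{L}$, that is, elements of $\operatorname{Sym}^m(R^{n+1})\otimes_R L$, where $L$ is the rank-one projective $R$-module representing $\mathcal{L}$. The Macaulay resultant is a universal polynomial with integer coefficients in the coefficients of $n+1$ homogeneous polynomials of degree $m$ in $n+1$ variables, multihomogeneous of degree $m^n$ in each polynomial's coefficients; evaluated on the $F_i$ it produces a canonical element
\[
\operatorname{Res}(F_0,\ldots,F_n)\in L^{\otimes (n+1)m^n}.
\]
Since $f$ is everywhere defined, the $F_i|_{\bar s}$ have no common projective zero for every $s$, so by the classical resultant criterion $\operatorname{Res}(F_0|_{\bar s},\ldots,F_n|_{\bar s})\ne 0$ at every $s\in S$. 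This makes $\operatorname{Res}(F_0,\ldots,F_n)$ a nowhere-vanishing section of $\mathcal{L}^{\otimes (n+1)m^n}$, which therefore trivializes, forcing $\mathcal{L}$ to have order dividing $(n+1)m^n$ in $\Pic(S)$.

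\textbf{Main obstacle.} The delicate point is the sheaf-theoretic set-up of the Macaulay resultant in families: one must verify that the universal integer-coefficient resultant is functorial in the required sense, producing a well-defined element in the expected tensor power of $L$ (even when $L$ is not free), and that its pointwise nonvanishing is indeed equivalent to generating $\mathcal{L}^{\otimes (n+1)m^n}$ globally. This material is classical (see, e.g., Jouanolou's treatise on resultants), but importing it cleanly into the sheaf-theoretic context is the nontrivial step of the plan.
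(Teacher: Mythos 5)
Your plan coincides with the paper's proof in both parts: for (a) the paper also argues via constancy of the fiber degree (using local constancy of Euler characteristic) and then cohomology and base change to split off $\pi^*\mathcal{L}$; for (b) it derives $m>0$ from ampleness on fibers and proves torsion by exactly the resultant argument you describe, producing a nowhere-vanishing section of $\mathcal{L}^{\otimes(n+1)m^n}$. The "main obstacle" you flag — making the Macaulay resultant a well-defined section of the correct tensor power of $\mathcal{L}$ — is precisely what the paper resolves, by covering $S$ by principal affines where $\mathcal{L}$ is trivialized by a basis $t_j$, writing $F_i=t_j\otimes G_{ij}$, and checking via the homogeneity law $\operatorname{Res}(aG_{0},\dots,aG_{n})=a^{(n+1)m^n}\operatorname{Res}(G_0,\dots,G_n)$ (cf.\ Jouanolou) that the local elements $\operatorname{Res}(G_{0j},\dots,G_{nj})\,t_j^{\otimes(n+1)m^n}$ glue; also note that your descent to a noetherian subring $R_0$ is unnecessary, since the standing hypotheses already make $S$ noetherian and connected.
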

\proof (a) This statement is well-known (it is  found for instance in \cite{Mum}, page 20,
or in   
\cite{EGA}, II.4.2.7). Since we did not find a complete proof in the literature, let us sketch a proof here.
Let ${\mathcal M}$ be an invertible sheaf on  ${\mathbb P}(\mathcal E)$. For each $s \in S$, 
the pull-back  ${\mathcal M}_s $ of ${\mathcal M}$ to the fiber over $s$ is of the form $\cO_{{\mathbb P}_{k(s)}^n}(m_s)$ for some integer $m_s$. 
Using the fact that the Euler characteristic of ${\mathcal M}_s $  is locally constant on $S$ (\cite{MilAV}, 4.2 (b)), we find that 
$m_s$ is locally constant on $S$. Since $S$ is connected, $m_s=m$ for all $s\in S$. The conclusion follows as in \cite{MilAV}, 5.1.

(b) Let $\cO(1):= \cO_{{\mathbb P}_S^n}(1)$. 
Using (a), we find that $f^*(\cO(1))$ is isomorphic to a sheaf of the
form $\cO(m) \otimes \pi^*(\mathcal L)$, where  $\mathcal L$ is an
invertible sheaf on $S $. We have $m>0$ because over each point $s$,
$f^*(\cO(1))_s$ is ample, being the pull-back by a finite morphism of the ample sheaf $\cO(1)_s$,
and is isomorphic to $\cO(m)_s$. 

Write $M:= H^0(S,\mathcal L)$, and 
identify $H^0(\mathbb P^n_S, (\pi^*\mathcal L) (m))$ with $M \otimes_R R[x_0, x_1, ..., x_n]_m$, where $R[x_0, x_1, ..., x_n]_m$ denotes 
the set of homogeneous polynomials of degree $m$. The section of $\cO(1)$ corresponding to $x_i \in R[x_0, x_1, ..., x_n]_1$ pulls
back to a section of $f^*(\cO(1))$ which we identify with an element $F_i \in M \otimes_R R[x_0, x_1, ..., x_n]_m$.

Since $M$ is  locally free of rank $1$, there is a cover $\cup_{j=1}^t D(s_j) $ of $ S$ by special affine open subsets such that 
$M \otimes_R R[1/s_j]$ has a basis $t_j$.
Hence, for each $i\le n$, we can write $F_i = t_j \otimes G_{ij}$ with $G_{ij} \in R[1/s_j][x_0,\dots, x_n]_m$. 
Denote the resultant of $G_{0j}, \dots, G_{nj}$ by $\text{Res}(G_{0j}, \dots, G_{nj})$ (see \cite{Jou}, 2.3). 
We claim that $\text{Res}(G_{0j},\dots, G_{nj}) $ is a unit in $ R[1/s_j]$.
Indeed, over $D(s_j) := \Spec R[1/s_j]$, the restricted morphism $f_{S_j} : {\mathbb P}^n_{S_j} \to {\mathbb P}^n_{S_j}$
is given by the global sections of $\cO(m)_{\mid D(s_j)}$ corresponding to $G_{0j}, \dots, G_{jn} \in R[1/s_j][x_0,\dots, x_n]_m$. Since these global sections
generate the sheaf  $\cO(m)_{\mid D(s_j)}$, we find that the hypersurfaces $G_{0j}=0$, $\dots$, $G_{nj}=0$ cannot have a common point and, thus, that 
$\text{Res}(G_{0j},\dots, G_{nj}) \in R[1/s_j]^*$.

For $j=1,\dots, t$, consider now  
$$r_j:=  \text{Res}(G_{0j}, \dots, G_{nj})t_j^{\otimes (n+1)m^{n}} \in M^{\otimes (n+1)m^{n}} \otimes R[1/s_j].$$
Since $\text{Res}(G_{0j},\dots, G_{nj}) \in R[1/s_j]^*$, the element $r_j$ is a basis for $M^{\otimes (n+1)m^{n}} \otimes R[1/s_j]$.
We show now  that $M^{\otimes (n+1)m^{n}}$ is a free
$R$-module of rank $1$ by showing that the elements $r_j$ can be glued to produce 
a basis $r$ of $M^{\otimes (n+1)m^{n}}$ over $R$. Indeed, over $D(s_j) \cap D(s_k)$, we note that there exists $a \in R[1/s_j, 1/s_k]$
such that $at_j=t_k$. Then from $F_i= t_j \otimes G_{ij} = t_k \otimes G_{ik}$, we conclude that $G_{ij}=aG_{ik}$,
so that $$\text{Res}(G_{0j},\dots, G_{nj})= a^{(n+1)m^{n}}\text{Res}(G_{0k},\dots, G_{nk})$$ (\cite{Jou}, 5.11.2). We thus find that $r_j$ is equal to $r_k$ 
when restricted to $D(s_j) \cap D(s_k)$, as desired.
\qed

\begin{example} \label{example.infiniteorder}
Let $S$ be a connected noetherian affine scheme.  
Assume that $\Pic(S)$ contains an element $\mathcal L$ of infinite order. 
Suppose that $\mathcal L$ can be generated by $d+1$ sections for some
$d\ge 0$. We construct in this example a projective morphism $X_{\mathcal L} \to S$, with fibers of dimension $d$, and such that there exists no
finite $S$-morphism $X_{\mathcal L} \to {\mathbb P}^d_S$. Let $\cO(1):= \cO_{{\mathbb P}_S^d}(1)$. 

Using $d+1$ global sections in $\mathcal L(S)$ which generate $\mathcal L$, define a closed $S$-immersion $i_1: S \to {\mathbb P}^d_S$,
with $i^*_1({\mathcal O}(1)) = \mathcal L$. Consider also the closed
$S$-immersion $i_0: S \to {\mathbb P}^d_S$ given by $(1:0:\ldots:
0)\in \mathbb P^d_S(S)$,
so that $i^*_0({\mathcal O}(1) ) = \cO_S$. Consider now  the scheme $X_{\mathcal L}$ obtained by gluing
two copies of ${\mathbb P}^d_S$ over the closed subschemes $\text{Im}(i_0)$ and $\text{Im}(i_1)$ (\cite{Ana}, 1.1.1).
It is noted in \cite{Ana},  1.1.5, that under our hypotheses, the resulting gluing is endowed with a natural morphism $\pi: X_{\mathcal L} \to S$
which is separated and of finite type.
Recall that the scheme $X_{\mathcal L}$ is endowed with two natural closed immersions $\varphi_0: X_0= {\mathbb P}^d_S \to X_{\mathcal L}$
and $\varphi_1: X_1= {\mathbb P}^d_S \to X_{\mathcal L}$ such that
$\varphi_0 \circ i_0 = \varphi_1 \circ i_1$. 
Moreover, the $S$-morphism $(\varphi_0,\varphi_1): {\mathbb P}^d_S \sqcup {\mathbb P}^d_S \to X_{\mathcal L}$ is finite and surjective. 
 Since ${\mathbb P}^d_S \to S$ is proper, we find that $X_{\mathcal L} \to S$ is also proper.

Suppose that there exists a finite $S$-morphism $f: X_{\mathcal L} \to
{\mathbb P}^d_S$. Then, using \ref{ProjSpace} (b), we find that there
exist two torsion invertible sheaves $\G_0$ and $\G_1$ on $S$ and $m
\geq 0$ such that $(f \circ \varphi_0 \circ i_0)^*(\cO_{\mathbb
  P^d_S}(1))= \G_0$, and $(f \circ \varphi_1 \circ i_1)^*(\cO_{\mathbb
  P^d_S}(1))= \mathcal L^{\otimes m} \otimes \G_1$.
Since we must have then $\G_0$ isomorphic to $\mathcal L^{\otimes m} \otimes \G_1$ and since ${\mathcal L}$ is not torsion, 
we find that such a morphism $f$ cannot exist. 

To conclude this example, it remains to show that $\pi: X_{\mathcal L} \to S$ is a projective\footnote{For an example where the gluing of two projective spaces over a `common' closed subscheme is not projective, see \cite{Fer}, 6.3.} morphism.
For this, we exhibit an ample sheaf on $X_{\mathcal L}$ as follows. Consider the sheaf ${\mathcal F}_0:= (\pi \circ \varphi_0)^*({\mathcal L})(1)$ on  $X_0= {\mathbb P}^d_S$
and the sheaf ${\mathcal F}_1:=\cO(1)$ on $X_1= {\mathbb P}^d_S$. We clearly have a natural isomorphism of sheaves 
$i_0^*({\mathcal F}_0) \rightarrow i_1^*({\mathcal F}_1)$ on $S$. Thus, we can glue the sheaves  ${\mathcal F}_0$ and ${\mathcal F}_1$
to obtain a sheaf ${\mathcal F}$ on $X_{\mathcal L}$. Since both ${\mathcal F}_0$ and ${\mathcal F}_1$ are invertible, we find that 
${\mathcal F}$ is also invertible on $X_{\mathcal L}$ (such a statement in the affine case can be found in \cite{Fer}, 2.2). Under the finite $S$-morphism $(\varphi_0,\varphi_1): {\mathbb P}^d_S \sqcup {\mathbb P}^d_S \to X_{\mathcal L}$,
the sheaf ${\mathcal F}$ pulls back to the sheaf restricting to ${\mathcal F}_0$ on $X_0$ and ${\mathcal F}_1$ on $X_1$.
In particular, the pull-back is ample (since $\mathcal L$ is generated by its global sections), and since $X_{\mathcal L}\to S$ is proper, we can apply \cite{EGA}, III.2.6.2, to find that ${\mathcal F} $ is also ample.
 \end{example}
 \begin{remark} Let $R$ be a Dedekind domain and $S:= \Spec R$. Let $X \to S$ be a projective morphism with fibers of dimension $1$.
 When $R$ is pictorsion, Theorem \ref{theorem.finiteP^n} shows that there exists a finite $S$-morphism $X \to {\mathbb P}^1_S$.
It is natural to wonder, when $R$ is not assumed to be pictorsion, whether it would still be possible to find a locally free $\cO_S$-module  $\mathcal E$ 
of rank $2$
and a finite $S$-morphism $X \to {\mathbb P}(\mathcal E)$. The answer to this question is negative, as the following 
example shows. 

Assume that $\Pic(S)$ contains an element $\mathcal L$ of infinite order. 
Then $\mathcal L$ can be generated by $2$ sections.
Consider the projective morphism $X_{\mathcal L} \to S$ constructed
in  Example \ref{example.infiniteorder}. 
Suppose that there exists a locally free $\cO_S$-module  $\mathcal E$ 
of rank $2$ and a finite  $S$-morphism $X_{\mathcal L}  \to {\mathbb P}(\mathcal E)$. Proposition \ref{NoPictorsionHyp} shows  that there exists then 
a finite $S$-morphism ${\mathbb P}(\mathcal E) \to {\mathbb P}^1_S$. We would then obtain by composition a finite 
$S$-morphism $X_{\mathcal L} \to {\mathbb P}^1_S$, which is a contradiction. We thank Pascal Autissier for bringing this question to our attention.
\end{remark}

We are now ready to prove a converse to Theorem \ref{theorem.finiteP^n}.

\begin{proposition} \label{conversepictorsion}
Let $R$ be any commutative ring 
and let $S:=\Spec R$.
Suppose that for any $d \geq 0$, and for any projective morphism $X \to S$ such that $\dim X_s=d$  for all $s\in S$,
 there exists a finite surjective $S$-morphism $ X \to {\mathbb P}^d_S$.
Then $R$ is pictorsion.

When $R$ is noetherian of  finite Krull dimension $\dim R$, then $R$ is pictorsion if
 for all projective morphisms $X\to S$ such that
$\dim X_s \le \dim R$ for all $s\in S$, there exists
a finite $S$-morphism $X \to {\mathbb P}^{\dim R}_S$.
\end{proposition}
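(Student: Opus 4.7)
The plan is to prove the contrapositive: assuming $R$ is not pictorsion, we construct a projective morphism $X\to S$ with equidimensional fibers of the appropriate dimension that admits no finite $S$-morphism to the prescribed target, contradicting the hypothesis. The construction is a relative version of Example \ref{example.infiniteorder}, carried out over a suitable finite cover of $S$ in place of $S$ itself.

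Suppose $R$ is not pictorsion, so there exist a finite morphism $Z\to S$ and $\mathcal{L}\in\Pic(Z)$ of infinite order. Replacing $Z$ by $Z\sqcup S$ and $\mathcal L$ by its extension by $\cO_S$ (still of infinite order in $\Pic(Z\sqcup S)=\Pic(Z)\times\Pic(S)$), we may assume $Z\to S$ is finite and surjective. Since $Z$ is affine, $\mathcal L$ is generated by finitely many, say $d+1$, global sections, and necessarily $d\ge 1$ (otherwise $\mathcal L\cong\cO_Z$). For the second part of the proposition, Forster's theorem applied to the finite, hence noetherian, $R$-algebra $B=\Gamma(Z,\cO_Z)$ whose Krull dimension is at most $\dim R$ guarantees that $\mathcal L$ is generated by $\dim R+1$ sections; padding with zero sections if necessary, we take $d=\dim R$.

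Mimicking Example \ref{example.infiniteorder} with base $Z$ in place of $S$, we glue two copies $X_0,X_1$ of $\mathbb P^d_Z$ along closed immersions $i_0,i_1:Z\to\mathbb P^d_Z$, where $i_0$ is the constant section $(1:0:\cdots:0)$ (so $i_0^*\cO(1)=\cO_Z$) and $i_1$ is defined by the chosen $d+1$ generating sections of $\mathcal L$ (so $i_1^*\cO(1)=\mathcal L$). The resulting scheme $Z_{\mathcal L}$ is projective over $Z$ and hence over $S$. Because $Z\to S$ is finite surjective, each fiber $(Z_{\mathcal L})_s$ is a gluing of finitely many copies of $\mathbb P^d_{k(z)}$ indexed by the non-empty finite set $Z_s$, hence equidimensional of dimension $d$ on all of $S$. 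By hypothesis there exists a finite $S$-morphism $f:Z_{\mathcal L}\to\mathbb P^d_S$. Let $\varphi_0,\varphi_1:\mathbb P^d_Z\to Z_{\mathcal L}$ be the canonical closed immersions, so that $\varphi_0\circ i_0=\varphi_1\circ i_1$. Each composition $f\circ\varphi_i:\mathbb P^d_Z\to\mathbb P^d_S$ is a finite $S$-morphism, and by the universal property of the fiber product $\mathbb P^d_Z=\mathbb P^d_S\times_S Z$ it lifts uniquely to a $Z$-morphism $\tilde g_i:\mathbb P^d_Z\to\mathbb P^d_Z$ with $(f\circ\varphi_i)^*\cO(1)\cong\tilde g_i^*\cO_{\mathbb P^d_Z}(1)$. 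The morphism $\tilde g_i$ is proper (as a $Z$-morphism of $Z$-proper schemes) and its fibers, contained in the finite fibers of $f\circ\varphi_i$, are finite, so $\tilde g_i$ is finite. Applying Proposition \ref{ProjSpace}(b) with $Z$ in place of $S$ yields $\tilde g_i^*\cO(1)\cong\cO_{\mathbb P^d_Z}(m_i)\otimes\pi^*\mathcal M_i$ with $m_i>0$ and $\mathcal M_i\in\Pic(Z)$ of finite order. Pulling back along $i_0$ and $i_1$ and equating via $\varphi_0\circ i_0=\varphi_1\circ i_1$ gives
$$
\mathcal M_0\;\cong\;i_0^*\bigl(\cO(m_0)\otimes\pi^*\mathcal M_0\bigr)\;\cong\;i_1^*\bigl(\cO(m_1)\otimes\pi^*\mathcal M_1\bigr)\;\cong\;\mathcal L^{\otimes m_1}\otimes\mathcal M_1,
$$
which together with $m_1>0$ and the torsion of $\mathcal M_0,\mathcal M_1$ forces $\mathcal L$ to be torsion, a contradiction.

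The main technical obstacle is that Example \ref{example.infiniteorder} and Proposition \ref{ProjSpace} are formulated for connected noetherian bases, whereas our $Z$ is only affine. The gluing construction and its projectivity via an ample sheaf extend to any affine base with only superficial changes; the decomposition $\tilde g_i^*\cO(1)\cong\cO(m_i)\otimes\pi^*\mathcal M_i$ and the torsion conclusion for $\mathcal M_i$ are handled by decomposing $Z$ into its connected components (after descending to a finitely generated, hence noetherian, subring of $\Gamma(Z,\cO_Z)$ by a standard limit argument) and then invoking \ref{ProjSpace}(b) on each piece.
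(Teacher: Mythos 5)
Your argument is correct and follows essentially the same route as the paper's: build the Anantharaman pinching over a surjective finite cover $Z\to S$ carrying a non-torsion invertible sheaf $\mathcal L$, lift a hypothetical finite $S$-morphism $f$ to finite $Z$-endomorphisms $\tilde g_i$ of $\mathbb P^d_Z$ via the identification $\mathbb P^d_Z=\mathbb P^d_S\times_S Z$ (this lifting step is exactly the paper's last implication, ``hence there is no finite $R$-morphism''), and derive a contradiction from Proposition~\ref{ProjSpace}(b) after descending to a noetherian subring and decomposing into connected components. The two organizational variations you introduce --- replacing $Z$ by $Z\sqcup S$ and extending $\mathcal L$ by $\cO_S$ to force surjectivity at the outset (where the paper instead adjoins a copy of $\mathbb P^d_R$ at the very end), and explicitly invoking Forster's theorem to arrange $d\le\dim R$ in the noetherian case (where the paper is terse) --- are small clarifications rather than a different method, and both amount to the same underlying calculation as in Example~\ref{example.infiniteorder}.
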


\proof  Let $R'$ be a finite extension of $R$, and let $\mathcal L' \in \Pic(\Spec R')$. 
The sheaf $\mathcal L'$ descends to an element ${\mathcal L}$ of $\Pic(\Spec R_0)$ for some noetherian subring $R_0$ of $R'$.
For each  connected component $S_i$ of $\Spec R_0$, let $d_i $
be such that ${\mathcal L}_{|S_i}$ can be generated by $d_i+1$ global sections. Let $d:= {\rm max}(d_i)$.  
When $R$ is noetherian we take $R=R_0$, and when in addition  $\dim(R) < \infty$, we can always choose $d \leq \dim(R)$.

Assume now that $\mathcal L'$ is of infinite order. It follows that ${\mathcal L}$ is of infinite order 
on some connected component of $\Spec R_0$. Apply the construction of Example \ref{example.infiniteorder} to each connected component $S_i$ of $\Spec R_0$
where ${\mathcal L}$ has infinite order, with a choice of $d$ global sections of ${\mathcal L}_{|S_i}$ which generate ${\mathcal L}_{|S_i}$. 
We obtain a projective scheme $X_i \to S_i$ 
with fibers of dimension $d$ and which does not admit a finite morphism to $ X_i \to {\mathbb P}^{d}_{S_i}$. 
If $S_j$ is a connected component of $\Spec R_0$
such that ${\mathcal L}$ has  finite order, we set $X_j\to S_j$ to be ${\mathbb P}^{d}_{S_j} \to S_j$. We let $X$ denote the disjoint union of the schemes $X_i$. 
The natural morphism $X\to \Spec R_0$ has fibers of dimension $d$, and does not  admit a finite morphism  $ X \to {\mathbb P}^{d}_{R_0}$. 

Let $R_1$ be any noetherian ring such that $R_0 \subset R_1 \subset R'$.
By construction, the pullback of $\mathcal L $ to $\Spec R_1$ has infinite order on some connected component of $\Spec R_1$. 
Since the construction in Example \ref{example.infiniteorder} is compatible with pullbacks, we conclude that $
X \times_{R_0} R_1$ does not admit a finite morphism to ${\mathbb P}^{d}_{R_1}$. It follows then 
from \cite{EGA} IV.8.8.2 and IV.8.10.5 that there is no finite morphism $X \times_{R}  R' \to {\mathbb P}^{d}_{R'}$.
Hence, there is no finite $R$-morphism $X \times_{R}  R' \to {\mathbb P}^{d}_{R}$. Replacing $X \times_{R}  R'$ with its disjoint
union with ${\mathbb P}^{d}_{R}$ if necessary, we  obtain a projective morphism to $S$ with fibers of dimension $d$
 and which does not factor through a finite morphism to  $ {\mathbb P}^d_S$. \qed
\medskip

 We present below an example of an affine regular scheme $S$ of dimension $3$ with a locally free sheaf $\mathcal E$ of rank $2 $
of the form ${\mathcal E}= \cO_S \oplus {\mathcal L}$ such that ${\mathbb P}({\mathcal E})$ does not admit a finite $S$-morphism to ${\mathbb P}^1_S$.

\begin{example} 
Let $V$ be any smooth connected quasi-projective variety over a field $k$. Let $\mathcal E$  be a locally free sheaf of rank $r $ on $V$.
Let $p : {\mathbb P}(\mathcal E) \to V$ denote the associated projective bundle. 
Denote by $A(V)$ the Chow ring   of algebraic cycles on $V$ modulo rational
equivalence. Let $\xi$ denote the class in $A({\mathbb P}(\mathcal E))$
of the invertible sheaf $\cO_{\mathbb P(\mathcal E)}(1)$. Then $p$ induces a ring homomorphism $p^*: A(V) \to A({\mathbb P}(\mathcal E))$, 
and $A({\mathbb P}(\mathcal E))$ is a free $A(V)$-module
generated by $1, \xi, \dots, \xi^{r-1}$. For $i=0,1,\dots, r$,  one defines (see, e.g., \cite{Har}, page 429) the $i$-th Chern class 
 of ${\mathcal E}$, $c_i(\mathcal E) \in A^i(V)$,
and these classes satisfy the requirements that $c_0(\mathcal E)=1$ and 
$$\sum_{i=0}^r (-1)^i p^*(c_i(\mathcal E)) \xi^{r-i}=0$$
in $A^r({\mathbb P}(\mathcal E))$. When $\mathcal E = \cO_V \oplus {\mathcal L}$ for some invertible sheaf ${\mathcal L}$, we find that $c_2(\mathcal E)=0$.

Consider now the case where $\mathcal E$ has rank $2$ and 
suppose that there exists a finite $V$-morphism $f: {\mathbb P}(\mathcal E) \to {\mathbb P}^1_V$. Then 
 $f^*(\cO_{{\mathbb P}^1_V}(1))$ is isomorphic to a locally free sheaf of the form $p^*\mathcal M \otimes \cO_{{\mathbb P}(\mathcal E)}(m)$
for some $m >0$ and some invertible sheaf $\mathcal M$ on $V$ 
(\ref{ProjSpace} (a)). 
Consider the ring homomorphism
$$f^*: A({\mathbb P}^1_V) =A(V)[h]/(h^2) \longrightarrow A({\mathbb P}(\mathcal E)) = A(V)[\xi]/(\xi^2 - c_1(\mathcal E) \xi + c_2(\mathcal E)) ,$$
where $h$ denote the class in $A({\mathbb P}^1_V)$
of the invertible sheaf $\cO_{\mathbb P^1_V}(1)$.
It follows that in $A({\mathbb P}(\mathcal E))$, $f^*(h)= a + m \xi$ with $a \in A^1(V)$, and   $(a + m \xi)^2=0$.
Hence, $m^4(c_1(\mathcal E)^2 -4 c_2(\mathcal E)) = 0$ in $A^2(V)$.
Thus,  in $A^2(V)_{\mathbb Q}$, $c_1(\mathcal E)^2 =4 c_2(\mathcal E)$. 
Choose now $\mathcal E = \cO_V \oplus {\mathcal L}$ for some invertible sheaf ${\mathcal L}$. 
Then $0=c_2(\mathcal E)=c_1(\mathcal E)^2 = c_1(\mathcal L)^2$ in $A^2(V)_{\mathbb Q}$. 

We are now ready to construct our example. Recall that under our hypotheses on $V$, there exists an affine variety $S$ and 
a surjective morphism $\pi: S \to V$ such that $\pi $ is a torsor under a vector bundle (Jouanolou's device, \cite{JouDevice}, 1.5).
We will use only the simplest case of this construction, when $V= {\mathbb P}^2_k$. 
In this case, $S$ is the affine variety formed by all $3 \times 3$-matrices which are idempotent and have rank $1$. 
We claim that we have an isomorphism
 $$\pi^*: A(V)_{\mathbb Q} \to A(S)_{\mathbb Q}.$$
Indeed, this statement with the Chow rings replaced by $K$-groups is proved in \cite{Jou2}, 1.1. 
Then we use the fact that the Chern character determines an isomorphism of ${\mathbb Q}$-algebras
$ch : K^0(X)_{\mathbb Q} \longrightarrow A(X)_{\mathbb Q} $
where $K^0(X)$ denotes the Grothendieck group of algebraic vector bundles on a smooth  quasi-projective variety
$X$ over a field (\cite{Ful}, 15.2.16 (b)).  

 Choose on $S$ the line bundle $\mathcal L:= \pi^*\cO_{\mathbb P^2}(1)$. Then since $A(V)_{\mathbb Q}= {\mathbb Q}[h]/(h^3)$, we find that
 $h^2 \neq 0$, so that $c_1(\mathcal L)^2 \neq 0$ in  $A(S)_{\mathbb Q}$. Hence, we have produced a smooth affine variety $S$ of dimension $3$, 
 and a locally free sheaf $\mathcal E:=\cO_S \oplus \mathcal L$ such that $\mathbb P(\mathcal E)$ does not admit a finite surjective 
$S$-morphism to $\mathbb P^1_S$.
\end{example}
 
We conclude this section with some remarks and examples of pictorsion rings (\ref{ConditionT*}).
We first note the following. 

\begin{lemma} Let $R$ be any commutative ring.
Denote by $R^{\mathrm{red}}$ the quotient of $R$ by its nilradical.
Then $R$ is pictorsion if and only if $R^{\mathrm{red}}$ is pictorsion.
\end{lemma}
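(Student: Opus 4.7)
The forward direction is immediate from functoriality. The natural closed immersion $\iota\colon \Spec R^{\mathrm{red}}\to \Spec R$ is finite, so any finite morphism $Z\to \Spec R^{\mathrm{red}}$ composes to a finite morphism $Z\to \Spec R$, and by hypothesis $\Pic(Z)$ is then torsion.

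For the converse, I would proceed as follows. Let $Z\to \Spec R$ be finite; since the target is affine, $Z=\Spec B$ for some finite $R$-algebra $B$. The reduction $Z^{\mathrm{red}}=\Spec(B^{\mathrm{red}})$ is reduced, so by the universal property of the reduction the composite $Z^{\mathrm{red}}\to Z\to \Spec R$ factors uniquely as $Z^{\mathrm{red}}\to \Spec R^{\mathrm{red}}\hookrightarrow\Spec R$, with the first map still finite. The pictorsion hypothesis on $R^{\mathrm{red}}$ then yields that $\Pic(B^{\mathrm{red}})$ is torsion. It therefore suffices to prove that the natural restriction map $\Pic(B)\to \Pic(B^{\mathrm{red}})$ is an isomorphism for every commutative ring $B$; this invariance of $\Pic$ under the nilradical thickening is the heart of the matter.

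When $B$ is noetherian (in particular, whenever $R$ is), the nilradical $N$ of $B$ is nilpotent, and I would reduce inductively along the filtration $B\supset N\supset N^2\supset \cdots\supset N^n=0$ to the case $N^2=0$. In that case the short exact sequence of sheaves on $\Spec B$
\[
1 \longrightarrow 1+\widetilde{N} \longrightarrow \mathcal O_B^{\,*} \longrightarrow \mathcal O_{B^{\mathrm{red}}}^{\,*} \longrightarrow 1,
\]
together with the isomorphism of sheaves of groups $1+\widetilde{N}\cong \widetilde{N}$ (valid because $N^2=0$) and the vanishing $H^i(\Spec B,\widetilde{N})=0$ for $i\ge 1$, will force $\Pic(B)\to \Pic(B^{\mathrm{red}})$ to be an isomorphism via the associated long exact sequence.

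The main obstacle is the non-noetherian case, where $N$ is only known to be a nil ideal, not nilpotent, so the filtration argument above does not terminate. Here I would argue directly. For injectivity, I would use a Nakayama-type argument: every element $1+n$ with $n\in N$ is a unit (as $n$ is nilpotent), so if $L$ is an invertible $B$-module that becomes trivial modulo $N$, a lift of a trivializing generator generates $L$ modulo $N$, and the usual determinant-trick with entries in $N$ shows it generates $L$ itself, making $L$ free. For surjectivity, I would invoke the classical idempotent-lifting theorem along nil ideals: an invertible $B^{\mathrm{red}}$-module is the image of some idempotent $e'\in M_k(B^{\mathrm{red}})$, any lift $e\in M_k(B)$ (which exists by idempotent lifting) has image a finitely generated projective $B$-module, and its rank equals $1$ at every prime of $B$ because $\Spec B\to \Spec B^{\mathrm{red}}$ is a homeomorphism and rank is read off residue fields. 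Once this nil-ideal invariance is secured, the converse follows at once.
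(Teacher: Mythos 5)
Your proof is correct, and its essential content coincides with the paper's: the forward direction is handled identically (compose a finite morphism to $\Spec R^{\mathrm{red}}$ with the finite closed immersion $\Spec R^{\mathrm{red}}\hookrightarrow\Spec R$), and for the converse the decisive step in both arguments is that the restriction $\Pic(B)\to\Pic(B^{\mathrm{red}})$ is injective, proved by Nakayama. You do, however, considerably more work than is needed. To deduce that $\Pic(B)$ is torsion from $\Pic(B^{\mathrm{red}})$ being torsion, injectivity alone suffices; your surjectivity argument via idempotent lifting along nil ideals, while true, is superfluous, as is the separate noetherian reduction with the exponential sequence $1\to 1+\widetilde N\to\mathcal O_B^{\,*}\to\mathcal O_{B^{\mathrm{red}}}^{\,*}\to 1$. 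Moreover, the noetherian/non-noetherian dichotomy is not needed even for injectivity: since the nilradical is always contained in the Jacobson radical (for $n$ nilpotent, $1-rn$ is a unit for every $r$) and an invertible module is always finitely generated, Nakayama applies to any commutative $B$. So the paper's one-line observation already covers the general case, and you could delete roughly two-thirds of your converse argument with no loss.
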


\proof  
Since $R\to R^{\mathrm{red}}$ is a finite homomorphism, it is clear that if
$R$ is pictorsion, then so is $R^{\mathrm{red}}$. Assume now that $R^{\mathrm{red}}$ is pictorsion  
and let $R\to R'$ be a finite homomorphism. Then 
$R^{\mathrm{red}}\to (R')^{\mathrm{red}}$ is a finite homomorphism. Thus $\Pic((R')^{\mathrm{red}})$ is a
torsion group. As we can see using Nakayama's lemma, $\Pic(R')\to
\Pic((R')^{\mathrm{red}})$ is injective,
so $\Pic(R') $ is a torsion group. \qed

\medskip
Recall that a pictorsion Dedekind domain $R$ satisfies Condition {\rm (T) (a)} in \ref{ConditionT}  by definition
(that is, $\Pic(R_L)$ is a torsion group for any Dedekind domain $R_L$ obtained as the integral closure of $R$ in a finite extension $L/K$). 
The statement of (2) below is found in \cite{MB1}, 2.3, when $R$ is excellent. We follow the proof given in  \cite{MB1},
modifying it only in 2.5 to also treat the case where $R$ is not excellent. We do not know of an example of 
a Dedekind domain which satisfies Condition {\rm (T) (a)} and which is not pictorsion.

\begin{lemma} \label{lem.torsiondegreed} 
Let $R$ be a Dedekind domain with field of fractions $K$. 
\begin{enumerate} [{\rm (1)}] 
\item Let $L/K$ be a finite extension of degree $d$, 
and let $R'$ denote a sub-$R$-algebra of $L$, integral over $R$. 
Then the kernel of $\Pic(R) \to \Pic(R')$ is killed by $d$.
\item  If $R$ satisfies Condition {\rm (T)} in {\rm \ref{ConditionT}}, 
then $R$ is $\pictorsion$. 
\end{enumerate}
\end{lemma}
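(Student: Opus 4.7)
For part (1), the plan is to reduce to a finite sub-$R$-algebra of $R'$ and then exploit the norm map. If $\mathcal{L}\in \Pic(R)$ becomes trivial in $\Pic(R')$, a generator of $\mathcal{L}\otimes_R R'$ is defined over some finitely generated sub-$R$-algebra $R''\subseteq R'$; this $R''$ is finite over $R$ because it is finitely generated and integral, and being torsion-free over the Dedekind domain $R$ it is locally free of some rank $d''$. The key point is that $R''\otimes_R K$ is a $K$-subalgebra of $L$ that is itself a field (being a finite integral extension of $K$ inside $L$), so $d''=[R''\otimes_R K:K]$ divides $d$. The norm map $\mathrm{Nm}\colon \Pic(R'')\to \Pic(R)$ sends $[\mathcal{L}\otimes_R R'']$ to $[\mathcal{L}^{\otimes d''}]$, and since the former is trivial, so is $\mathcal{L}^{\otimes d''}$, and a fortiori $\mathcal{L}^{\otimes d}$.

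For part (2), the preceding lemma lets me assume $R'$ is reduced, and I consider the integral closure $\widetilde{R'}$ of $R'$ in its total ring of fractions. Decomposing by the finitely many minimal primes $\p_1,\ldots,\p_n$ of $R'$, one has $\widetilde{R'}=\prod_i B_i$, where $B_i$ is the normalization of the domain $R'/\p_i$. When $\p_i$ is vertical (contracting to a maximal ideal $\m$ of $R$), $R'/\p_i$ is a finite-dimensional $k(\m)$-algebra that is a domain, hence already a field, so $\Pic(B_i)=0$. When $\p_i$ is horizontal, transitivity of integrality shows that $B_i$ coincides with the integral closure $\cO_{L_i}$ of $R$ in the finite extension $L_i:=\Frac(R'/\p_i)$ of $K$, and then Condition (T)(a) gives that $\Pic(B_i)$ is torsion. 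Consequently $\Pic(\widetilde{R'})$ is torsion.

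The main obstacle is that $\widetilde{R'}$ need not be finite over $R'$ when $R$ fails to be excellent, so a direct application of the conductor square is unavailable. To circumvent this, I plan to write $\widetilde{R'}=\bigcup_\lambda A_\lambda$ as the filtered union of its finite $R'$-subalgebras, so that $\Pic(\widetilde{R'})$ identifies with the direct limit of the $\Pic(A_\lambda)$. For each $A_\lambda$, the rings $R'$ and $A_\lambda$ share the same total quotient ring, so the conductor $\mathfrak f_\lambda:=\Ann_{R'}(A_\lambda/R')$ contains a non-zero-divisor; applying the Mayer--Vietoris sequence for $\mathbb{G}_m$ to the conductor square yields the exact sequence
\[
(A_\lambda/\mathfrak f_\lambda)^* \longrightarrow \Pic(R') \longrightarrow \Pic(A_\lambda)\oplus \Pic(R'/\mathfrak f_\lambda) \longrightarrow \Pic(A_\lambda/\mathfrak f_\lambda).
\]
Both $R'/\mathfrak f_\lambda$ and $A_\lambda/\mathfrak f_\lambda$ are zero-dimensional Noetherian rings, hence finite products of local Artinian rings with vanishing Picard groups; and since the closed points of $\Spec A_\lambda$ lie over closed points of $\Spec R$, Condition (T)(b) forces the residue fields of those Artinian rings to be algebraic over some $\mathbb{F}_p$, so their unit groups (roots of unity times a $p$-group of principal units) are torsion. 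This makes $\ker(\Pic(R')\to \Pic(A_\lambda))$ torsion. Finally, for any $\mathcal{L}\in\Pic(R')$, its image in the torsion group $\Pic(\widetilde{R'})$ is killed by some $n$, so $\mathcal{L}^{\otimes n}$ becomes trivial in $\Pic(A_\lambda)$ for some $\lambda$, and therefore lies in the torsion kernel above, showing that $\mathcal{L}$ itself is torsion.
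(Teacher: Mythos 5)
Your proof is correct and follows essentially the same route as the paper. Part (1) matches the paper's argument verbatim in spirit: descend the trivialization to a finite locally free $R$-subalgebra $A$ of $L$ of rank $e$ dividing $d$, then apply the norm map (the paper cites \cite{Gur}, 2.1 for this). Part (2) also matches: the paper reduces to $R'$ reduced, passes to the normalization (torsion $\Pic$ by (T)(a)), descends a trivialization to a finite birational overring, and then cites the conductor-square argument of \cite{MB1}, 2.5, which uses (T)(b) to show the relevant unit group is torsion; you have simply written out the Milnor Mayer--Vietoris step and the residue-field analysis that the paper delegates to Moret-Bailly, and packaged the descent as a filtered colimit $\Pic(\widetilde{R'})=\varinjlim\Pic(A_\lambda)$ rather than singling out one finite overring, which amounts to the same thing.
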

\proof (1) When $R'$ is finite and flat over $R$, this is well-known
(see, e.g., \cite{Gur}, 2.1). (The hypothesis that $R$ is Dedekind is used here to insure that the ring $R'$ is flat over $R$.)
In general, let $M$ be a locally free $R$-module of rank $1$ such that $M \otimes_R R'$ is isomorphic
as $R'$-module to $R'$. 
Then there exist a finite $R$-algebra $A$ contained in $R'$ such that $M \otimes_R A$ is isomorphic
as $A$-module to $A$. 
It follows that $M^{d}$ is trivial in $\Pic(R)$, since $A/R$ is finite.

(2) Let $S=\Spec R$. Let $Z$ be a finite $S$-scheme. We need to show that
$\Pic(Z)$ is torsion. The proof in \cite{MB1}, 2.3 - 2.6, is complete when $R$ is excellent.
When $R$ is not necessarily excellent, only 2.5 needs to be modified as follows. Assume that $Z$ is reduced. Let $Z'\to Z$ be the normalization 
morphism, which need not be finite. Then $Z'$ is a finite disjoint union of Dedekind schemes, and the hypothesis that $R$ satisfies
Condition (T)(a) implies that $\Pic(Z')$ 
is a torsion group. Let $\mathcal L\in\Pic(Z)$. Then there exists 
$n\ge 1$  
such that $\mathcal L^{\otimes n}\otimes \cO_{Z'}\simeq \cO_{Z'}$. This 
isomorphism descends to some 
$Z$-scheme $Z_{\alpha}$ with $\pi: Z_{\alpha}\to Z$ finite and birational. 
We now use the proof of 2.5 in \cite{MB1}, applied to  $Z_1=Z_{\alpha}$ (instead of the normalization
which is not necessarily finite), to find that the kernel of $\Pic(Z)\to \Pic(Z_{\alpha})$ is  torsion. Hence, $\mathcal L$ is torsion. 
\qed 

\begin{proposition}  \label{Bezout} Let $R$ be 
a Dedekind
domain with field of fractions $K$. Let $\overline{R}$ denote the integral closure of $R$ in an algebraic closure $\overline{K}$ of $K$.
The following are equivalent:
\begin{enumerate}[\rm (1)]
\item  Condition {\rm (T)(a)} in {\rm \ref{ConditionT}} holds.
\item  $\overline{R}$ is a B\'ezout domain (i.e., all finitely generated ideals of $\overline{R}$ are principal).
\end{enumerate}
\end{proposition}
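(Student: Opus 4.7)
\medskip

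\noindent\emph{Proof plan.} The plan is first to observe that $\overline{R}$ is automatically a Pr\"ufer domain: being integral over the Dedekind domain $R$, it has Krull dimension $\leq 1$, and it is by construction integrally closed in $\overline{K}$; hence every finitely generated non-zero ideal of $\overline{R}$ is invertible. Under this observation, condition (2) is equivalent to the triviality of $\Pic(\overline{R})$, i.e., to the assertion that every invertible finitely generated ideal of $\overline{R}$ is principal. A further standard remark to set up both implications is that $\overline{R} = \varinjlim_L R_L$, the colimit running over finite subextensions $L/K$ inside $\overline K$; consequently any finitely generated ideal $I \subseteq \overline{R}$ is of the form $I_L \overline{R}$ for some finite $L/K$ and some (necessarily invertible, hence non-zero fractional) ideal $I_L$ of $R_L$.

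\smallskip

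For the implication (2)$\Rightarrow$(1), I would take a finite extension $L/K$, a class $[I_L] \in \Pic(R_L)$, and spread $I_L \overline{R}$ to $\overline{R}$. By hypothesis $I_L \overline{R} = b\,\overline{R}$ for some $b \in \overline{K}^*$, and $b$ lies in some finite extension $L'/L$ inside $\overline K$; then $I_L R_{L'} = b\, R_{L'}$ is principal in the Dedekind domain $R_{L'}$. Lemma \ref{lem.torsiondegreed}(1) applied to $R_L \hookrightarrow R_{L'}$ shows that the kernel of $\Pic(R_L) \to \Pic(R_{L'})$ is killed by $[L':L]$, so $[I_L]$ is a torsion class in $\Pic(R_L)$. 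This gives Condition (T)(a).

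\smallskip

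For the implication (1)$\Rightarrow$(2), I would start with a finitely generated ideal $I \subseteq \overline{R}$ and descend it as $I = I_L \overline{R}$ for some finite $L/K$ and some invertible ideal $I_L$ of $R_L$. By (1), $[I_L]$ is torsion, so $I_L^n = (a)$ for some integer $n \geq 1$ and some $a \in R_L \setminus \{0\}$. The key step is then to extract an $n$-th root: pick $b \in \overline{R}$ with $b^n = a$ (such a root exists because $\overline{R}$ is integrally closed in the algebraically closed field $\overline{K}$), and enlarge $L$ to a finite extension $L'$ containing $b$. In the Dedekind domain $R_{L'}$ the fractional ideals $J := I_L R_{L'}$ and $(b)$ satisfy $J^n = (b)^n$; unique factorization of ideals in $R_{L'}$ forces $J = (b)$, and extending to $\overline{R}$ gives $I = b\,\overline{R}$, which is principal.

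\smallskip

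The main technical obstacle is the $n$-th root extraction step in the direction (1)$\Rightarrow$(2): one must be sure that $b = \sqrt[n]{a}$ does sit in $\overline{R}$ (not just in $\overline{K}$), which is immediate once one remembers that $\overline{R}$ is the integral closure of $R$ in $\overline{K}$ and that $b$ satisfies the monic polynomial $x^n - a$ with $a \in R_L \subseteq \overline{R}$. Everything else is a bookkeeping combination of the Pr\"ufer property of $\overline{R}$, unique prime factorization of ideals in the intermediate Dedekind domains $R_L$, and the norm-type statement recorded in Lemma \ref{lem.torsiondegreed}(1).
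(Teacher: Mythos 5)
Your proof is correct, and the direction (2)\,$\Rightarrow$\,(1) is exactly the argument given in the paper: descend a class $[I_L]\in\Pic(R_L)$, use the B\'ezout hypothesis on $\overline{R}$ to principalize $I_L\overline{R}$, locate a finite $L'/L$ over which this principalization already takes place, and then invoke Lemma \ref{lem.torsiondegreed}(1) to conclude that $[I_L]$ is killed by $[L':L]$. (One small point you pass over quickly: to know $I_L R_{L'}=bR_{L'}$ once $b\in R_{L'}$, you either enlarge $L'$ so that the finitely many relations expressing $b\in I_L\overline{R}$ already live in $R_{L'}$, or appeal to faithful flatness of $R_{L'}\to\overline{R}$; either remedy is routine.)

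Where you diverge from the paper is in the direction (1)\,$\Rightarrow$\,(2). The paper simply cites Theorem 102 of Kaplansky for that implication. You instead give a self-contained argument: observe $\overline{R}$ is Pr\"ufer (so a finitely generated ideal is principal as soon as it is trivial in $\Pic$), descend a finitely generated ideal $I=I_L\overline{R}$, use torsion of $\Pic(R_L)$ to write $I_L^n=(a)$, extract an $n$-th root $b$ of $a$ in $\overline{R}$, and then use unique factorization of ideals in the Dedekind domain $R_{L(b)}$ to deduce $I_LR_{L(b)}=(b)$, hence $I=b\overline{R}$. This is precisely the content of Kaplansky's proof, unpacked; it buys a self-contained exposition at the cost of a few extra lines, and it makes transparent exactly where algebraic closedness of $\overline K$ and the torsion hypothesis are used. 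Both routes are valid; the paper's is just a pointer to the literature.
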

\proof
That (1) implies (2) is the content of Theorem 102 in \cite{Ka}.
Assume that (2) holds, and let 
 $R_L$ be the integral closure of $R$ in a finite extension $L/K$. Let $I$ be a non-zero ideal in $R_L$. 
 Then $I\overline{R}$ is principal. Hence, there exists a finite extension $F/L$ 
such that in the integral 
 closure $R_F$ of $R$ in $F$, $IR_F$ is principal. Since the kernel of $\Pic(R_L) \to \Pic(R_F)$ is killed by $[F:L]$ (\ref{lem.torsiondegreed}),
 we find that $I$ has finite order in $\Pic(R_L)$. \hfill \qed

\begin{remark}
Keep the notation of \ref{Bezout}, and denote by
$R_F$ the integral closure of $R$ in any algebraic extension $F/K$. Then Condition (1) in \ref{Bezout}  implies that  $\Pic(R_F)$ is a torsion group. Indeed, 
one finds that $\Pic(R_F) = \varinjlim \Pic(R_L)$, with the direct limit taken over all finite extensions $L/K$ contained in $F$.

Condition (2) in \ref{Bezout} is equivalent to $\Pic(\overline{R})= (0)$. Indeed, the ring $\overline{R}$ is a Pr\"ufer domain (\cite{Ka}, Thm.\ 101), 
and 
a Pr\"ufer domain $D$ is a B\'ezout domain if and only if $\Pic(D)= (0)$.
\end{remark}

We now recall two properties of commutative rings and relate them to the notion of pictorsion 
introduced in this article. A {\it local-global} ring $R$ is a commutative ring where the following property holds:
 whenever $f \in R[x_1,\dots, x_n]$ is such  that the ideal of values $(f(r), r \in R^n)$ is equal to the full ring $R$, 
 then there exists $r \in R^n$ such that $f(r) \in R^*$. A commutative ring $R$ {\it satisfies the primitive criterion}
 if, whenever $f(x) = a_nx^n+\dots + a_0$ is such that $(a_n,\dots, a_0)=R$ (such $f$ is called {\it primitive}), then  there exists $r \in R$ such that $f(r) \in R^*$.
 A ring $R$ satisfies the primitive criterion if and only if it is local-global and for each maximal ideal 
$M$ of $R$, the  residue field $R/M$ is infinite
(\cite{McW}, Proposition, or \cite{EG}, 3.5). 
 
 \begin{proposition} \label{localglobal} Let $R$ be a local-global commutative ring.
 Then every finite $R$-algebra $R'$ has $\Pic(R')=(1)$. In particular, $R$ is pictorsion.
 \end{proposition}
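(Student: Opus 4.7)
The plan is to combine two ingredients. First, that every local-global ring $R$ has $\Pic(R) = (1)$. Second, that any ring module-finite over a local-global ring is again local-global (a result of Estes--Guralnick, \cite{EG}). Applying the first ingredient to each finite $R$-algebra $R'$ (which is local-global by the second) yields $\Pic(R') = (1)$ for every finite $R$-algebra, whence $R$ is \emph{a fortiori} pictorsion.

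The heart of the argument is the first ingredient. Given a rank one projective $R$-module $P$, pick $Q$ and $n$ with $P \oplus Q = R^n$, and let $e = (e_{ij}) \in M_n(R)$ be the associated idempotent projecting onto $P$. Its columns $p_1, \ldots, p_n$ generate $P$, and for $r \in R^n$ the element $p := \sum_j r_j p_j \in P$ has coordinates $p^{(i)} = \sum_j e_{ij} r_j$ in $R^n$. A standard computation (extend any $\phi \in P^*$ with $\phi(p) = 1$ by zero on $Q$; conversely localize at primes to see a functional with $\phi(p)=1$ must be a local unit) shows that $p$ generates $P$ as an $R$-module if and only if $p^{(1)}, \ldots, p^{(n)}$ generate the unit ideal of $R$. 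Now consider
$$
g(x_1, \ldots, x_n, y_1, \ldots, y_n) \;:=\; \sum_{i,j} e_{ij}\, x_j\, y_i \;\in\; R[x_1, \ldots, x_n, y_1, \ldots, y_n].
$$
Evaluating with all variables zero except $x_k = y_l = 1$ yields the value $e_{lk}$, so every $e_{ij}$ lies in the ideal of values of $g$; in particular that ideal contains $\mathrm{tr}(e) = \sum_i e_{ii}$, which equals $1$ because $e$ has rank one modulo each prime of $R$. Hence the ideal of values of $g$ is all of $R$, and the local-global hypothesis produces $(r,y) \in R^{2n}$ with $g(r,y) = u \in R^*$. Replacing $y$ by $u^{-1} y$ gives $\sum_i y_i p^{(i)} = 1$, so the coordinates of $p$ generate $R$, $p$ is a generator of $P$, and $P \cong R$.

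The main obstacle is the second ingredient, the transfer of the local-global property to finite extensions: this is a nontrivial statement that I would invoke as an external input. Were that citation not available, the natural alternative would be to construct the polynomial $g$ above directly over $R'$ from a rank one projective $R'$-module and then descend the unit-value problem for $g$ from $R'$ back to $R$ via the norm map $N_{R'/R}$, so as to apply the local-global hypothesis of $R$ to a polynomial whose values over $R$ still generate the unit ideal. That descent is the technically delicate step of the alternative route.
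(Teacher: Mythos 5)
Your proposal is correct and takes essentially the same route as the paper: both cite Estes--Guralnick (\cite{EG}, 2.3) to transfer the local-global property to the finite extension $R'$, and both then conclude that $\Pic(R')$ is trivial from the fact that finitely generated projective modules of constant rank over a local-global ring are free. The only difference is that where the paper cites \cite{McW} (or \cite{EG}, 2.10) for this last point, you supply a correct self-contained argument via the idempotent $e$ and the bilinear form $g(x,y) = \sum_{i,j} e_{ij}x_j y_i$; that unfolding is a nice addition but does not change the structure of the proof.
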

 \proof The ring $R'$ is also a local-global ring (\cite{EG}, 2.3). In a local-global ring, every finitely generated 
projective $R$-module of constant rank is free (\cite{McW}, Theorem, or \cite{EG}, 2.10). 
It follows that $\Pic(R') = (1)$.
\qed

\begin{example} \label{pictrivial}
Rings which satisfy the primitive criterion 
can be constructed as follows (see, e.g., \cite{vK}, 1.13, and also \cite{EG}, section 5). Let $R$ be any commutative ring, and consider 
the multiplicative subset $S$ of $R[x]$ consisting of all primitive polynomials. Then the ring $R(x):= S^{-1}R[x]$
satisfies the primitive criterion\footnote{The ring $R(x)$ is considered already in \cite{Kru}, page 535 after Hilfssatz 1. 
The notation $R(x)$ was introduced  by Nagata (see the historical remark in \cite{Nag}, p.\ 213).
When $R$ is a local ring, the extension $R\to R(x)$
 is used to reduce some considerations to the case of local rings with infinite residue fields (see, e.g., \cite{SH}, 8.4, p.\ 159).
Let $X$ be any scheme with an ample invertible sheaf. An affine scheme $X'$ with a faithfully flat morphism $X' \to X$ is constructed
in \cite{Fer2}, 4.3, in analogy with the purely affine situation $\Spec R(x) \to \Spec R$.}. Indeed, suppose that $g(y) \in R(x)[y]$ is primitive. 
Then write $g(y) = \sum_{i=0}^n f_i(x) y^i$, with $f_i(x) \in R(x)$. It is easy to reduce to the case where
$f_i(x) \in R[x]$ for all $i$. Since $g(y) $ is primitive, we find that the ideal generated by the coefficients of the polynomials $f_0(x),\dots,f_n(x)$
is the unit ideal of $R$. Hence, choosing $y:=x^t$ for $t$ large enough, we find that $
g(x^t)$ is a primitive polynomial in $R[x]$ and thus is a unit in $R(x)$.
\end{example}

\begin{example} We have seen already in this article examples of commutative rings $R$ such that 
for every finite morphism $\Spec R' \to \Spec R$,
$\Pic(R')$ is trivial (\ref{pictrivial}), or $\Pic(R')$  is finite but not necessarily trivial (take $R={\mathbb Z}$ or ${\mathbb F}_p[x]$).  When needed, such rings could be called {\it pictrivial} and {\it picfinite}, respectively. 

Let us note in this example a ring $R$
which is pictorsion and such that at least one of the groups $\Pic(R')$ is not finite. Consider the algebraic closure $\overline{\mathbb F}_p$
of $\mathbb F_p$, and let $R:= \overline{\mathbb F}_p[x]$. Then $R$ is pictorsion because it satisfies Condition (T) (\ref{lem.torsiondegreed} (2)).
Indeed, let $R'$ be the integral closure of $R$ in a finite extension   of $\overline{\mathbb F}_p(x)$.
Then $U:=\Spec(R')$ is a dense open subset of a smooth connected projective curve $X/ \overline{\mathbb F}_p$. One shows that the natural restriction map
$\Pic(X) \to \Pic(U)$ induces a surjective map $\Pic^0(X) \to \Pic(U)$ with finite kernel. When the genus of $X$ is bigger than $0$,
it is known that $\Pic^0(X)$, which is isomorphic to the $\overline{\mathbb F}_p$-points of the Jacobian of $X$, is an infinite torsion group.

We also note that the ring $R:= \overline{\mathbb F}_p[x,y]$ is not pictorsion. Indeed, let $X/\overline{\mathbb F}_p$ be a smooth projective surface
over $\overline{\mathbb F}_p$ such that its N\'eron-Severi group NS$(X)$ has  
rank   greater than one (for instance, $X$ could be the product of two smooth projective curves). Let $D \subset X$ be an irreducible divisor whose complement $V:= X \setminus D$ is affine.
Write $V=\Spec A$, and use Noether's Normalization Lemma to view $A$ as a finite $\overline{\mathbb F}_p[x,y]$-algebra.
We claim that $\Pic(V)$ is not a torsion group. Indeed, the natural restriction map $\Pic(X) \to \Pic(V)$ is surjective, with kernel generated 
by the class of $D$. 
If $\Pic(V)$ is torsion, then the quotient of
$\mathrm{NS}(X)$ by the subgroup generated by image of $D$ is
torsion. This contradicts the hypothesis on the rank of
$\mathrm{NS}(X)$. 
\end{example}

\begin{example} \label{Varley}
 Robert Varley suggested the following example of a 
Dedekind domain $A$ which 
is $\pictorsion$   
with infinitely many maximal ideals, each  having 
residue field which is not an algebraic extension of a finite field, 
i.e., such that $A$ does not satisfies Condition (T)(b) in \ref{ConditionT}. Rather than providing a direct proof that 
the ring below is pictorsion, we  interpret the example in light of the above definitions:

{\it Let $Z$ denote a countable subset of ${\mathbb C}$.
 Consider the polynomial ring ${\mathbb C}[x]$, and let $T$ denote the multiplicative subset of all polynomials which do not vanish 
on $Z$. Then $A:= T^{-1}({\mathbb C}[x])$ satisfies the primitive criterion, and is thus pictorsion by \text{\rm \ref{localglobal}}. }

Indeed, let $F(y) \in A[y]$ be a primitive polynomial. Up to multiplication by elements of $T$, we
can assume that $F(y)=f_n(x) y^n + \dots + f_0(x)$ with $f_i \in {\mathbb C}[x]$ for all $i$,  and that $x-z$ does not divide $\gcd(f_i(x), i=0,\dots, n)$  for all $z \in Z$. We claim that there exists $a \in {\mathbb C}$ such that $F(a) \in {\mathbb C}[x]$ is coprime to $x-z$ for all $z \in Z$. This shows that $F(a)$ is invertible in $A:= T^{-1}({\mathbb C}[x])$, and thus $A$ satisfies the primitive criterion.
To prove this claim, let us think of $F(y)$ as a polynomial $F(x,y)$ in two variables, and let us first note that the curve $F(x,y)=0$ intersects the line $x-z=0$ in at most $\deg(F)$ places.
Thus, there are only countably many points in the plane ${\mathbb C}^2$ of the form $(z,v)$ with $z \in Z$ and $F(z,v)=0$. 
Therefore, it is possible to choose $a \in {\mathbb C}$ such that $F(x,a)=0$ does not contain any of these countably many points.
\end{example}
 
Let $\overline{\mathbb Q}$ denote the algebraic closure of ${\mathbb Q}$. In view of the above example,  it is natural to wonder
whether there exists a multiplicative subset $T$ of $\overline{\mathbb Q}[x]$
such that $R:=T^{-1}(\overline{\mathbb Q}[x])$ 
is pictorsion and $\Spec R$ is infinite. Clearly, the integral closure $\tilde{R}$ of $R$ in the algebraic closure of 
$\overline{\mathbb Q}(x)$ must be B\'ezout (\ref{Bezout}). 
A related question is addressed  in \cite{vDM}, section 5, and in \cite{GM}. 
 
\end{section}

\end{document}